\documentclass[a4paper]{amsart}
\usepackage[a4paper,outer=3.5cm,inner=3.5cm,total={14cm,21.4cm}]{geometry}

\usepackage[english]{babel}
\usepackage[utf8]{inputenc}
\usepackage{amsmath}
\usepackage{enumerate}
\usepackage{graphicx}
\usepackage{amssymb,amsxtra}
\usepackage{courier}
\usepackage{listings}
\usepackage{mathtools}
\RequirePackage{amsthm}
\RequirePackage{tikz-cd}
\RequirePackage{mathrsfs}
\RequirePackage[colorinlistoftodos]{todonotes}
\usepackage{amsthm}
\usepackage{amsfonts}
\usepackage{amssymb}
\usepackage{amscd}
\usepackage{epsf}
\usepackage{verbatim}
\usepackage{latexsym}
\usepackage{stackengine}
\usepackage{adjustbox}
\usepackage{thmtools}
\usepackage{thm-restate}
\stackMath
\usepackage{array}
\usepackage{lscape}
\usepackage[colorlinks=true]{hyperref}
\hypersetup{colorlinks, citecolor=blue, filecolor=black, linkcolor=black, urlcolor=black}
\usepackage{epstopdf}
\usepackage{tikz}
\usetikzlibrary{calc}
\usetikzlibrary{matrix,arrows,decorations.pathmorphing}
\usepackage{tikz-cd}
\usepackage{color}
\usepackage{geometry}
\usepackage{multirow}
\usepackage{enumitem}

\usepackage{framed}
\usepackage{stmaryrd} 

\usepackage{calligra} 

\newtheorem{thm}{Theorem}[section]
\newtheorem{Theorem}[thm]{Theorem}
\newtheorem{Lemma}[thm]{Lemma}

\newtheorem{Proposition}[thm]{Proposition}

\newtheorem{Corollary}[thm]{Corollary}

\theoremstyle{definition}

\newtheorem{Definition}[thm]{Definition}
\newtheorem{question}[thm]{Question}

\newtheorem{Example}[thm]{Example}

\newtheorem{Remark}[thm]{Remark}
\newtheorem{notation}[thm]{Notation}

\lstset{basicstyle=\footnotesize\ttfamily,breaklines=true}
\lstset{framextopmargin=50pt,frame=bottomline}

\newcommand{\smallmat}[4]{\left(\begin{smallmatrix} #1 & #2 \\ #3 & #4\end{smallmatrix}\right)}

\newcommand*\isomarrow{%
	\xrightarrow{\raisebox{-0.35em}{\smash{\ensuremath{\sim}}}}
}

\newcommand\restr[2]{{
		\left.\kern-\nulldelimiterspace 
		#1 
		\right|_{#2} 
}}

\newcommand{\R}{\mathbb{R}}
\newcommand{\N}{\mathbb{N}}
\newcommand{\Z}{\mathbb{Z}}
\newcommand{\Q}{\mathbb{Q}}
\newcommand{\A}{\mathbb{A}}

\newcommand{\F}{\mathbb{F}}

\newcommand{\Tate}{\mathrm{T}}

\newcommand{\m}{\mathfrak{m}}

\renewcommand{\P}{\mathbb{P}}
\renewcommand{\O}{\mathcal{O}}

\newcommand{\Hom}{\operatorname{Hom}}
\newcommand{\Map}{\operatorname{Map}}
\newcommand{\Mapc}{\operatorname{Map}_{\cts}}

\newcommand{\GL}{\operatorname{GL}}

\newcommand{\Spec}{\operatorname{Spec}}

\newcommand{\Spa}{\operatorname{Spa}}
\newcommand{\Spf}{\operatorname{Spf}}

\newcommand{\cts}{{\operatorname{cts}}}

\newcommand{\an}{\mathrm{an}}

\newcommand{\rel}{\mathrm{rel}}

\newcommand{\perf}{{\operatorname{perf}}}

\newcommand{\hotimes}{\hat{\otimes}}

\newcommand{\HT}{\operatorname{HT}}

\newcommand{\X}{\mathcal X}

\newcommand{\Y}{\mathcal Y}
\newcommand{\E}{\mathcal E}
\newcommand{\Xa}{\mathcal X^{\ast}}

\newcommand{\XpaIne}{\mathcal X'^{\ast}_{\Ig(p^n)}(\epsilon)}
\newcommand{\XpaIie}{\mathcal X'^{\ast}_{\Ig(p^\infty)}(\epsilon)}

\newcommand{\mX}{\mathfrak X}

\newcommand{\mXa}{\mathfrak X^{\ast}}


\newcommand{\Xpa}{\mathcal X'^{\ast}}

\newcommand{\Xpae}{\mathcal X'^{\ast}(\epsilon)}
\newcommand{\Xpaep}{\mathcal X'^{\ast}(\epsilon)^\perf}
\newcommand{\Xpep}{\mathcal X'(\epsilon)^\perf}

\newcommand{\mXpa}{\mathfrak X'^{\ast}}

\newcommand{\mXp}{\mathfrak X'}

\newcommand{\XaGzoea}{\mathcal X^{\ast}_{\Gamma_0(p)}(\epsilon)_a}

\newcommand{\XaGea}[2]{\mathcal X^{\ast}_{\Gamma_{#1}(p^{#2})}(\epsilon)_a}

\newcommand{\XGea}[2]{\X_{\Gamma_{#1}(p^{#2})}(\epsilon)_a}

\newcommand{\XaGoiea}{\mathcal X^{\ast}_{\Gamma_1(p^\infty)}(\epsilon)_a}

\newcommand{\XpaInep}{\mathcal X'^{\ast}_{\Ig(p^n)}(\epsilon)^{\perf}}
\newcommand{\XpaIiep}{\mathcal X'^{\ast}_{\Ig(p^\infty)}(\epsilon)^{\perf}}

\newcommand{\Ig}{\operatorname{Ig}}


\newcommand{\Xp}{\mathcal X'}

\newcommand{\Ha}{\mathrm{Ha}}
\newcommand{\ord}{\mathrm{ord}}

\newcommand{\ad}{\mathrm{ad}}
\newcommand{\tr}{\operatorname{tr}}

\renewcommand{\O}{\mathcal O}

\newcommand{\smallvector}[2]{\left(\begin{smallmatrix}#1\\#2\end{smallmatrix}\right)}

\tikzset{
	labelrotate/.style={anchor=south, rotate=90, inner sep=.5mm}} 
\tikzset{
	labelrotatep/.style={anchor=north, rotate=90, inner sep=.75mm}}

\usepackage{relsize}
\usepackage[bbgreekl]{mathbbol}
\usepackage{amsfonts}
\DeclareSymbolFontAlphabet{\mathbb}{AMSb} 
\DeclareSymbolFontAlphabet{\mathbbl}{bbold}

\newcommand{\gd}{\mathrm{gd}}
\newcommand{\red}{\mathrm{red}}
\newcommand{\lauc}[1]{\langle \!\langle #1 \rangle\! \rangle}
\newcommand{\bb}[1]{\llbracket #1  \rrbracket}
\newcommand{\cc}[1]{(\!( #1  )\!)}
\newcommand{\D}{\mathcal D}

\title{Cusps and \texorpdfstring{\MakeLowercase{$q$}}{\MakeLowercase{q}}-expansion principles for modular curves at infinite level}
\author{Ben Heuer}
\date{}
\begin{document}
\maketitle
\begin{abstract}
	We develop an analytic theory of cusps for Scholze's $p$-adic modular curves at infinite level  in terms of perfectoid parameter spaces for Tate curves.  As an application, we describe a canonical tilting isomorphism between an anticanonical overconvergent neighbourhood of the ordinary locus of the modular curve at level $\Gamma_1(p^\infty)$ and the analogous locus of an infinite level perfected Igusa variety. We also prove various $q$-expansion principles for functions on modular curves at infinite level, namely that the properties of extending to the cusps, vanishing, coming from finite level, and being bounded, can all be detected on $q$-expansions.
\end{abstract}

\setcounter{tocdepth}{2}
\section{Introduction}
\subsection{Cusps of modular curves at infinite level}
Let $p$ be a prime and let $K$ be a perfectoid field extension of $\Q_p$. Throughout we shall assume that $K$ contains all $p^n$-th unit roots for all $n\in \N$. Let $N\geq 3$ be coprime to $p$ and let $\Xa$ be the compactified modular curve over $K$ of some rigidifying tame level $\Gamma^p$ such that $\Gamma(N)\subseteq \Gamma^p\subseteq \GL_2(\Z/N\Z)$. Here we consider $\mathcal X^{\ast}$ as an analytic adic space.  

The first goal of this article is to give a detailed analytic description of the geometry at the cusps in the inverse system of modular curves with varying level structures at $p$, as well as for the  modular curves at infinite level introduced by Scholze in \cite{torsion}. In doing so, we aim to complement results on the boundary of infinite level Siegel varieties for $\mathrm{GSp}_{2g}$ for $g\geq 2$ from \cite[\S III.2.5]{torsion}, proved there using machinery like Hartog's extension principle and a perfectoid version of Riemann's Hebbarkeitssatz: Due to assumptions on the codimension of the boundary to be $\geq 2$, these tools do not apply in the elliptic case. Instead, in this case one can get a much more explicit description with more elementary means.

The way we study the boundary in the elliptic case is in terms of adic analytic parameter spaces for Tate curves. Fix a cusp $x$ of $\X^\ast$, this is in general a point defined over a finite field extension $K\subseteq L_x\subseteq K[\zeta_N]$ depending on $x$, and it corresponds to a $\Gamma^p$-level structure on the Tate curve $\Tate(q^{e_x})$ over $\O_{L_x}\cc{q}$ for some $1\leq e_x\leq N$. The analytic Tate curve parameter space in this situation is simply the adic open unit disc $\D_x$ over $L_x$, and there is a canonical open immersion $\D_x\hookrightarrow \mathcal X^{\ast}$ that sends the origin to $x$.

In order to state our main result, let us recall the tower of anticanonical moduli spaces from \cite[\S3 III]{torsion}: Away from the cusps, the modular curve $\X^{\ast}$ is the moduli space of elliptic curves $E$ together with a $\Gamma^p$-level structure. 
Let $\X^{\ast}_{\Gamma_0(p)}\to \X^{\ast}$ be the finite flat cover that relatively represents (away from the cusps) the data of a cyclic subgroup scheme of rank $p$ of $E[p]$.
By the theory of the canonical subgroup, for small enough $\epsilon>0$, the $\epsilon$-overconvergent neighbourhood of the ordinary locus $\X^{\ast}_{\Gamma_0(p)}(\epsilon)\subseteq \X^{\ast}_{\Gamma_0(p)}$ decomposes into two disjoint open components: the canonical locus $\X^{\ast}_{\Gamma_0(p)}(\epsilon)_c$ and the anticanonical locus $\X^{\ast}_{\Gamma_0(p)}(\epsilon)_a$. In order to understand the cusps of the perfectoid modular curve at infinite level, we first study the tower of compactified  modular curves of higher level at $p$ relatively over $\X^{\ast}_{\Gamma_0(p)}(\epsilon)_a$. Specifically, for any $n\in \N$, the pullback of $\mathcal X^{\ast}_{\Gamma_0(p)}(\epsilon)_a\subseteq \mathcal X^{\ast}_{\Gamma_0(p)}$ defines a tower
\[\mathcal X^{\ast}_{\Gamma(p^n)}(\epsilon)_a \to \mathcal X^{\ast}_{\Gamma_1(p^n)}(\epsilon)_a  \to \mathcal X^{\ast}_{\Gamma_0(p^n)}(\epsilon)_a \to\mathcal X^{\ast}(\epsilon)\]
 of anticanonical loci. Here the rightmost map is finite flat and totally ramified at the cusps, whereas the map $\mathcal X^{\ast}_{\Gamma(p^n)}(\epsilon)_a \to \mathcal X^{\ast}_{\Gamma_0(p^n)}(\epsilon)_a$ is finite Galois with group
\[ \Gamma_0(p^n,\Z/p^n\Z):=\{\smallmat{\ast}{\ast}{0}{\ast}\in \GL_2(\Z/p^n\Z)\}.\]

The cusps of these moduli spaces of higher finite level can be described using analogous parameter spaces for Tate curves: As we shall discuss in detail in \S2, it is essentially an adic analytic version of the classical calculus of cusps of Katz--Mazur \cite{KatzMazur} that for any cusp $x$ of $\mathcal X^{\ast}$, there are Cartesian diagrams of adic spaces of topologically finite type over $K$
\begin{equation*}
	\begin{tikzcd}
		\underline{\Gamma_0(p^n,\Z/p^n\Z)}\times \D_{n,x} \arrow[d, hook] \arrow[r] & \underline{(\Z/p^n\Z)^{\times}}\times \D_{n,x} \arrow[d, hook] \arrow[r] & \mathbb \D_{n,x} \arrow[r] \arrow[d, hook] & \D_x \arrow[d, hook] \\
		\mathcal X^{\ast}_{\Gamma(p^n)}(\epsilon)_a \arrow[r] & \mathcal X^{\ast}_{\Gamma_1(p^n)}(\epsilon)_a \arrow[r] & \mathcal X^{\ast}_{\Gamma_0(p^n)}(\epsilon)_a \arrow[r] & \mathcal X^{\ast}(\epsilon),
	\end{tikzcd}
\end{equation*}
where the top left map is $\smallmat{a}{b}{0}{d}\mapsto d$ in the first component and the identity in the second, and where $\D_{n,x}$ is the open unit disc in the variable $q^{1/p^n}$ over $L_x$,
\[\D_{n,x}= \text{ open subspace of }\Spa(L_x\langle q^{1/p^n}\rangle,\O_{L_x}\langle q^{1/p^n}\rangle) \text{ defined by }|q|<1.\]
In the limit $n\to \infty$, these open discs become parameter spaces for Tate curves with infinite $\Gamma_0$-level structure at $p$, given by perfectoid open unit discs 
\[\D_{\infty,x}= \text{ open subspace of }\Spa(L_x\langle q^{1/p^\infty}\rangle,\O_{L_x}\langle q^{1/p^\infty}\rangle) \text{ defined by }|q|<1.\]
We then get the following description of the cusps at infinite level: Let 
\[\Gamma_0(p^\infty):=\{\smallmat{\ast}{\ast}{0}{\ast}\}\subseteq \GL_2(\Z_p)\]
and let $\underline{\Gamma_0(p^\infty)}$ be the associated profinite perfectoid space.
\begin{Theorem}\label{t:cusps of X_Gamma(p^infty)}
	Let $x$ be any cusp of $\X^{\ast}$.
	\begin{enumerate}
	\item There is a Cartesian diagram of perfectoid spaces over $K$
\begin{equation*}
	\begin{tikzcd}
		\underline{\Gamma_0(p^\infty)}\times \D_{\infty,x} \arrow[d,hook] \arrow[r] & \underline{\Z_p^\times}\times \D_{\infty,x} \arrow[d,hook] \arrow[r] &  \D_{\infty,x} \arrow[d,hook] \arrow[r] & \D_x\arrow[d,hook]\\
		\mathcal X^{\ast}_{\Gamma(p^\infty)}(\epsilon)_a \arrow[r] &  \X^{\ast}_{\Gamma_1(p^\infty)}(\epsilon)_a \arrow[r] & \mathcal X^{\ast}_{\Gamma_0(p^\infty)}(\epsilon)_a \arrow[r]&\mathcal X^{\ast}(\epsilon).
	\end{tikzcd}
\end{equation*}
		\item Define a right action of $\Z_p$ on $\underline{\GL_2(\Z_p)}\times \D_{\infty,x}$ by $(\gamma,q^{1/p^n})\cdot h\mapsto (\gamma\smallmat{1}{0}{h}{1},q^{1/p^n}\zeta^{h/e_x}_{p^n})$. Let $\GL_2(\Z_p)$ act on the left via the first factor. Then there is a Cartesian diagram
		\begin{equation*}
			\begin{tikzcd}
				(\underline{\GL_2(\Z_p)}\times \D_{\infty,x})/\Z_p \arrow[d,hook] \arrow[r] & \D_x\arrow[d,hook]\\
				\mathcal X^{\ast}_{\Gamma(p^\infty)} \arrow[r] &\mathcal X^{\ast}
			\end{tikzcd}
		\end{equation*}
		for which the left map is a $\GL_2(\Z_p)$-equivariant open immersion.
		\item The Hodge--Tate period map $\pi_{\HT}\colon \mathcal X^{\ast}_{\Gamma(p^\infty)} \rightarrow \P^1$ restricts to the locally constant map
		\[
				(\underline{\GL_2(\Z_p)}\times \D_{\infty,x})/\Z_p  \to\underline{\P^1(\Z_p)},\quad \big(\!\smallmat{a}{b}{c}{d},q\big)\mapsto (b:d).
		\]
	\end{enumerate}
\end{Theorem}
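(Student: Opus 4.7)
The proof splits naturally into three steps, one for each assertion, which build on each other.

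For (i), the strategy is to pass to the inverse limit of the finite-level Cartesian squares recalled just before the theorem. By Scholze's results in the anticanonical tower, each of $\mathcal X^{\ast}_{\Gamma_0(p^\infty)}(\epsilon)_a$, $\mathcal X^{\ast}_{\Gamma_1(p^\infty)}(\epsilon)_a$, and $\mathcal X^{\ast}_{\Gamma(p^\infty)}(\epsilon)_a$ is a perfectoid tilde-limit of its finite-level analogues. On the parameter-space side, $\D_{\infty,x} \sim \varprojlim_n \D_{n,x}$ is immediate from the fact that $L_x\langle q^{1/p^\infty}\rangle$ is the $p$-adically completed colimit of the $L_x\langle q^{1/p^n}\rangle$, while $\underline{\Gamma_0(p^\infty)}$ and $\underline{\Z_p^\times}$ are the evident profinite perfectoid limits. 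Since tilde-limits of Cartesian squares of perfectoid spaces are Cartesian, the desired square is obtained column-by-column.

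For (ii), the plan is to enlarge the anticanonical neighborhood from (i) using the left $\GL_2(\Z_p)$-action on $\mathcal X^{\ast}_{\Gamma(p^\infty)}$. Translating the open immersion from (i) by all $\gamma\in \GL_2(\Z_p)$ gives a $\GL_2(\Z_p)$-equivariant map $\underline{\GL_2(\Z_p)}\times \D_{\infty,x}\to \mathcal X^{\ast}_{\Gamma(p^\infty)}$ whose image is an open neighborhood of the cusps above $x$. The key point is that this map is not injective: the choice of a compatible $p^\infty$-th root system $q^{1/p^\infty}$ used to define the anticanonical generator of $\Tate(q^{e_x})[p^n]$ is ambiguous by compatible $p^\infty$-th roots of unity $\zeta_{p^\infty}^{h/e_x}$ parametrised by $h\in \Z_p$ (using that $e_x$ is invertible in $\Z_p$). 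A direct Tate curve computation shows that rescaling $q^{1/p^n}$ by $\zeta_{p^n}^{h/e_x}$ replaces the anticanonical generator $P_n$ by $P_n + h\cdot \zeta_{p^n}$, which under the anticanonical level structure $\Z_p^2\xrightarrow{\sim} T_p\Tate(q^{e_x})$ corresponds to precomposition with $\smallmat{1}{0}{h}{1}$. This identifies the fibres as the orbits of the stated free $\Z_p$-action; the quotient exists as a perfectoid space because the action is free with discrete orbits on each disc, and the induced map is an open immersion by comparison with the finite-level picture in $\mathcal X^{\ast}_{\Gamma(p^n)}$ combined with (i). \emph{The main obstacle} is precisely this bookkeeping at finite level, identifying the $\GL_2(\Z/p^n)$-orbit of the cuspidal disc correctly via the Katz--Mazur calculus of cusps.

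For (iii), by $\GL_2(\Z_p)$-equivariance of both $\pi_{\HT}$ and the parametrisation of (ii), it suffices to compute $\pi_{\HT}$ on the anticanonical slice $\{1\}\times \D_{\infty,x}$. Over the Tate curve $\Tate(q^{e_x})$, the canonical line in $T_p\Tate(q^{e_x})\otimes \wh K$ (the kernel of the Hodge--Tate map) is generated by $\zeta_{p^\infty}$; under the anticanonical level structure $e_1\mapsto q^{e_x/p^\infty}$, $e_2\mapsto \zeta_{p^\infty}$ this corresponds to the line $(0:1)\in \P^1(\Z_p)$. Hence $\pi_{\HT}(1,q^{1/p^n})=(0:1)$, and translating by $\gamma=\smallmat{a}{b}{c}{d}$ gives $\gamma\cdot(0:1)=(b:d)$. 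Local constancy in $\D_{\infty,x}$ is immediate since the formula is independent of $q$.
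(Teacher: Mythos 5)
Your proposal follows essentially the same route as the paper: part (1) by passing to tilde-limits of the finite-level Cartesian squares, part (2) by $\GL_2(\Z_p)$-translation of the anticanonical Tate parameter space with the fibres identified through the ambiguity $q^{1/p^n}\mapsto \zeta_{p^n}^{h/e_x}q^{1/p^n}$ in the anticanonical generator (this is exactly the paper's key computation), and part (3) by computing the canonical line $\langle\zeta_{p^\infty}\rangle\subseteq T_p\Tate(q^{e_x})$ and invoking equivariance.

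Two steps that you declare immediate do require an argument. First, in (2) the quotient $(\underline{\GL_2(\Z_p)}\times \D_{\infty,x})/\Z_p$ does not exist because ``the action is free with discrete orbits'': the $\Z_p$-orbits on $\mathring{\D}_{\infty,x}$ are profinite rather than discrete, and at the origin the action on the disc factor degenerates. The paper instead produces an explicit continuous slice, identifying $(\underline{\Gamma_0(p)}\times \D_{\infty,x})/p\Z_p$ with $\underline{\Gamma_0(p^\infty)}\times \D_{\infty,x}$ via the factorisation $\smallmat{a}{b}{c}{d}=\smallmat{\det(\gamma)/d}{b}{0}{d}\smallmat{1}{0}{c/d}{1}$, and then covers the canonical locus separately by checking that the $\smallmat{0}{1}{1}{0}$-translate of a suitable auxiliary open subspace is the canonical locus; some version of this bookkeeping is needed to get a well-defined open immersion on all of $\mathcal X^{\ast}_{\Gamma(p^\infty)}$ rather than just near the anticanonical cusps. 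Second, in (3) the passage from the value of $\pi_{\HT}$ on $(C,\O_C)$-points of $\{\gamma\}\times\D_{\infty,x}$ to the assertion that the restriction of $\pi_{\HT}$ as a morphism of adic spaces is the constant $(b:d)$ is not automatic; the paper proves a separate lemma (a maximum-principle/reduction-mod-$\mathfrak m$ argument on the perfectoid disc) showing that a function on $\D_{\infty,x}$ that is constant on all $(C,\O_C)$-points is constant. Neither point changes the architecture of your argument, but both need to be supplied.
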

We refer to Thm.~\ref{Theorem: Tate parameter spaces at level Gamma(p^infty)} and Thm.~\ref{t:Main-Theorem-parts-2-3} for slightly more precise statements.
In other words, the part of the boundary of $\mathcal X^{\ast}_{\Gamma(p^\infty)}$ lying over $x$ is given by the closed profinite subspace \[\underline{\GL_2(\Z_p)/\smallmat{1}{0}{\Z_p}{1}}\hookrightarrow \mathcal X^{\ast}_{\Gamma(p^\infty)}\]
and has an open neighbourhood given by a Tate curve  parameter space $(\underline{\GL_2(\Z_p)}\times \D_{\infty,x})/\Z_p$.

We will prove part (1) of the theorem step by step in \S\ref{s:infinite-level-Gamma_0}-\ref{s:infinite-level-Gamma}.
We then deduce (2) from (1) via $\GL_2(\Z_p)$-translations. For this one needs to describe the action of the larger group 
\[\Gamma_0(p):=\{\smallmat{\ast}{\ast}{c}{\ast}|c\in p\Z_p\}\subseteq \GL_2(\Z_p)\]
on the Tate curve parameter space $\underline{\Gamma_0(p^\infty)}\times \D_{\infty,x}\hookrightarrow \mathcal X^{\ast}_{\Gamma(p^\infty)}(\epsilon)_a $, which also takes into account isomorphisms of Tate curves of the form $q\mapsto \zeta^h_{p^n}q$ for $h\in\Z_p$. We will do so in \S\ref{s:action-of-Gamma_0}.

\begin{Remark}
	We note that  Pilloni--Stroh \cite{pilloni2016cohomologie} in their construction of perfectoid tilde-limits of toroidal compactifications of Siegel moduli varieties also describe the boundary of $\mathcal X^{\ast}_{\Gamma(p^\infty)}$: 
	More precisely, the second part of Thm.~\ref{t:cusps of X_Gamma(p^infty)} also follows from \cite[Prop. A.14]{pilloni2016cohomologie}. While their proposition is much more general, the above description is arguably slightly more explicit. We will also identify the canonical and anticanonical subspaces.
\end{Remark}

On the way, we discuss in \S2 some aspects of modular curves as analytic adic space that are not visible in the rigid setting as treated in \cite{conrad2006modular}. For example, in the adic setting there is also a larger analytic Tate curve parameter space $\overline{\D}_x\to \X^{\ast}$ of the form 
\[\overline{\D}_x:=\Spa(\O_{L_x}\bb q[\tfrac{1}{p}],\O_{L_x}\bb q)\]
 where $\O_{L_x}\bb q$ is endowed with the $p$-adic topology (rather than the $(p,q)$-adic one). This gives rise at infinite level to a map $\overline{\D}_{\infty,x}\to \X_{\Gamma_0(p^\infty)}^{\ast}(\epsilon)_a$ where 
\[\overline{\D}_{\infty,x}:=\Spa(\O_{L_x}\bb {q^{1/p^\infty}}_p[\tfrac{1}{p}],\O_{L_x}\bb{q^{1/p^\infty}}_p)\sim \textstyle\varprojlim_{q\mapsto q^p} \overline{\D}_x.\]
Here $\O_{L_x}\bb{q^{1/p^\infty}}_p$ is the $p$-adic completion of $\varinjlim_n \O_{L_x}\bb{q^{1/p^n}}$. 

While these are no longer open immersions, they are sometimes useful, for example because in contrast to $\D_x$, the spaces $\overline{\D}_x$ for all $x$ together with the good reduction locus $\X_{\gd}$ cover the adic space $\X^{\ast}$. More precisely, we have $\overline{\D}\setminus \D= \Spa(\O_{L_x}\lauc{q}[\tfrac{1}{p}],\O_{L_x}\lauc{q}^+)$ where $\O_{L_x}\lauc{q}$ is the $p$-adic completion of $\O_{L_x}\bb{q}[q^{-1}]$, a local ring, and $\O_L\lauc{q}^+$ is a certain valuation subring of rank 2. The image of this rank $2$ point in $\mathcal X^{\ast}$ is a closed point that is neither contained in $\X_{\gd}$ nor in $\D_x$. We discuss this situation in more detail in \S\ref{s:points-of-Xast}.

\subsection{cusps of perfectoid modular curves in characteristic $p$}
There are natural analogues of the above descriptions for modular curves in characteristic $p$, which we treat in \S\ref{s:cusps-in-char-p}: We shall work over the perfectoid field $K^{\flat}$ and choose $\varpi^{\flat}\in K^{\flat}$ with $|\varpi^{\flat}|=|p|$. Let $\X'^{\ast}$ be the compactified modular curve of level $\Gamma^p$ over $K^{\flat}$, considered as an analytic adic space. Then, again, for every cusp $x$ of $\mathcal X'^{\ast}$, there is a Tate parameter space $\D'_x\hookrightarrow \X'^{\ast}$ where  now $\D'_x$ is the adic open unit disc over a finite cyclotomic extension $L_x^{\flat}\subseteq K^{\flat}[\zeta_N]$ (here the notation as the tilt of an extension of $K$ corresponding to $x$ will be justified later).

Recall that over any overconvergent neighbourhood $\X^{\ast}(\epsilon)$ of the locus of ordinary reduction, there is a finite \'etale Igusa curve $\X'^{\ast}_{\Ig(p^n)}(\epsilon)\to \X'^{\ast}(\epsilon)$. In the limit over the relative Frobenius morphism, and over $n\to \infty$, these give rise to a pro-\'etale morphism of perfectoid spaces over  $\X'^{\ast}(\epsilon)$
\[\X'^{\ast}_{\Ig(p^\infty)}(\epsilon)^{\perf}\to \X'^{\ast}(\epsilon)^{\perf}.\]
Let now $\D_{\infty,x}'$ denote the perfectoid open unit disc over $L_x^{\flat}$, which is the perfection of $\D_x'$. Then we have the following analogue of Thm.~\ref{t:cusps of X_Gamma(p^infty)} in characteristic $p$:
\begin{Theorem}
	For every cusp $x$ of $\X'^{\ast}$, there are Cartesian diagrams
	\begin{center}
	\begin{tikzcd}
	\underline{\Z_p^\times}\times \D_{\infty,x}' \arrow[d,hook] \arrow[r] & \D'_{\infty,x} \arrow[d,hook]\arrow[r]&\D'_{x} \arrow[d,hook] \\
	\XpaIiep \arrow[r] & \Xpaep \arrow[r] & \Xpae.
	\end{tikzcd}
	\end{center}
\end{Theorem}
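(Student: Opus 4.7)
The plan is to reduce the theorem to a finite-level Igusa statement and then pass to the perfectoid inverse limit, paralleling the characteristic-$0$ argument used earlier in the paper. The key simplification in characteristic $p$ is that the étale quotient of the Tate curve is already a constant group scheme over $\D'_x$, so no extraction of $p^n$-th roots of the Tate parameter is required at finite level.

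First, for each $n\geq 1$, I would establish the isomorphism
\[\XpaIne \times_{\Xpae} \D'_x \;\cong\; \underline{(\Z/p^n\Z)^{\times}}\times \D'_x,\]
realising the pullback of the Igusa cover to the cuspidal disc as the constant $(\Z/p^n\Z)^{\times}$-torsor, projecting onto the second factor. This rests on the moduli interpretation of the Igusa cover: away from the cusps, $\XpaIne\to \Xpae$ is the finite étale $(\Z/p^n\Z)^{\times}$-torsor of generators of the étale quotient $E[p^n]^{\et}$ of the universal elliptic curve. At the cusp $x$, the universal object pulled back along $\D'_x\hookrightarrow \Xpae$ is the Tate curve $\Tate(q^{e_x})$, whose étale $p^n$-torsion is canonically identified with the constant group $\Z/p^n\Z$ via the Tate uniformization, and so the torsor of generators becomes constant on $\D'_x$.

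The middle square of the theorem follows by applying the perfection functor to the open immersion $\D'_x \hookrightarrow \Xpae$. Perfection commutes with open immersions, and since Frobenius acts on the Tate parameter as $q\mapsto q^p$, the inverse limit $\varprojlim_F \D'_x$ is the perfectoid open disc $\D'_{\infty,x}$, yielding the open immersion $\D'_{\infty,x}\hookrightarrow \Xpaep$ and the Cartesian property.

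For the left square, I pull back the finite-level identification along $\D'_{\infty,x}\hookrightarrow \Xpaep$ and pass to the inverse limit over $n$. Using that cofiltered inverse limits commute with fiber products in the perfectoid setting, and that $\XpaIiep$ is by construction the perfectoid pro-étale Igusa tower over $\Xpaep$, one obtains
\[\XpaIiep \times_{\Xpaep} \D'_{\infty,x} \;=\; \varprojlim_n \bigl(\underline{(\Z/p^n\Z)^{\times}}\times \D'_{\infty,x}\bigr) \;=\; \underline{\Z_p^{\times}} \times \D'_{\infty,x}.\]
The main obstacle will be the finite-level step: carefully verifying the moduli interpretation of the compactified Igusa cover $\XpaIne$ at the cusps, and that its pullback to $\D'_x$ is indeed the constant $(\Z/p^n\Z)^{\times}$-torsor. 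Once this is done, the passage to the perfectoid inverse limit is formal.
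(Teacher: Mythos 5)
Your proposal is correct and follows essentially the same route as the paper: a finite-level trivialisation of the Igusa torsor over the cuspidal disc coming from the canonical splitting on the Tate curve (your "constant étale quotient" is just the Cartier-dual phrasing of the paper's canonical isomorphism $\mu_{p^n}\isomarrow C_n(\Tate(q^e))$), the observation that $F_{\rel}$ acts on the Tate parameter by $q\mapsto q^p$ so that the perfection of $\D'_x$ is the perfectoid disc, and then formal passage to the limit using that the étale Igusa covers pull back along perfection. The point you flag as the main obstacle — extending the trivialisation over the origin — is handled in the paper exactly as you anticipate, by producing the section on completed local rings at the cusp and using that the Igusa cover is finite étale over all of $\X'^{\ast}(\epsilon)\subseteq \X'^{\ast}_{\ord}$, cusps included.
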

We then compare this diagram to the situation in characteristic $0$ via tilting:
\subsection{A tilting isomorphism at level $\Gamma_1(p^\infty)$}
In \cite[Cor.~III.2.19]{torsion}, Scholze identifies the tilt of $\mathcal X^{\ast}_{\Gamma_0(p^\infty)}(\epsilon)_a$ by proving that there is a canonical isomorphism
\begin{equation}\label{eq:tilting the Gamma_0 tower}
 \mathcal X^{\ast}_{\Gamma_0(p^\infty)}(\epsilon)_a^{\flat} \isomarrow\mathcal X'^{\ast}(\epsilon)^{\perf}
 \end{equation}
of perfectoid spaces  over $K^\flat$. In the case of Siegel spaces parametrising abelian varieties of dimension $g\geq 2$, he then extends this tilting isomorphism to level $\Gamma_1(p^\infty)$. 

Using Tate curve parameter spaces, we complement this result in \S\ref{s:tilting-isomorphism} by treating the case $g=1$ of elliptic curves. Moreover, we work out the precise situation at the cusps: It follows from \eqref{eq:tilting the Gamma_0 tower} that the cusps of $\X^{\ast}$ (which can be identified with those of $\mathcal X^{\ast}_{\Gamma_0(p^\infty)}(\epsilon)_a$) and the cusps of $\mathcal X'^\ast$ (which can be identified with those of $\mathcal X'^{\ast}(\epsilon)^{\perf}$) can be identified via tilting, and the same is true for the field extensions $L_x$ and $L^{\flat}_x$. Using these identifications, we have:

\begin{Theorem}\label{t: tilting the Gamma_1(p^infty)-tower}
	\begin{enumerate}
		\item There is a canonical isomorphism 
\[\mathcal X^{\ast}_{\Gamma_1(p^\infty)}(\epsilon)_a^{\flat} \isomarrow \mathcal X'^{\ast}_{\Ig(p^\infty)}(\epsilon)^{\perf}\]
which is $\Z_p^\times$-equivariant and makes the following diagram commute:
\begin{equation*}
\begin{tikzcd}[row sep = 0.15cm]
\mathcal X^{\ast}_{\Gamma_1(p^\infty)}(\epsilon)_a^{\flat} \arrow[d,"\sim"labelrotate,equal] \arrow[r] & \mathcal X^{\ast}_{\Gamma_0(p^\infty)}(\epsilon)_a^{\flat}\arrow[d,"\sim"labelrotate,equal] \\ \mathcal X'^{\ast}_{\Ig(p^\infty)}(\epsilon)^{\perf}  \arrow[r] & \mathcal X'^{\ast}(\epsilon)^{\perf}.
\end{tikzcd}
\end{equation*}
\item For any cusp $x$ of $\X^{\ast}$ with corresponding cusp $x^{\flat}$ of $\X'^{\ast}$, the diagram
\begin{equation*}
\begin{tikzcd}[row sep = 0.15cm]
\underline{\Z_p^{\times}}\times \D_{\infty,x}^{\flat} \arrow[d,"\sim"labelrotate,equal] \arrow[r,hook] & 
\XaGoiea^{\flat} \arrow[d,"\sim"labelrotate,equal]\\
\underline{\Z_p^{\times}}\times \D'_{\infty,x^{\flat}}\arrow[r,hook]& \mathcal X'^{\ast}_{\Ig(p^\infty)}(\epsilon)^{\perf}
\end{tikzcd}
\end{equation*}
 commutes, where the left map is given by the canonical identification $\D_{\infty,x}^{\flat}\cong  \D'_{\infty,x^{\flat}}$. 
	\end{enumerate}
\end{Theorem}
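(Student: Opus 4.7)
The plan is to lift Scholze's tilting isomorphism~\eqref{eq:tilting the Gamma_0 tower} from level $\Gamma_0(p^\infty)$ to level $\Gamma_1(p^\infty)$ by exploiting that both $\mathcal X^{\ast}_{\Gamma_1(p^\infty)}(\epsilon)_a\to \mathcal X^{\ast}_{\Gamma_0(p^\infty)}(\epsilon)_a$ and $\mathcal X'^{\ast}_{\Ig(p^\infty)}(\epsilon)^{\perf}\to \mathcal X'^{\ast}(\epsilon)^{\perf}$ are pro-finite-\'etale $\Z_p^\times$-torsors (finite \'etale at every finite level, as one sees both on the non-cuspidal ordinary locus and, via Theorem~\ref{t:cusps of X_Gamma(p^infty)}(1) and its characteristic-$p$ analogue, near every cusp). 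Since the finite \'etale site is preserved under tilting, each of these torsors corresponds under~\eqref{eq:tilting the Gamma_0 tower} to a pro-finite-\'etale $\Z_p^\times$-torsor over the common base $\mathcal X'^{\ast}(\epsilon)^{\perf}$; it then suffices to identify these two torsors canonically and $\Z_p^\times$-equivariantly.

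First I would construct this identification over the non-cuspidal ordinary locus by modular interpretation. In characteristic $0$, the anticanonical $\Gamma_1(p^\infty)$-torsor classifies trivialisations of the \'etale quotient of the universal $p$-divisible group (dual to the canonical subgroup), while in characteristic $p$ the Igusa torsor classifies trivialisations of the \'etale quotient of the universal $p$-divisible group of the universal ordinary $E'$. These two $\Z_p$-local systems are canonically identified under Scholze's $\Gamma_0$-tilting, because an ordinary elliptic curve $E$ over a perfectoid base and its tilt $E'$ have canonically identified \'etale quotients of their $p$-divisible groups, equivalently $\Z_p$-local systems match under the tilting equivalence of pro-\'etale sites. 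This produces the desired $\Z_p^\times$-equivariant identification of torsors over the open part.

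Next I would extend to the cusps using Theorem~\ref{t:cusps of X_Gamma(p^infty)}(1) and its characteristic-$p$ analogue. Near any cusp $x$, both torsors pull back on the Tate parameter neighbourhood to $\underline{\Z_p^\times}$ times an open perfectoid unit disc, over $L_x$ in characteristic~$0$ and over $L_x^\flat$ in characteristic~$p$. Since the tilt of the perfectoid open unit disc over $L_x$ is canonically the perfectoid open unit disc over $L_x^\flat$ (the Tate parameter $q$ corresponds to its tilt $q^\flat$), and since the $\Z_p^\times$-action is trivial on the disc factor in both cases, the local isomorphism at each cusp is built in and gives the square of part~(2) directly.

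The main technical point will be to verify that the two constructions glue to a global $\Z_p^\times$-equivariant isomorphism. For this I would first check that Scholze's $\Gamma_0$-tilting isomorphism is already compatible with the Tate parameter spaces at the cusps in the expected way, i.e.\ it matches $\D_x$ in characteristic $0$ with $\D'_{x^\flat}$ in characteristic $p$ via $q\leftrightarrow q^\flat$; this baseline compatibility follows from our explicit description of the cusps of $\mathcal X'^{\ast}(\epsilon)^\perf$ combined with~\eqref{eq:tilting the Gamma_0 tower}. Once baseline compatibility holds, the general fact that a $\Z_p^\times$-equivariant morphism of $\Z_p^\times$-torsors over the same base is automatically an isomorphism forces the modular identification and the Tate-parameter identification to agree on overlaps. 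Gluing then yields the global isomorphism of part~(1) whose restriction at each cusp is part~(2); commutativity of the square in part~(1) and $\Z_p^\times$-equivariance follow immediately from the corresponding local statements.
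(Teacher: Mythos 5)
Your overall architecture is the same as the paper's: identify the two $\Z_p^\times$-torsors by a modular argument away from the cusps, use the Tate parameter spaces near the cusps, and glue. However, two of your steps are asserted where the actual work lies. First, the claim that the \'etale quotients of the $p$-divisible groups of $E$ and of the corresponding $E'$ are ``canonically identified under Scholze's $\Gamma_0$-tilting'' is exactly the key lemma (the paper's Lemma~\ref{Lemma G_n identifies with ker V^n upon tilting}, extending \cite[Lemma~III.2.26]{torsion} from the good reduction locus to the whole ordinary locus), and it does not follow formally from ``the tilting equivalence of pro-\'etale sites'': that equivalence tells you the categories of $\Z_p$-local systems on the two sides agree, but not which object corresponds to which. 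One must trace through the construction of the $\Gamma_0$-tilting isomorphism: identify the anticanonical subgroup with the Cartier dual of the canonical subgroup via Atkin--Lehner, reduce mod $p$ where both canonical subgroups become the kernel of Frobenius, and use that Cartier duality commutes with base change. Without this your step has no content.

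Second, your gluing step is logically flawed as stated. The fact that every $\Z_p^\times$-equivariant morphism of $\Z_p^\times$-torsors is an isomorphism does \emph{not} force two given isomorphisms to agree: the trivial torsor over a base $S$ has automorphism group $\Mapc(|S|,\Z_p^\times)$, so two torsor isomorphisms can differ by a nontrivial section. To glue, you must actually compute that the modular identification, restricted to the punctured Tate parameter space, coincides with the evident identification $(\underline{\Z/p^n\Z}\times\mathring{\D}_\infty)^\flat\cong \underline{\Z/p^n\Z}\times\mathring{\D}'_\infty$; the paper does this by checking that the canonical section $q^{1/p^n}$ of the anticanonical subgroup $\langle q^{1/p^n}\rangle\subseteq\Tate(q)[p^n]$ and the corresponding section of $\ker V^n$ on the characteristic-$p$ side match under the identification of the previous paragraph (via the dual trivialisations of the canonical subgroups). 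Once that is done, the isomorphism visibly extends over the puncture, giving part (1) at each finite level and, in the limit $n\to\infty$, both parts of the theorem.
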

We are interested in this result because of an application to $p$-adic modular forms: In \cite{heuer-thesis}, we use Thm~\ref{t: tilting the Gamma_1(p^infty)-tower} to give a perfectoid perspective on the $t$-adic modular forms at the boundary of weight space of Andreatta--Iovita--Pilloni.

\subsection{\texorpdfstring{$q$}{q}-expansion principles}
As a second application, Tate curve parameter spaces give a way to talk about $q$-expansions of functions on modular curves:
For any $f\in \O(\X^{\ast}_{\Gamma(p^\infty)}(\epsilon)_a)$, we may define the $q$-expansion  of $f$ at a cusp $x\in \X^{\ast}_{\Gamma(p^\infty)}(\epsilon)_a$ to be its restriction to the associated open subspace $\D_{\infty,x}\hookrightarrow \X^{\ast}_{\Gamma(p^\infty)}(\epsilon)_a$, i.e.\ the image under
\[\O(\X^{\ast}_{\Gamma(p^\infty)}(\epsilon)_a)\to \O(\D_{\infty,x})\]
which will automatically lie in $\O_K\bb{q^{1/p^\infty}}[\tfrac{1}{p}]\subseteq \O(\D_{\infty,x})$. One has analogous definitions for other infinite level modular curves, or open subspaces thereof, as well as for profinite families of cusps.
Such $q$-expansions can be useful when working with modular curves at infinite level, as they often allow one to extend constructions which are a priori defined only away from the cusps (or even just on the good reduction locus),  for instance maps defined using moduli functors, to the compactifications.  For example:

\begin{Lemma}
A function $f$ on the uncompactified modular curve $\mathcal X_{\Gamma(p^\infty)}(\epsilon)_a$ extends to a function on  $\mathcal X^{\ast}_{\Gamma(p^\infty)}(\epsilon)_a$ if and only if at every cusp $x$ of  $\mathcal X^{\ast}_{\Gamma(p^\infty)}(\epsilon)_a$, the $q$-expansion of $f$ is already contained in $\O_{L_x}\llbracket q^{1/p^\infty}\rrbracket[\frac{1}{p}]\subseteq \O_{L_x}\lauc {q^{1/p^\infty}}[\frac{1}{p}]$. Any such extension is unique.
\end{Lemma}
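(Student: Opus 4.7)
The plan is to combine the open cover furnished by Theorem~\ref{t:cusps of X_Gamma(p^infty)}(1) with a local extension argument on perfectoid Tate parameter neighborhoods, reducing ultimately to a Hebbarkeitssatz-style fact on the perfectoid open unit disc.

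First, for each of the finitely many cusps $x$ of the downstairs curve $\Xae$, Theorem~\ref{t:cusps of X_Gamma(p^infty)}(1) provides an open immersion $V_x := \underline{\Gamma_0(p^\infty)} \times \D_{\infty,x} \hookrightarrow \XaGiea$, whose intersection with the uncompactified locus $U := \XGiea$ is the open subspace $V_x^{\times}$ cut out by $q\neq 0$, and whose boundary in $\XaGiea$ is the profinite family of cusps lying over $x$. Together with $U$, the $V_x$ form a finite open cover of $\XaGiea$; by the sheaf property of $\O$, extending $f$ globally is equivalent to extending $f|_{V_x^{\times}}$ to $V_x$ for each $x$ separately. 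Uniqueness is immediate from the fact that $q$ is a non-zero-divisor in $\O(V_x)$, so that the restriction map $\O(V_x)\to \O(V_x^{\times})$ is injective.

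For existence, I would work with the identifications
\[\O(V_x) = C(\Gamma_0(p^\infty), L_x)\, \widehat\otimes_{L_x}\, \O(\D_{\infty,x}),\quad \O(V_x^{\times}) = C(\Gamma_0(p^\infty), L_x)\, \widehat\otimes_{L_x}\, \O(\D_{\infty,x}\setminus\{0\}).\]
Evaluating $g := f|_{V_x^{\times}}$ at a point $\gamma \in \underline{\Gamma_0(p^\infty)}$ gives a function $g_\gamma$ on the punctured perfectoid open disc, i.e.\ an element of $\O_{L_x}\lauc{q^{1/p^\infty}}[\tfrac{1}{p}]$. By the construction of the Tate parameter neighborhood in Theorem~\ref{t:cusps of X_Gamma(p^infty)}, this $g_\gamma$ is precisely the $q$-expansion of $f$ at the cusp of $\XaGiea$ corresponding to $(\gamma,0)\in V_x$. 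The hypothesis therefore says exactly that every $g_\gamma$ lies in the power-series subring $\O_{L_x}\bb{q^{1/p^\infty}}[\tfrac{1}{p}]$.

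The main obstacle — and where I expect to spend effort — is translating this pointwise condition into the statement that $g$ itself lies in $\O(V_x)\subseteq \O(V_x^{\times})$, as opposed to only a compatible system of formal power series indexed by $\gamma$. I would handle this by approximating $g$ through the finite-level quotients: since $C(\Gamma_0(p^\infty), L_x) = \varinjlim_n C(\Gamma_0(p^\infty)/\Gamma_0(p^n), L_x)$, the function $g$ is approximated in the natural Banach topology by functions pulled back from finite-level quotients, on each of which $V_x$ decomposes as a finite disjoint union of perfectoid open discs. On every such component, the extension statement reduces to the one-variable fact that an analytic function on the punctured perfectoid open disc extends to the disc iff its Laurent expansion has no negative powers. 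Continuity of $\gamma \mapsto g_\gamma$ together with completeness of $\O(V_x)$ then assembles these finite-level extensions into the desired element of $\O(V_x)$.
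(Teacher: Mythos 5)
Your overall architecture --- cover $\XaGiea$ by $U=\XGiea$ and the Tate parameter neighbourhoods $V_x$, extend over each $V_x$, and assemble the $\gamma$-direction by finite-level approximation and a maximum-modulus estimate --- is reasonable, and it is a genuinely different route from the paper's (which deduces the statement from $q$-expansion principle IV, proved by reducing to finite level and $\epsilon=0$, then mod $\varpi^n$ to characteristic $p$, where extension over a cusp becomes the algebraic fact that $X^\ast_{\F_p}$ is the normalisation of $\P^1_{\F_p}$ along $j$). However, there is a genuine gap at the central step where you declare that a function on the punctured perfectoid open disc is ``an element of $\O_{L_x}\lauc{q^{1/p^\infty}}[\tfrac1p]$'' and hence that the hypothesis applies to $g_\gamma$. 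These are two different rings: $\O(\mathring{\D}_{\infty,x})$ consists of Laurent series converging on every annulus $\{r\le|q|\le r'\}$ with $0<r\le r'<1$ (negative coefficients must decay superexponentially, positive coefficients may be unbounded), while $\O_{L_x}\lauc{q^{1/p^\infty}}[\tfrac1p]$ is the function ring of the higher-rank locus $\overline{\D}_{\infty,x}(|q|\ge 1)$, which is disjoint from the open disc and lies over the good reduction locus; neither ring contains the other. For the stated containment $\subseteq\O_{L_x}\lauc{q^{1/p^\infty}}[\tfrac1p]$ to typecheck, the $q$-expansion of a function on the \emph{uncompactified} curve must be its restriction along $\overline{\D}_{\infty,x}(|q|\ge1)\to\X_{\gd}(\epsilon)$, not its restriction to $\mathring{\D}_{\infty,x}$. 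As written, your hypothesis therefore says nothing directly about the behaviour of $g_\gamma$ near the origin of the punctured disc, and essential singularities are not ruled out; the ``one-variable fact'' you invoke is applied to the wrong object.

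To close the gap you must bridge the two restrictions, and this is where the actual content of the lemma sits. The natural route is through the larger parameter space: since $\overline{\D}_{\infty,x}\setminus\{0\}$ maps to the uncompactified curve, $f$ pulls back to $h\in\O(\overline{\D}_{\infty,x}\setminus\{0\})$ whose restriction to $\overline{\D}_{\infty,x}(|q|\ge1)$ is the $q$-expansion and whose restriction to $\mathring{\D}_{\infty,x}$ is $g_\gamma$. One must then prove that the Laurent coefficients of $h$ computed on the interior annuli of the punctured disc agree with those of the $q$-expansion, by comparing on the overlapping rational subsets $\overline{\D}_{\infty,x}(|q|\ge|p|^{1/m})$; only after this does the hypothesis yield a power series with integral coefficients, which then converges on the whole open disc and gives the extension. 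This compatibility-of-expansions step is silently assumed in your write-up and needs a proof (the paper avoids it entirely by working with formal models and the normalisation description in characteristic $p$). The remaining pieces --- the open cover, uniqueness via injectivity of restriction to the punctured disc, and the finite-level sampling argument with the maximum principle to assemble the $\gamma$-family --- are fine once this bridge is supplied.
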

This is what we mean when we say that $q$-expansions can be used as a replacement for Hartog's extension principle in the elliptic case of $g=1$.
An instance of such an application of Tate parameter spaces in the context of $p$-adic modular forms can be found in \cite{ChrisChris-HilbertCHJ}.

Finally in this article, we show in \S\ref{s:q-expansion-principles} that in the spirit of Katz' $q$-expansion principle for modular forms \cite[Thm.~1.6.1]{p-adicMSMF}, one can use Tate parameter spaces to prove various $q$-expansion principles for functions on perfectoid modular curves. 

\begin{Proposition}[$q$-expansion principle I]\label{p: q-expansion principle I}
	Let $\mathcal C$ be a collection of cusps of $\mathcal X^{\ast}$ such that each connected component of $\mathcal X^{\ast}$ contains at least one $x\in \mathcal C$. Then restriction of functions to the Tate curve parameter spaces associated to $\mathcal C$ defines injective maps
	\begin{alignat*}{4}
	&\O(\XaGea{0}{\infty})&\hookrightarrow& \textstyle\prod_{x \in \mathcal C}\O_{L_x}\llbracket q^{1/p^\infty}\rrbracket[\tfrac{1}{p}],\\
	&\O(\XaGea{1}{\infty})&\hookrightarrow& \textstyle\prod_{x \in \mathcal C}\Mapc(\Z_p^\times,\O_{L_x}\llbracket q^{1/p^\infty}\rrbracket)[\tfrac{1}{p}],\\
	&\O(\XaGea{}{\infty})&\hookrightarrow& \textstyle\prod_{x \in \mathcal C}\Mapc(\Gamma_0(p^\infty),\O_{L_x}\llbracket q^{1/p^\infty}\rrbracket)[\tfrac{1}{p}],\\
	&\O(\Xpaep)&\hookrightarrow& \textstyle\prod_{x \in \mathcal C}\O_{L_x^{\flat}}\llbracket q^{1/p^\infty}\rrbracket[\tfrac{1}{\varpi^{\flat}}],\\
	&\O(\XpaIiep)&\hookrightarrow& \textstyle\prod_{x \in \mathcal C}\Mapc(\Z_p^\times,\O_{L_x^{\flat}}\llbracket q^{1/p^\infty}\rrbracket)[\tfrac{1}{\varpi^{\flat}}].
	\end{alignat*}
\end{Proposition}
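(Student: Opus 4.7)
My plan is to identify each $q$-expansion map in the proposition with the restriction map along the open immersion of Tate-curve parameter spaces supplied by Theorem~\ref{t:cusps of X_Gamma(p^infty)} and its characteristic-$p$ analogue, and then derive injectivity from a density argument combined with the reducedness of perfectoid rings.

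For the first step, Theorem~\ref{t:cusps of X_Gamma(p^infty)} (and its counterpart in characteristic $p$) equips each of the five infinite-level spaces $\X_\infty$ in the proposition with, for every cusp $x$, a canonical open immersion $\iota_x\colon \mathcal T_{\infty,x}\hookrightarrow \X_\infty$, where the Tate parameter space $\mathcal T_{\infty,x}$ is respectively $\D_{\infty,x}$, $\underline{\Z_p^\times}\times \D_{\infty,x}$, $\underline{\Gamma_0(p^\infty)}\times \D_{\infty,x}$, $\D'_{\infty,x}$, or $\underline{\Z_p^\times}\times \D'_{\infty,x}$. In each case a direct computation identifies $\O(\mathcal T_{\infty,x})$ with the target ring appearing in the corresponding line, so the $q$-expansion map at $x$ is nothing but the pull-back $\iota_x^\ast$, and the claim reduces to the injectivity of $\O(\X_\infty)\to \prod_{x\in\mathcal C}\O(\mathcal T_{\infty,x})$.

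For the second step, I would show that $\bigsqcup_{x\in\mathcal C}\mathcal T_{\infty,x}$ is dense in $\X_\infty$. Since each $\X_\infty$ is a perfectoid adic space, its structure sheaf is uniform and hence reduced, and the zero locus $\{x:|f(x)|=0\}$ of any $f\in\O(\X_\infty)$ is closed (as the complement of the union of rational opens $\{|f|\geq \epsilon\}$ for $\epsilon>0$). Once we know that $\bigsqcup_{x\in\mathcal C}\mathcal T_{\infty,x}$ meets every connected component of $\X_\infty$, it becomes a non-empty open of each component, hence dense there; a function vanishing on it therefore has a closed dense zero locus, must vanish at every point, and thus equals zero by reducedness. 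To verify that every connected component of $\X_\infty$ is met, I would use the Cartesian diagrams at finite level, together with the connectedness of the finite-level covers $\mathcal X^{\ast}_{\Gamma_?(p^n)}(\epsilon)_a\to \X^{\ast}(\epsilon)_a$ over each component of $\X^{\ast}$ (which holds because $\Gamma^p\supseteq \Gamma(N)$ with $N\geq 3$), to match the connected components of each $\X_n$ with those of $\X^{\ast}$; the hypothesis on $\mathcal C$ then guarantees the required covering at every finite level, and this property passes to the tilde-limit.

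The main technical obstacle is the analysis of connected components of $\X_\infty$ in the second step, in particular verifying that they are not further refined at infinite level, nor, in the characteristic-$p$ cases, under perfection. This requires the explicit finite-level information recalled in \S2 and its tilted variant in \S\ref{s:cusps-in-char-p}, but once established it applies uniformly to all five cases, and the density-plus-reducedness argument then closes the proof.
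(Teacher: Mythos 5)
There is a genuine gap in your second step. You assert that once $\bigsqcup_{x\in\mathcal C}\mathcal T_{\infty,x}$ meets every connected component of $\X_\infty$, it is ``a non-empty open of each component, hence dense there''. A non-empty open subset of a connected analytic adic space need not be dense: already the open unit disc $\{|T|<1\}$ is open but not dense in the connected closed unit disc $\Spa(K\langle T\rangle,\O_K\langle T\rangle)$, because the rational subset $\{|T|\geq 1\}$ is a non-empty open disjoint from it. In the situation at hand the failure is drastic: the Tate parameter spaces are small neighbourhoods of the cusps, and their union misses the entire good reduction locus (cf.\ Prop.~\ref{p: classification of points of X^ast}), so it is very far from dense in $\X_\infty$. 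What your argument actually requires is an identity theorem --- a function vanishing on a non-empty open of a connected component vanishes on the whole component. For smooth rigid curves this is a genuine theorem and not a formal consequence of connectedness plus reducedness of the structure sheaf; for the perfectoid spaces at infinite level it is not available off the shelf, since these are neither Noetherian nor smooth, and supplying a substitute is precisely the content of the proposition.

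The paper's proof circumvents this analytically inaccessible step by algebraic means: one first reduces from $\X^{\ast}_{\Gamma}(\epsilon)_a$ to the ordinary locus $\X^{\ast}_{\Gamma}(0)_a$ (Lemma~\ref{l:restricting functions from (epsilon) to (0) is injective}, itself a nontrivial power-series computation), then passes to the affine formal model, reduces mod $p$, and identifies the resulting map with completion of the Noetherian ring $\O(X^{\ast}_{\F_p,\Ig(p^n),\ord})$ along the divisor of cusps, where injectivity is Krull's intersection theorem; the infinite-level cases then follow by flatness and passage to the colimit over $n$. Your first step, identifying the $q$-expansion maps with $\iota_x^{\ast}$, is essentially correct, but note that $\O(\D_{\infty,x})$ is the strictly larger ring of convergent power series described in Lemma~\ref{the perfectoid parameter space of the Tate curve at infinite level}; that the image lands in $\O_{L_x}\llbracket q^{1/p^\infty}\rrbracket[\tfrac{1}{p}]$ is a separate (easy) boundedness statement, not an identification of rings.
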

We note that $p$-adic modular forms can be described as functions on $\mathcal X^{\ast}_{\Gamma_0(p^\infty)}(\epsilon)_a$ satisfying a certain equivariance property, see \cite{CHJ}, \cite{ChrisChris-HilbertCHJ}. Prop.~\ref{p: q-expansion principle I} may thus be seen as a generalisation of its classical version from modular forms to more general functions.

Similarly, one can  detect on $q$-expansions whether a function comes from finite level:

\begin{Proposition}[$q$-expansion principle II]\label{p: q-expansion principle II}
	Let  $f\in \O(\XaGea{0}{\infty})$. Then for any $n\in \Z_{\geq 0}$, the following are equivalent:
	\begin{enumerate}
		\item $f$ comes via pullback from $\XaGea{0}{n}$, i.e.\  $f\in\O(\XaGea{0}{n})\subseteq \O(\XaGea{0}{\infty})$.
		\item The $q$-expansion of $f$ at every cusp $x$ is contained in $\O_{L_x}\llbracket q^{1/p^n}\rrbracket[\frac{1}{p}]\subseteq \O_{L_x}\llbracket q^{1/p^\infty}\rrbracket[\frac{1}{p}]$.
		\item On each connected component of $\XaGea{0}{n}$, there is at least one cusp $x$ at which the $q$-expansion of $f$ is already in ${\O_{L_x}}\llbracket q^{1/p^n}\rrbracket[\frac{1}{p}]\subseteq \O_{L_x}\llbracket q^{1/p^\infty}\rrbracket[\frac{1}{p}]$.
	\end{enumerate}
	The analogous statements for $\Xpaep$ are also true.
\end{Proposition}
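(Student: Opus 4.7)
The implications $(1) \Rightarrow (2) \Rightarrow (3)$ are straightforward. For $(1) \Rightarrow (2)$, the Cartesian diagram at cusps at finite level $n$ (to be established in \S\ref{s:infinite-level-Gamma_0}) is compatible with the one at infinite level from Theorem~\ref{t:cusps of X_Gamma(p^infty)}, and on Tate parameter spaces the induced map $\D_{\infty,x} \to \D_{n,x}$ realises on functions the natural inclusion
\[
\O_{L_x}\bb{q^{1/p^n}}[\tfrac{1}{p}] \;\hookrightarrow\; \O_{L_x}\bb{q^{1/p^\infty}}[\tfrac{1}{p}].
\]
Hence pulling back $f$ from level $n$ to level $\infty$ preserves its $q$-expansion as a series in the variable $q^{1/p^n}$. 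The implication $(2) \Rightarrow (3)$ is trivial.

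For $(3) \Rightarrow (1)$, I would restrict to a single connected component $U$ of $\XaGea{0}{n}$, and let $x$ be a cusp on $U$ at which the $q$-expansion of $f$ lies in $\O_{L_x}\bb{q^{1/p^n}}[\tfrac{1}{p}]$. Writing $U_\infty$ for the preimage of $U$ in $\XaGea{0}{\infty}$, Prop~\ref{p: q-expansion principle I} applied to the singleton $\{x\}$ shows that the $q$-expansion map at $x$ embeds $\O(U_\infty)$ injectively into $\O_{L_x}\bb{q^{1/p^\infty}}[\tfrac{1}{p}]$. The analogous classical statement at finite level $n$ (by rigid analytic continuation on the connected component $U$) gives the corresponding injection for $\O(U)$. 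The proposition then reduces to proving the identity
\[
\O(U) \;=\; \O(U_\infty) \,\cap\, \O_{L_x}\bb{q^{1/p^n}}[\tfrac{1}{p}]
\]
inside $\O_{L_x}\bb{q^{1/p^\infty}}[\tfrac{1}{p}]$, whose inclusion $\subseteq$ is $(1) \Rightarrow (2)$.

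For the reverse inclusion $\supseteq$, my plan is to reduce to characteristic $p$ via the tilting isomorphism $\XaGea{0}{\infty}^\flat \cong \Xpaep$ of \cite[Cor.~III.2.19]{torsion}. This isomorphism is compatible with Tate parameter spaces and so identifies $q$-expansions with $q^\flat$-expansions. On the characteristic $p$ side, transition maps in the anticanonical tower become Frobenius on $\Xpae$, so ``$f$ comes from level $n$'' translates to ``$f^\flat \in \O(\Xpaep)$ is a $p^n$-th root of a function in $\O(\Xpae)$''. This is visibly detectable on $q$-expansions: if $g \in \O(\Xpaep)$ has $q$-expansion in $\O_{L_x^\flat}\bb{q^{1/p^n}}[\tfrac{1}{\varpi^\flat}]$, then $g^{p^n}$ has $q$-expansion in $\O_{L_x^\flat}\bb{q}[\tfrac{1}{\varpi^\flat}]$, and the classical $q$-expansion principle for $\O(\Xpae)$ on $U$ forces $g^{p^n} \in \O(\Xpae)$. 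Untilting yields the characteristic $0$ identity, and the same argument simultaneously establishes the characteristic $p$ case asserted at the end of the proposition.

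\textbf{The main obstacle} is to pin down the correspondence ``level $n$ in characteristic $0$'' $\leftrightarrow$ ``$p^n$-th roots in characteristic $p$'' precisely: the tilting isomorphism identifies only $\XaGea{0}{\infty}$ with $\Xpaep$ and does not by itself match finite-level spaces with individual perfectoid pieces on the tilt. However, using the compatibility of the tilt with both the anticanonical tower (whose transition maps tilt to Frobenius) and the Tate parameter spaces (where $q$ tilts to $q^\flat$), the subring $\O(\XaGea{0}{n}) \subseteq \O(\XaGea{0}{\infty})$ should correspond, at least almost-isomorphically, to $\{g \in \O(\Xpaep) : g^{p^n} \in \O(\Xpae)\} \subseteq \O(\Xpaep)$, which enables the reduction.
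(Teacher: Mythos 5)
Your implications $(1)\Rightarrow(2)\Rightarrow(3)$ are fine and agree with the paper. The problem is the step $(3)\Rightarrow(1)$, where your reduction to characteristic $p$ via tilting has a genuine gap that I do not see how to close. Tilting identifies the \emph{spaces} $\XaGea{0}{\infty}^{\flat}$ and $\Xpaep$, equivalently it identifies $\O^{+}(\Xpaep)$ with $\varprojlim_{g\mapsto g^p}\O^{+}(\XaGea{0}{\infty})$ as a multiplicative monoid (or the integral function rings modulo $p$, almost); it does \emph{not} give an additive bijection between $\O(\XaGea{0}{\infty})$ and $\O(\Xpaep)$, and a general $f\in\O(\XaGea{0}{\infty})$ has no ``$f^{\flat}$''. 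So the sentence ``$f$ comes from level $n$ translates to $f^{\flat}$ is a $p^n$-th root of a function in $\O(\Xpae)$'' is not meaningful for the $f$ you need to handle, and the concluding ``untilting yields the characteristic $0$ identity'' does not go through. The obstacle you flag at the end is indeed the crux, but the proposed resolution (``should correspond, at least almost-isomorphically'') cannot work even in principle: the identification of the finite-level formal models with Frobenius towers in \cite[Cor.~III.2.19]{torsion} only holds modulo $p^{1-\delta}$ and only almost, and an approximate, almost-mod-$p^{1-\delta}$ correspondence of subrings cannot detect \emph{exact} membership of $f$ in the $K$-subalgebra $\O(\XaGea{0}{n})$ after inverting $p$. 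A secondary issue: your characteristic-$p$ argument invokes ``the classical $q$-expansion principle for $\O(\Xpae)$'' to pass from $g^{p^n}$ having integral exponents to $g^{p^n}\in\O(\Xpae)$, but that is exactly the $n=0$ instance of the proposition for the perfection $\Xpaep\to\Xpae$, which is not classical and is left unproved, so even the characteristic-$p$ half is circular at its base case.

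What the paper does instead, and what you are missing, is a genuinely characteristic-$0$ projector: the normalised Tate trace $\tr_n\colon\O(\XaGea{0}{\infty})\to\O(\XaGea{0}{n})$ of \cite[Cor.~III.2.23]{torsion} (extended over the cusps, which is possible because $\mathfrak X^{\ast}$ is smooth). One computes (Prop.~\ref{Proposition: effect of trace on q-expansions}) that on $q$-expansions $\tr_n$ simply deletes the coefficients $a_m$ with $m\notin p^{-n}\Z_{\geq 0}$, using that $\O_{L}\bb{q}\to\O_L\bb{q^{1/p^k}}$ becomes Galois after inverting $q$ and that the relevant character sums vanish. Then $f\in\O(\XaGea{0}{n})$ if and only if $\tr_n(f)=f$, and by your hypothesis (3) the $q$-expansion of $f-\tr_n(f)$ vanishes at one cusp on each component, so $f=\tr_n(f)$ by the injectivity statement of Prop.~\ref{p: q-expansion principle I}. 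The characteristic-$p$ case is handled identically with the normalised traces of \cite[\S6.3]{AIP} in place of those of \cite{torsion}. I would encourage you to rebuild your step $(3)\Rightarrow(1)$ around such a trace; without an exact idempotent onto the finite-level subring, the $q$-expansion condition cannot be converted into membership.
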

For $\epsilon=0$, i.e.\ on the ordinary locus, one can see on $q$-expansions whether a function is integral, i.e.\ bounded by 1. We note, however, that this fails for $\epsilon>0$.
\begin{Proposition}[$q$-expansion principle III]\label{p: q-expansion principle III}
	For $f\in \O(\mathcal X^{\ast}_{\Gamma_0(p^\infty)}(0)_a)$ are equivalent:
	\begin{enumerate}
		\item	$f$ is integral, i.e.\ it is contained in $\O^+(\mathcal X^{\ast}_{\Gamma_0(p^\infty)}(0)_a)$. 	
		\item The $q$-expansion of $f$ at every cusp $x$ is already in $\O_{L_x}\llbracket q^{1/p^\infty}\rrbracket$.
		\item On each connected component of $\XaGea{0}{n}$, there is at least one cusp $x$ at which the $q$-expansion of $f$ is in $\O_{L_x}\llbracket q^{1/p^\infty}\rrbracket$.
	\end{enumerate}
	The analogous statements for $\mathcal X^{\ast}_{\Gamma_1(p^\infty)}(0)_a$, $\mathcal X^{\ast}_{\Gamma(p^\infty)}(0)_a$, $\mathcal X'^{\ast}(0)^{\perf}$ and $\mathcal X'^{\ast}_{\Ig(p^\infty)}(0)^\perf$ are also true when we replace $\O_{L_x}\llbracket q^{1/p^\infty}\rrbracket$ by the respective algebra in Prop.~\ref{p: q-expansion principle I}.
\end{Proposition}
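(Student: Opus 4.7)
The plan is to dispose of (1)$\Rightarrow$(2) and (2)$\Rightarrow$(3) easily, then reduce (3)$\Rightarrow$(1) in characteristic $0$ to its characteristic-$p$ analogue via the tilting isomorphisms, and finally use the perfection structure to reduce to a classical $q$-expansion integrality principle at finite level.

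For (1)$\Rightarrow$(2), I would restrict $f$ along the open immersion $\D_{\infty,x} \hookrightarrow \mathcal X^{\ast}_{\Gamma_0(p^\infty)}(0)_a$ from Theorem~\ref{t:cusps of X_Gamma(p^infty)}, using that restriction preserves $\O^+$. The required equality
\[
\O^+(\D_{\infty,x}) \cap \O_{L_x}\bb{q^{1/p^\infty}}[\tfrac{1}{p}] = \O_{L_x}\bb{q^{1/p^\infty}}
\]
holds because on the perfectoid open unit disc the sup-norm of a convergent series $\sum a_n q^{n/p^k}$ equals $\sup_n|a_n|$, so power-boundedness is exactly termwise integrality. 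The four other cases are analogous, using the product form of the Tate parameter spaces and the interpretation of the $q$-expansion as a continuous map. The implication (2)$\Rightarrow$(3) is immediate from the hypothesis on $\mathcal C$.

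For (3)$\Rightarrow$(1), I would first treat $Y=\mathcal X^{\ast}_{\Gamma_0(p^\infty)}(0)_a$. Using Scholze's tilting isomorphism \eqref{eq:tilting the Gamma_0 tower} together with the compatibility of tilting with the cusp neighbourhoods developed earlier in the paper, both $\O^+$-membership and the $q$-expansion condition on $f$ transfer to $f^\flat \in \O(\mathcal X'^{\ast}(0)^{\perf})$, so I may work in characteristic $p$. The affinoid $\mathcal X'^{\ast}(0)$ carries a smooth formal integral model $\mathfrak X$ over $\Spf\O_{K^\flat}$ (the formal completion along the smooth ordinary locus of the compactified modular curve), and every element of $\O(\mathcal X'^{\ast}(0)^{\perf})$ has some $p^n$-th power already in $\O(\mathcal X'^{\ast}(0))$. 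Using that $(\sum a_i q^{i/p^n})^{p^n} = \sum a_i^{p^n} q^i$ in characteristic $p$, the integral $q$-expansion of $f^\flat$ yields an integral $q$-expansion of $f^{\flat,p^n}$ in $\O_{L_x^\flat}\bb q$ at one cusp per component for $n$ large enough. Since $f^\flat$ is power-bounded iff $f^{\flat,p^n}$ is, this reduces the problem to the classical statement that $g \in \O(\mathcal X'^{\ast}(0))$ lies in $\O^+$ provided its $q$-expansion at at least one cusp in each connected component lies in $\O_{L_x^\flat}\bb q$.

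The remaining cases follow the same pattern: for $\mathcal X'^{\ast}_{\Ig(p^\infty)}(0)^{\perf}$ one replaces $\mathfrak X$ by the smooth integral model of the finite étale cover $\mathcal X'^{\ast}_{\Ig(p^n)}(0)$; for $\mathcal X^{\ast}_{\Gamma_1(p^\infty)}(0)_a$ one invokes Theorem~\ref{t: tilting the Gamma_1(p^infty)-tower} to reduce to the Igusa case; and for $\mathcal X^{\ast}_{\Gamma(p^\infty)}(0)_a$ one descends along the Galois cover of $\mathcal X^{\ast}_{\Gamma_1(p^\infty)}(0)_a$ with profinite group $\Z_p \cong \smallmat{1}{\ast}{0}{1}$, using the product form of the Tate parameter spaces. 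The main obstacle will be the classical $q$-expansion integrality principle at finite level itself: it reduces to the injectivity of $\O(\mathfrak X) \hookrightarrow \O_{L_x^\flat}\bb q$ on each connected component, which uses smoothness of $\mathfrak X$. Precisely this smoothness fails for overconvergent neighbourhoods with $\epsilon > 0$, which explains why the principle breaks down there.
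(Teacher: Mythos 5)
Your chain (1)$\Rightarrow$(2)$\Rightarrow$(3) is fine and is essentially what the paper does, namely quoting $\O^+(\D_{\infty,x})=\O_{L_x}\llbracket q^{1/p^\infty}\rrbracket$. The direction (3)$\Rightarrow$(1), however, has genuine gaps. The first is the ``transfer'' of $f$ to characteristic $p$: tilting identifies the adic spaces $\mathcal X^{\ast}_{\Gamma_0(p^\infty)}(0)_a^{\flat}\cong\mathcal X'^{\ast}(0)^{\perf}$, but it does not give an additive map on function rings. The tilt of a perfectoid ring is $R^\flat=\varprojlim_{x\mapsto x^p}R$, a multiplicative construction, so there is no element $f^\flat$ canonically attached to $f$, and no sense in which ``$\O^+$-membership and the $q$-expansion condition transfer to $f^\flat$''. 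The only thing that transfers is the isomorphism $\O^+/p\cong\O^{\flat+}/\varpi^{\flat}$. The second gap is the claim that every element of $\O(\mathcal X'^{\ast}(0)^{\perf})$ has some $p^n$-th power in $\O(\mathcal X'^{\ast}(0))$: the perfection is the \emph{completion} of $\varinjlim_{F_{\rel}}\O(\mathcal X'^{\ast}(0))$, and a typical element of the completion (already on a Tate parameter space, e.g.\ $\sum_n \varpi^{\flat n}q^{1/p^n}$) is not a $p^n$-th root of any finite-level function. So the Frobenius trick $(\sum a_iq^{i/p^n})^{p^n}=\sum a_i^{p^n}q^i$ cannot be used to descend to finite level. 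Finally, even the finite-level endgame is understated: to promote ``the $q$-expansion of $f$ is divisible by $\varpi$'' to ``$f\in\varpi\O^+$'' you need injectivity of the $q$-expansion map \emph{modulo} $\varpi$, not just injectivity of $\O(\mathfrak X)\hookrightarrow\prod_x\O_{L_x}\llbracket q\rrbracket$, together with the identification $\O^+(\mathcal X^{\ast}(0)_a)=\O(\mathfrak X^{\ast}(0)_a)$ (which is not formal and is cited by the paper from \cite{heuer-thesis}).

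The paper's argument avoids all three issues: it reduces (3)$\Rightarrow$(1) to the injectivity of $\O^+(\mathcal X^{\ast}_{\Gamma_0(p^\infty)}(0)_a)/p\to\prod_x(\O_{L_x}/p)\llbracket q^{1/p^\infty}\rrbracket$, identifies $\O^+$ with the global functions of the formal model, and observes that this mod-$p$ statement was already established in the proof of Prop.~\ref{p: q-expansion principle I,second version} (displayed there as \eqref{eq:q-exp-III-for-n<infty}) by interpreting the reduction as completion of the Noetherian ordinary Igusa curve over $\F_p$ along its cusps and applying Krull's intersection theorem, then passing to the colimit over $n$. If you want to salvage your route, the correct move is not to tilt the function $f$ but to reduce the whole integrality question modulo $p$ and only then use $\O^+/p\cong\O^{\flat+}/\varpi^{\flat}$ --- at which point you are doing the paper's proof.
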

Finally, there is also a version of $q$-expansions for the good reduction locus: 
\begin{Proposition}[$q$-expansion principle IV]\label{p: q-expansion principle IV}
	Let $\mathcal C$ be a collection of cusps of $\Xa$ such that each connected component contains at least one $x\in \mathcal C$. Then a function on the good reduction locus $\X_{\gd}(\epsilon)$ extends to all of $\X^{\ast}(\epsilon)$ if and only if its $q$-expansion with respect to $\overline{\D}(|q|\geq 1)\to \X_{\gd}(\epsilon)$ at each $x\in \mathcal C$ is already in $\O_{L_x}\llbracket q\rrbracket[\tfrac{1}{p}]\subseteq \O_{L_x}\lauc{q}[\tfrac{1}{p}]$. In this case, the extension is unique. The analogous statements for $\mathcal X^{\ast}_{\Gamma_0(p^\infty)}(\epsilon)_a$ $\mathcal X^{\ast}_{\Gamma_1(p^\infty)}(\epsilon)_a$, $\mathcal X^{\ast}_{\Gamma(p^\infty)}(\epsilon)_a$, $\X'^{\ast}(\epsilon)$, $\mathcal X'^{\ast}(\epsilon)^{\perf}$ and $\mathcal X'^{\ast}_{\Ig(p^\infty)}(\epsilon)^\perf$ are also true.
\end{Proposition}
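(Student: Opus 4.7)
The plan is to establish the statement for $\X^{\ast}(\epsilon)$ in detail; the analogues for the other spaces in the list then follow by the same argument, after substituting the appropriate perfectoid Tate parameter space ($\D_{\infty,x}$, $\D'_{\infty,x^{\flat}}$, and so on) for $\D_x$ and invoking the corresponding cusp descriptions from Thm~\ref{t:cusps of X_Gamma(p^infty)} and its characteristic-$p$ analogue.

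Uniqueness is immediate: $\X_{\gd}(\epsilon)$ is dense in the reduced space $\X^{\ast}(\epsilon)$, so restriction $\O(\X^{\ast}(\epsilon)) \to \O(\X_{\gd}(\epsilon))$ is injective. The ``only if'' direction is then essentially formal: if $\tilde f \in \O(\X^{\ast}(\epsilon))$ extends $f$, the open immersion $\D_x \hookrightarrow \X^{\ast}(\epsilon)$ from Sec.~2 yields $\tilde f|_{\D_x} \in \O(\D_x) = \O_{L_x}\bb{q}[\tfrac{1}{p}]$, and this matches the image of $f$ under the $q$-expansion map via the commutative diagram relating $\D_x \setminus \{x\}$ to $\overline{\D}_x(|q|\geq 1) \to \X_{\gd}(\epsilon)$.

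For the ``if'' direction, I use the open cover $\X^{\ast}(\epsilon) = \X_{\gd}(\epsilon) \cup \bigcup_{y}\D_y$ indexed by all cusps $y$: it is enough to extend $f|_{\D_y \setminus \{y\}}$ to $\O(\D_y) = \O_{L_y}\bb{q}[\tfrac{1}{p}]$ for each $y$ and apply the sheaf property of $\O$. At $y = x \in \mathcal C$ the hypothesis directly furnishes such an extension $g_x$, which agrees with $f$ on $\D_x \setminus \{x\}$ by the injectivity of $\O_{L_x}\bb{q}[\tfrac{1}{p}] \hookrightarrow \O_{L_x}\lauc{q}[\tfrac{1}{p}]$. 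For a cusp $y \notin \mathcal C$, the key task is to propagate the nice $q$-expansion from some $x \in \mathcal C$ in the same connected component of $\X^{\ast}$. I would argue this by exhibiting an explicit symmetry carrying $x$ to $y$: at finite level one uses the action of $\Gamma_0(p^n,\Z/p^n\Z)$ on $\X^{\ast}_{\Gamma(p^n)}(\epsilon)_a$ from Sec.~2 together with the tame-level diamond operators; more cleanly, one passes to infinite level and uses that the cusps over a single component of $\X^{\ast}$ form a single orbit under the $\GL_2(\Z_p)$-action of Thm~\ref{t:cusps of X_Gamma(p^infty)}. The invariance of the condition ``$q$-expansion lies in $\O_{L_\bullet}\bb{q}[\tfrac{1}{p}]$'' under these symmetries then forces the condition at $y$. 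Once this is established at every cusp, the $g_y$ glue with $f$ to yield $\tilde f \in \O(\X^{\ast}(\epsilon))$.

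The principal obstacle is this propagation step, where the specific structure of the cusp orbits of the tower plays an essential role; the remaining parts (gluing, uniqueness, the ``only if'' direction, and the translation to the other spaces via their respective Tate parameter-space covers) are essentially routine once propagation is in hand. The characteristic-$p$ variants are treated identically, with $\D'_{\infty,x^{\flat}}$ playing the role of $\D_{\infty,x}$ and the tilting identifications of Sec.~\ref{s:tilting-isomorphism} ensuring that the symmetry-based propagation transfers from characteristic $0$ to characteristic $p$.
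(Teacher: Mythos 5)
Your argument breaks at two essential points. First, the gluing step has no cover to glue over: by Prop.~\ref{p: classification of points of X^ast} the loci $\X_{\gd}(\epsilon)$ and the $\D_y$ are pairwise \emph{disjoint} open subspaces whose union omits the finitely many higher-rank points of type (c) (Example~\ref{ex: rank-2-point}; see also the Remark after Lemma~\ref{l:full-Tate-curve-parameter-space}). Two symptoms: since the proposed ``cover'' is disjoint, any choice of extensions $g_y$ would glue with $f$, so your argument never actually uses the $q$-expansion hypothesis; and the glued function would live only on a proper open subspace of $\X^{\ast}(\epsilon)$. The compatibility between $f$ and $g_x$ is a statement on $\overline{\D}_x(|q|\ge 1)$, and $\overline{\D}_x\to \X^{\ast}(\epsilon)$ is not an open immersion, so the sheaf property in the analytic topology is unavailable (also note $\O(\D_x)\ne \O_{L_x}\bb{q}[\tfrac1p]$; the latter is $\O(\overline{\D}_x)$). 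This is exactly why the paper does not argue on the adic space: it reduces to $\epsilon=0$ via Lemma~\ref{l:restricting functions from (epsilon) to (0) is injective}, passes to formal models, expresses the claim as left-exactness of $0\to \O(\mathfrak X^{\ast}(0))\to \O(\mathfrak X(0))\times \prod_x \O_{L_x}\bb{q}\to \prod_x\O_{L_x}\lauc{q}$, and checks this mod $\varpi^n$, where it becomes a scheme-theoretic statement over $\F_p$: a function on $X_{\F_p,\ord}$ extends over a cusp if and only if its $q$-expansion there has no pole, because $X^{\ast}_{\F_p}$ is the normalisation of $\P^1_{\F_p}$ in $X_{\F_p}$ and is finite over $\F_p\bb{1/j}$ near the cusps.

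Second, the propagation to cusps $y\notin\mathcal C$ cannot work. The $\GL_2(\Z_p)$-action of Thm.~\ref{t:cusps of X_Gamma(p^infty)} acts on $\X^{\ast}_{\Gamma(p^\infty)}$ \emph{over} $\X^{\ast}$: its orbit through a cusp is the profinite set $\underline{\GL_2(\Z_p)/\smallmat{1}{0}{\Z_p}{1}}$ of cusps lying over one fixed cusp $x$ of $\X^{\ast}$, and it never carries $x$ to a different cusp $y$ of $\X^{\ast}$ (those are distinguished by the tame level structure, which $\GL_2(\Z_p)$ does not touch). Even a symmetry that does permute the cusps of $\X^{\ast}$, such as a tame diamond operator $\gamma$, is of no use for an arbitrary function: regularity of the $q$-expansion of $f$ at $x$ gives regularity of $\gamma^{\ast}f$, not of $f$, at $\gamma^{-1}x$. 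No propagation is possible for general functions --- by Riemann--Roch there exist functions on $X_{\F_p,\ord}$ (hence on $\X_{\gd}(0)$) regular at one cusp and with a pole at another. Unlike principles I--III, which reduce to the vanishing of a globally defined function and can therefore be tested at one point per component by Krull's intersection theorem, the extension criterion here is genuinely a condition at each individual cusp; accordingly the hypothesis must be imposed at every cusp (this is also what the paper's reduction to the normalisation criterion actually uses), and no symmetry argument can supply the missing ones.
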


\section*{Acknowledgements}
I would like to thank Johannes Ansch\"utz, Kevin Buzzard, Ana Caraiani, Vincent Pilloni and Peter Scholze for helpful discussions.
This work was carried out while the author was supported by the Engineering and Physical Sciences Research Council [EP/L015234/1], the EPSRC Centre for Doctoral Training in Geometry and Number Theory (The London School of Geometry and Number Theory), University College London.

\section{Adic analytic theory of cusps at finite level}\label{s:adic-cusps-finite}
\subsection{Recollections on the classical theory of cusps}\label{s:classical-recollections-on-cusps}
We start by recalling from \cite[\S8.6-8.11]{KatzMazur} some basic facts about cusps of modular curves  that we will use freely throughout, and fix some notation and conventions: 

Let $N\geq 3$ and let $R$ be an excellent Noetherian regular $\Z[1/N]$-algebra, for instance $R=\Z[1/N]$.  Let $X_R$ be the modular curve of some rigidifying level $\Gamma(N)\subseteq \Gamma\subseteq \GL_2(\Z/N\Z)$ over $R$.
By definition, the compactification $X_{R}^{\ast}$ of $X_{R}$ is then the normalisation in $\P_R^1$ of the finite flat $j$-invariant $j\colon X_{R}\to \A_R^1$ . The divisor of cusps is defined as the closed subscheme 
\[\partial X_{R}^{\ast} := (X_{R}^{\ast}\backslash X_{R})^{\red}=j^{-1}(\infty)^{\red}\hookrightarrow X_{R}^{\ast},\] which is finite \'etale over $\Spec(R)$. When we refer to ``a cusp'' we shall mean by this a (not necessarily geometrically) connected component of $\partial X_{R}^{\ast}$.

We recall from \cite[\S8.11]{KatzMazur} that the divisor of cusps can be computed explicitly using the Tate curve $\Tate(q)$: This is an elliptic curve over $\Z\cc{q}$ of $j$-invariant $1/q+744+\dots$, which we may base--change to $\Tate(q)_{R\cc{q}}\to \Spec(R\cc{q})$. Then we have:
\begin{Proposition}[{\cite[{Thm. 8.11.10}]{KatzMazur}}]\label{p:completion-at-cusp}
	The completion $\hat{\partial} X_{R}^{\ast}$ of $X_{R}^{\ast}$ along $\partial X_{R}^{\ast}$ is the normalisation of $R\llbracket q\rrbracket $ in the finite flat scheme over $R\cc{q}$ that represents $\Gamma$-level structures of $\Tate(q)_{R\cc{q}}$. Via the $j$-invariant, $\hat{\partial} X_{R}^{\ast}$ is finite over the completion $R\llbracket q\rrbracket $ of $\P_R^1$ at $\infty$.
\end{Proposition}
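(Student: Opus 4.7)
The plan is to use the definition of $X_R^{\ast}$ as a normalisation together with the Tate uniformisation of elliptic curves near the cusps. The key idea is that completion along $\partial X_R^{\ast}$ and formation of the normalisation can be exchanged, which reduces the problem to a moduli-theoretic identification over the punctured formal disc $\Spec R\cc{q}$.

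First I would use that $X_R^{\ast}$ is by construction the normalisation of $\P^1_R$ in the finite flat cover $X_R \to \A^1_R$ given by the $j$-invariant. Since formation of the normalisation commutes with formal completion along a closed subscheme in the excellent Noetherian setting, $\hat\partial X_R^{\ast}$ is the normalisation of the completion $R\bb{q}$ of $\P^1_R$ at $\infty$ (where $q = 1/j$) inside the base change $X_R \times_{\A^1_R} \Spec R\cc{q}$. Since this base change is already finite flat over $R\cc{q}$ and normalisation preserves finiteness in this setting, the second assertion about finiteness over $R\bb{q}$ is immediate from this reduction.

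Second I would identify the base change $X_R \times_{\A^1_R}\Spec R\cc{q}$ with the scheme parametrising $\Gamma$-level structures on $\Tate(q)_{R\cc{q}}$. Since $\Gamma\supseteq \Gamma(N)$ for $N\geq 3$, $X_R$ is a fine moduli space, so an $S$-point of the base change amounts to an elliptic curve $E/S$ with $\Gamma$-level structure whose $j$-invariant matches $j(\Tate(q)) = 1/q + 744 + \dots$. The claim is that such $E$ is canonically the pullback of $\Tate(q)_{R\cc{q}}$, so that the fibre product is precisely the $R\cc{q}$-scheme of $\Gamma$-level structures on the Tate curve.

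The main obstacle is the canonicity in this last step: a priori $E$ is determined by its $j$-invariant only up to a twist. I would resolve this using the algebraic Tate uniformisation over $R\bb{q}$, which supplies a canonical identification $\Tate(q) \cong \G_m/q^{\Z}$ and thereby singles out $\Tate(q)$ among its twists as the unique elliptic curve over $R\cc{q}$ with the prescribed $j$-invariant that admits a semi-abelian degeneration to $\Spec R\bb{q}$. Since nontrivial twists fail to extend in this way, and the normalisation procedure automatically selects the component carrying such a degeneration (possibly after passing to an étale cover over which twists trivialise and descending), the canonical choice is pinned down. Combining this with the first step yields the asserted description of $\hat\partial X_R^{\ast}$.
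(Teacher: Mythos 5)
The paper does not actually prove this statement: it is quoted verbatim from \cite[Thm.~8.11.10]{KatzMazur}, so there is no internal argument to compare yours against, and your proposal has to be judged as a reconstruction of the Katz--Mazur proof. Your first step is fine: normalisation does commute with completion over an excellent base, this reduces everything to identifying the fibre product $X_R\times_{\A^1_R,\,j}\Spec R\cc{q}$, and finiteness over $R\bb{q}$ falls out. The problem is the second step.

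The asserted identification of $X_R\times_{\A^1_R}\Spec R\cc{q}$ with the scheme of $\Gamma$-level structures on $\Tate(q)_{R\cc{q}}$ is false, and no amount of care with twists can repair it, because the two schemes have different degrees over $R\cc{q}$. An $S$-point of the fibre product is an \emph{isomorphism class} of pairs $(E,\alpha)$ with $j(E)=j(q)$, and the inversion $-1\in\Aut(\Tate(q))$ identifies $(\Tate(q),\alpha)$ with $(\Tate(q),-\alpha)$. Since $-1$ acts freely on $\Gamma$-structures for $N\geq 3$, the natural map from the level-structure scheme to the fibre product is $2$-to-$1$, so the fibre product is the quotient of the level-structure scheme by inversion, of degree $\tfrac12\deg(\mathcal P)=\deg(X_R/\A^1_R)$. (Sanity check for $\Gamma=\Gamma(3)$ over $\C$: the level-structure scheme of $\Tate(q)$ has degree $|\GL_2(\Z/3)|=48$ over $\C\cc{q}$, while $X^{\ast}_{\C}\to\P^1_j$ has degree $24$.) Any correct proof must confront this inversion; it is precisely what the calculus of cusps in \cite[\S 8.11]{KatzMazur} is organised around, and your argument never engages with automorphisms of the pair $(E,\alpha)$ at all.

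Separately, your treatment of twists is not yet an argument. The generic fibre of $\hat\partial X_R^{\ast}$ is the \emph{entire} fibre product (by finiteness and flatness of $X_R^{\ast}\to\P^1_R$), so nothing is "selected" by the normalisation procedure --- every branch through every cusp survives, and what must be \emph{proved} is that the universal elliptic curve over each such branch is the split Tate curve, i.e.\ that its quadratic twist class is trivial. Your appeal to the $j$-invariant cannot see this. A workable route for $\Gamma\supseteq\Gamma(N)$, $N\geq 3$, is to refine to full level $N$ and observe that constant $N$-torsion rules out a non-trivial quadratic twist of $\Tate(q)$ (whose $N$-torsion is $\Tate(q)[N]\otimes\chi$, non-constant for $N\geq 3$), followed by descent; alternatively one invokes the Deligne--Rapoport theory of generalised elliptic curves degenerating to standard N\'eron polygons. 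Either way this step carries real content and needs to be written out, together with the inversion issue above.
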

To say more concretely what $\partial X_{R}^{\ast}$ looks like, recall that $\Tate(q)[N]$ is an extension
\[0\to \mu_{N}\to \Tate(q)[N]\to \underline{\Z/N\Z}\to 0\]
over $\Z\cc{q}$ which becomes split over $\Z\cc{q^{1/N}}$. Consequently, the $\Gamma$-level structures of $\Tate(q)_{R\cc{q}}$ are defined over various subrings of $R[\zeta_N]\cc{q^{1/N}}$. In particular, each component of $\hat{\partial} X_{R}^{\ast}$ will be of the form $\Spf(R[\zeta_d]\llbracket q^{1/e}\rrbracket )$ for some $d|N$ and $e|N$.
\begin{notation}\label{n:e}
In order to lighten notation, we wish to reduce the amount of $N$-th roots of $q$ throughout. Therefore, we shall by convention renormalise this ring of definition at each cusp to be a subring of the form $R[\zeta_d]\cc{q}$ for some $d|N$, by passing from $\Tate(q)$ to $\Tate(q^{e})$.
\end{notation}
This means that depending on the cusp, the completion of the $j$-invariant at the cusp is now given by a map of the form $\Spf(R[\zeta_d]\llbracket q\rrbracket )\to \Spf(R\llbracket q\rrbracket )$ that sends $q\mapsto q^e$.
If $R'$ is another Noetherian excellent regular $\Z[1/N]$-algebra, then $\partial X_{R'}^{\ast}$, $\hat{\partial} X_{R'}^{\ast}$, etc.\ agree with the base-changes via $R\to R'$ \cite[Prop.\ 8.6.6]{KatzMazur}. Therefore, more generally, for any $\Z[1/N]$-algebra $S$ we may simply define $X_S$, $X^{\ast}_S$, $\partial X_S^{\ast}$, $\hat{\partial} X_S^{\ast}$, etc.\ by base-change. 
\subsection{The analytic setup}
We now switch to a $p$-adic analytic situation and recall the setup from \cite{torsion}.

Let $p$ be a prime and let $K$ be a perfectoid field extension of $\Q_p$ like in the introduction. We denote by $\mathfrak m$ the maximal ideal of the ring of integers $\O_K$ and by $k$ the residue field. We fix a complete algebraically closed extension $C$ of $K$ and assume that $K$ contains all $p$-power unit roots in $C$. We moreover fix a pseudo-uniformiser $\varpi\in K$ with $|\varpi|=|p|$ such that $\varpi$ contains arbitrary $p$-power roots in $K$. This is possible since $K$ is perfectoid.

Let $N$ be coprime to $p$ and let $\Gamma(N)\subseteq \Gamma^p\subseteq \GL_2(\Z/N\Z)$ be a rigidifying tame level. Let $X:=X_K$ be the modular curve of level $\Gamma^p$ over $K$ and let $X^{\ast}:=X^{\ast}_K$ be its compactification.
We denote by $\mathfrak X$ and $\mathfrak X^{\ast}$ the respective $p$-adic completions of $X_{\O_K}$ and $X^{\ast}_{\O_K}$. Let $\mathcal X$ and $\mathcal X^{\ast}$ be the respective adic analytifications of $X$ and $X^{\ast}$. This is the only way in which we deviate from the notation in \cite{torsion}, where $\X$ denotes the good reduction locus, which we shall instead denote by $\X_{\gd}\subseteq \X\subseteq \X^\ast$.

For any of the classical level structures $\Gamma=\Gamma_0(p^n),\Gamma_1(p^n),\Gamma(p^n)$, $n\in \N$, we denote by $X_\Gamma\to X$ the representing moduli scheme. These all have compactifications $X^{\ast}_{\Gamma}\to X^{\ast}$, and we have associated adic spaces $\mathcal X_{\Gamma}\to \mathcal X$ and  $\mathcal X^{\ast}_{\Gamma}\to \mathcal X^{\ast}$.
The uncompactified spaces have a natural moduli interpretation in the category $\mathbf{Adic}$ of adic spaces:
\begin{Lemma}\label{Lemma: adic moduli interpretation of X_Gamma^an}
	Let $S$ be an honest adic space over $\Spa(K,\mathcal O_K)$. Then
	\[\Hom_{\mathbf{Adic}}(S,\X_{\Gamma})=X(\mathcal O_S(S)).\]
	In particular, the $S$-points of $\X_{\Gamma}$ are in functorial correspondence with isomorphism classes of elliptic curves over $\mathcal O_S(S)$ with tame level structure $\Gamma^p$ and level structure $\Gamma$ at $p$.
\end{Lemma}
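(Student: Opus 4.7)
The strategy is to combine the fact that $X_\Gamma$ is affine of finite type over $K$ with the universal property of adic analytification, and then to bootstrap from affinoid test objects $S$ to arbitrary honest adic spaces via the sheaf property of morphisms.

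First, I would establish that $X_\Gamma$ is affine. Since $N \geq 3$ and $\Gamma(N) \subseteq \Gamma^p$, the tame moduli problem is representable, and $X$ is the complement of the non-empty divisor of cusps inside the projective curve $X^{\ast}$, hence affine. Each of $X_{\Gamma_0(p^n)}, X_{\Gamma_1(p^n)}, X_{\Gamma(p^n)}$ admits a finite morphism to $X$ and is therefore affine of finite type over $K$; write $X_\Gamma = \Spec B$ with $B$ a finitely generated $K$-algebra.

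Next, I would invoke the universal property of adic analytification for schemes locally of finite type over $K$ (standard in Huber's framework): for any affinoid $S = \Spa(A, A^+)$ over $\Spa(K, \mathcal O_K)$, pullback along the canonical map $\mathcal X_\Gamma \to X_\Gamma$ of locally ringed $K$-spaces induces a natural bijection
\[
\Hom_{\mathbf{Adic}/K}(S, \mathcal X_\Gamma) \;\xrightarrow{\sim}\; \Hom_{K\text{-alg}}(B, A) \;=\; X_\Gamma(\mathcal O_S(S)).
\]
For a general honest $S$, I would choose an affinoid cover $\{U_i = \Spa(A_i, A_i^+)\}_{i \in I}$ and observe that both sides of the claimed bijection are computed as equalisers of the corresponding products indexed by $i$ and by pairs $(i,j)$. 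On the adic side this is the sheaf property of morphisms into $\mathcal X_\Gamma$; on the algebraic side, $\Hom_{K\text{-alg}}(B,-)$ preserves limits and $\mathcal O_S$ is a sheaf on the honest space $S$, so the two equalisers match term by term via the affinoid case already handled. This yields the first assertion.

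Finally, the ``in particular'' moduli interpretation is automatic: since $X_\Gamma$ is by construction the fine moduli scheme for elliptic curves with tame level $\Gamma^p$ and $p$-level $\Gamma$, the set $X_\Gamma(R)$ is functorially in bijection with isomorphism classes of such level-structured elliptic curves over $R$ for any $K$-algebra $R$, applied here to $R = \mathcal O_S(S)$. The only nontrivial ingredient is the universal property of analytification on affinoids; everything else is formal bookkeeping with sheaves and limits. That is the step where I would either appeal directly to Huber's work or, if a self-contained argument were desired, unpack the affinoid case by writing $B$ as a quotient of a polynomial ring, expressing $\mathcal X_\Gamma$ as a Zariski-closed subspace of analytic affine space, and reducing the bijection to the elementary description of $K$-algebra maps into $A$.
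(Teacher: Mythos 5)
Your proposal is correct and rests on the same two pillars as the paper's proof: affineness of $X_{\Gamma}$ and the universal property of analytification. The difference is in the last step. The paper applies the universal property directly to the arbitrary honest adic space $S$, obtaining $\Hom_{\mathbf{Adic}}(S,\X_{\Gamma})=\Hom_{\mathbf{LRS}}(S,X_{\Gamma})$, and then concludes in one line by the adjunction between $\Spec$ and global sections for \emph{locally ringed spaces}, which holds for any locally ringed space $S$ and any affine scheme; no affinoid cover or gluing is needed. You instead prove the affinoid case and then glue via an equaliser comparison. That route works, but note a small gloss: the intersections $U_i\cap U_j$ need not be affinoid, so the two equaliser diagrams do not literally ``match term by term'' from the affinoid case alone — you either need to refine to an affinoid hypercover, or observe that the comparison map on the $(i,j)$-terms is injective (which follows by restricting two morphisms $U_i\cap U_j\to \X_{\Gamma}$ to a further affinoid cover). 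The locally-ringed-space adjunction sidesteps all of this, which is what the paper's proof buys.
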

\begin{proof}
	The scheme $X_{\Gamma}$ over $K$ is an affine curve \cite[Cor.\ 4.7.2]{KatzMazur}. Let $\mathbf{LRS}$ be the category of locally ringed spaces, then by the universal property of the analytification $\X_\Gamma = X_{\Gamma}^{\an}$: \[\Hom_{\mathbf{Adic}}(S,\X_{\Gamma})=\Hom_{\mathbf{LRS}}(S,X_{\Gamma})=X_{\Gamma}(\O_S(S))\]
	where the last step is the adjunction of Spec and global sections for locally ringed spaces.
\end{proof}

Let $0\leq \epsilon<\tfrac{1}{2}$ be such that $|p|^{\epsilon}\in |K|$. Using local trivialisations of the Hodge bundle and lifts $\Ha$ of the Hasse invariant one defines an open subspace $\mathcal X^{\ast}(\epsilon)\subseteq \Xa$ cut out by the condition that $|\Ha|\geq |p|^{\epsilon}$.
This has a canonical integral model $\mathfrak X^{\ast}(\epsilon)\to \mathfrak X^{\ast}$, for example by \cite[Lemma III.2.13]{torsion}.
In general, for any morphism $S\to \mathcal X^{\ast}$ we shall write
\[S(\epsilon):=S\times_{\mathcal X^{\ast}}\mathcal X^{\ast}(\epsilon).\]
In particular, for any of the classical level structures $\Gamma=\Gamma_0(p^n),\Gamma_1(p^n),\Gamma(p^n)$ the modular curve $\mathcal X^{\ast}_{\Gamma}\to \X^{\ast}$ restricts to a morphism $\mathcal X^{\ast}_{\Gamma}(\epsilon)\to \X^{\ast}(\epsilon)$. 
We note that the open subspace $\X^{\ast}(0)$ is precisely the ordinary locus of $\Xa$. 

\begin{Definition}
	We shall say that an elliptic curve $E$ is $\epsilon$-nearly ordinary if $|\Ha(E)|\geq |p|^{\epsilon}$.
\end{Definition}
By the theory of the canonical subgroup, the forgetful morphism $\mathcal X^{\ast}_{\Gamma_0(p)}(\epsilon)\to \mathcal X^{\ast}(\epsilon)$ has a canonical section. We denote by $\mathcal X^{\ast}_{\Gamma_0(p)}(\epsilon)_c$ the image of this section, that is the component of $\mathcal X^{\ast}_{\Gamma_0(p)}(\epsilon)$ that parametrises the $\Gamma_0(p)$-structure given by the canonical subgroup. This is called the canonical locus. We denote its complement by $\mathcal X^{\ast}_{\Gamma_0(p)}(\epsilon)_a$ and call it the anticanonical locus. For any adic space $S\to \mathcal X^{\ast}_{\Gamma_0(p)}$ we denote by
\[S(\epsilon)_a:=S\times_{\Xa_{\Gamma_0(p)}} \XaGzoea\]
the open subspace that lies over the anticanonical locus.
\begin{Definition}
For any adic space $S\to \X(\epsilon)$ corresponding to an $\epsilon$-nearly ordinary elliptic curve $E$ over $\O_S(S)$, we shall call a $\Gamma$-level structure anticanonical if it corresponds to a point of $\X_{\Gamma}(\epsilon)_a\subseteq \X_{\Gamma}$. For instance, a $\Gamma_0(p^n)$-level structure is a locally free subgroup scheme $G_n\subseteq E[p^n]$, \'etale locally cyclic of rank $p^n$, and it is anticanonical if $G_n\cap C_1=0$. Similarly, a $\Gamma(p^n)$-level structure, given by an isomorphism of group schemes $\alpha\colon(\Z/p^n\Z)^2 \to E[p^n]$ is anticanonical if the subgroup scheme generated by $\alpha(1,0)$ is anticanonical.
\end{Definition}
For any $n\in\N$, the transformation of moduli functors that sends an elliptic curve $E$ together with an anticanonical $\Gamma_0(p^n)$-structure $G_n$ to $E/G_n$ induces an isomorphism
\begin{equation}\label{eq:Atkin--Lehner}
\X_{\Gamma_0(p^n)}(\epsilon)_a \xrightarrow{\sim}  \X(p^{-n}\epsilon)
\end{equation}
that is called the Atkin--Lehner isomorphism. The inverse is given by sending $E$ with its canonical subgroup $C_n$ of rank $p^n$ to the data of $E/C_n$ with $\Gamma_0(p^n)_a$-structure $E[p^n]/C_n$. The Atkin--Lehner isomorphism uniquely extends to the cusps for all $n$, and for varying $n$ the resulting isomorphisms fit into a commutative diagram of towers
\begin{equation*}
\begin{tikzcd}
	\cdots \arrow[r] & \Xa_{\Gamma_0(p^2)}(\epsilon)_a \arrow[d, no head,"\sim" labelrotate] \arrow[r] & \Xa_{\Gamma_0(p)}(\epsilon)_a \arrow[d, no head,"\sim" labelrotate] \arrow[r] & \Xa(\epsilon) \arrow[d, no head,equal] \\
	\cdots \arrow[r] &  \Xa(p^{-2}\epsilon) \arrow[r,"\phi"] &  \Xa(p^{-1}\epsilon) \arrow[r,"\phi"] &  \Xa(\epsilon)
\end{tikzcd}
\end{equation*}
where in the bottom row, the morphism $\phi$ is the ``Frobenius lift'' defined in terms of moduli by sending $E$ to $E/C_1$. The resulting tower is called the ``anticanonical tower''.

It is a crucial intermediate result in \cite{torsion} that the anticanonical tower ``becomes perfectoid'' in the inverse limit. More precisely:

\begin{Theorem}[{\cite[Cor.~III.2.19]{torsion}}]
There is an affinoid perfectoid tilde-limit
\[\XaGea0\infty \sim \textstyle\varprojlim_{n\in\N}\XaGea0n.\]
\end{Theorem}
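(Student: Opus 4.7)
The plan is to invoke the Atkin--Lehner isomorphisms \eqref{eq:Atkin--Lehner} to replace the tower $\{\XaGea{0}{n}\}_{n}$ by the ``Frobenius-like'' tower
\[ \cdots \to \Xa(p^{-2}\epsilon) \xrightarrow{\phi} \Xa(p^{-1}\epsilon) \xrightarrow{\phi} \Xa(\epsilon), \]
where $\phi$ sends (away from the cusps) an $\epsilon$-nearly ordinary elliptic curve $E$ to $E/C_1$. Since the Atkin--Lehner isomorphisms commute with the transition maps, it suffices to produce an affinoid perfectoid tilde-limit for this tower. The spaces $\Xa(p^{-n}\epsilon)$ are all affinoid (being compactifications of modular curves minus open supersingular disks), so we may write $\Xa(p^{-n}\epsilon) = \Spa(A_n, A_n^+)$.

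The core computation is that $\phi$ closely approximates the relative Frobenius. On the honestly ordinary locus ($\epsilon = 0$), the canonical subgroup $C_1 \subset E[p]$ reduces mod $p$ to the kernel of the relative Frobenius on the special fibre, so $E/C_1$ reduces to the Frobenius twist $E^{(p)}$. Hence $\phi$ mod $p$ coincides (up to a choice of trivialisation of $\omega_E$) with the absolute Frobenius on $A_n/p$. On the $\epsilon$-nearly ordinary locus this identification holds only up to an error controlled by $\mathrm{Ha}^{-1}$, but this error improves as $n$ grows because the overconvergent radius $p^{-n}\epsilon$ shrinks. Concretely, I would verify that the transition map $A_n^+/p \to A_{n+1}^+/p$ agrees with the $p$-th power map modulo $p^{1 - c\cdot p^{-n}\epsilon}$ for a constant $c$ coming from canonical subgroup theory, with invertible correction factor involving the chosen lift of Ha.

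It then follows from the standard perfectoidness-via-Frobenius criterion (as in the proof of tilting equivalence) that the $p$-adic completion $A_\infty := \widehat{\varinjlim_n A_n}$ is a perfectoid $K$-algebra, and that the affinoid perfectoid space $\Spa(A_\infty, A_\infty^+)$ together with the obvious maps is a tilde-limit of $\{\Spa(A_n, A_n^+)\}_n$. Transporting back via Atkin--Lehner yields $\XaGea{0}{\infty} \sim \varprojlim_n \XaGea{0}{n}$. The main obstacle is the quantitative control of canonical-subgroup error terms: because we are not on the genuine ordinary locus, $\phi$ is only approximately Frobenius, and one must show that the error terms decay at exactly the right rate in $n$ so that Frobenius becomes honestly bijective mod $\varpi$ in the limit rather than only almost so. Once this quantitative statement is made precise, the affinoid perfectoidness of the tilde-limit is an immediate consequence.
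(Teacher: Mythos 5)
Your proposal follows essentially the same route as the paper (which recalls Scholze's argument in \S\ref{s:tilting-isomorphism}): transport the anticanonical tower via the Atkin--Lehner isomorphisms to the tower $\cdots\to\Xa(p^{-1}\epsilon)\xrightarrow{\phi}\Xa(\epsilon)$, identify the formal model of $\phi$ with the relative Frobenius modulo $p^{1-\delta}$, and conclude affinoid perfectoidness of the tilde-limit from the Frobenius criterion of \cite[Thm~5.2]{perfectoid-spaces}. The only cosmetic difference is the precise exponent in the congruence (the paper quotes the uniform $\delta=\tfrac{p+1}{p}\epsilon$ rather than an $n$-dependent bound), which does not affect the argument.
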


Since the forgetful morphisms $\XaGea 1 n\to \XaGea 0 n$ are finite \'etale $(\Z/p^n\Z)^{\times}$-torsors, even over the cusps, one immediately deduces that in the inverse limit these give rise to an affinoid perfectoid space $\XaGea 1 \infty\sim \varprojlim_n\XaGea 1 n $ together with a forgetful map \[\XaGea 1 \infty  \rightarrow \XaGea 0 \infty\]
that is a pro-\'etale $\Z_p^\times$-torsor. Similarly, for full level $\Gamma(p^n)$, one obtains an affinoid perfectoid space $\XaGea{}{\infty}$ together with a forgetful map that is  a pro-\'etale $\Gamma_0(p^\infty)$-torsor $\XaGea{}{\infty}\rightarrow \XaGea{0}{\infty}$, where we set:
\begin{Definition}\label{df: Gamma_0(p^m)}
	For any $m\in \Z_{\geq 0}\cup\{\infty\}$, let $\Gamma_0(p^m)=\{\smallmat{\ast}{\ast}{c}{\ast}\in \GL_2(\Z_p)\mid c\equiv 0 \bmod p^m\}$.
\end{Definition}

All in all, we have a tower of morphisms
\begin{equation*}
	\begin{tikzcd}
		\mathcal X^{\ast}_{\Gamma(p^\infty)}(\epsilon)_a \arrow[r] & \mathcal X^{\ast}_{\Gamma_1(p^\infty)}(\epsilon)_a \arrow[r] & \mathcal X^{\ast}_{\Gamma_0(p^\infty)}(\epsilon)_a \arrow[r] & \mathcal X^{\ast}_{\Gamma_0(p)}(\epsilon)_a
	\end{tikzcd}
\end{equation*}
which is a pro-\'etale $\Gamma_0(p)$-torsor away from the boundary, but not globally: One reason is that there is ramification over the cusps in $\mathcal X^{\ast}_{\Gamma_0(p^\infty)}(\epsilon)_a \to \mathcal X^{\ast}_{\Gamma_0(p)}(\epsilon)_a$, but we note that the tower is still no torsor on the quasi-pro-\'etale site, as we will see on $q$-expansions.

\subsection{Analytic Tate curve parameter spaces at tame level}\label{subsection: adic cusps and Tate curves}
In this subsection, we recall the universal analytic Tate curves at the cusps, as developed by \cite{conrad2006modular}.  The main technical difference is that we work with analytic adic spaces instead of rigid spaces. In particular, instead of the generalisation of Berthelot's functor constructed in \S3 of \textit{loc. cit.} we may use the adic generic fibre functor.

For now, we shall focus on the adic analytic modular curve $\X^{\ast}$ over $K$. We remark that everything in this section and the next will also work for $\X_{\Gamma_0(p^n)}^{\ast}$, except for the additional phenomenon of ramification at the cusps. In order to separate the discussion of these two topics, and to simplify the exposition, we shall therefore focus on $\X^{\ast}$ for now.

As discussed in \S\ref{s:classical-recollections-on-cusps}, the subscheme $\partial X^{\ast}\subseteq X^{\ast}$ decomposes into a union of points of the form $x\colon\Spec(L)\rightarrow X^{\ast}$ where $K\subseteq L\subseteq K[\zeta_N]$ is a subfield depending on $x$. For example, if $\Gamma^p=\Gamma(N)$, we have $L=K[\zeta_N]$ at every cusp. We now switch to an analytic setup: 
\begin{Definition}\label{d:field-of-definition-of-Tate-curve}
By a cusp of $\X^{\ast}$ we shall mean a  connected (but not necessarily geometrically connected) component of $(\X^{\ast}\backslash \X)^{\red}$.
Given a fixed cusp $x$, we shall denote by $L=L_x\subseteq K[\zeta_N]$ the coefficient field of definition of the corresponding Tate curve. We have $L=K[\zeta_d]$ for some $d|N$. Let $\mathfrak m_L$ be the maximal ideal of $\O_L$ and let $k_L$ be the residue field.
\end{Definition}
From now on for the rest of this section, let us fix a cusp $x\in \X^{\ast}$. To simplify notation, we will write $L=L_x$ and $e=e_x$.  We note that the cusp $x$ will decompose into $[L:K]$ disjoint $L$-points after base-changing $\mathcal X^{\ast}$ from $K$ to $L$.
 
By Prop.~\ref{p:completion-at-cusp}, the completion along $x$ is canonically of the form
\begin{equation}\label{eq:completion-at-schematic-cusp}
\Spf(\O_{L}\llbracket q\rrbracket)\rightarrow X_{\O_K}^{\ast}
\end{equation}
where $\O_{L}\llbracket q\rrbracket$ carries the $q$-adic topology. Here we recall from Notation~\ref{n:e} that we have renormalised parameters from $q^{1/e}$ to $q$, so that the universal Tate curve is $\Tate(q^{e})$. Upon $\pi$-adic completion, this becomes a morphism
\[\Spf(\O_{L}\llbracket q\rrbracket)\rightarrow \mathfrak X^{\ast}\]
where now $\O_{L}\llbracket q\rrbracket$ is endowed with the $(p,q)$-adic topology.
We note that this morphism restricts to $\Spf(\O_{L}\llbracket q\rrbracket)\rightarrow \mathfrak X^{\ast}(0)$ since the supersingular locus is disjoint from the cusps.

On the $p$-adic generic fibre, we obtain a morphism of analytic adic spaces over $K$
\[\D:=\Spf(\mathcal O_{L}\llbracket q\rrbracket)^{\ad}_\eta \rightarrow \mathcal X^{\ast}.\]
Here $\D$ is the open unit disc over $K$, a topologically finite type but non-quasicompact, non-affinoid analytic adic space. We emphasize that in general, this depends on $x$, as $L=L_x$ does. If the cusp $x$ is not clear from the context, we shall therefore denote this space by $\D_x$.

The global functions on $\D$ are given by $\mathcal O^+(\D) =\O_{L}\llbracket q\rrbracket$ and
\[\mathcal O(\D)=\Big \{ \textstyle\sum_{n\geq 0}a_nq^n \in L\llbracket q\rrbracket \text{ such that } |a_n|z^n\to 0\text{ for all }0\leq z<1 \Big\}.\]

More classically, if we regarded $\D$ as a rigid space, it would be associated to the formal scheme $\Spf(\mathcal O_{L}\llbracket q\rrbracket)$ via Conrad's non-Noetherian generalisation of Berthelot's rigid generic fibre construction, \cite[Thm.\ 3.1.5]{conrad2006modular}.

\begin{Lemma}\label{l: Conrad's theorem on generic fibres of completions around the cusp}
	The map $\D \hookrightarrow \mathcal X^{\ast}$ is an open immersion that sends the origin to the cusp.
\end{Lemma}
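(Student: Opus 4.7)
The plan has two parts. The second claim---that the origin of $\D$ maps to the cusp $x$---is immediate from the construction: the origin of $\D$ is the rank-one classical point cut out by $q = 0$, and the map $\D \to \X^{\ast}$ factors through the formal completion morphism $\Spf(\O_L\bb q) \to \mX^{\ast}$ of \eqref{eq:completion-at-schematic-cusp}, which by Proposition~\ref{p:completion-at-cusp} is the completion of $X_{\O_K}^{\ast}$ along the cusp $x$ and thus sends the subscheme $V(q) = \Spf(\O_L)$ to $x$.

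For the open immersion, the plan is to exhibit the image as an explicit exhaustion by rational subsets of an affinoid neighbourhood of the cusp. First, I would choose an affine open $\Spec(A) \subset X_{\O_K}^{\ast}$ containing $x$ and meeting $\partial X^{\ast}$ only in the component of $x$; this is possible since $\partial X^{\ast}$ is finite étale over $\O_K$ and $X_{\O_K}^{\ast}$ is smooth of relative dimension one over $\Spec(\O_K)$. After base change to $A_L := A \otimes_{\O_K} \O_L$ and further Zariski-localization one may assume that the ideal $I$ cutting out $x$ in $\Spec(A_L)$ is principal, generated by a parameter $q$ compatible with the identification $\widehat{(A_L)}_I \cong \O_L\bb q$ of Proposition~\ref{p:completion-at-cusp}. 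The $p$-adic completion then yields an open affinoid $\mathcal{U} := \Spa(\widehat{A_L}[\tfrac{1}{p}], \widehat{A_L})$ of the base-change $\X^{\ast}_L = \X^{\ast} \times_{\Spa(K,\O_K)} \Spa(L,\O_L)$.

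The core computation is to identify $\D$ with the open (non-quasicompact) subspace $\{|q| < 1\}$ of $\mathcal{U}$. Writing $\D = \bigcup_n \D_n$ with $\D_n = \{|q|^n \leq |\varpi|\}$, the claim reduces to the isomorphism of affinoid algebras
\[
\widehat{A_L}\langle q^n/\varpi\rangle[\tfrac{1}{p}] \isomarrow L\langle q^n/\varpi\rangle,
\]
which one extracts from the observation that $A_L/q^n$ is supported on the infinitesimal neighbourhood $V(I^n)$ of $x$ and is therefore already $I$-adically complete, hence the two completion procedures (along $I$ and along $p$) commute when applied modulo $(q^n, p^N)$ and coincide in the limit. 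I expect the main obstacle to be keeping straight this interplay between the $I$-adic completion governing Proposition~\ref{p:completion-at-cusp} and the $p$-adic completion implicit in the adic generic fibre; once it is handled, the descent of $\D \hookrightarrow \X^{\ast}_L$ to $\D \hookrightarrow \X^{\ast}$ is a formal consequence of flat (indeed étale) base change along $\Spa(L,\O_L) \to \Spa(K,\O_K)$ and the fact that $\D$ is defined over $L$ by construction.
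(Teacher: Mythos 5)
Your proposal is correct and is essentially the paper's argument: both proofs exhaust the tube around the cusp by the rational subsets $\{|q|^n\le|\varpi|\}$ of an affinoid neighbourhood and use that killing a power of $p$ also kills a power of $q$ on each such subset, so that the coordinate ring there only sees the $I$-adic completion $\O_L\llbracket q\rrbracket$ of $A$. (The paper phrases this as: the completion along the cusp is $A\llbracket T\rrbracket/(T-f)$, whose adic generic fibre is $\bigcup_n\Spa(A\langle f^n/p\rangle[\tfrac1p])=\{|f|<1\}\subseteq\Spa(A[\tfrac1p])$; your ring isomorphism $\widehat{A}\langle q^n/\varpi\rangle[\tfrac1p]\cong \O(\D_n)$ is the same computation made explicit, and your justification via $A/I^{m}$ being $I$-adically complete is the right mechanism, though you need to work modulo $(q^{nN},p^N)$ rather than $(q^n,p^N)$.) The one avoidable wrinkle is your base change to $\O_L$: over $\O_L$ the cusp $\Spec(\O_L)\hookrightarrow X^{\ast}_{\O_K}$ splits into its $[L:K]$ Galois conjugates, so after isolating one component you have only shown that one piece of $\D\times_K L$ is an open immersion into $\X^{\ast}_L$, and the descent step requires observing that all conjugate components are open immersions with pairwise disjoint images. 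This is true but unaddressed; it is cleaner to avoid it entirely, as the paper does, by noting that the cusp is already cut out by a locally principal ideal $(f)$ with $A/(f)\cong\O_L$ in an affine formal neighbourhood over $\O_K$ (a regular codimension-one closed subscheme of a regular scheme), so that the whole argument runs over $K$ with no descent needed.
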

\begin{Remark}
This is part of \cite[Thm.~3.2.8]{conrad2006modular} for $\Gamma^p = \Gamma_1(N)$, and in general follows from \cite[Thm.~3.2.6]{conrad2006modular}. These moreover give a moduli interpretation in terms of  analytic generalised elliptic curves, as well as a universal  analytic generalised Tate curve over $\D$.
\end{Remark}

\begin{proof}
	In an affine open formal neighbourhood $\Spf(A)\subseteq \mathfrak X^{\ast}(0)$ of the cusp, $x$ is cut out by a principal ideal $(f)$ for some non-zero-divisor $f$. The completion along the cusp is then  $A\llbracket T\rrbracket/(T-f)$. The adic generic fibre is thus the union over the spaces $\Spa(A\langle f^n/p\rangle[1/p])$, and is thus the open subspace of $\Spa(A[1/p])\subseteq \mathcal X^{\ast}$ defined by the condition $|f|<1$.
\end{proof}
\begin{Lemma}\label{lemma for comparing schematic cusp to adic cusp}
	The morphism of locally ringed spaces $\D\rightarrow \Spec (\mathbb Z_p\llbracket q\rrbracket\otimes_{\Z_p} { \O_{L}})$
	induced by the inclusion $\mathbb Z_p\llbracket q\rrbracket \hookrightarrow \mathcal O^+(\D)$ fits into a commutative diagram of ringed spaces:
	
	\begin{equation*}
		\begin{tikzcd}
			\Spec(\mathbb Z_p\llbracket q\rrbracket\otimes_{\Z_p} { \O_L}) \arrow[r] & X^{\ast}_{\O_K} \\
			\D \arrow[r,hook] \arrow[u] & \mathcal X^{\ast} \arrow[u].
		\end{tikzcd}
	\end{equation*}
\end{Lemma}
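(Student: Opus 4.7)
The plan is to check commutativity by reducing to an equality of ring homomorphisms on an affine neighbourhood of the cusp, and then invoking the universal property of completion. Choose any affine open $U=\Spec(A)\subseteq X^{\ast}_{\O_K}$ containing $x$. By Prop.~\ref{p:completion-at-cusp} and Notation~\ref{n:e}, the cusp corresponds to an ideal $I\subseteq A$ with $A/I\cong\O_L$, and the $I$-adic completion is canonically $\wh A_I\cong \O_L\llbracket q\rrbracket$. I will show that both paths $\D\to X^{\ast}_{\O_K}$ factor through $U$ and induce the same ring map $A\to\O(\D)$.

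For the bottom--right path, the proof of Lemma~\ref{l: Conrad's theorem on generic fibres of completions around the cusp} exhibits $\D$ as the open subspace of $U^{\an}=\Spa(A[1/p],A^+)\subseteq\mathcal X^{\ast}$ cut out by $|f|<1$ for a generator $f$ of $I$. On global sections, the composition $\D\hookrightarrow \mathcal X^{\ast}\to X^{\ast}_{\O_K}$ corresponds to the ring homomorphism $A\to\O(\D)$ that factors as $A\to \wh A_I\cong \O_L\llbracket q\rrbracket=\O^+(\D)\hookrightarrow \O(\D)$, where the first arrow is the $I$-adic completion map.

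For the top--left path, I construct the top horizontal arrow using that $X^{\ast}$ together with its divisor of cusps descends to $\Z[1/N]$ (Section~\ref{s:classical-recollections-on-cusps}). By the description of the completion at the cusp in Prop.~\ref{p:completion-at-cusp}, the completion map $A\to \wh A_I\cong\O_L\llbracket q\rrbracket$ factors canonically through the subring $\Z_p[\zeta_d]\llbracket q\rrbracket$. Since $\Z_p[\zeta_d]$ is finite free over $\Z_p$, we have $\Z_p[\zeta_d]\llbracket q\rrbracket=\Z_p\llbracket q\rrbracket\otimes_{\Z_p}\Z_p[\zeta_d]$, and tensoring the injection $\Z_p[\zeta_d]\hookrightarrow \O_L$ with the flat $\Z_p$-module $\Z_p\llbracket q\rrbracket$ gives an injection $\Z_p[\zeta_d]\llbracket q\rrbracket\hookrightarrow \Z_p\llbracket q\rrbracket\otimes_{\Z_p}\O_L$. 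Composing with $A\to \Z_p[\zeta_d]\llbracket q\rrbracket$ defines the top horizontal morphism on $U$. Composing this further with the left vertical arrow $\D\to\Spec(\Z_p\llbracket q\rrbracket\otimes_{\Z_p}\O_L)$ (which by definition comes from $\Z_p\llbracket q\rrbracket\otimes_{\Z_p}\O_L\to \O_L\llbracket q\rrbracket=\O^+(\D)\hookrightarrow \O(\D)$) recovers exactly the ring map $A\to\O_L\llbracket q\rrbracket\hookrightarrow\O(\D)$ from the bottom--right path.

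The delicate point I expect is in constructing the top horizontal map, i.e.\ verifying that the completion map $A\to \O_L\llbracket q\rrbracket$ has its image in $\Z_p[\zeta_d]\llbracket q\rrbracket$, so that it factors through $\Z_p\llbracket q\rrbracket\otimes_{\Z_p}\O_L$ instead of just $\O_L\llbracket q\rrbracket$. This is precisely the arithmetic content of the classical Katz--Mazur calculus of cusps: the Tate curve and its level structures descend to $\Z[\zeta_d][1/N]$, and the resulting completion is already defined at that level. Once this factorization is fixed, the commutativity of the square of ringed spaces is a formal verification that two factorizations of one and the same completion map coincide, which follows from the universal property of $\Spec$ applied to ringed spaces.
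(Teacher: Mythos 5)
Your overall strategy is the right one and is essentially the paper's: reduce the commutativity to an equality of ring maps on an affine neighbourhood of the cusp, and use the Katz--Mazur description of the completion to produce the top horizontal arrow. However, one step in your construction of that arrow is false as stated. You claim that the completion map $A\to \wh A_I\cong \O_L\llbracket q\rrbracket$ ``factors canonically through the subring $\Z_p[\zeta_d]\llbracket q\rrbracket$'' and then compose $A\to \Z_p[\zeta_d]\llbracket q\rrbracket\hookrightarrow \Z_p\llbracket q\rrbracket\otimes_{\Z_p}\O_L$. No such map $A\to \Z_p[\zeta_d]\llbracket q\rrbracket$ can exist: $A$ is an $\O_K$-algebra, the composite $\O_K\to A\to \O_L\llbracket q\rrbracket$ is the inclusion $\O_K\hookrightarrow\O_L$, and $\O_K$ is not contained in $\Z_p[\zeta_d]\llbracket q\rrbracket$ (its intersection with $\O_L$ inside $\O_L\llbracket q\rrbracket$ is just $\Z_p[\zeta_d]$, while $\O_K$ is the ring of integers of a perfectoid field). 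So the arrow you compose with does not exist, and the factorization through $\Z_p\llbracket q\rrbracket\otimes_{\Z_p}\O_L$ is not yet established.

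The missing idea is that the descent to the Noetherian level must happen at the level of the modular curve, not at the level of the single ring map $A\to\O_L\llbracket q\rrbracket$. This is what the paper does: with $R=\Z_p[\zeta_d]$ and $R_0=\Z_p[\zeta_{d_0}]$ (where $d_0$ records the roots of unity already in $K$), one has $X^\ast_{\O_K}=X^\ast_{R_0}\times_{\Spec R_0}\Spec\O_K$, and Prop.~\ref{p:completion-at-cusp} applies over the excellent Noetherian base $R_0$ to give $\Spf(R\llbracket q\rrbracket)\to\Spec(R\llbracket q\rrbracket)\to X^\ast_{R_0}$. Writing $A=A_0\otimes_{R_0}\O_K$ for a descended affine open, the completion map over $R_0$ is $A_0\to R\llbracket q\rrbracket$, and base change along $R_0\to\O_K$ yields exactly $A\to R\llbracket q\rrbracket\otimes_{R_0}\O_K=\Z_p\llbracket q\rrbracket\otimes_{\Z_p}\O_L$; the commutativity of the square is then obtained by taking the fibre of the corresponding diagram over $R_0$ along $\Spa(K,\O_K)\to\Spec(\O_K)$. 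With this correction your argument goes through and coincides with the paper's proof; the rest of your verification (both paths inducing the completion map $A\to\O_L\llbracket q\rrbracket\hookrightarrow\O(\D)$, and the flatness argument identifying $\Z_p[\zeta_d]\llbracket q\rrbracket$ inside $\Z_p\llbracket q\rrbracket\otimes_{\Z_p}\O_L$) is fine.
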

\begin{proof}
Let $R:=\mathbb Z_p[\zeta_d]$ and $R_0:=\mathbb Z_p[\zeta_{d_0}]$ where $d_0$ is the largest divisor of $d$ such that $K$ contains a primitive $d_0$-th unit root. 
The adification of the $p$-adic completion of
$f\colon\Spf(R\llbracket q\rrbracket)\rightarrow \Spec(R\llbracket q\rrbracket)\rightarrow X^{\ast}_{R_0}$  fits into a commutative diagram of ringed spaces
\begin{equation*}
	\begin{tikzcd}
		{\Spec(R\llbracket q\rrbracket)} \arrow[r,"f"] & X_{R_0}^{\ast}\arrow[r]& \Spec(R_0) \\
		{\Spa(R\llbracket q\rrbracket,R\llbracket q\rrbracket)} \arrow[u] \arrow[r,"\hat{f}"] & {\mathfrak X_{R_0}^{\ast \ad}} \arrow[u]\arrow[r]&\Spa(R_0,R_0).\arrow[u]
	\end{tikzcd}
\end{equation*}
The lemma follows upon taking the fibre over $\Spa(K,\O_K)\to \Spec(\O_K)$.
\end{proof}

We thus have the following moduli interpretation of $\mathring{\D}:=\D(q\neq 0)\subseteq \D$:
\begin{Lemma}\label{l:moduli interpretation of adic Tate parameter space}
	Let $S$ be an honest adic space over $K$ and let $\varphi\colon S\rightarrow \X$ be a morphism corresponding to an elliptic curve $E$ over $\mathcal O_S(S)$ with $\Gamma^p$-level structure $\alpha_N$.
	Then $\varphi$ factors through the punctured open unit disc $\mathring{\D}\rightarrow \X$ at the cusp $x$ if and only if $E\cong \Tate(q_E)$ is a Tate curve for some $q_E\in S$, the $\Gamma^p$-structure $\alpha_N$ corresponds to $x$ under Prop.~\ref{p:completion-at-cusp}, and $q_E$ is locally topologically nilpotent on $S$, i.e.\ $v_z(q_E)$ is cofinal in the value group for all $z\in S$.
\end{Lemma}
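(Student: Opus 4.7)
The plan is to combine the adic moduli interpretation of $\X$ (Lemma~\ref{Lemma: adic moduli interpretation of X_Gamma^an}), the identification of $\D$ with the adic generic fibre of $\Spf(\O_L\llbracket q\rrbracket)$ (Lemma~\ref{l: Conrad's theorem on generic fibres of completions around the cusp}), and Proposition~\ref{p:completion-at-cusp} together with the renormalisation of Notation~\ref{n:e}. These supply a universal formal Tate curve with $\Gamma^p$-level structure corresponding to $x$ over $\Spf(\O_L\llbracket q\rrbracket)$, which passes to the adic generic fibre to give a universal analytic Tate curve over $\D$.

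For the forward implication, I would pull this universal analytic Tate curve back along $\varphi\colon S\to \mathring{\D}\hookrightarrow \X$. This produces an isomorphism $E\cong \Tate(q_E)$ over $\O_S(S)$ where $q_E$ is the pullback of the natural coordinate on $\D$ (under the convention of Notation~\ref{n:e}), together with the desired level structure at $x$. The inclusion $\mathring{\D}\subseteq \D$ then forces $q_E$ to be locally topologically nilpotent, and the puncturing $q\neq 0$ forces $q_E$ to be invertible in $\O_S(S)$, so that $E$ is a genuine elliptic curve as required.

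Conversely, assume $(E,\alpha_N)$ over $\O_S(S)$ is of the stated form. By Proposition~\ref{p:completion-at-cusp} together with Lemma~\ref{lemma for comparing schematic cusp to adic cusp}, this moduli data is classified by an $\O_L$-algebra homomorphism $\O_L\llbracket q\rrbracket[q^{-1}]\to \O_S(S)$ sending $q$ to $q_E$ (once the $e$-th root ambiguity is trivialised; see below). Since $q_E$ is locally topologically nilpotent it lies in $\O^+(S)$, and the map extends to a continuous homomorphism $\O_L\llbracket q\rrbracket\to \O^+(S)$ for the $(p,q)$-adic topology on the source. By the universal property of the adic generic fibre of $\Spf(\O_L\llbracket q\rrbracket)$ this is precisely the data of a morphism $S\to \D$; since $q_E$ is a unit, the image lies in $\mathring{\D}$. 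That the composition recovers $\varphi$ follows from Lemma~\ref{Lemma: adic moduli interpretation of X_Gamma^an}, as both morphisms are classified by the same moduli pair $(E,\alpha_N)$.

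The main subtlety I anticipate is the bookkeeping around the $e$-th root renormalisation of Notation~\ref{n:e}: one must verify that the level structure data at $x$ canonically resolves the ambiguity between the coordinate on $\D$ and its $e$-th power, so that $q_E$ as it appears in ``$E\cong \Tate(q_E)$'' is unambiguously defined. A secondary technical point is the passage between the pointwise condition ``$v_z(q_E)$ cofinal in the value group for all $z\in S$'' and continuity of the ring map $\O_L\llbracket q\rrbracket\to \O^+(S)$; this reduces to the affinoid case, where for an honest analytic adic space the two notions coincide on sections of $\O^+$.
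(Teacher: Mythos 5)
Your proposal is correct and follows essentially the same route as the paper: the forward direction via pulling back the universal (formal, hence analytic) Tate curve and the factorisation through $\Spec(\O_L\cc{q})$ of Lemma~\ref{lemma for comparing schematic cusp to adic cusp}, and the converse by reducing to affinoid $S$ and upgrading the pointwise cofinality of $v_z(q_E)$ to genuine topological nilpotence by quasicompactness. The only presentational difference is that you package the resulting map $S\to\D$ via the universal property of the adic generic fibre of $\Spf(\O_L\llbracket q\rrbracket)$, whereas the paper writes down the explicit rational subsets $\Spa(L\langle q,q^n/\varpi,\varpi^m/q\rangle,\dots)$ of $\mathring{\D}$ through which the map factors; both hinge on the same compactness step, which you correctly identify as the technical crux.
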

\begin{proof}
	If $\varphi\colon S\rightarrow \X^{\ast}\to X^{\ast}$ factors through $\mathring{\D}\hookrightarrow \X$, then by Lemma~\ref{lemma for comparing schematic cusp to adic cusp} it factors through the map $\Spec(\mathcal O_{L}\cc q)\to X^{\ast}$. Consequently, $E$ is a Tate curve and we obtain a parameter $q_E\in \mathcal O_S(S)$ as the image of $q\in \mathcal O_{L}\cc q\subseteq \O(\D)$ on global sections. This is locally topologically nilpotent because $q\in \mathcal O(\D)$ is locally topologically nilpotent. 
	
	Conversely, assume that $E$ is a Tate curve such that $q_E\in \mathcal O(S)$ is topologically nilpotent, with $\Gamma^p$-level structure associated to $x$. The latter condition implies that $\O(S)$ is naturally an $L$-algebra.
	It therefore suffices to consider the case that $S=\Spa(B,B^{+})$ is an affinoid adic space over $\Spa(L,\mathcal O_L)$. The condition that $q_E$ is topologically nilpotent implies that for any $x$ there is $n$ such that $|q_E(x)|^n\leq |\varpi|$. Since $S$ is affinoid and thus quasicompact, we can find $n$ that works for all $x\in S$. Similarly, since $E$ is a Tate curve, $q_E\in B$ is a unit and we thus have $0<|q_E(x)|$ for all $x\in S$. Again by compactness, we can find $m$ such that $|\varpi|^m\leq |q_E|$. 
	But then $q_E^n/\varpi,\varpi^m/q_E\in B^{+}$ and there is a natural morphism of affinoids
	\[(L\langle q,q^n/\varpi,\varpi^m/q\rangle,\mathcal O_{L}\langle q,q^n/\varpi,\varpi^m/q\rangle)\,\rightarrow\, (B,B^{+}),\quad q\mapsto q_E\]
	through which the map $\mathcal O_{L}\cc{q}\rightarrow B$ defining the Tate curve structure factors. Since the algebra on the left defines an affinoid open of $\mathring{\D}$, this gives the desired factorisation.
\end{proof}

\begin{Definition}
	Consider $\O_L\cc q$ with the $p$-adic topology, that is we suppress the topology coming from $q$.
	Let $\O_L\lauc{q}:=\O_L\cc{q}^{\wedge}$ be the $p$-adic completion.	Explicitly,
	\[\O_L\lauc{q}=\big\{\textstyle\sum_{n\in\Z} a_nq^n\in \O_L\llbracket q^{\pm 1}\rrbracket\mid a_n\to 0 \text{ for }n\to-\infty\big\}.\]
	This is a discrete valuation ring with maximal ideal $(p)$ and residue field $k_L\cc q$.
\end{Definition}

\begin{Remark}\label{remark: Tate curve of good reduction}
	The adic space $S=\Spa(\O_L\lauc{q}[\tfrac{1}{p}],\O_L\lauc{q})$ consists of a single point. As we have suppressed the $q$-adic topology, it is clear that $q$ is not topologically nilpotent on $S$. We conclude that the map 
	\[S\rightarrow \Spec \mathcal O_{L}\cc{q}\to X^{\ast}\]
	does not factor through $\mathring{\D}\hookrightarrow \X^{\ast}$ even though it corresponds to a Tate curve. The point is that this Tate curve has good reduction: Concretely, this means that it is already an elliptic curve over $\mathcal O_{L}\cc{q}$. Its reduction mod $\mathfrak m_L$ is simply the Tate curve $\Tate(q)$ over $k_L\cc{q}$. It therefore gives rise to a point in the good reduction locus $\mathcal X_{\gd}\subseteq \X\subseteq \X^{\ast}$.
\end{Remark}
We can enlarge the Tate parameter space $\mathcal D$ so that it includes the above example:

\begin{Definition}\label{d: OK bb q^1/p^infty _p}
	Let $\overline{\D}=\Spa(\O_L\llbracket q\rrbracket[\tfrac{1}{p}],\O_L\llbracket q\rrbracket)$ where, contrary to our usual convention, $\O_L\llbracket q\rrbracket$ is endowed with the $p$-adic topology. We let $\O_L\llbracket q^{1/p^\infty}\rrbracket _p:=(\varinjlim_m \O_L\llbracket q^{1/p^m}\rrbracket )^{\wedge}$ where, crucially, the completion is the $p$-adic one. This is a perfectoid $\O_L$-algebra. If we use the $(p,q)$-adic topology instead, we obtain a $(p,q)$-adically complete ring $\O_L\llbracket q^{1/p^\infty}\rrbracket $. There is a natural inclusion $\O_L\llbracket q^{1/p^\infty}\rrbracket _p\hookrightarrow\O_L\llbracket q^{1/p^\infty}\rrbracket $, but this is not an isomorphism: For example, $\sum_{n\in\N} q^{n+\frac{1}{p^n}}$ defines an element contained in the codomain but not in the image.
	
	As before, we emphasize that $\overline{\D}$ depends on our chosen cusp $x$. If this cusp is not clear from the context, we shall also write $\overline{\D}_x$ for the parameter space associated to $x$.
\end{Definition}
\begin{Lemma}\label{l:overline{D} is sousperfectoid}
	\begin{enumerate}
	\item The Huber pair $(\O_L \llbracket q\rrbracket[\tfrac{1}{p}],\O_L \llbracket q \rrbracket)$, where $\O_L \llbracket q\rrbracket$ is endowed with the $p$-adic topology, is sous-perfectoid in the sense of \cite[\S6.3]{ScholzeBerkeleyLectureNotes}, and thus sheafy.
	\item We have an open immersion $
	\D=\cup_m\overline{\D}(|q|\leq |p|^{1/p^m})\hookrightarrow \overline{\D}$.
	\item We have  an open immersion
	$\Spa(\O_L\lauc{q}[\tfrac{1}{p}],\O_L\lauc{q})= \overline{\D}(|q|\geq 1)\hookrightarrow \overline{\D}$.
	\end{enumerate}
\end{Lemma}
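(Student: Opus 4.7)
The plan is to treat the three parts separately; parts (2) and (3) reduce to rational-open bookkeeping in $\overline{\D}$, and I expect the main obstacle to be producing an explicit continuous splitting in (1). For (1), the natural perfectoid cover is $B=\O_L\bb{q^{1/p^\infty}}_p[\tfrac{1}{p}]$, with integral subring $\O_L\bb{q^{1/p^\infty}}_p$ from Definition~\ref{d: OK bb q^1/p^infty _p}. I would first verify that $B$ is perfectoid: it is $p$-adically complete and $p$-torsion-free by construction, carries compatible $p$-power roots of $q$, and modulo a pseudo-uniformiser $\varpi_L\in\O_L$ with $\varpi_L^p\mid p$ (which exists since $L$ is a finite hence perfectoid extension of $K$) Frobenius is surjective because it lifts on $\O_L$ and sends $q^{i/p^{n+1}}\mapsto q^{i/p^n}$. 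To produce an $A$-linear splitting, I would use that for each $n$ the ring $\O_L\bb{q^{1/p^n}}$ is free as an $\O_L\bb{q}$-module with basis $\{q^{i/p^n}\}_{0\le i<p^n}$, and let $\pi_n\colon \O_L\bb{q^{1/p^n}}\to \O_L\bb{q}$ denote projection onto the $i=0$ summand. The identity $q^{i/p^n}=q^{ip/p^{n+1}}$ shows $\pi_{n+1}$ restricts to $\pi_n$, so the $\pi_n$ assemble over the colimit, and being contractive in the $p$-adic topology they extend uniquely to a continuous $\O_L\bb{q}$-linear retraction $\O_L\bb{q^{1/p^\infty}}_p\to \O_L\bb{q}$. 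Inverting $p$ yields the desired topological $A$-linear splitting of $A\to B$, so $A$ is sous-perfectoid and sheafiness follows from the standard formalism.

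For (2), I would exploit that by construction $\D$ is the adic generic fibre of $\Spf \O_L\bb{q}$ with the $(p,q)$-adic topology, while $\overline{\D}$ is the adic spectrum of the same underlying Huber pair with only the $p$-adic topology on $\O_L\bb{q}$. The points of $\D$ are therefore precisely those points of $\overline{\D}$ at which $q$ is additionally topologically nilpotent, and on the function side the topology is strictly refined. The standard description of the open unit disc as an ascending union of closed subdiscs $\{|q|\le |p|^{1/n}\}$, combined with the cofinality of $n=p^m$ among positive integers, identifies $\D$ with $\cup_m \overline{\D}(|q|\le|p|^{1/p^m})$ both set-theoretically and at the level of rings of functions, since each member is computed inside $\overline{\D}$ by the same rational-open localisation. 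This realises $\D$ as an ascending union of rational opens in $\overline{\D}$, hence as an open subspace.

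For (3), I would first observe that $q\in \O_L\bb{q}=\O^+(\overline{\D})$, so $|q|\le 1$ holds everywhere on $\overline{\D}$ and $\{|q|\ge 1\}=\{|q|=1\}$ is automatically a rational subspace on which $q$ becomes a unit. Computing its structure sheaf directly, the ring of functions is the $p$-adic completion of $\O_L\bb{q}[\tfrac{1}{p},\tfrac{1}{q}]=\O_L\cc{q}[\tfrac{1}{p}]$, which is $\O_L\lauc{q}[\tfrac{1}{p}]$ by Definition~\ref{d: OK bb q^1/p^infty _p}. The ring of integral elements is $\O_L\lauc{q}$ itself, because this is a discrete valuation ring with uniformiser $p$ and residue field $k_L\cc{q}$, hence automatically integrally closed in its fraction field $\O_L\lauc{q}[\tfrac{1}{p}]$; no further normalisation is needed.
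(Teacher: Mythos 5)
Your proof is correct and follows essentially the same route as the paper: your splitting in (1) is exactly the paper's map (the ``traces'' $q^{i/p^n}\mapsto 0$ for $0<i<p^n$, assembled over $n$ and extended to the $p$-adic completion), and (2)--(3) are handled by the same rational-subset bookkeeping, with the paper recording in (2) the one computation worth making explicit, namely $\O_L\llbracket q\rrbracket\langle \tfrac{q^m}{p}\rangle[\tfrac{1}{p}]=\O_L\langle \tfrac{q^m}{p}\rangle[\tfrac{1}{p}]$, which is the precise content of your phrase ``computed by the same rational-open localisation''. No gaps; if anything you supply more detail than the paper (perfectoidness of $\O_L\llbracket q^{1/p^\infty}\rrbracket_p$, compatibility of the $\pi_n$, integral closedness in (3)).
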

Parts (1)-(2) of the lemma say that we may think of $\overline{\D}$ as a compactification of $\D$, albeit a non-standard one. As discussed in Example~\ref{ex: rank-2-point} below, $\mathcal D$ and $\overline{\D}(|q|\geq 1)$ form a disjoint open cover of $\overline{\D}$ up to one additional point of rank $2$ which lies between them.
\begin{proof}
	 The traces $\O_L\llbracket q^{1/p^n}\rrbracket\to \O_L\llbracket q\rrbracket$, $q^{i/p^n}\mapsto 0$ for $0<i<p^n$ in the $p$-adically completed limit give rise to an $\O_L\bb{q}$-linear section $\O_L \llbracket q^{1/p^\infty}\rrbracket_p\to \O_L \llbracket q\rrbracket$. This shows the first part.
	 
	 Part (3) is clear as $\overline{\D}(|q|\geq 1)$ is formed by adjoining $q^{-1}$ and completing $p$-adically.
	 
	 Part (2) follows from $\D=\cup_m \D(|q|^m\leq |p|)$ and \[\O(\overline{\D}(|q|^m\leq |p|))=\O_L\llbracket q\rrbracket\langle \tfrac{q^m}{p}\rangle[\tfrac{1}{p}]=\O_L\langle \tfrac{q^m}{p}\rangle[\tfrac{1}{p}]=\O(\D(|q|^m\leq |p|)).\qedhere\]
\end{proof}

\begin{Lemma}\label{l:full-Tate-curve-parameter-space}
	For every cusp of $\X^{\ast}$ there is a natural morphism $\overline{\D}\to \X^{\ast}(\epsilon)$, extending the  morphism $\D\hookrightarrow \X^{\ast}(\epsilon)$. The fibre of the good reduction locus is precisely $\overline{\D}(|q|\geq 1)$.
\end{Lemma}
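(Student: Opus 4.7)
The plan is to mirror the construction of $\D\hookrightarrow \mathcal X^{\ast}$ from Lemma~\ref{l: Conrad's theorem on generic fibres of completions around the cusp}, merely replacing the $(p,q)$-adic topology on $\O_L\bb q$ by the weaker $p$-adic topology. Starting from the schematic morphism $\Spec(\O_L\bb q)\to X^{\ast}_{\O_K}$ of Prop.~\ref{p:completion-at-cusp} and observing that $\O_L\bb q$ is already $p$-adically complete, I obtain a morphism of $p$-adic formal schemes $\Spf(\O_L\bb q)\to \mathfrak X^{\ast}$; passing to adic generic fibres yields the desired morphism $\overline{\D}\to \mathcal X^{\ast}$. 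Compatibility with $\D\hookrightarrow \mathcal X^{\ast}$ follows from a commutative diagram of ringed spaces as in Lemma~\ref{lemma for comparing schematic cusp to adic cusp}: both maps factor through the same underlying schematic morphism, and Lemma~\ref{l:overline{D} is sousperfectoid}(2) identifies $\D$ with the open locus $|q|<1$ inside $\overline{\D}$.

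To check that the image lies in $\mathcal X^{\ast}(\epsilon)$, I will in fact show it lies in $\mathcal X^{\ast}(0)$. Pick a formal affine open $\Spf A\subseteq \mathfrak X^{\ast}$ around the cusp on which the Hodge bundle is trivial, and let $\widetilde{\Ha}\in A$ be a lift of the Hasse invariant through this trivialisation. Its pullback along $A\to \O_L\bb q$ reduces modulo $\mathfrak m_L$ to a power series in $k_L\bb q$ with nonzero constant term: at the cusp $q=0$ of the special fibre, the degenerate universal Tate curve is a N\'eron polygon whose identity component is a torus, and its Hasse invariant therefore equals $1$. Such a power series is a unit in $k_L\bb q$, so $\widetilde{\Ha}$ is a unit in $\O_L\bb q$, and hence $|\widetilde{\Ha}|\equiv 1$ on $\overline{\D}$.

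For the preimage of the good reduction locus, I argue via integral models: $\X_{\gd}\subseteq \mathcal X^{\ast}$ is the adic generic fibre of the open formal subscheme $\mathfrak X\subseteq \mathfrak X^{\ast}$ complementary to the cuspidal divisor. The latter pulls back in $\Spf(\O_L\bb q)$ to $V(q)$, and its open complement is $\Spf(\O_L\lauc q)$, the $p$-adic completion of $\O_L\bb q[q^{-1}]$. By Lemma~\ref{l:overline{D} is sousperfectoid}(3), passing to adic generic fibres then returns $\overline{\D}(|q|\geq 1)$, which is therefore the preimage of $\X_{\gd}$.

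The most delicate point is this last identification. Algebraically, it rests on knowing that inverting $q$ and subsequently $p$-adically completing yields exactly the ring $\O_L\lauc q$ of Lemma~\ref{l:overline{D} is sousperfectoid}. Conceptually, whereas on $\D$ the pullback of $V(q)$ shrinks to the single cusp point, on $\overline{\D}$ the weaker $p$-adic topology lets it fill out the entire open subdisc $|q|<1$; this is precisely what makes room inside $\overline{\D}$ for the good reduction Tate curve point of Rem.~\ref{remark: Tate curve of good reduction}.
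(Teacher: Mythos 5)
Your construction of the morphism $\overline{\D}\to\X^{\ast}(\epsilon)$ coincides with the paper's: take the schematic map $\Spec(\O_L\bb q)\to X^{\ast}_{\O_K}$, view $\O_L\bb q$ as a $p$-adic formal scheme, and pass to the adic generic fibre. You add a verification that the image lands in $\X^{\ast}(0)\subseteq\X^{\ast}(\epsilon)$ by showing that a lift $\widetilde{\Ha}$ of the Hasse invariant pulls back to a unit of $\O_L\bb q=\O^+(\overline{\D})$; the paper leaves this implicit (having noted earlier that the supersingular locus is disjoint from the cusps), and your unit argument is indeed what is needed to control $|\widetilde{\Ha}|$ at the extra higher-rank points of $\overline{\D}$ that do not lie in $\D$. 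For the second assertion your route genuinely differs from the paper's: you compute the preimage of the open formal subscheme $\mathfrak X\subseteq\mathfrak X^{\ast}$ inside $\Spf(\O_L\bb q)$, identify it as $\Spf(\O_L\lauc q)$ (the $p$-adic completion of $\O_L\bb q[q^{-1}]$), and conclude via compatibility of adic generic fibres with open formal subschemes together with Lemma~\ref{l:overline{D} is sousperfectoid}(3). The paper instead argues through the moduli interpretation: a morphism $\Spa(R,R^{+})\to\X_{\gd}$ is an elliptic curve over $R^{+}$, and the Tate curve extends over $\O^{+}$ exactly where $q\in\O^{+,\times}$, i.e.\ where $|q|\geq 1$. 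Both arguments are correct; yours stays entirely within formal geometry and the specialisation map, which makes the set-theoretic identification of the preimage cleaner, while the paper's is shorter and makes the modular meaning of $\overline{\D}(|q|\geq 1)$ (good reduction of $\Tate(q)$) transparent.
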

\begin{proof} For the first part, we take the morphism $\Spec(\O_L\llbracket q\rrbracket )\to X_{\O_K}^{\ast}$ from Lemma~\ref{lemma for comparing schematic cusp to adic cusp}, complete $p$-adically and pass to the generic fibre.
	
	To see the second part, we note that morphisms $\Spa(R,R^+)\to \mathcal X_{\gd}$ correspond to elliptic curves over $R^+$. The locus of $\overline{\D}$ where the Tate curve is defined over $\O^+$ is precisely that where $q$ is in $\O^{+,\times}$. Equivalently, as we have $|q|\leq 1$ on $\overline{\D}$,  this means that $|q|\geq 1$.
\end{proof}
\begin{Remark}
	\begin{enumerate}
		\item The map $\overline{\D}\to \X^{\ast}(\epsilon)$ is no longer an open immersion. Indeed, $\overline{\D}$ is not even of locally topologically finite type over $K$. We therefore do not have a rigid analogue of $\overline{\D}$, and can only capture this map in the setting of adic spaces.
		\item Here and in the following, by base-change we could  replace $\O_{L}\llbracket q\rrbracket$ by the smaller ring $\Z_p\llbracket q\rrbracket\hotimes_{\Z_p}\O_{L}$ where the completion is $p$-adic. But we will not need this.
		\item  If we form the union over all cusps $\mathcal C$, we get two maps \[(i)\: \bigsqcup_{x\in \mathcal C} {\D}_x\times \mathcal X_{\gd}\to \mathcal X ,\quad\quad (ii) \: \bigsqcup_{x\in \mathcal C} \overline{\D}_x\times \mathcal X_{\gd}\to \mathcal X\]
		of which (ii) is a cover (for example in the $v$-topology), in contrast to (i) whose image misses some points of rank 2. This is what we shall discuss next.
	\end{enumerate}
\end{Remark}

\subsection{The points of the adic space \texorpdfstring{$\mathcal X^{\ast}$}{X*}}\label{s:points-of-Xast}
Our next goal is to see how much of the adic space $\mathcal X^{\ast}$ is captured by the Tate curve parameter spaces $\D$ in conjunction with the good reduction locus. The answer is that they give everything except for a finite set of higher rank points whose moduli interpretation in terms of Tate curves we shall describe. 

Indeed, since the loci $\mathcal X_{\gd}$ and $\D$ in $\X^{\ast}$ are the preimages of a cover by an open and a closed set of the special fibre $X_{k}$ under the specialisation map $ |\mathcal X^{\ast}|\rightarrow |X_{k}|$, it follows from the adaptation of \cite[Lemma~7.2.5]{de1995crystalline},  to the setting of \cite{conrad2006modular} that if we worked with rigid spaces, then $\D$ and  $\mathcal X$ would cover $\mathcal X^{\ast}$ set-theoretically, but not admissibly so \cite[\S 7.5.1]{de1995crystalline}.

\begin{Proposition}\label{p: classification of points of X^ast}
	Let $z\in \mathcal X^{\ast}$ be any point, then we are in either of the following cases:
	\begin{enumerate}[label=(\alph*)]
		\item $z\in \mathcal X^{\ast}$ is contained in the good reduction locus $\mathcal X_{\gd}$,
		\item $z\in \D_x\hookrightarrow \mathcal X^{\ast}$ is contained in a Tate parameter space around a cusp $x$ of $\X^{\ast}$,
		\item $z\in \mathcal X^{\ast}\backslash\mathcal X_{\gd}$ is of rank $>1$ and its unique height $1$ vertical generisation $z'$ is in $\mathcal X_{\gd}$.	\end{enumerate}
	When we denote by $j$ the global function on $\X$ induced by the $j$-invariant $j\colon \X\rightarrow \A^{1,\an}$, then the above are respectively equivalent to
	\begin{enumerate}[label=(\alph*')]
		\item $|j(z)|\leq 1$,
		\item $|j(z)|> 1$ and its inverse is cofinal in the value group,
		\item $|j(z)|> 1$ and its unique rank 1 generisation $z'$ satisfies $|j(z')|=1$. 
	\end{enumerate}
\end{Proposition}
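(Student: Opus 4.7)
The plan is to study any $z \in \mathcal X^\ast$ through its image under the finite morphism $j \colon \mathcal X^\ast \to \mathbb P^{1,\an}_K$ given by the $j$-invariant, and thereby reduce the claimed trichotomy on $\mathcal X^\ast$ to the standard classification of points of $\mathbb P^{1,\an}_K$ into the closed unit disc around $0$, the formal open unit disc around $\infty$, and the higher-rank points lying between them.

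I would first establish (a) $\Leftrightarrow$ (a'). Since $X^\ast_{\O_K}$ is by construction the normalisation of $\mathbb P^1_{\O_K}$ in $X_{\O_K}$ along the $j$-map, the morphism $X_{\O_K} \to \mathbb A^1_{\O_K}$ is finite flat. Passing to $p$-adic completions and then to adic generic fibres yields a finite morphism $\mathcal X_{\gd} \to \{|j|\leq 1\} \subseteq \mathbb P^{1,\an}_K$ which identifies $\mathcal X_{\gd}$ with the $j$-preimage of the closed unit disc around $0$. In particular, $z \in \mathcal X_{\gd}$ iff $|j(z)|\leq 1$.

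Next, for $z \notin \mathcal X_{\gd}$, the image $j(z)$ lies in the open subspace $\{|u|<1\}\subseteq \mathbb P^{1,\an}_K$, where $u := 1/j$ is a local coordinate at $\infty$; I split into two subcases. If $|u(z)|$ is topologically nilpotent (equivalently, $|j(z)|^{-1}$ is cofinal in the value group), then $j(z)$ lies in the formal open unit disc around $\infty$ in $\mathbb P^{1,\an}_K$. I claim that the $j$-preimage of this disc in $\mathcal X^\ast$ is exactly $\bigsqcup_x \mathcal D_x$, the disjoint union of the Tate parameter spaces over the cusps of $\X^\ast$. Indeed, in an affine formal neighbourhood $\Spf(A)\subseteq \mathfrak X^\ast$ of any cusp $x$, the cusp is cut out by a non-zero-divisor $f_x$ (as in the proof of Lemma~\ref{l: Conrad's theorem on generic fibres of completions around the cusp}), and since schematically $j^{-1}(\infty)^{\red} = \bigsqcup_x\{x\}$ in $X^\ast_{\O_K}$, one has $u = f_x^{e_x}\cdot g_x$ for some unit $g_x$ after shrinking, where $e_x$ is the ramification index of $j$ at $x$. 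Therefore ``$u$ topologically nilpotent at $z$'' is equivalent to ``$f_x$ topologically nilpotent at $z$'' for some cusp $x$, which by that lemma characterises $z \in \mathcal D_x$; the $\mathcal D_x$ are pairwise disjoint as each contains only one cusp. This gives (b) $\Leftrightarrow$ (b'). If instead $|u(z)|$ is not topologically nilpotent, then $z$ cannot have rank $1$ (in a rank-$1$ value group, $|u(z)|<1$ forces $|u(z)|^n\to 0$), so $z$ has rank $>1$, and its unique rank-$1$ vertical generisation $z'$ satisfies $|u(z')|=1$, equivalently $|j(z')|=1$, hence $z'\in \mathcal X_{\gd}$ by the first step. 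This gives (c) $\Leftrightarrow$ (c').

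Since (a'), (b'), (c') are mutually exclusive and exhaustive for any $z\in \mathcal X^\ast$, the same holds for (a), (b), (c). The main obstacle is case (b'): one must recognise the $j$-preimage of the formal open unit disc at $\infty$ as precisely $\bigsqcup_x \mathcal D_x$, which relies on the schematic identification of cusps with the reduced fibre $j^{-1}(\infty)^{\red}$ and a careful comparison of the local parameters $f_x$ at the cusps with $u=1/j$ through the ramification index $e_x$.
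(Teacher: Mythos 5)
Your proposal is correct, but the key step $(b)\Leftrightarrow(b')$ is proved by a genuinely different route from the paper's. The paper argues moduli-theoretically: it reduces to $\Spa(C,C^+)$-points with $C$ algebraically closed, inverts the series $j=q^{-1}+744+\dots$ to manufacture a topologically nilpotent Tate parameter $q_E$ out of a topologically nilpotent $j_E^{-1}$, uses that over an algebraically closed field the $j$-invariant determines the curve, and then reads off membership in $\D_x$ from the moduli description of Lemma~\ref{l:moduli interpretation of adic Tate parameter space}. You instead argue purely through the geometry of the finite map $j$ near the boundary: you identify $\D_x$ with the locus where the local equation $f_x$ of the cusp is locally topologically nilpotent (this is exactly the content of the proof of Lemma~\ref{l: Conrad's theorem on generic fibres of completions around the cusp}) and compare $f_x$ with $u=1/j$ via $u=f_x^{e_x}g_x$. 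This buys you an argument that avoids both the series inversion and the passage to an algebraically closed point, at the cost of not producing the Tate-curve moduli description of the points of type $(b)$, which the paper's argument gives for free and uses elsewhere (e.g.\ in Example~\ref{ex: rank-2-point}).

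Two steps you leave implicit should be spelled out. First, to compare $|u(z)|$ with $|f_x(z)|$ you need $z$ to lie in $\Spa(A[\tfrac{1}{p}])$ for an affine formal neighbourhood $\Spf(A)\subseteq\mathfrak X^{\ast}$ of some cusp; this follows because $z\notin\X_{\gd}$ forces the specialisation of $z$ under $|\mathcal X^{\ast}|\to|X^{\ast}_k|$ to be a cusp of the special fibre, but it is not automatic from ``$|u(z)|<1$'' alone. Second, the factorisation $u=f_x^{e_x}g_x$ with $g_x$ a unit must hold on an actual formal neighbourhood, not merely in the completed local ring $\O_{L_x}\llbracket q\rrbracket$ where the expansion $j=q^{-e_x}+744+\dots$ lives; this uses that $X^{\ast}_{\O_K}$ is regular and that the cusp is a Cartier divisor, so that $(u)=(f_x^{e_x})$ as ideals after shrinking. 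Once this is in place, $g_x\in A^{\times}$ gives $|g_x(z)|=1$, hence $|u(z)|=|f_x(z)|^{e_x}$, and the two cofinality conditions coincide as you claim. With these points made explicit the proof is complete.
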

We note that the analogous description also holds after adding level at $p$.
\begin{proof}
	The space $\X^{\ast}$ is analytic, hence the valuation $v_z$ is always microbial. This implies that $z$ has a unique generisation $z'$ of height 1, so statements ($c$) and ($c$') make sense.
	
	The case of the cusps is clear, so let us without loss of generality assume that $z\in \X$.
	
	We start by proving that ($a$) and ($a'$) are equivalent. Recall that $X_{\O_K}$ is the preimage of $\A_{\O_K}^{1}$ under the morphism of $\mathcal O_K$-schemes $j\colon X_{\O_K}^{\ast}\rightarrow \P_{\O_K}^{1}$. Upon formal completion and passing to the adic generic fibre, $j$ becomes $j^{\an}$ while $\A^{1}_{\O_K}\subseteq \P^{1}_{\O_K}$ is sent to the open disc $B_{1}(0)\subseteq \A_K^{1,\an}\subseteq \P_K^{1,\an}$ defined by $|j(z)|\leq 1$. Since the adic generic fibre of the completion of $X_{\O_K}\subseteq X_{\O_K}^{\ast}$ is $\X_{\gd}\subseteq \mathcal X^{\ast}$, this shows that $\X_{\gd}$ is precisely the preimage of $B_{1}(0)$ under $j^{\an}\colon \X=X^{\an}\rightarrow \A^{1,\an}$.
	
	Next, let us prove that ($b$) and ($b$') are equivalent. We can always find a morphism
	\[r_z\colon \Spa(C,C^{+})\rightarrow \X\]
	where $(C,C^{+})$ is a complete algebraically closed non-archimedean field, such that $z$ is in the image of $r_z$. It thus suffices to show that $r_z$ factors through some $\D\hookrightarrow \X^{\ast}$. By Lemma~\ref{l:moduli interpretation of adic Tate parameter space} it suffices to show that ($b$') holds if and only if the elliptic curve $E$ over $C$ that $r_z$ represents is a Tate curve with nilpotent parameter $q_E\in C$. 
	
	The image of $j$ in $C$ is precisely the $j$-invariant $j_E$ of $E$. Since in a non-archimedean field the elements with cofinal valuation are precisely the topologically nilpotent ones, condition  ($b$') is equivalent to $j_E\ne 0$ and $j_E^{-1}$ being topologically nilpotent. 
	We can now argue like in the classical case of $p$-adic fields to see that this is equivalent to $E$ being a Tate curve with $q_E$ topologically nilpotent: 
	Assume the latter, then $j_E=q_E^{-1}+744+196884q+\dots \ne 0$ has valuation $|j_E|=|1/q_E|$ in $C$ and thus $j_E$ satisfies ($b$').
	To see the converse, recall that in the formal Laurent series ring $\Z\cc{q}$ the formula $j(q)=q^{-1}+744+\dots$ reverses to 
	\[q(j^{-1}) = j^{-1}+744j^{-2}+750420j^{-3}+\dots \in \Z\llbracket j^{-1}\rrbracket. \]
	If now $j_E^{-1}$ is topologically nilpotent, the above series converges in $C$ and we obtain a topologically nilpotent element $q_E\in C^{\times}$ with $j_E=1/q_E+744+\dots=j(q_E)$. The Tate curve $\Tate(q_E)_C$ over $C$ thus has the same $j$-invariant as $E$, and since $C$ is algebraically closed we conclude that $E\cong \Tate(q_E)_C$. This shows that ($b$) and ($b$') are equivalent.
	
	Next, let us show that ($c$') holds if and only if ($a$') and ($b$') don't hold. Recall that we always have a unique height 1 vertical generisation $z'$. Clearly $|j(z)|\neq 0$ if and only if $|j(z')|\neq 0$, and if in this case $|j(z)|^{-1}$ is cofinal then $|j(z')|^{-1}$ is cofinal. This implies that (b') and (c') can't hold at the same time. On the other hand, if $|j(z)|>1$, then either $|j(z')|=1$, or $|j(z')|>1$ in which case $|j(z')|^{-1}<1$ is cofinal because $v_{z'}$ has height 1. This shows that if $|j(z)|>1$ then we are either in case ($b$') or in ($c$').
	
	Since clearly ($c$) implies that ($a$) and ($b$) do not hold, it remains to prove ($c$') implies ($c$). But this follows from applying the equivalence of ($a$) and ($a'$) first to $z$ and then to $z'$.  
\end{proof}

\begin{Example}\label{ex: rank-2-point}
	Let us work out an example for an elliptic curve corresponding to a point of type $(c)$: Let $x$ be a cusp and $L=L_x$.	
	Let $\R_{>0}\times \gamma^{\Z}$ be the totally ordered group for which $\gamma$ is such that $z<\gamma^n<1$ for all $n\in \Z_{\geq 1}$ and all $z\in \R_{<1}$. Equip the field $F=\mathcal O_{L}\lauc{q}[\tfrac{1}{p}]$ with the valuation 
	\[v_{1^{-}}\colon F\rightarrow (\R_{> 0}\times \gamma^{\Z}) \cup \{0\},\quad \sum a_nq^n\mapsto \max_{n\in\Z} |a_n|\gamma^n. \]
	
	Recall that $\m_{L}\subseteq \mathcal O_{L}$ is the maximal ideal. The valuation subring of $F$ defined by $v_{1^{-}}$ is
	\[ F^{+}=\left\{\sum_{n\gg -\infty}^{\infty}a_nq^n \in \mathcal O_{L}\lauc{q} \middle | a_n\in \mathfrak m_L\text{ for all }n<0\right\}. \]
	Indeed, we have $v_{1^{-}}(\sum_{n\gg -\infty}^{\infty}a_nq^n)\leq 1$ if and only if $|a_n|\gamma^n\leq 1$ for all $n$. For $n\geq 0$ we have $|a_n|\gamma^n\leq 1$ if and only if $|a_n|\leq 1$, that is $a_n\in \mathcal O_{L}$. For $n<0$, on the other hand, $|\gamma|^n$ is ``infinitesimally'' bigger than $1$, so that $|a_n|\gamma^n\leq 1$ if and only if $|a_n|<1$, that is $a_n\in \m_L$.
	
	The Tate curve $\Tate(q^{e})$ over $F$ for $e=e_x$ equipped with the $\Gamma^p$-structure corresponding to $x$, gives rise to a map 
	\[ \varphi\colon \Spa(F, F^{+})\rightarrow \mathcal X^{\ast}.\]
	We claim that $\varphi$ sends $v_{1^{-}}$ neither into $\mathcal X_{\gd}$ nor into any of the Tate parameter spaces $\D\subseteq \mathcal X^{\ast}$. Indeed, the $j$-invariant of $\Tate(q^{e})$ is 
	\begin{equation}\label{equation: j-invariant of Tate curve over rank 2 point}
	j=q^{-e}+744+q^e(\dots) \not \in F^{+}
	\end{equation}
	which is not contained in $F^+$ by the above description. This shows that $\Tate(q^e)$ does not extend to an elliptic curve over $F^+$. On the other hand,  $q$ is not locally topologically nilpotent in $L$. Thus, by Lemma~\ref{l:moduli interpretation of adic Tate parameter space}, $v_{1^{-}}$ does not land in the open subspaces $\D_x\hookrightarrow \X^{\ast}$. 
	
	Prop.~\ref{p: classification of points of X^ast} explains this as follows: The unique rank 1 vertical generisation of $v_{1^{-}}$ is
	\[ v\colon  F \rightarrow \R_{\geq 0},\quad \sum a_nq^n\mapsto \max_{n\in\Z} |a_n|\]
	with larger valuation ring $\mathcal O_{L}\lauc{q}\supseteq F^{+}$.
	We see from equation~\eqref{equation: j-invariant of Tate curve over rank 2 point} that 
	\[|j(v_{1^{-}})|=\gamma^{-e}<1, \text{ while } |j(v)|=1.\]
	This shows that $\varphi$ sends $v_{1^{-}}$ to one of the points of type (c) in Prop.~\ref{p: classification of points of X^ast}, while its generisation $v$ goes to the point of $\mathcal X_{\gd}$ defined in Remark~\ref{remark: Tate curve of good reduction}.
\end{Example}
\subsection{Tate curve parameter spaces at level $\Gamma_0(p^n)$}
Next, we discuss the behaviour of the Tate curve parameter spaces in the anticanonical tower. For this we first recall the situation at the cusps on the level of schemes:

Consider the forgetful morphism $f\colon X^{\ast}_{\Gamma_0(p)}\to X^{\ast}$. Over each cusp of $X^{\ast}$ there are precisely two cusps of $X^{\ast}_{\Gamma_0(p)}$: One is called the \'etale cusp, it corresponds to the $\Gamma_0(p)$-level structure $\mu_{p}\subseteq \Tate(q)[p]$ on the Tate curve. The other is the ramified cusp, it corresponds to the level structure $\langle q^{1/p}\rangle \subseteq \Tate(q)[p]$. In particular, this latter level structure is only defined over $\Z\cc{q^{1/p}}$, and by Prop.~\ref{p:completion-at-cusp} the completion at this cusp is given by 
\[\Spf(\O_L\llbracket q^{1/p}\rrbracket  )\to X_{\Gamma_0(p)}^{\ast}.\]
 The names reflect that the map $X^{\ast}_{\Gamma_0(p)}\to X^{\ast}$ is \'etale at the one sort of cusps, but is ramified at the other: Over the \'etale cusp the map induced on completions is the identity
\[\O_L\llbracket q\rrbracket\to \O_L\llbracket q\rrbracket\]
whereas over the ramified cusp it is the inclusion
\[\O_L\llbracket q\rrbracket\to \O_L\llbracket q^{1/p}\rrbracket.\]

For higher level structures $\Gamma_0(p^n)$, the curve $X^{\ast}_{\Gamma_0(p^n)}\to X^{\ast}$ has more cusps of different degrees of ramification $p^i$ with $i\in\{0,\dots,n\}$, and corresponding completions of the form $\Spf(\O_L\llbracket q^{1/p^i}\rrbracket  )\to X^{\ast}_{\Gamma_0(p^n)}$. There is, however, exactly one \'etale cusp, corresponding to the $\Gamma_0(p^n)$-level structure $\mu_{p^n}\subseteq\Tate(q)[p^n]$, and exactly one purely ramified one, corresponding to $\langle q^{1/p^n}\rangle$. Relatively with respect to $X^{\ast}_{\Gamma_0(p^n)}\to X^{\ast}_{\Gamma_0(p)}$,  the purely ramified cusps lie over the ramified cusps of $X^{\ast}_{\Gamma_0(p)}$, while all other cusps of $X^{\ast}_{\Gamma_0(p^n)}$ lie over the \'etale cusps of $X^{\ast}_{\Gamma_0(p)}$.

All constructions from the last two sections now go through with the same proofs for $\X^{\ast}$ replaced by $\X^{\ast}_{\Gamma_0(p^n)}$, and $\O_L\llbracket q\rrbracket  $ replaced by $\O_L\llbracket q^{1/p^i}\rrbracket  $ where $i$ depends on the cusp:
\begin{Definition}
	For any $n\in \Z_{\geq 0}$, we write $\mathcal D_i:=\Spa(L\langle q^{1/p^i}\rangle,\O_L\langle q^{1/p^i}\rangle)(|q|>1)$ for the open unit disc over $L$ in the parameter $q^{1/p^i}$. We write $\mathring{\D}_i$ for the open subspace obtained by removing the origin, i.e.\ the point defined by $q^{1/p^i}\mapsto 0$. When we want to emphasize the dependence on field $L=L_x$ determined by a cusp $x\in \X^{\ast}$, we write these as $\mathcal D_{i,x}$ and $\mathring{\D}_{i,x}$.
\end{Definition}
Then like in Lemma~\ref{l: Conrad's theorem on generic fibres of completions around the cusp}, we have for any cusp $x$ of $\X^{\ast}$ and any cusp of $\X^{\ast}_{\Gamma_0(p^n)}$ over $x$ of ramification degree $p^i$ a canonical open immersion
\[\mathcal D_{i,x}\hookrightarrow  \X^{\ast}_{\Gamma_0(p^n)}.\]

From now on until the next chapter, we shall focus exclusively on the anticanonical locus $\X^{\ast}_{\Gamma_0(p^n)}(\epsilon)_a$. Here the ramification is very easy to describe, by the following proposition:

\begin{Proposition}\label{Proposition: Tate parameter spaces in the Gamma_0-tower}
	Fix a cusp $x\in \mathcal X^{\ast}$.
	\begin{enumerate}
		\item The cusps of  $\X^{\ast}_{\Gamma_0(p^n)}(\epsilon)_a$ are precisely the purely ramified cusps of $\X^{\ast}_{\Gamma_0(p^n)}$. In particular, there is a canonical open immersion $\mathcal D_{n,x}\hookrightarrow \X^{\ast}_{\Gamma_0(p^n)}(\epsilon)_a$ over $x$.
		\item The forgetful map $\mathcal X^{\ast}_{\Gamma_0(p^n)}(\epsilon)_a\to \mathcal X^{\ast}_{\Gamma_0(p^{n-1})}(\epsilon)_a$ gives a bijection between the respective cusps.
		The associated Tate curve parameter spaces fit into Cartesian diagrams
		\begin{equation*}
			\begin{tikzcd}
				\D_{n,x} \arrow[r] \arrow[d,hook] & \D_{n-1,x} \arrow[d,hook]  \\
				\mathcal X^{\ast}_{\Gamma_0(p^n)}(\epsilon)_a \arrow[r]
				& \mathcal X^{\ast}_{\Gamma_0(p^{n-1})}(\epsilon)_a,
			\end{tikzcd}
		\end{equation*}
		where $\D_n\to \D_{n-1}$ is the canonical finite flat map of degree $p$ which sends $q\mapsto q$.
		\item For any adic space $S$ over $(K,\mathcal O_K)$, the $S$-points of $\mathring{\D}_{n,x}\hookrightarrow \X_{\Gamma_0(p^n)}(\epsilon)_a$ correspond functorially to Tate curves $\Tate(q)$ over $\mathcal O(S)$ with topologically nilpotent parameter $q\in\mathcal O(S)$, a $\Gamma^p$-structure corresponding to $x$ and a choice of $p^n$-th root $q^{1/p^n}$ of $q$ defining a $\Gamma_0(p^n)$-structure $\langle q^{1/p^n}\rangle \subseteq \Tate(q)$. 
	\end{enumerate}
\end{Proposition}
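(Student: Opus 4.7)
The strategy is to reduce everything to the classical Katz--Mazur calculus of cusps on the Tate curve via Prop.~\ref{p:completion-at-cusp}, combined with the adic analytic formalism developed in \S\ref{subsection: adic cusps and Tate curves}.

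For part (1), I would first enumerate the cusps of $\mathcal X^{\ast}_{\Gamma_0(p^n)}$ above a fixed cusp $x$ of $\mathcal X^{\ast}$: by Prop.~\ref{p:completion-at-cusp} and the discussion at the start of this subsection, these correspond to the cyclic subgroup schemes $G_n$ of order $p^n$ of $\Tate(q^{e_x})$ up to the relevant Galois action, and each has ramification index $p^{i}$ over $x$ for some $0 \leq i \leq n$. Since $\mathcal D_x$ lies in the ordinary locus, the canonical subgroup of $\Tate(q^{e_x})$ is the multiplicative part $\mu_p$. The anticanonical condition $G_n \cap C_1 = 0$ therefore becomes $G_n\cap \mu_p=0$, which is equivalent to demanding that $G_n$ maps isomorphically onto the \'etale quotient $\Tate(q^{e_x})[p^n]/\mu_{p^n}\cong \underline{\Z/p^n\Z}$. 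Going through the above list, only the purely ramified cusp (with underlying level structure $\langle q^{e_x/p^n}\rangle$) satisfies this, as all the others contain $\mu_{p^j}$ for some $j\geq 1$. The existence of the open immersion $\mathcal D_{n,x}\hookrightarrow \mathcal X^{\ast}_{\Gamma_0(p^n)}(\epsilon)_a$ then follows from the argument of Lemma~\ref{l: Conrad's theorem on generic fibres of completions around the cusp} applied to the formal completion $\Spf(\O_L\bb{q^{1/p^n}})\to \mathfrak X^{\ast}_{\Gamma_0(p^n)}$ provided by Prop.~\ref{p:completion-at-cusp}.

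For part (2), the forgetful morphism sends a level structure $G_n$ to $pG_n$; in particular, it sends $\langle q^{e_x/p^n}\rangle$ to $\langle q^{e_x/p^{n-1}}\rangle$, which gives the claimed bijection between the cusps at consecutive levels. On formal completions at the cusp, the forgetful map becomes the inclusion $\O_L\bb{q^{1/p^{n-1}}}\hookrightarrow \O_L\bb{q^{1/p^n}}$, which on adic generic fibres realises the map $\mathcal D_{n,x}\to \mathcal D_{n-1,x}$ described in the statement. To check that the square is Cartesian, I would observe that the preimage of $\mathcal D_{n-1,x}$ in $\mathcal X^{\ast}_{\Gamma_0(p^n)}(\epsilon)_a$ is the locus where $|q|<1$, which coincides with $|q^{1/p^n}|<1$ since $q^{1/p^n}$ is a uniformiser at the purely ramified cusp, and this is precisely $\mathcal D_{n,x}$.

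Part (3) is then a direct generalisation of Lemma~\ref{l:moduli interpretation of adic Tate parameter space}: a morphism $S\to \mathring{\mathcal D}_{n,x}$ amounts to specifying a locally topologically nilpotent unit $q^{1/p^n}\in \mathcal O_S(S)^\times$, and pulling back the universal data along $\mathring{\mathcal D}_{n,x}\hookrightarrow \mathcal X_{\Gamma_0(p^n)}(\epsilon)_a$ yields the Tate curve $\Tate(q)$ with its tautological $\Gamma_0(p^n)$-structure $\langle q^{1/p^n}\rangle$ and the $\Gamma^p$-structure associated to $x$. The converse direction follows by the same quasi-compactness argument as in the proof of Lemma~\ref{l:moduli interpretation of adic Tate parameter space}. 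The main technical obstacle is the identification in part (1) of the purely ramified cusp as the unique anticanonical one above $x$, which rests on the classical calculus of level structures on the Tate curve; once this is in place, parts (2) and (3) reduce to essentially routine bookkeeping using the moduli interpretation together with the adic analogue of Prop.~\ref{p:completion-at-cusp}.
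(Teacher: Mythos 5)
Your proposal is correct and follows essentially the same route as the paper: identifying the anticanonical cusps via the fact that the canonical subgroup of the Tate curve is $\mu_p$ (the paper phrases this as lying over the ramified cusps of $X^{\ast}_{\Gamma_0(p)}$, you argue directly with the subgroups $G_n$, but this is the same Katz--Mazur calculus), obtaining the open immersion from the adic generic fibre of the formal completion as in Lemma~\ref{l: Conrad's theorem on generic fibres of completions around the cusp}, and verifying the Cartesian square on points via the moduli interpretation of Lemma~\ref{l:moduli interpretation of adic Tate parameter space}. No substantive differences or gaps.
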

\begin{proof}
	Since the canonical subgroup of the Tate curve is given by $\mu_{p}\subseteq \Tate(q)[p]$, the cusps of $\X^{\ast}_{\Gamma_0(p^n)}$ contained in $\X^{\ast}_{\Gamma_0(p^n)}(\epsilon)_a$ are precisely the ones over the ramified cusps in $X^{\ast}_{\Gamma_0(p)}$. But the cusps of $ X^{\ast}_{\Gamma_0(p^n)}$ over the ramified cusps of $ X^{\ast}_{\Gamma_0(p)}$ are precisely the purely ramified ones. This proves (1). Part (3) follows immediately.
	
	The diagram in (2) commutes because by construction it is the generic fibre of a commutative diagram of formal schemes. Since the morphisms are open immersions, it suffices to check that it is Cartesian on the level of points, where it follows from (3) and Lemma~\ref{l:moduli interpretation of adic Tate parameter space}.
\end{proof}

\subsection{Tate parameter spaces of \texorpdfstring{$\mathcal X^{\ast}_{\Gamma_1(p^n)}(\epsilon)_a$}{X*_G_1(p^n)(e)_a}}
We now pass to higher level at $p$ and describe Tate curve parameter spaces of the form $\D_n\hookrightarrow \mathcal X^{\ast}_{\Gamma_1(p^n)}(\epsilon)_a$.
We note that the integral theory of cusps for $\Gamma_1(p^n)$ is slightly complicated in general, see \S4.2 of \cite{conrad2007arithmetic} for a thorough discussion. However, over the anticanonical locus, the story is very simple:

\begin{Lemma}\label{l:Tate parameter spaces of X^{ast}_{Gamma_1(p^n)}}
	Let $x$ be a cusp of $\mathcal X^{\ast}$. Then there are $\Z_p^\times$-equivariant Cartesian squares
		\begin{equation*}
			\begin{tikzcd}
				\underline{(\Z/p^{n+1}\Z)^{\times}} \times \D_{n+1,x} \arrow[d] \arrow[r] &\underline{(\Z/p^n\Z)^{\times}}\times \D_{n,x}\arrow[d, hook] \arrow[r] & \D_{n,x} \arrow[d, hook] \\
				\mathcal X^{\ast}_{\Gamma_1(p^{n+1})}(\epsilon)_a \arrow[r]&\mathcal X^{\ast}_{\Gamma_1(p^n)}(\epsilon)_a \arrow[r] & \mathcal X^{\ast}_{\Gamma_0(p^n)}(\epsilon)_a,
			\end{tikzcd}
		\end{equation*}
		in which the morphism on the top left is given by the reduction $(a,q)\mapsto (\overline{a},q)$.
\end{Lemma}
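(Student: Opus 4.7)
The plan is to set up the moduli description of $\Gamma_1(p^n)$-structures on the Tate curve, mirroring the treatment of $\Gamma_0(p^n)$ in Prop.~\ref{Proposition: Tate parameter spaces in the Gamma_0-tower}. By part (3) of that proposition, the universal anticanonical $\Gamma_0(p^n)$-structure on $\mathring{\D}_{n,x}$ is $\langle q^{1/p^n}\rangle \subseteq \Tate(q)[p^n]$. A refinement to an anticanonical $\Gamma_1(p^n)$-structure is a choice of generator of this cyclic group, namely an element $a\cdot q^{1/p^n}$ with $a\in (\Z/p^n\Z)^{\times}$. Using the adic moduli interpretation of Lemma~\ref{Lemma: adic moduli interpretation of X_Gamma^an}, this defines a canonical morphism
\[
\underline{(\Z/p^n\Z)^{\times}} \times \mathring{\D}_{n,x} \to \X_{\Gamma_1(p^n)}(\epsilon)_a
\]
sending $(a, q^{1/p^n})$ to the data of $\Tate(q)$ with the tame $\Gamma^p$-structure at $x$, $\Gamma_0(p^n)$-structure $\langle q^{1/p^n}\rangle$, and $\Gamma_1(p^n)$-structure $a\cdot q^{1/p^n}$.

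Next, I would extend the above to the cusps. Since the forgetful morphism $\X^{\ast}_{\Gamma_1(p^n)}(\epsilon)_a \to \X^{\ast}_{\Gamma_0(p^n)}(\epsilon)_a$ is finite flat, and on the anticanonical stratum the $\Gamma_1(p^n)$-structures correspond to the free $(\Z/p^n\Z)^{\times}$-set of generators of the anticanonical subgroup, the map remains a $(\Z/p^n\Z)^{\times}$-torsor over the (purely ramified) cusps of $\X^{\ast}_{\Gamma_0(p^n)}(\epsilon)_a$. Pulling back along the open immersion $\D_{n,x}\hookrightarrow \X^{\ast}_{\Gamma_0(p^n)}(\epsilon)_a$ from Prop.~\ref{Proposition: Tate parameter spaces in the Gamma_0-tower} thus produces a finite étale $(\Z/p^n\Z)^{\times}$-torsor over $\D_{n,x}$, and this torsor is trivialised by the canonical generator $q^{1/p^n}$. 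This yields the right Cartesian square, with top arrow the projection.

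For the left square, the forgetful map $\Gamma_1(p^{n+1})\to\Gamma_1(p^n)$ sends a generator $P$ of order $p^{n+1}$ to $pP$ of order $p^n$. Applied to $P=a\cdot q^{1/p^{n+1}}$ with $a\in (\Z/p^{n+1}\Z)^{\times}$, and using $p\cdot q^{1/p^{n+1}}=q^{1/p^n}$ in $\Tate(q)$ (that is, $(q^{1/p^{n+1}})^p=q^{1/p^n}$ on $\G_m$), we obtain $pP=\overline{a}\cdot q^{1/p^n}$, where $\overline{a}$ is the image of $a$ in $(\Z/p^n\Z)^{\times}$. Combining with the identification $q=q$ on the Tate parameter spaces from Prop.~\ref{Proposition: Tate parameter spaces in the Gamma_0-tower}(2) yields the claimed top map $(a,q)\mapsto (\overline{a},q)$, and the left square is Cartesian by the same argument applied to the right square.

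Finally, $\Z_p^{\times}$-equivariance is immediate from the moduli description: the natural $\Z_p^{\times}$-action on the tower $\X^{\ast}_{\Gamma_1(p^n)}(\epsilon)_a$ by rescaling the generator of the $\Gamma_1(p^n)$-structure factors through $(\Z/p^n\Z)^{\times}$ and corresponds to the obvious action on $\underline{(\Z/p^n\Z)^{\times}}$ on top. The main conceptual point is that the trivialisation of the $(\Z/p^n\Z)^{\times}$-torsor at the cusps is provided by the canonical generator $q^{1/p^n}$ on $\D_{n,x}$; this is what makes the anticanonical stratum behave so cleanly and bypasses the subtleties in the general integral theory of $\Gamma_1(p^n)$-cusps (cf.~\cite[\S4.2]{conrad2007arithmetic}).
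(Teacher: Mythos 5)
Your proof is correct and follows essentially the same route as the paper: both reduce to producing a section of the finite étale $(\Z/p^n\Z)^{\times}$-torsor $\X^{\ast}_{\Gamma_1(p^n)}(\epsilon)_a\to\X^{\ast}_{\Gamma_0(p^n)}(\epsilon)_a$ over $\D_{n,x}$, given by the canonical generator $q^{1/p^n}$ of the anticanonical subgroup, and deduce the left square from the compatibility of these generators under $P\mapsto pP$. The only point you gloss over slightly is why the trivialisation extends across the origin of $\D_{n,x}$ (where the Tate curve degenerates); the paper handles this by constructing the lift schematically on $\Spec(\Z_p\llbracket q^{1/p^n}\rrbracket\otimes L_x)$ via normalisation and then analytifying, but a section of a finite étale cover over the dense open $\mathring\D_{n,x}$ extends uniquely anyway.
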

\begin{proof}
	As the morphisms in the bottom row are finite \'etale Galois torsors for the groups $\Z/p^n\Z$ and $\Z/p^{n+1}\Z$, it suffices to produce a section $\D_{n,x}\rightarrow
	\X^{\ast}_{\Gamma_1(p^n)}$.
	
	By Prop.~\ref{Proposition: Tate parameter spaces in the Gamma_0-tower}, the cusp of $\X^{\ast}_{\Gamma_0(p^n)}(\epsilon)_a$ over $x$ corresponds to the choice of $\langle q^{1/p^n}\rangle\subseteq \Tate(q)$ as a $\Gamma_0(p^n)$-structure. This can be lifted canonically to the $\Gamma_1(p^n)$-structure given by the $p^n$-th root $q^{1/p^n}$ in $\O(\D_{n,x})$. Upon normalisation, we thus get a canonical lift
	\begin{equation*}
		\begin{tikzcd}
			& \Spec(\Z_p\llbracket q^{1/p^n}\rrbracket\otimes L_x) \arrow[d] \arrow[ld, dashed] \\
			X^{\ast}_{\Gamma_1(p^n)} \arrow[r] & X^{\ast}_{\Gamma_0(p^n)}.
		\end{tikzcd}
	\end{equation*}
	The factorisation $\D_{n,x}\rightarrow \Spec(\Z_p\llbracket q^{1/p^n}\rrbracket\otimes L_x)\rightarrow X^{\ast}_{\Gamma_0(p^n)}$ from the analogue for level $\Gamma_0(p^n)$ of Lemma~\ref{lemma for comparing schematic cusp to adic cusp} together with the universal property of the analytification now give rise to the desired section
	\begin{equation*}
		\begin{tikzcd}
			& \D_{n,x} \arrow[d] \arrow[ld, dashed] \\
			\mathcal X^{\ast}_{\Gamma_1(p^n)} \arrow[r] & \mathcal X^{\ast}_{\Gamma_0(p^n)}.
		\end{tikzcd}
	\end{equation*}
	This shows that the right square is Cartesian. 
	That the outer square is Cartesian follows in combination with Prop.~\ref{Proposition: Tate parameter spaces in the Gamma_0-tower}. Consequently, the left square is also Cartesian.
\end{proof}

\subsection{Tate parameter spaces of \texorpdfstring{$\mathcal X^{\ast}_{\Gamma(p^n)}(\epsilon)_a$}{X*_G(p^n)(e)_a}}
Next, we look at what happens with the cusps in the transition $\mathcal X^{\ast}_{\Gamma(p^n)}(\epsilon)_a\rightarrow \mathcal X^{\ast}_{\Gamma_1(p^n)}(\epsilon)_a$.

Let us fix notation for the {left} action of $\GL_2(\Z/p^n\Z)$ on $\mathcal X^{\ast}_{\Gamma(p^n)}$ in terms of moduli: For any $\gamma\in\GL_2(\Z/p^n\Z)$ it is given  by sending a trivialisation $(\Z/p^n\Z)^{2}\xrightarrow{\sim} E[p^n] $ to \[(\Z/p^n\Z)^{2}\xrightarrow{\gamma^\vee} (\Z/p^n\Z)^{2}\xrightarrow{\sim} E[p^n]\]
where $\gamma^{\vee}=\det(g)\gamma^{-1}$. Here the inverse is necessary to indeed obtain a left action, and the twist by $\det(g)$ ensures that the action on the fibres of the map $\mathcal X^{\ast}_{\Gamma(p^n)}(\epsilon)_a\rightarrow \mathcal X^{\ast}_{\Gamma_1(p^n)}(\epsilon)_a$ is given by matrices of the form $\smallmat{\ast}{\ast}{0}{1}$ rather than $\smallmat{1}{\ast}{0}{\ast}$.

\begin{Definition}
	For $0\leq m\leq n\in\N$, we denote by $\Gamma_0(p^m,\Z/p^n\Z)\subseteq \operatorname{GL}_2(p^m,\Z/p^n\Z)$ the subgroup of matrices of the form $\smallmat{\ast}{\ast}{c}{\ast}$ with $c\equiv 0 \bmod p^m$.
\end{Definition}
The forgetful map $X^{\ast}_{\Gamma(p^n)}\to X^{\ast}_{\Gamma_0(p)}$ is given by reducing $(\Z/p^n\Z)^{2}\xrightarrow{\sim} E[p^n]$ mod $p$  to $(\Z/p\Z)^{2}\xrightarrow{\sim} E[p]$ and sending it to the subgroup generated by $(1,0)$. Consequently, the action of $\Gamma_0(p,\Z/p^n\Z)$ leaves the forgetful morphism $X^{\ast}_{\Gamma(p^n)}\to X^{\ast}_{\Gamma_0(p)}$ invariant. We see from this that the action of $\Gamma_0(p,\Z/p^n\Z)$ restricts to an action on $\mathcal X^{\ast}_{\Gamma(p^n)}(\epsilon)_a\subseteq \X^{\ast}_{\Gamma(p^n)}$.
\begin{Lemma}\label{l:Tate parameter spaces of X_Gamma(p^n)(epsilon)_a}
	Let $x$ be a cusp of $\mathcal X^{\ast}$.
	\leavevmode
	\begin{enumerate}
		\item Depending on our chosen primitive root $\zeta_{p^n}$, there is a canonical Cartesian diagram
		\begin{equation*}
			\begin{tikzcd}
				\underline{\Gamma_0(p^n,\Z/p^n\Z)}\times \D_{n,x} \arrow[d,hook] \arrow[r] & \D_{n,x} \arrow[d,hook] \\
				\mathcal X^{\ast}_{\Gamma(p^n)}(\epsilon)_a \arrow[r] & \mathcal X^{\ast}_{\Gamma_0(p^n)}(\epsilon)_a.
			\end{tikzcd}
		\end{equation*}
		where the left map is $\Gamma_0(p^n,\Z/p^n\Z)$-equivariant for the action via the first factor.
		\item Let $x_{\gamma}$ be the cusp of $\mathcal X^{\ast}_{\Gamma(p^n)}(0)_a$ over $x$ determined by $\gamma=\smallmat{a}{b}{0}{d}\in \Gamma_0(p^n,\Z/p^n\Z)$. 
		Then for any honest adic space $S$ over $K$, the $S$-points of $x_{\gamma}\colon \mathring{\D}_{n,x}\hookrightarrow \X_{\Gamma(p^n)}(\epsilon)_a$ correspond functorially to Tate curves $E=\Tate(q)$ with topologically nilpotent parameter $q_E\in\mathcal O(S)$, a $\Gamma^p$-structure corresponding to $x$, and the basis $(q_E^{d/p^n},q_E^{-b/p^n}\zeta_{p^n}^{a})$ of $E[p^n]$, where $q_E^{1/p^n}$ is the $p^n$-th root of $q_E$ determined by $x$.
		\item The reduction map $\Gamma_0(p^{n+1},\Z/p^{n+1}\Z)\rightarrow \Gamma_0(p^{n},\Z/p^{n}\Z)$ gives a Cartesian diagram
		\begin{equation*}
			\begin{tikzcd}[column sep = 2cm]
				\underline{\Gamma_0(p^{n+1},\Z/p^{n+1}\Z)} \times \D_{n+1,x} \arrow[d,hook] \arrow[r,"{(\gamma,q)\mapsto (\overline{\gamma},q)}"] & \underline{\Gamma_0(p^n,\Z/p^n\Z)}\times \D_{n,x} \arrow[d,hook] \\
				\mathcal X^{\ast}_{\Gamma(p^{n+1})}(\epsilon)_a \arrow[r] & \mathcal X^{\ast}_{\Gamma(p^n)}(\epsilon)_a.
			\end{tikzcd}
		\end{equation*}
	\end{enumerate}
\end{Lemma}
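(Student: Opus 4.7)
My strategy for all three parts is to reduce to an analogue of Lemma~\ref{l:Tate parameter spaces of X^{ast}_{Gamma_1(p^n)}}: I will produce a canonical section of the forgetful map $\mathcal{X}^{\ast}_{\Gamma(p^n)}(\epsilon)_a \to \mathcal{X}^{\ast}_{\Gamma_0(p^n)}(\epsilon)_a$ over the Tate parameter space $\D_{n,x}$, and then deduce everything by trivialising the resulting étale torsor along this section.

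The first step, for part~(1), is to check that $\mathcal{X}^{\ast}_{\Gamma(p^n)}(\epsilon)_a \to \mathcal{X}^{\ast}_{\Gamma_0(p^n)}(\epsilon)_a$ is a finite étale Galois cover with group $\Gamma_0(p^n,\Z/p^n\Z)$. In moduli terms, this group is exactly the stabiliser of the anticanonical line $\langle \alpha(1,0)\rangle \subseteq E[p^n]$ under the twisted action $\alpha \mapsto \alpha\circ\gamma^\vee$: for $\gamma = \smallmat{a}{b}{c}{d}$ one has $\gamma^\vee = \smallmat{d}{-b}{-c}{a}$, and this sends $(1,0) \mapsto (d,-c)$, which lies in $\langle(1,0)\rangle$ with $d$ a unit iff $\gamma \in \Gamma_0(p^n,\Z/p^n\Z)$. Étaleness persists over the cusps because by Prop.~\ref{Proposition: Tate parameter spaces in the Gamma_0-tower}(1) the cusps of $\mathcal{X}^{\ast}_{\Gamma_0(p^n)}(\epsilon)_a$ are purely ramified, so the universal Tate curve over $\D_{n,x}$ already has a canonical basis $(q^{1/p^n},\zeta_{p^n})$ of its $p^n$-torsion, using that $K$ contains $\zeta_{p^n}$. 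The second step is to use this basis to construct the section $\D_{n,x} \to \mathcal{X}^{\ast}_{\Gamma(p^n)}(\epsilon)_a$: the construction goes exactly as in Lemma~\ref{l:Tate parameter spaces of X^{ast}_{Gamma_1(p^n)}}, first algebraically over $\Spec(\Z_p\bb{q^{1/p^n}}\otimes L_x)$ via the universal property of the moduli scheme, and then passing to the adic analytification. Trivialising the finite étale $\Gamma_0(p^n,\Z/p^n\Z)$-torsor along this section yields the Cartesian square.

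For part~(2), each cusp $x_\gamma$ is identified with the image of $(\gamma,0)\in \underline{\Gamma_0(p^n,\Z/p^n\Z)}\times \D_{n,x}$ under the immersion just constructed. Applying $\gamma^\vee = \smallmat{d}{-b}{0}{a}$ to the canonical basis $(q^{1/p^n},\zeta_{p^n})$ of $\Tate(q)[p^n]$ produces precisely the basis $(q^{d/p^n},\, q^{-b/p^n}\zeta_{p^n}^a)$ claimed in the statement, and the moduli interpretation then follows by combining this with the moduli description of $\mathring{\D}_{n,x}\hookrightarrow \mathcal{X}_{\Gamma_0(p^n)}(\epsilon)_a$ from Prop.~\ref{Proposition: Tate parameter spaces in the Gamma_0-tower}(3).

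For part~(3), I would stack the Cartesian square of~(1) at levels $n$ and $n+1$ on top of the transition square for $\mathcal{X}^{\ast}_{\Gamma_0(p^n)}(\epsilon)_a$ from Prop.~\ref{Proposition: Tate parameter spaces in the Gamma_0-tower}(2); functoriality of the canonical section in $n$ ensures that the top horizontal map is induced by the reduction map $\Gamma_0(p^{n+1},\Z/p^{n+1}\Z)\to \Gamma_0(p^n,\Z/p^n\Z)$ on the discrete factor and by the canonical map $\D_{n+1,x}\to \D_{n,x}$ on the other. The main obstacle, really concentrated in part~(1), is the bookkeeping of which variant of $\Gamma_0(p^n,\Z/p^n\Z)$ appears on the left: one has to carefully track the $\det$-twist in $\gamma\mapsto \gamma^\vee$ to get precisely the subgroup claimed and to guarantee equivariance on the \emph{first} factor, as a transposed or conjugated variant would break the compatibility required downstream for Thm.~\ref{t:cusps of X_Gamma(p^infty)}.
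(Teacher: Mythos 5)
Your proposal is correct and follows essentially the same route as the paper: one produces the canonical second basis vector $\zeta_{p^n}\in\mu_{p^n}\subseteq \Tate(q^{e})[p^n]$ over $\Spec(\Z_p\llbracket q^{1/p^n}\rrbracket\otimes L_x)$ (the paper normalises it via the Weil pairing against $q^{e/p^n}$, which yields the same section), analytifies to get a splitting of the Galois cover $\mathcal X^{\ast}_{\Gamma(p^n)}(\epsilon)_a\to\mathcal X^{\ast}_{\Gamma_0(p^n)}(\epsilon)_a$, and then deduces (1) by trivialising the torsor, (2) by the explicit computation with $\gamma^\vee=\smallmat{d}{-b}{0}{a}$, and (3) by stacking the squares and equivariance for reduction. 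The only cosmetic difference is that the paper constructs the lift in two stages through $\mathcal X^{\ast}_{\Gamma_1(p^n)}(\epsilon)_a$, while you section the full cover directly; the content is identical.
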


\begin{proof}
	\begin{enumerate}
		\item 	
	Arguing as in Lemma~\ref{l:Tate parameter spaces of X^{ast}_{Gamma_1(p^n)}}, it suffices to produce a splitting
		\begin{equation*}\label{diagram: lifting Tate curve from Gamma_0 to Gamma}
			\begin{tikzcd}
				& \Spec(\Z_p\llbracket q^{1/p^n}\rrbracket\otimes L_x) \arrow[d] \arrow[ld, dashed] \\
				X^{\ast}_{\Gamma(p^n)} \arrow[r] & X^{\ast}_{\Gamma_1(p^n)}
			\end{tikzcd}
		\end{equation*}
		which we construct as follows: Consider the Tate curve $T=\Tate(q^e)_R$ over $R=\Z_p\cc{q^{1/p^n}}\otimes \O_L$ with its Weil pairing $e_{p^n}\colon T[p^n]\times T[p^n]\to \mu_{p^n}$. Specialising at $q^{e/p^n}\in T[p^n]$, we obtain an isomorphism \[e(q^{e/p^n},-)\colon C_n\to \mu_{p^n}\]
		where $C_n$ is the canonical subgroup of $T$. The preimage of $\zeta_{p^n}$ gives the second basis vector of $T[p^n]$ of an anticanonical $\Gamma(p^n)$-structure on $T[p^n]$, defining the desired lift.
	By analytification we then obtain the map $g\colon \D_{n,x}\rightarrow \mathcal X^{\ast}_{\Gamma(p^n)}(\epsilon)_a$ which gives the Cartesian diagram as $\X^{\ast}_{\Gamma(p^n)}(\epsilon)_a\to\X^{\ast}_{\Gamma_0(p^n)}(\epsilon)_a$ is Galois with group $\underline{\Gamma_0(p^n,\Z/p^n\Z)}$.

	\item We have just seen that the cusp label $1\in \Gamma_0(p^n,\Z/p^n\Z)$ corresponds via $q^e\mapsto q_E$ to the isomorphism $\varphi\colon (\Z/p^n\Z)^2\to E[p^n]$ defined by the ordered basis $(q_E^{1/p^n},\zeta_{p^n})$. For the general case, we use that the action of $\gamma=\smallmat{a}{b}{0}{d}$ is given by
	\[
	\begin{tikzcd}[row sep = 0.2cm]
		{\smallvector{1}{0}} \arrow[r, "\gamma^{\vee}", maps to] & {\smallvector{d}{0}} \arrow[r, "\varphi", maps to] & q_E^{d/p^n} \\
		{\smallvector{0}{1}} \arrow[r, "\gamma^{\vee}", maps to]                 & {\smallvector{-b}{a}} \arrow[r, "\varphi", maps to]&  q_E^{-b/p^n}\zeta_{p^n}^a 
	\end{tikzcd}
	\]
	\item This follows from (1)\ and Lemma~\ref{Proposition: Tate parameter spaces in the Gamma_0-tower} as $X^{\ast}_{\Gamma(p^n)}\to X^{\ast}_{\Gamma(p^{n-1})}$ is equivariant for $\pi$.\qedhere
	\end{enumerate}
\end{proof}

All in all, we get the following description of Tate curve parameter spaces at finite level:

\begin{Proposition}\label{p: structure of Tate parameter spaces from tame level to Gamma(p^n)}
	Let $x$ be any cusp of $\mathcal X^{\ast}$
	Then depending on our choice of $\zeta_{p^n}\in K$, there is a canonical tower of Cartesian squares
		\begin{equation*}
			\begin{tikzcd}[column sep = 1.4cm]
				\underline{\Gamma_0(p^n,\Z/p^n\Z)}\times \D_{n,x} \arrow[d, hook,"\varphi"] \arrow[r,"\smallmat{a}{b}{0}{d}\mapsto d"] & \underline{(\Z/p^n\Z)^{\times}}\times \D_{n,x} \arrow[d, hook] \arrow[r] & \D_{n,x} \arrow[r] \arrow[d, hook] & \D_x \arrow[d, hook] \\
				\mathcal X^{\ast}_{\Gamma(p^n)}(\epsilon)_a \arrow[r] & \mathcal X^{\ast}_{\Gamma_1(p^n)}(\epsilon)_a \arrow[r] & \mathcal X^{\ast}_{\Gamma_0(p^n)}(\epsilon)_a \arrow[r] & \mathcal X^{\ast}(\epsilon).
			\end{tikzcd}
		\end{equation*}
\end{Proposition}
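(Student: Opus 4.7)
The proposition is essentially the concatenation of three Cartesian squares already established: the rightmost square is Proposition~\ref{Proposition: Tate parameter spaces in the Gamma_0-tower}, the middle square is the right square of Lemma~\ref{l:Tate parameter spaces of X^{ast}_{Gamma_1(p^n)}}, and the leftmost square is part (1) of Lemma~\ref{l:Tate parameter spaces of X_Gamma(p^n)(epsilon)_a}. The plan is therefore to paste these three squares together and then to identify the map labelled on the top left arrow.

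First I would simply paste the diagrams. Since pullbacks paste, each of the displayed squares is Cartesian, and moreover the outer rectangle is Cartesian against $\D_x\hookrightarrow\mathcal X^{\ast}(\epsilon)$. The open immersion $\varphi$ exists by construction, and the splittings used in the three individual lemmas are compatible because they are all obtained from the same canonical $p^n$-th root $q^{1/p^n}$ together with the Weil pairing identification $e(q^{1/p^n},-)\colon C_n\isomarrow\mu_{p^n}$.

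The only new content is the identification of the composite of the top-left arrow with $\smallmat{a}{b}{0}{d}\mapsto d$ in the first factor and the identity in the second. For this I will track the moduli interpretation. By Lemma~\ref{l:Tate parameter spaces of X_Gamma(p^n)(epsilon)_a}.(2), the point of $\mathcal X^{\ast}_{\Gamma(p^n)}(\epsilon)_a$ labelled by $(\gamma,q)$ with $\gamma=\smallmat{a}{b}{0}{d}$ corresponds to the Tate curve $\Tate(q^e)$ equipped with the basis $(q^{d/p^n},q^{-b/p^n}\zeta_{p^n}^a)$ of $\Tate(q^e)[p^n]$. Pushing forward along $\mathcal X^{\ast}_{\Gamma(p^n)}(\epsilon)_a\to\mathcal X^{\ast}_{\Gamma_1(p^n)}(\epsilon)_a$ forgets the second basis vector, leaving the $\Gamma_1(p^n)$-structure determined by the generator $q^{d/p^n}$ of the anticanonical subgroup $\langle q^{1/p^n}\rangle$. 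Under the splitting $\underline{(\Z/p^n\Z)^\times}\times\D_{n,x}\hookrightarrow \mathcal X^{\ast}_{\Gamma_1(p^n)}(\epsilon)_a$ from Lemma~\ref{l:Tate parameter spaces of X^{ast}_{Gamma_1(p^n)}}, which matches the label $d\in(\Z/p^n\Z)^\times$ with the $\Gamma_1(p^n)$-structure given by the generator $q^{d/p^n}$, this is precisely the point $(d,q)$. Hence the composite is $(\gamma,q)\mapsto(d,q)$, as claimed.

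No part of this argument is serious; the only mild subtlety is a bookkeeping check that the conventions for the $\GL_2(\Z/p^n\Z)$-action (in particular the twist $\gamma\mapsto\gamma^\vee=\det(\gamma)\gamma^{-1}$ introduced before Lemma~\ref{l:Tate parameter spaces of X_Gamma(p^n)(epsilon)_a}) are consistent across the three lemmas, so that the map $\smallmat{a}{b}{0}{d}\mapsto d$ really has $d$ in the $(2,2)$-entry rather than $a$ in the $(1,1)$-entry; this is exactly what the twist by the determinant is designed to ensure.
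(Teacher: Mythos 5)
Your proposal is correct and follows essentially the same route as the paper: paste the right square from Prop.~\ref{Proposition: Tate parameter spaces in the Gamma_0-tower}, the middle square from Lemma~\ref{l:Tate parameter spaces of X^{ast}_{Gamma_1(p^n)}}, and the left square from Lemma~\ref{l:Tate parameter spaces of X_Gamma(p^n)(epsilon)_a}.(1), with the only new content being the identification of the top-left arrow as $\smallmat{a}{b}{0}{d}\mapsto d$, which the paper phrases as equivariance of $X^{\ast}_{\Gamma(p^n)}\to X^{\ast}_{\Gamma_1(p^n)}$ for that homomorphism and which you verify equivalently by tracking the bases $(q^{d/p^n},q^{-b/p^n}\zeta_{p^n}^a)$ through the forgetful map.
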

\begin{proof}
	The square on the right is Prop.~\ref{Proposition: Tate parameter spaces in the Gamma_0-tower}.(2).
	The square in the middle is Lemma~\ref{l:Tate parameter spaces of X^{ast}_{Gamma_1(p^n)}}.
	The Cartesian diagram on the left exists as a consequence of Lemma~\ref{l:Tate parameter spaces of X_Gamma(p^n)(epsilon)_a}.1 together with the fact that  $X^{\ast}_{\Gamma(p^n)}\to X^{\ast}_{\Gamma_1(p^{n})}$ is equivariant with respect to the map $ \smallmat{a}{b}{0}{d}\mapsto d$.
\end{proof}

Lemma~\ref{l:Tate parameter spaces of X_Gamma(p^n)(epsilon)_a} describes the $\Gamma_0(p^n,\Z/p^n\Z)$-action at the cusps of $\mathcal X^{\ast}_{\Gamma(p^n)}(\epsilon)_a$, but there is also an action of the larger group $\Gamma_0(p,\Z/p^n\Z)$. While the action of $\Gamma_0(p^n,\Z/p^n\Z)$ just permutes the different copies of $\D_{n,x}$ over $x$, the action of $\Gamma_0(p,\Z/p^n\Z)$ in general has a non-trivial effect on each of these Tate curve parameter space, because it also takes into account isomorphisms of Tate curves of the form $q^{1/p^n}\mapsto  \zeta_{p^n}q^{1/p^n}$, as we shall now discuss.

\begin{Proposition}\label{Proposition: the action of Gamma_0(p,Z/p^nZ) on cusps of Gamma(p^n)}
	Over any cusp  $x$ of $\mathcal X^{\ast}$, the $\Gamma_0(p,\Z/p^n\Z)$-action on  $\mathcal X^{\ast}_{\Gamma(p^n)}(\epsilon)_a$ restricts to an action on $\varphi\colon \underline{\Gamma_0(p^n,\Z/p^n\Z)}\times \D_{n,x}\hookrightarrow \mathcal X^{\ast}_{\Gamma(p^n)}(\epsilon)_a$  that can be described as follows: Equip $\underline{\Gamma_0(p,\Z/p^n\Z)}\times \D_{n,x}$ with a right action by $p\Z/p^n\Z$ given by $(\gamma,q)\mapsto (\gamma\smallmat{1}{0}{h}{1},\zeta^{h/e_x}_{p^n}q)$, then
	\[(\underline{\Gamma_0(p,\Z/p^n\Z)}\times \D_{n,x})/(p\Z/p^n\Z)=\underline{\Gamma_0(p^n,\Z/p^n\Z)}\times \D_{n,x}\]
	and the left action of $\Gamma_0(p,\Z/p^n\Z)$ is the natural left action via the first factor.
	
	Explicitly, for any $\gamma_1\in \Gamma_0(p,\Z/p^n\Z)$, the action is given by
	\begin{alignat*}{3}
	\gamma_1\colon &&\underline{\Gamma_0(p^n,\Z/p^n\Z)}\times \D_{n,x}&\;\xrightarrow{\sim}\;&& \underline{\Gamma_0(p^n,\Z/p^n\Z)}\times \D_{n,x}\\
	&& \gamma_2,q^{1/p^n} &\; \;\mapsto\; && \smallmat{\det( \gamma_3)/d_3}{b_3}{0}{d_3},\zeta_{p^n}^{-\frac{c_3}{d_3e_x}}q^{1/p^n}
	\end{alignat*}
	where $\gamma_3=\smallmat{a_3}{b_3}{c_3}{d_3}:=\gamma_1\cdot \gamma_2$.
\end{Proposition}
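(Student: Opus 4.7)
The plan is to combine the moduli interpretation of Lemma~\ref{l:Tate parameter spaces of X_Gamma(p^n)(epsilon)_a}.(2) with a direct matrix computation. I would first check that the $\Gamma_0(p,\Z/p^n\Z)$-action on $\mathcal X^{\ast}_{\Gamma(p^n)}(\epsilon)_a$ actually preserves the open subspace $\varphi$. This is because the forgetful map $\mathcal X^{\ast}_{\Gamma(p^n)}(\epsilon)_a\to \mathcal X^{\ast}(\epsilon)$ is $\Gamma_0(p,\Z/p^n\Z)$-equivariant with trivial action on the base, and the image of $\varphi$ is, by Prop.~\ref{p: structure of Tate parameter spaces from tame level to Gamma(p^n)}, the full preimage of $\D_x\hookrightarrow \mathcal X^{\ast}(\epsilon)$, which is automatically stable.

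Next, I would describe the action on $S$-points using Lemma~\ref{l:Tate parameter spaces of X_Gamma(p^n)(epsilon)_a}.(2). Fix a point $(\gamma_2,q^{1/p^n})$ with $\gamma_2=\smallmat{a_2}{b_2}{0}{d_2}\in \Gamma_0(p^n,\Z/p^n\Z)$: it corresponds to the Tate curve $E=\Tate(q^{e_x})$ with $\Gamma(p^n)$-structure $\alpha_{\gamma_2}\colon (\Z/p^n\Z)^2\to E[p^n]$ given by the basis $(q_E^{d_2/p^n},q_E^{-b_2/p^n}\zeta_{p^n}^{a_2})$, where $q_E^{1/p^n}=(q^{1/p^n})^{e_x}$. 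Writing $\alpha_0$ for the ``tautological'' isomorphism $(1,0)\mapsto q_E^{1/p^n}$, $(0,1)\mapsto \zeta_{p^n}$, one reads off from Lemma~\ref{l:Tate parameter spaces of X_Gamma(p^n)(epsilon)_a}.(2) that $\alpha_{\gamma_2}=\alpha_0\circ \gamma_2^{\vee}$. The left action of $\gamma_1$ replaces $\alpha_{\gamma_2}$ by $\alpha_{\gamma_2}\circ \gamma_1^{\vee}=\alpha_0\circ(\gamma_1\gamma_2)^{\vee}=\alpha_0\circ\gamma_3^{\vee}$ with $\gamma_3=\gamma_1\gamma_2=\smallmat{a_3}{b_3}{c_3}{d_3}$; explicitly, the new basis of $E[p^n]$ is
\[
\bigl(q_E^{d_3/p^n}\zeta_{p^n}^{-c_3},\;q_E^{-b_3/p^n}\zeta_{p^n}^{a_3}\bigr).
\]

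The main step is then to renormalise this data back into the form of Lemma~\ref{l:Tate parameter spaces of X_Gamma(p^n)(epsilon)_a}.(2), which requires vanishing bottom-left entry. Since $d_3\in (\Z/p^n\Z)^{\times}$ and $\gcd(e_x,p)=1$, the element $d_3e_x$ is invertible mod $p^n$, so I may set
\[
\tilde q^{1/p^n}:=\zeta_{p^n}^{-c_3/(d_3e_x)}\cdot q^{1/p^n},
\]
which is another $p^n$-th root of the same $q$. Then $\tilde q_E^{1/p^n}=(\tilde q^{1/p^n})^{e_x}=q_E^{1/p^n}\zeta_{p^n}^{-c_3/d_3}$, hence $\tilde q_E^{d_3/p^n}=q_E^{d_3/p^n}\zeta_{p^n}^{-c_3}$ matches the first new basis vector. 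Comparing the second basis vector then forces $b'=b_3$, $d'=d_3$, and $a'=a_3-b_3c_3/d_3=\det(\gamma_3)/d_3$, and the image cusp label is $\smallmat{\det(\gamma_3)/d_3}{b_3}{0}{d_3}$. This is exactly the stated formula.

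Finally, for the quotient description I would note that the assignment $\gamma\mapsto (\smallmat{\det(\gamma)/d}{b}{0}{d};\,h=-c/d)$ identifies $\Gamma_0(p,\Z/p^n\Z)$ with $\Gamma_0(p^n,\Z/p^n\Z)\times (p\Z/p^n\Z)$ as sets, via the decomposition $\gamma=\smallmat{\det(\gamma)/d}{b}{0}{d}\cdot \smallmat{1}{0}{-c/d}{1}^{-1}$. Hence the right action $(\gamma,q^{1/p^n})\cdot h=(\gamma\smallmat{1}{0}{h}{1},\zeta_{p^n}^{h/e_x}q^{1/p^n})$ has a unique orbit representative in $\Gamma_0(p^n,\Z/p^n\Z)\times \D_{n,x}$, and unwinding shows the induced left action coincides with the one just computed. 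The only delicate point is tracking the $\zeta_{p^n}^{-c/(de_x)}$ factor arising from the renormalisation of $p^n$-th roots, caused by the presence of $e_x$ in $q_E=q^{e_x}$; once this is handled cleanly, the rest is routine manipulation of $2\times 2$ matrices.
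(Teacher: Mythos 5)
Your proof is correct and follows essentially the same route as the paper's: both rest on the moduli description of Lemma~\ref{l:Tate parameter spaces of X_Gamma(p^n)(epsilon)_a}.(2), the reparametrisation $q^{1/p^n}\mapsto \zeta_{p^n}^{h/e_x}q^{1/p^n}$ of $p^n$-th roots (with the $1/e_x$ correction coming from $q_E=q^{e_x}$), and the decomposition $\smallmat{a}{b}{c}{d}=\smallmat{\det(\gamma)/d}{b}{0}{d}\smallmat{1}{0}{c/d}{1}$. The only difference is organisational: the paper first computes the action of $\smallmat{1}{0}{h}{1}$ on the component $\{1\}\times \D_{n,x}$ and then propagates to general $(\gamma_1,\gamma_2)$ by equivariance, whereas you compute $\alpha_0\circ\gamma_3^{\vee}$ in one step and renormalise — the underlying mechanism is identical.
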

Here we recall that $e_x$ was introduced in Notation~\ref{n:e}.
\begin{proof}
	Recall that the reason for the pullback of $\D_{n,x}\hookrightarrow \mathcal X^{\ast}_{\Gamma_0(p)}$ to $\mathcal X^{\ast}_{\Gamma(p^n)}$ to be of the form $\underline{\Gamma_0(p^n,\Z/p^n\Z)}\times \D_{n,x}$ even though $\X_{\Gamma(p^n)}\rightarrow \X_{\Gamma_0(p)}$ has larger Galois group $\Gamma_0(p,\Z/p^n\Z)$ is that in the step from $\mathcal X^{\ast}$ to $\mathcal X^{\ast}_{\Gamma_0(p^n)}(\epsilon)_a$ the isomorphism 
	\begin{equation*}\label{eq:automorphisms of D_{n,x}}
	\psi_h\colon \D_{n,x}\rightarrow \D_{n,x}, \quad q^{1/p^n}\mapsto \zeta^{h}_{p^n}q^{1/p^n}
	\end{equation*}
	for any $h\in \Z/p^n\Z$ induces an isomorphism of moduli for the universal Tate curve $T=\Tate(q^{e_x})$ over $\D_{x}$ that sends the anti-canonical $\Gamma_0(p^n)$-level structure $\langle q^{{e_x}/p^n} \rangle$ to $\langle \zeta^{he_x}_{p^n}q^{{e_x}/p^n} \rangle$.
	For the action of $\Gamma_0(p,\Z/p^n\Z)$, this means the following: 
	
	Consider the Tate parameter space $\varphi(1)\colon \D_{n,x}\hookrightarrow \mathcal X^{\ast}_{\Gamma(p^n)}(\epsilon)_a$ at $1\in \Gamma_0(p,\Z/p^n\Z)$, associated to the isomorphism $\alpha\colon (\Z/p^n\Z)^2\to T[p^n]$ defined by the ordered basis $(q^{e_x/p^n},\zeta_{p^n})$. Then the action of $\gamma_1=\smallmat{1}{0}{h}{1}$ sends this to the isomorphism $\alpha\circ \gamma^\vee$ defined by $(1,0)\mapsto \zeta_{p^n}^{-h}q^{e_x/p^n}$ and $(0,1)\mapsto \zeta_{p^n}$. The isomorphism $\psi_{-h/e_x}$ identifies this with the basis $(q^{1/p^n},\zeta_{p^n})$:
	\begin{equation*}
		\begin{tikzcd}
			q^{1/p^n}\arrow[d,mapsto]&\langle q^{e_x/p^n}\rangle  \arrow[d] \arrow[r] & \mathring \D_{n,x}\arrow[d,"\psi_{-h/e_x}"'] \arrow[r, hook,"\varphi(1)"] & \X_{\Gamma(p^n)}(\epsilon)_a \arrow[d, "\gamma_1"] \\
			\zeta_{p^n}^{-h/e_x}q^{1/p^n}&\langle \zeta^{-h}_{p^n}q^{e_x/p^n}\rangle \arrow[r] & \mathring{\D_{n,x}} \arrow[r, hook,"\varphi(1)"] & \X_{\Gamma(p^n)}(\epsilon)_a.
		\end{tikzcd}
	\end{equation*}
	The action of $\smallmat{1}{0}{h}{1}$ on the component $\{1\}\times \D_{n,x}$ of $\underline{\Gamma_0(p^n,\Z/p^n\Z)}\times \D_{n,x}$ defined by $1\in\Gamma_0(p^n,\Z/p^n\Z)$ is thus given by $\psi_{-h/e_x}$.

	In general, in order to describe the action of $\gamma_1$ on the component $\{\gamma_2 \}\times \D_{n,x}$, it suffices to compute the action of $\gamma_3=\gamma_1\cdot \gamma_2$ on $\{1\}\times \D_{n,x}$, since we have a commutative diagram
	\[\begin{tikzcd}
		{\{1 \}\times \D_{n,x}} \arrow[d, "\gamma_2"] \arrow[r, dotted] \arrow[rd, "\gamma_3"] & {\{1 \}\times \D_{n,x}} \arrow[d, dotted] \\
		{\{\gamma_2 \}\times \D_{n,x}} \arrow[r, "\gamma_1"]                                   & {\{\gamma_3 \}\times \D_{n,x}}.
	\end{tikzcd}\]
	One can now decompose $\gamma_3$ into the actions which we have already computed, using
	\begin{equation}\label{equation: decomposition of Gamma_0(p) into Gamma_0(p^n){1}{0}{ast}{1}}
	\smallmat{a}{b}{c}{d}=\smallmat{\det(\gamma)/d}{b}{0}{d}\smallmat{1}{0}{c/d}{1}.
	\end{equation}
	Applying this to $\gamma_3$, we get the desired action by equivariance of $\varphi$ under $\Gamma_0(p,\Z/p^n\Z)$.
\end{proof}
\section{Adic theory of cusps at infinite level}\label{s:adic-cusps-infinite}
We now pass to infinite level, starting with $\Xa_{\Gamma_0(p^\infty)}(\epsilon)_a\sim\varprojlim \Xa_{\Gamma_0(p^n)}(\epsilon)_a$. We first note:

\begin{Lemma}\label{l: moduli interpretation of modular curve at infinite level}
	Let $(R,R^+)$ be a perfectoid $K$-algebra. Then the set $\X_{\Gamma_0(p^\infty)}(\epsilon)_a(R,R^+)$ is in functorial bijection with isomorphism classes of triples $(E,\alpha_N,(G_n)_{n\in \mathbb N})$ of elliptic curves $E$ over $R$ that are $\epsilon$-nearly ordinary, together with a $\Gamma^p$-structure $\alpha_N$ and a collection of anticanonical cyclic subgroups $G_n\subseteq E[p^n]$ of order $p^n$ for all $n$ that are compatible in the sense that $G_n = G_{n+1}[p^n]$. Equivalently, one could view $G=(G_n)_{n\in \mathbb N}$ as a p-divisible subgroup of $E[p^\infty]$ of height 1 such that $D_1$ is anticanonical.
\end{Lemma}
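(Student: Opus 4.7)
The plan is to reduce the statement to the finite-level moduli interpretation (Lemma~\ref{Lemma: adic moduli interpretation of X_Gamma^an}) via the tilde-limit property
\[\mathcal X_{\Gamma_0(p^\infty)}(\epsilon)_a \sim \varprojlim_n \mathcal X_{\Gamma_0(p^n)}(\epsilon)_a\]
established via the Atkin--Lehner isomorphism and the perfectoidness of the anticanonical tower recalled in \S2. First, I would use that for an affinoid perfectoid $(R,R^+)$ over $K$, the tilde-limit property (combined with the fact that $R$ is uniform and that the transition maps are adic) gives a functorial identification
\[\mathcal X_{\Gamma_0(p^\infty)}(\epsilon)_a(R,R^+) = \varprojlim_n \mathcal X_{\Gamma_0(p^n)}(\epsilon)_a(R,R^+),\]
which reduces the task to describing the right hand side. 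A general honest affinoid reduces to the perfectoid case by covering and gluing, or by passing to a perfectoid cover.

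Next, I would apply Lemma~\ref{Lemma: adic moduli interpretation of X_Gamma^an} at each finite level $n$: the set $\mathcal X_{\Gamma_0(p^n)}(\epsilon)_a(R,R^+)$ is in functorial bijection with isomorphism classes of triples $(E_n,\alpha_{N,n},G_n)$ consisting of an $\epsilon$-nearly ordinary elliptic curve $E_n/R$, a $\Gamma^p$-structure $\alpha_{N,n}$, and an anticanonical cyclic subgroup $G_n \subseteq E_n[p^n]$ of order $p^n$. Under the forgetful map $\mathcal X_{\Gamma_0(p^{n+1})}(\epsilon)_a \to \mathcal X_{\Gamma_0(p^n)}(\epsilon)_a$, which on moduli sends $(E,\alpha_N,G_{n+1}) \mapsto (E,\alpha_N,G_{n+1}[p^n])$, the compatibility condition in the inverse limit is precisely $G_n = G_{n+1}[p^n]$. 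That each $G_n$ is automatically anticanonical whenever $G_1$ is follows from the standard fact that $G_n \cap C_1 = (G_{n+1}[p^n]) \cap C_1 \subseteq G_{n+1} \cap C_1$, so anticanonicity of $G_{n+1}$ propagates down, while upward propagation is built into the pullback definition of the anticanonical locus.

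The step that requires a little care is the passage from an inverse system of \emph{isomorphism classes} of triples to an isomorphism class of an inverse system of triples. This is where the assumption $N \geq 3$ enters crucially: the $\Gamma^p$-level structure rigidifies the moduli problem, so the automorphism group of each $(E_n,\alpha_{N,n},G_n)$ is trivial. Consequently, choosing representatives $(E_n,\alpha_{N,n},G_n)$ compatibly determines canonical isomorphisms $E_{n+1} \xrightarrow{\sim} E_n$ identifying all the data, so the inverse system collapses to a single triple $(E,\alpha_N,(G_n)_{n\in\mathbb N})$ up to unique isomorphism.

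Finally, the reformulation in terms of a $p$-divisible subgroup of height $1$ is direct: the $(G_n)_n$ assemble under the compatibility $G_n = G_{n+1}[p^n]$ into a $p$-divisible subgroup $G := \varinjlim_n G_n \subseteq E[p^\infty]$ whose $p^n$-torsion is $G_n$, of height $1$ since each $G_n$ has order $p^n$; conversely any such $G$ yields a compatible family by $G_n := G[p^n]$. Anticanonicity of all $G_n$ is equivalent to that of $D_1 = G_1$ as noted above. I expect the rigidification argument and the interchange of isomorphism classes with inverse limits to be the only genuinely non-formal part of the argument.
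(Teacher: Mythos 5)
Your proposal is correct and follows essentially the same route as the paper: identify $\X_{\Gamma_0(p^\infty)}(\epsilon)_a(R,R^+)$ with $\varprojlim_n \X_{\Gamma_0(p^n)}(\epsilon)_a(R,R^+)$ using the perfectoid tilde-limit property (the paper cites \cite[Prop.~2.4.5]{ScholzeWeinstein} for this), then apply the finite-level moduli interpretation of Lemma~\ref{Lemma: adic moduli interpretation of X_Gamma^an}. Your additional remarks on rigidity from $N\geq 3$ (to pass from an inverse system of isomorphism classes to an isomorphism class of a compatible system) and on the propagation of anticanonicity are correct elaborations of points the paper leaves implicit.
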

\begin{proof}
	Since $(R,R^+)$ is perfectoid, one has 
	\[\mathcal X_{\Gamma_0(p^\infty)}(\epsilon)_a(R,R^+)= \varprojlim_{n\in \N} \mathcal X_{\Gamma_0(p^n)}(\epsilon)_a(R,R^+)\]
	by \cite[Prop~2.4.5]{ScholzeWeinstein}. The statement thus follows from Lemma~\ref{Lemma: adic moduli interpretation of X_Gamma^an}.
\end{proof}
\begin{Definition}
	 We shall call the $p$-divisible group $D$ an {anticanonical $\Gamma_0(p^\infty)$-structure}.
\end{Definition}
We wish to study the cusps at infinite level, by which we mean the following:

\begin{Definition}
	By the cusps of $\XaGea{0}{\infty}$, $\XaGea{1}{\infty}$ etc.\ we mean the preimage of the divisor of cusps of $\X^{\ast}$ endowed with its induced reduced structure. 
\end{Definition}
We note that at infinite level, the cusps are in general a profinite Zariski-closed subspace of the respective infinite level modular curve.
However, at level $\Gamma_0(p^\infty)$, we will see that the map $\mathcal X^{\ast}_{\Gamma_0(p^\infty)}(\epsilon)_a\rightarrow \mathcal X^{\ast}$ isomorphically identifies the cusps of $\mathcal X^{\ast}_{\Gamma_0(p^\infty)}(\epsilon)_a$ and those of $\X^{\ast}$.
\subsection{The perfectoid Tate parameter space at level \texorpdfstring{$\Gamma_0(p^\infty)$}{G_0(p^infty)}}\label{s:infinite-level-Gamma_0}

We start our discussion by having a closer look at the cusps in the anticanonical tower.

Like before, we fix a cusp $x$ of $\X^{\ast}$ and let $L$ be the field of definition of the associated Tate curve, like in Def.~\ref{d:field-of-definition-of-Tate-curve}. In particular, if $K$ contains a primitive $N$-th unit root, we simply have $L=K$.
By Lemma~\ref{Proposition: Tate parameter spaces in the Gamma_0-tower}, there is for $x$ a tower of Cartesian squares
\begin{equation}\label{diagram: Tate parameter spaces in anticanonical tower, forgetful morphism version}
\begin{tikzcd}
\dots \arrow[r] & \D_2 \arrow[r]\arrow[d,hook] & \D_1 \arrow[d,hook] \arrow[r] & \D \arrow[d,hook] \\
\dots \arrow[r] &\mathcal X^{\ast}_{\Gamma_0(p^2)}(\epsilon)_a \arrow[r] & \mathcal X^{\ast}_{\Gamma_0(p)}(\epsilon)_a \arrow[r] & \mathcal X^{\ast}(\epsilon).
\end{tikzcd}
\end{equation}

\begin{Lemma}\label{the perfectoid parameter space of the Tate curve at infinite level}
	\begin{enumerate}
		\item The  perfectoid open disc $\D_\infty:=\Spa(L\langle q^{1/p^\infty}\rangle,\O_L\langle q^{1/p^\infty}\rangle)(|q|< 1)$ is a tilde-limit 
		$\D_\infty \sim \varprojlim_{n\in\N} \D_n$.
		\item Denote by $\mathcal O_{L}\llbracket q^{1/p^\infty}\rrbracket$ the $(p,q)$-adic completion of $\varinjlim_{n \in \N} \mathcal O_{L}\llbracket q^{1/p^n}\rrbracket$. Then $\D_\infty$ is the adic generic fibre of the formal scheme $\Spf(\mathcal O_{L}\llbracket q^{1/p^\infty}\rrbracket)$.
		\item The global sections of $\D_\infty$ are given by $\mathcal O^+(\D_\infty)= \O_L\llbracket q^{1/p^\infty}\rrbracket$ and
		\[\mathcal O(\D_\infty) = \Bigg \{ \sum_{n\in \Z[\frac{1}{p}]_{\geq 0}}a_nq^n \in L\llbracket q^{1/p^\infty}\rrbracket \Bigg |\quad\stackanchor{ |a_n|q^n\to 0\text{ for all }0\leq q<1, }{|a_n|\to 0 \text{ on bounded intervals}} \Bigg\}\]
		where the second condition means that for any $\delta> 0$ and for any bounded interval $I\subseteq \Z[\frac{1}{p}]_{\geq 0}$ there are only finitely many $n\in I$ such that $|a_n|> \delta$.
	\end{enumerate}
\end{Lemma}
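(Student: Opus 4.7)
The plan is to establish part (2) directly by computing the adic generic fibre of the given formal scheme, then deduce (1) by checking the Scholze--Weinstein tilde-limit criterion on a convenient cover, and finally read off (3) from the global sections on that cover.

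For (2), I would cover $\Spf(\O_L\bb{q^{1/p^\infty}})^{\ad}_\eta$ by the affinoid rational opens
\[V_m := \Spa\!\left(\O_L\bb{q^{1/p^\infty}}\langle q^m/p\rangle[\tfrac 1 p],\ \O_L\bb{q^{1/p^\infty}}\langle q^m/p\rangle\right)\]
indexed by $m \in \Z[\tfrac{1}{p}]_{>0}$. These are the admissible opens on which the topologically nilpotent element $q$ of the $(p,q)$-adically complete ring becomes bounded by a power of $p$; this is the standard Huber construction, entirely analogous to the argument used in Lemma~\ref{l: Conrad's theorem on generic fibres of completions around the cusp}. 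On the other hand, the subspace $\D_\infty \subseteq \Spa(L\langle q^{1/p^\infty}\rangle, \O_L\langle q^{1/p^\infty}\rangle)$ defined by $|q|<1$ is covered by the rational subsets $\{|q|^m \leq |p|\}$ for the same set of $m$, and an explicit comparison of sections shows these coincide with $V_m$. The unions therefore agree, which proves (2) and identifies $\O^+(\D_\infty) = \O_L\bb{q^{1/p^\infty}}$.

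For (1), set $U_m := \D_\infty(|q|^m \leq |p|)$ and $U_{m,n} := \D_n(|q|^m \leq |p|)$, where for fixed $n$ we restrict to those $m$ with $p^n m \in \Z$. The finite-level analog of (2) gives $\O^+(U_{m,n}) = \O_L\bb{q^{1/p^n}}\langle q^m/p\rangle$. Since by Definition~\ref{d: OK bb q^1/p^infty _p} the ring $\O_L\bb{q^{1/p^\infty}}$ is the $(p,q)$-adic completion of $\varinjlim_n \O_L\bb{q^{1/p^n}}$, I would next verify that $\O^+(U_m)$ is the $p$-adic completion of $\varinjlim_n \O^+(U_{m,n})$; this is the density condition in the Scholze--Weinstein tilde-limit criterion. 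The accompanying homeomorphism $|\D_\infty| \xrightarrow{\sim} \varprojlim_n |\D_n|$ then follows by gluing the corresponding statements on each $U_m$, using quasi-compactness. As a bonus, each $U_m$ is affinoid perfectoid by inspection of the Frobenius on $\O^+(U_m)/p$, so the limit is perfectoid.

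For (3), I would compute $\O(\D_\infty) = \varprojlim_m \O(U_m)$ with $\O(U_m) = \O_L\bb{q^{1/p^\infty}}\langle q^m/p\rangle[\tfrac 1p]$. A series $\sum_{n \in \Z[1/p]_{\geq 0}} a_n q^n$ lies in $\O(U_m)$ precisely when $|a_n|\,|p|^{n/m} \to 0$; letting $m \to \infty$ so that $|p|^{1/m}$ fills out all values $z \in [0,1)$ recovers the first condition, while for each fixed $m$ the Tate-algebra description forces only finitely many coefficients with any given exponent range to exceed a chosen $\delta$, yielding the bounded-interval condition. The main obstacle is the tilde-limit density check in (1): one must navigate carefully between the $p$-adic completion built into perfectoid affinoids and the $(p,q)$-adic completion defining $\O_L\bb{q^{1/p^\infty}}$, ruling out pathological elements like $\sum_n q^{n+1/p^n}$ from appearing in $\O^+(U_m)$ — which succeeds because on $U_m$ the condition $|q|^m \leq |p|$ forces $q$ to be $p$-adically small, making the discrepancy between the two topologies invisible on each affinoid of the cover.
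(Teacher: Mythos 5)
Your proposal is correct and follows essentially the same route as the paper: both arguments cover $\D_\infty$ and the generic fibre of $\Spf(\O_L\llbracket q^{1/p^\infty}\rrbracket)$ by the rescaled affinoid perfectoid discs $\{|q|^m\leq |p|\}$, observe that on each such piece the $(p,q)$-adic and $p$-adic topologies agree so the two spaces coincide, and read off the global sections as the intersection over the cover. The only difference is organizational — you prove (2) first and verify the tilde-limit criterion of (1) directly on each affinoid piece, whereas the paper deduces (1) from the closed-disc case by restriction to an open subspace and then proves (2) by the same rational-subset comparison you use — so the mathematical content is the same.
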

\begin{proof}
	It is clear on global sections that 
	\[\Spa(L\langle q^{1/p^\infty}\rangle,\O_L\langle q^{1/p^\infty}\rangle)\sim\varprojlim_{n\in\N} \Spa(L\langle q^{p^n}\rangle,\O_L\langle q^{1/p^n}\rangle).\] 
	The first part follows from \cite[Prop.~2.4.3]{ScholzeWeinstein} since $\D$ is the restriction to the open subspace defined by $|q|<1$.
	More explicitly, this means that $\D_\infty$ is given by glueing the affinoid perfectoid unit discs of increasing radii $<1$ given by
	\[\D_\infty(|q|^{p^m}\leq|\varpi|) = \Spa(L\langle (q/\varpi^{1/p^m})^{1/p^\infty} \rangle,\mathcal O_L\langle (q/\varpi^{1/p^m})^{1/p^\infty} \rangle)\]
	for all $m\in \N$.  These can be obtained by rescaling the perfectoid unit disc. Computing the intersection of their respective global functions gives  $\O(\D_\infty)$ and $\O^+(\D_\infty)$.
	
	Part (2) is not just a formal consequence as tilde-limits do not necessarily commute with taking generic fibres. But it follows from the construction: Let 
	\[S=\Spa(\mathcal O_{L}\llbracket q^{1/p^\infty}\rrbracket,\mathcal O_{L}\llbracket q^{1/p^\infty}\rrbracket)\]
	and consider the subspaces $S(|q|^{p^m}\leq |\varpi|\neq 0)$ which are rational because $(q^n,\varpi)$ is open. As usual, one shows that since $\mathcal O_{L}\llbracket q^{1/p^\infty}\rrbracket$ has ideal of definition $(q,\varpi)$, the element $|q(x)|$ must be cofinal in the value group for any $x \in S$. This shows that
	\[S^{\ad}_\eta =S(|\varpi|\neq 0) =\bigcup S(|q|^{p^m}\leq |\varpi|\neq 0).\]
	Let $B_m^{+}=\O^+(S(|q|^{p^m}\leq |\varpi|\neq 0))$, then as $q^{p^m}/\varpi \in B_m^{+}$, we have $(q,\varpi)^{p^m}\subseteq(\varpi)$ and the ring $B_m^{+}$ thus has the $\varpi$-adic topology. From this one deduces that $B_m^{+} = \mathcal O_L\langle q^{1/p^\infty}/\varpi^{1/mp^\infty} \rangle$ and thus the spaces $S(|q|^{p^m}\leq |\varpi|\neq 0)$ and $\D_\infty(|q|^{p^m}\leq |\varpi|\neq 0)$ coincide.
\end{proof}
\begin{Remark}\label{Remark: D_infty is not affinoid}
	Note that $\D_\infty$ is not affinoid, even though it is the generic fibre of an affine formal scheme, as we did not equip $\O_L\llbracket q^{1/p^\infty}\rrbracket$ with the $p$-adic topology.
\end{Remark}
\begin{Definition}
	 The origin in $\D_\infty$ is the closed point $x\colon \Spa(L,\mathcal O_L)\rightarrow \D_\infty$ where $q=0$. By removing this point, we obtain a space $\mathring{\D}_\infty := \D_\infty\backslash \{x\}$ that satisfies $\mathring{\D}_\infty \sim \varprojlim \mathring{\D}_n$.
\end{Definition}
\begin{Definition}
	Let $\overline{\D}_\infty:=\Spa(\O_L\llbracket q^{1/p^\infty}\rrbracket_p[\tfrac{1}{p}],\O_L\llbracket q^{1/p^\infty}\rrbracket_p)$ with the $p$-adic topology on $\O_L\llbracket q^{1/p^\infty}\rrbracket_p$ (see  Def.~\ref{d: OK bb q^1/p^infty _p}). Then it is clear from the definition that $\overline{\D}_\infty\sim \varprojlim_{q\mapsto q^p} \overline{\D}$.
\end{Definition}
We are now ready to discuss cusps at infinite level and the corresponding Tate curves.

\begin{Proposition}\label{p: the Tate parameter space at infinite level Gamma_0(p^infty)}
	Fix a cusp $x$ of $\mathcal X^{\ast}$, with corresponding cusps in the anticanonical tower.
	\begin{enumerate}
		\item  The open immersions $\D_n\hookrightarrow \mathcal X^{\ast}_{\Gamma_0(p^n)}(\epsilon)_a$ over $x$ in the limit $n\to \infty$ give rise to an open immersion
		$\D_\infty\hookrightarrow \mathcal X^{\ast}_{\Gamma_0(p^\infty)}(\epsilon)_a$ that fits into a Cartesian diagram
		\begin{equation*}
			\begin{tikzcd}
				\D_\infty \arrow[d] \arrow[r, hook] &	\overline{\D}_\infty \arrow[d] \arrow[r] & \mathcal X^{\ast}_{\Gamma_0(p^\infty)}(\epsilon)_a \arrow[d] \\
				\D \arrow[r, hook] &\overline{\D} \arrow[r, hook] & \mathcal X^{\ast}(\epsilon).
			\end{tikzcd}
		\end{equation*}
		\item Consider the restriction $\mathring{\D}_\infty\hookrightarrow \X_{\Gamma_0(p^\infty)}(\epsilon)_a$. For any perfectoid $K$-algebra $(R,R^+)$, \[\mathring{\D}_\infty(R,R^{+})\subseteq  \X_{\Gamma_0(p^\infty)}(\epsilon)_a(R,R^{+})\] 
		is in functorial bijection with isomorphism classes of triples $(E,\alpha_N,(q_E^{1/p^n})_{n\in \N})$ where $E\cong\Tate(q_E)$ is a Tate curve over $R$ for some topologically nilpotent unit $q_E\in R$, where $\alpha_N$ is a $\Gamma^p$-level structure, and $(q_E^{1/p^n})_{n\in \N}$ is a compatible system of $p^n$-th roots of $q_E$, defining an anticanonical $\Gamma_0(p^\infty)$-structure on $E$.
	\end{enumerate}
\end{Proposition}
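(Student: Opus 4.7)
The strategy is to pass to the inverse limit in the finite-level Cartesian squares from Prop.~\ref{Proposition: Tate parameter spaces in the Gamma_0-tower}, using the tilde-limit descriptions of all spaces involved. First, iterating part (2) of that proposition yields that
\[
\begin{tikzcd}
\D_n \arrow[r,hook] \arrow[d] & \mathcal X^{\ast}_{\Gamma_0(p^n)}(\epsilon)_a \arrow[d] \\
\D \arrow[r,hook] & \mathcal X^{\ast}(\epsilon)
\end{tikzcd}
\]
is Cartesian for every $n \geq 0$. Combined with the tilde-limit $\mathcal X^{\ast}_{\Gamma_0(p^\infty)}(\epsilon)_a \sim \varprojlim_n \mathcal X^{\ast}_{\Gamma_0(p^n)}(\epsilon)_a$ from \cite{torsion}, and the fact that pullback along an open immersion is compatible with tilde-limits, the preimage of $\D$ in $\mathcal X^{\ast}_{\Gamma_0(p^\infty)}(\epsilon)_a$ is a tilde-limit of the $\D_n$. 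By Lemma~\ref{the perfectoid parameter space of the Tate curve at infinite level}.(1) this limit is $\D_\infty$, producing the open immersion $\D_\infty \hookrightarrow \mathcal X^{\ast}_{\Gamma_0(p^\infty)}(\epsilon)_a$ together with the inner Cartesian square.

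For the outer square, the plan is to lift the construction of Lemma~\ref{l:full-Tate-curve-parameter-space} to each finite level. The scheme-theoretic morphism $\Spec(\O_L\llbracket q^{1/p^n}\rrbracket) \to X^{\ast}_{\Gamma_0(p^n)}$ classifying the Tate curve $\Tate(q^{e_x})$ with its anticanonical $\Gamma_0(p^n)$-structure $\langle q^{e_x/p^n}\rangle$, upon $p$-adic completion and passage to the adic generic fibre, produces a morphism $\overline{\D}^{(n)} \to \mathcal X^{\ast}_{\Gamma_0(p^n)}(\epsilon)_a$, where $\overline{\D}^{(n)}$ is the analogue of $\overline{\D}$ with variable $q^{1/p^n}$. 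Since $\overline{\D}_\infty \sim \varprojlim_{q\mapsto q^p}\overline{\D}^{(n)}$ by construction, taking the limit yields the desired map $\overline{\D}_\infty \to \mathcal X^{\ast}_{\Gamma_0(p^\infty)}(\epsilon)_a$. To verify that the outer square is Cartesian, I would test on perfectoid $(R, R^+)$-points using Lemma~\ref{l: moduli interpretation of modular curve at infinite level}: a point of the fibre product is a point of $\overline{\D}(R, R^+)$ together with an anticanonical $\Gamma_0(p^\infty)$-structure on the (possibly degenerating) underlying Tate curve, which is exactly a compatible system of $p^n$-th roots of $q$, hence a point of $\overline{\D}_\infty(R, R^+)$.

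For part (2), combine the tilde-limit $\mathring{\D}_\infty \sim \varprojlim_n \mathring{\D}_n$ with Lemma~\ref{l:moduli interpretation of adic Tate parameter space} applied at each finite level: an $(R, R^+)$-point of $\mathring{\D}_n$ is a Tate curve $\Tate(q_E)$ over $R$ with topologically nilpotent unit $q_E$, a $\Gamma^p$-structure corresponding to $x$, and a choice of $p^n$-th root $q_E^{1/p^n}$ defining the $\Gamma_0(p^n)$-structure $\langle q_E^{1/p^n}\rangle$. Passing to the inverse limit over $n$ assembles a compatible system $(q_E^{1/p^n})_{n \in \N}$, which under Lemma~\ref{l: moduli interpretation of modular curve at infinite level} is exactly the anticanonical $\Gamma_0(p^\infty)$-structure $G_n = \langle q_E^{1/p^n}\rangle$ on $E$. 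The main technical subtlety throughout is that tilde-limits do not commute with fibre products in general: this is harmless for the inner square (pullback along an open immersion) and is circumvented for the outer square and for part (2) by working moduli-theoretically on perfectoid test objects, where points of the tilde-limit are genuinely inverse limits of points.
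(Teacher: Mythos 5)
Your proposal is correct and follows essentially the same route as the paper: finite-level Cartesian squares from Prop.~\ref{Proposition: Tate parameter spaces in the Gamma_0-tower} passed to the tilde-limit via \cite[Prop.~2.4.3]{ScholzeWeinstein} for the $\D_\infty$-square, the limit of the finite-level maps $\overline{\D}^{(n)}\to \mathcal X^{\ast}_{\Gamma_0(p^n)}(\epsilon)_a$ for the $\overline{\D}_\infty$-square, and the moduli interpretation of anticanonical $\Gamma_0(p^\infty)$-structures on Tate curves for the Cartesian property and for part (2). The only point to make explicit is the one you flag parenthetically: at the origin of $\overline{\D}$ the moduli description in terms of elliptic curves does not literally apply, but that locus lies in $\D\subseteq\overline{\D}$, where Cartesianness already follows from the first square together with $\D_\infty=\overline{\D}_\infty\times_{\overline{\D}}\D$ --- which is exactly how the paper reduces to checking away from the cusps.
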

Exactly as before, we shall also write $\mathcal D_{\infty,x}\hookrightarrow \mathcal X^{\ast}_{\Gamma_0(p^\infty)}(\epsilon)_a$ for the open immersion in the proposition if we wish to emphasize the cusp $x$ we are working over.
\begin{proof}
	The map $\D_\infty\rightarrow \mathcal X^{\ast}_{\Gamma_0(p^\infty)}(\epsilon)_a$  is induced from Prop.~\ref{Proposition: Tate parameter spaces in the Gamma_0-tower} and Prop.~\ref{the perfectoid parameter space of the Tate curve at infinite level} by the universal property of the perfectoid tilde-limit. The outer square in part (1) is now Cartesian using \cite[Prop~2.4.3]{ScholzeWeinstein},  and the fact that the squares in diagram~\eqref{diagram: Tate parameter spaces in anticanonical tower, forgetful morphism version} are Cartesian. 
	
	It is clear that the left square is Cartesian.
	To show that the right square is as well,
	it now suffices to prove this away from the cusps, where it follows from the relative moduli interpretation: Giving an anticanonical $\Gamma_0(p^\infty)$-level structure on $\Tate(q)$ over some $\O_L\llbracket q\rrbracket$-algebra where $q$ is invertible corresponds to giving a system of $p^n$-th roots of $q$.
	
	The moduli interpretation of $\mathring{\D}_\infty$ follows from diagram~\eqref{diagram: Tate parameter spaces in anticanonical tower, forgetful morphism version} and Cor.~\ref{Proposition: Tate parameter spaces in the Gamma_0-tower}.
\end{proof}
\subsection{Tate curve parameter spaces of $\XaGea{1}{\infty}$}\label{s:infinite-level-Gamma_1}
Next, we discuss the Tate parameter spaces in the pro-\'etale map $\XaGea{1}{\infty}\to \XaGea{0}{\infty}$. This is just a matter of pulling back the descriptions from finite level: Let
\[\mathcal X^{\ast}_{\Gamma_1(p^n)\cap \Gamma_0(p^\infty)}(\epsilon)_a:=\XaGea{1}{n}\times_{\XaGea{0}{n}}\XaGea{0}{\infty}.\]

\begin{Lemma}\label{l: Tate parameter spaces of X^ast_Gamma_1(p^m)cap Gamma_0(p^infty)}
	Let $x$ be any cusp of $\mathcal X^{\ast}$. Let $n\in \Z_{\geq 0}$.
	\begin{enumerate}
		\item The map $\mathcal X^{\ast}_{\Gamma_1(p^n)\cap\Gamma_0(p^\infty)}(\epsilon)_a\to \XaGea{1}{n}$ restricts to an isomorphism on the cusps.
		\item There are canonical Cartesian cubes
		\begin{equation*}
			\begin{tikzcd}[row sep ={1cm,between origins}, column sep ={2.1cm,between origins}]
			& \underline{(\Z/p^{n+1}\Z)^{\times}}\times \D_{n+1}  \arrow[rr]\arrow[dd,hook] &  & \underline{(\Z/p^n\Z)^{\times}}\times \D_{n,x} \arrow[dd, hook] \arrow[rr] &  & \D_{n,x} \arrow[dd, hook] \\
			\underline{(\Z/p^{n+1}\Z)^{\times}}\times \D_{\infty,x} \arrow[dd, hook] \arrow[rr,crossing over] \arrow[ru] &  & \underline{(\Z/p^n\Z)^{\times}}\times \D_{\infty,x}  \arrow[rr,crossing over] \arrow[ru] &  & {\D_{\infty,x}} \arrow[dd, hook] \arrow[ru] &  \\
			& \mathcal X^{\ast}_{\Gamma_1(p^{n+1})}(\epsilon)_a  \arrow[rr]&  & \mathcal X^{\ast}_{\Gamma_1(p^{n})}(\epsilon)_a \arrow[rr] &  & \mathcal X^{\ast}_{\Gamma_0(p^n)}(\epsilon)_a \\
			\mathcal X^{\ast}_{\Gamma_1(p^{n+1})\cap \Gamma_0(p^\infty)}(\epsilon)_a  \arrow[rr] \arrow[ru] &  & \mathcal X^{\ast}_{\Gamma_1(p^n)\cap \Gamma_0(p^\infty)}(\epsilon)_a \arrow[rr]\arrow[from =uu, hook,crossing over] \arrow[ru] &  & \mathcal X^{\ast}_{\Gamma_0(p^{\infty})}(\epsilon)_a. \arrow[from =uu, hook,crossing over]\arrow[ru] & 
			\end{tikzcd}
		\end{equation*}
	\end{enumerate}
\end{Lemma}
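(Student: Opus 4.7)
My strategy is to obtain both parts of the lemma as formal consequences of the finite-level analysis, by base change along the forgetful map
\[\pi\colon \mathcal X^{\ast}_{\Gamma_0(p^\infty)}(\epsilon)_a \to \mathcal X^{\ast}_{\Gamma_0(p^n)}(\epsilon)_a.\]
For (1), I would first argue that $\pi$ itself restricts to a bijection on cusps. Iterating the second part of Proposition~\ref{Proposition: Tate parameter spaces in the Gamma_0-tower} shows that each transition $\mathcal X^{\ast}_{\Gamma_0(p^{m+1})}(\epsilon)_a \to \mathcal X^{\ast}_{\Gamma_0(p^{m})}(\epsilon)_a$ is bijective on cusps, and Proposition~\ref{p: the Tate parameter space at infinite level Gamma_0(p^infty)} shows that in the tilde-limit the cusps of $\XaGea{0}{\infty}$ over a fixed cusp $x\in\mathcal X^{\ast}$ are cut out compatibly by $q^{1/p^\infty}=0$ inside $\D_{\infty,x}$, so still in bijection with those of $\mathcal X^\ast$. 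Since $\mathcal X^{\ast}_{\Gamma_1(p^n)\cap\Gamma_0(p^\infty)}(\epsilon)_a \to \mathcal X^{\ast}_{\Gamma_1(p^n)}(\epsilon)_a$ is by definition the base change of $\pi$ along the finite étale cover $\mathcal X^{\ast}_{\Gamma_1(p^n)}(\epsilon)_a \to \mathcal X^{\ast}_{\Gamma_0(p^n)}(\epsilon)_a$, this bijection is inherited.

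For (2), the back face of the cube at finite level is already the content of Lemma~\ref{l:Tate parameter spaces of X^{ast}_{Gamma_1(p^n)}}. I would pull this entire face back along $\pi$. Three identifications need to be made: (i) $\XaGea{1}{n}$ pulls back to $\mathcal X^{\ast}_{\Gamma_1(p^n)\cap \Gamma_0(p^\infty)}(\epsilon)_a$ by definition; (ii) $\D_{n,x} \hookrightarrow \XaGea{0}{n}$ pulls back to $\D_{\infty,x} \hookrightarrow \XaGea{0}{\infty}$ by the right-hand Cartesian square of Proposition~\ref{p: the Tate parameter space at infinite level Gamma_0(p^infty)}(1); (iii) $\underline{(\Z/p^n\Z)^\times}\times \D_{n,x} \hookrightarrow \XaGea{1}{n}$ pulls back to $\underline{(\Z/p^n\Z)^\times}\times \D_{\infty,x} \hookrightarrow \mathcal X^{\ast}_{\Gamma_1(p^n)\cap\Gamma_0(p^\infty)}(\epsilon)_a$, which follows from (i) and (ii) since pullback commutes with fibre products and the constant profinite set $\underline{(\Z/p^n\Z)^\times}$ is unaffected. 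Performing the same pullback at level $n+1$ simultaneously produces the front face of the cube; it is Cartesian because it is the base change of a Cartesian square. The four vertical faces connecting finite and infinite level are Cartesian by construction as base-change squares.

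The main obstacle — such as it is — is purely bookkeeping: verifying that the pulled-back squares assemble into a single commuting cube with all faces Cartesian. This amounts to naturality of the fibre product construction under compatible maps, so no genuinely new input is required beyond the finite-level square of Lemma~\ref{l:Tate parameter spaces of X^{ast}_{Gamma_1(p^n)}}, the right-hand square of Proposition~\ref{p: the Tate parameter space at infinite level Gamma_0(p^infty)}, and the definition of $\mathcal X^{\ast}_{\Gamma_1(p^m)\cap\Gamma_0(p^\infty)}(\epsilon)_a$ as a fibre product.
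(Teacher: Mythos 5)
Your proposal is correct and follows essentially the same route as the paper: both arguments combine the finite-level square of Lemma~\ref{l:Tate parameter spaces of X^{ast}_{Gamma_1(p^n)}}, the Cartesian square for $\D_{\infty,x}\hookrightarrow \XaGea{0}{\infty}$ from Prop.~\ref{p: the Tate parameter space at infinite level Gamma_0(p^infty)}, and the definition of $\mathcal X^{\ast}_{\Gamma_1(p^n)\cap\Gamma_0(p^\infty)}(\epsilon)_a$ as a fibre product, and then deduce the remaining faces of the cube (and part (1)) by formal properties of Cartesian squares. The only cosmetic difference is that you phrase this as pulling back the entire back face along the forgetful map, whereas the paper runs a two-out-of-three argument on the faces of the cube; the inputs and content are identical.
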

\begin{proof}
	In part (2), the bottom faces are Cartesian by definition, the back faces are Cartesian by Lemma~\ref{l:Tate parameter spaces of X^{ast}_{Gamma_1(p^n)}}, the rightmost square is Cartesian by Prop.~\ref{p: the Tate parameter space at infinite level Gamma_0(p^infty)}, and the top faces are clearly also Cartesian. Thus all other faces are Cartesian.
	Part (1) follows immediately.
\end{proof}

We now take the limit $n\to \infty$ to get Tate parameter spaces for $\XaGea{1}{\infty}$: In doing so, we need to account for the fact that in the inverse limit, the divisor of cusps becomes a profinite set of points, rather than just a disjoint union of closed points. 

\begin{Definition}
	Let $S$ be a profinite set, and let $(S_i)_{i\in I}$ be a system of finite sets  with $S=\varprojlim_{i\in I} S_i$. Then we define $\underline{S}$ to be the unique perfectoid tilde-limit $\underline{S}\sim \varprojlim_{i\in I} \underline{S_i}$. This is independent of the choice of $S_i$:
	Explicitly, $\underline{S}$ is the affinoid perfectoid space 
	\[\underline{S} = \Spa(\Mapc(S,K),\Mapc(S,\O_K)).\]
\end{Definition}
\begin{Proposition}\label{Proposition: Tate parameter spaces at level Gamma_1(p^infty)}
	Let $x$ be a cusp of $\mathcal X^{\ast}$ with Tate parameter space $\D_{\infty,x}\hookrightarrow \mathcal X^{\ast}_{\Gamma_0(p^\infty)}(\epsilon)_a$.
	Then in the limit, the open immersions $\underline{(\Z/p^{n}\Z)^{\times}}\times \D_{\infty,x}\hookrightarrow \mathcal X^{\ast}_{\Gamma_1(p^n)\cap \Gamma_0(p^\infty)}(\epsilon)_a$ give a $\Z_p^\times$-equivariant open immersion $\underline{\Z_p^\times}\times \D_{\infty,x}\hookrightarrow \mathcal X^{\ast}_{\Gamma_1(p^\infty)}(\epsilon)_a$ that fits into a Cartesian diagram 
		\begin{equation*}
			\begin{tikzcd}
				\underline{\Z_p^\times}\times \D_{\infty,x} \arrow[d,hook] \arrow[r] & \D_{\infty,x} \arrow[d,hook]\\
				\mathcal X^{\ast}_{\Gamma_1(p^\infty)}(\epsilon)_a \arrow[r] & \mathcal X^{\ast}_{\Gamma_0(p^\infty)}(\epsilon)_a.
			\end{tikzcd}
		\end{equation*}
\end{Proposition}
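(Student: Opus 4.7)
The plan is to pass to the tilde-limit $n\to\infty$ in the cubes of Lemma~\ref{l: Tate parameter spaces of X^ast_Gamma_1(p^m)cap Gamma_0(p^infty)}.(2). First, by the results recalled in \S2.2 we have affinoid perfectoid tilde-limits $\XaGea{i}{\infty} \sim \varprojlim_n \XaGea{i}{n}$ for $i=0,1$, and pulling back along the forgetful map $\XaGea{0}{\infty} \to \XaGea{0}{n}$ yields
\[
\XaGea{1}{\infty} \;\sim\; \varprojlim_n \mathcal X^{\ast}_{\Gamma_1(p^n)\cap \Gamma_0(p^\infty)}(\epsilon)_a.
\]
On the parameter-space side, $\D_{\infty,x}$ is affinoid perfectoid by Lemma~\ref{the perfectoid parameter space of the Tate curve at infinite level}, and directly from the definition of $\underline{\Z_p^\times}$ and the fact that the projection $\underline{\Z_p^\times}\to\underline{(\Z/p^n\Z)^\times}$ is pro-\'etale, one has
\[
\underline{\Z_p^\times}\times \D_{\infty,x} \;\sim\; \varprojlim_n \underline{(\Z/p^n\Z)^\times}\times \D_{\infty,x}.
\]

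The compatible system of open immersions supplied by Lemma~\ref{l: Tate parameter spaces of X^ast_Gamma_1(p^m)cap Gamma_0(p^infty)}.(2), together with the transition maps appearing in that cube, then induces in the tilde-limit a canonical morphism
\[
\underline{\Z_p^\times}\times \D_{\infty,x} \;\longrightarrow\; \XaGea{1}{\infty}.
\]
That this morphism is itself an open immersion is a formal consequence of \cite[Prop.~2.4.3]{ScholzeWeinstein}: the tilde-limit of an inverse system of open immersions between affinoid perfectoid spaces is again an open immersion. The Cartesian square displayed in the proposition is obtained as the tilde-limit of the right-hand face of the cube in Lemma~\ref{l: Tate parameter spaces of X^ast_Gamma_1(p^m)cap Gamma_0(p^infty)}.(2), using that fibre products of affinoid perfectoid spaces commute with tilde-limits along cofinal subsystems. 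The $\Z_p^\times$-equivariance is inherited from the $(\Z/p^n\Z)^\times$-equivariance at each finite level recorded in Lemma~\ref{l:Tate parameter spaces of X^{ast}_{Gamma_1(p^n)}}.

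The only nontrivial point is ensuring that the various limit manipulations---tilde-limits of open immersions, of pullbacks, and of products with the profinite perfectoid space $\underline{\Z_p^\times}$---genuinely produce the asserted perfectoid open immersion and not just a map of underlying topological spaces. This is standard in the perfectoid formalism but is the step that has to be invoked carefully; everything else is a direct passage to the limit of objects already constructed at finite level.
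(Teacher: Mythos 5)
Your proposal is correct and takes essentially the same route as the paper, whose entire proof is to pass to the inverse limit over the front faces of the cubes in Lemma~\ref{l: Tate parameter spaces of X^ast_Gamma_1(p^m)cap Gamma_0(p^infty)}.(2) using the tilde-limit identity $\underline{\Z_p^\times}\times \D_{\infty,x}\sim \varprojlim_n \underline{(\Z/p^n\Z)^\times}\times \D_{\infty,x}$. One small slip: $\D_{\infty,x}$ is perfectoid but \emph{not} affinoid (Remark~\ref{Remark: D_infty is not affinoid}), so the limit manipulations you invoke should be carried out on the affinoid perfectoid pieces $\D_{\infty,x}(|q|^{p^m}\leq|\varpi|)$ and glued; this does not affect the argument.
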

\begin{Remark}
	Upon specialisation to the origin $\Spa(L,\O_L)\to \D_{\infty,x}$, this shows that the subspace of cusps of $\mathcal X^{\ast}_{\Gamma_1(p^\infty)}(\epsilon)_a$ over $x$ can be identified with $(\underline{\Z^\times_p})_{L}$, the base-change of the profinite adic space $\underline{\Z_p^\times}$ to $L$. In particular, for any $a \in \Z_p^\times$, specialisation at $a$ gives rise to a locally closed immersion $\D_{\infty,x}\hookrightarrow \mathcal X^{\ast}_{\Gamma_1(p^\infty)}(\epsilon)_a$ but in contrast to the case of $\Gamma_0(p^\infty)$ this is not going to be an open immersion due to the non-trivial topology on the cusps.
\end{Remark}
\begin{proof}
	This follows in the inverse limit over the front of the cubes in Lemma~\ref{l: Tate parameter spaces of X^ast_Gamma_1(p^m)cap Gamma_0(p^infty)}.2, since
	\begin{equation}\label{eq:tilde-limit-distributes-over-product}
	\underline{\Z_p^\times}\times \D_{\infty,x}\sim \varprojlim \underline{(\Z/p^n\Z)^\times}\times \D_{\infty,x}.
	\end{equation}
	That this holds is easy to verify, see for example \cite[Lemma~12.2]{perfectoid-covers-Arizona}.
\end{proof}

\subsection{Tate curve parameter spaces of $\XaGea{}{\infty}$}\label{s:infinite-level-Gamma}
Next, we look at the Tate parameter spaces in the pro-\'etale map $\XaGea{}{\infty}\to \XaGea{1}{\infty}$. As before, we do so by looking at the limit of the finite level morphisms
\[\Xa_{\Gamma(p^n)\cap \Gamma_0(p^\infty)}:=\XaGea{}{n}\times_{\XaGea{0}{n}}\XaGea{0}{\infty}\to\XaGea{0}{\infty}. \]
Combining the moduli descriptions of $\X_{\Gamma(p^n)}$ and Lemma~\ref{l: moduli interpretation of modular curve at infinite level}, we see:
\begin{Lemma}\label{l:moduli interpretation of modular curve of level Gamma(p^n) cap Gamma_0(p^infty)}
	Let $(R,R^+)$ be a perfectoid $K$-algebra. Then $\X_{\Gamma(p^n)\cap\Gamma_0(p^\infty)}(\epsilon)_a(R,R^{+})$ is in functorial bijection with isomorphism classes of tuples $(E,\alpha_N,G,\beta_n)$ with $E,\alpha_N,G$ as in Lemma~\ref{l: moduli interpretation of modular curve at infinite level} and  $\beta_n$  an isomorphism $(\Z/p^n\Z)^2\rightarrow E[p^n]$ such that $\alpha(1,0)$ generates $G_n$. 
\end{Lemma}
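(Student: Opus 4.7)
The strategy is to unfold the fiber-product definition of $\X_{\Gamma(p^n)\cap \Gamma_0(p^\infty)}(\epsilon)_a$ and apply the two moduli interpretations already established. By definition,
\[
\X_{\Gamma(p^n)\cap \Gamma_0(p^\infty)}(\epsilon)_a=\X_{\Gamma(p^n)}(\epsilon)_a\times_{\X_{\Gamma_0(p^n)}(\epsilon)_a}\X_{\Gamma_0(p^\infty)}(\epsilon)_a.
\]
Taking $\Spa(R,R^+)$-points commutes with fiber products in the category of adic spaces, so $\X_{\Gamma(p^n)\cap \Gamma_0(p^\infty)}(\epsilon)_a(R,R^+)$ is the set of triples $(P_1,P_2,\phi)$ where $P_1$ is an $(R,R^+)$-point of $\X_{\Gamma(p^n)}(\epsilon)_a$, $P_2$ is an $(R,R^+)$-point of $\X_{\Gamma_0(p^\infty)}(\epsilon)_a$, and $\phi$ is an identification of their images in $\X_{\Gamma_0(p^n)}(\epsilon)_a(R,R^+)$.

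First I would apply Lemma~\ref{Lemma: adic moduli interpretation of X_Gamma^an} to rewrite $P_1$ as an $\epsilon$-nearly ordinary elliptic curve $E_1/R$ together with a $\Gamma^p$-structure $\alpha_N^{(1)}$ and an isomorphism $\beta_n\colon (\Z/p^n\Z)^2\isomarrow E_1[p^n]$ for which the cyclic subgroup $\langle\beta_n(1,0)\rangle\subseteq E_1[p^n]$ is anticanonical; here the perfectoid hypothesis is not yet needed. Then I would apply Lemma~\ref{l: moduli interpretation of modular curve at infinite level}, which does use that $(R,R^+)$ is perfectoid via Prop.~2.4.5 of \cite{ScholzeWeinstein}, to rewrite $P_2$ as a tuple $(E_2,\alpha_N^{(2)},(G_m)_{m\in\N})$ consisting of an $\epsilon$-nearly ordinary elliptic curve with $\Gamma^p$-structure together with an anticanonical $\Gamma_0(p^\infty)$-structure $G=(G_m)$.

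The datum $\phi$ then amounts to an isomorphism $\phi\colon(E_1,\alpha_N^{(1)})\isomarrow(E_2,\alpha_N^{(2)})$ carrying the $\Gamma_0(p^n)$-subgroup $\langle\beta_n(1,0)\rangle$ onto $G_n$. Transporting $\beta_n$ along $\phi$ and relabelling $(E,\alpha_N):=(E_2,\alpha_N^{(2)})$ turns the triple into a tuple $(E,\alpha_N,G,\beta_n)$ satisfying the stated compatibility $\langle\beta_n(1,0)\rangle=G_n$. This construction is visibly functorial in $(R,R^+)$ and has a tautological inverse sending any such tuple back to the compatible pair $(P_1,P_2,\phi)$.

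The only substantive point to check is that the two anticanonicity conditions coming from the factors are compatible: on the $\Gamma(p^n)$ side one demands that $\langle\beta_n(1,0)\rangle$ be anticanonical, while on the $\Gamma_0(p^\infty)$ side one demands that $G_1$ be anticanonical. Under the identification $\langle\beta_n(1,0)\rangle=G_n$ and the relation $G_1=G_n[p]$, both conditions reduce to the single statement that the reduction mod $p$ of $\langle\beta_n(1,0)\rangle$ is the anticanonical subgroup of $E[p]$, so the two conditions match and no extra constraint is imposed. This is the main (and only) obstacle; everything else is a direct unwinding of definitions.
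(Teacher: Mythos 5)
Your proposal is correct and follows exactly the route the paper intends: the paper gives no written proof for this lemma, presenting it as an immediate consequence of unwinding the fibre-product definition $\X_{\Gamma(p^n)\cap\Gamma_0(p^\infty)}(\epsilon)_a=\X_{\Gamma(p^n)}(\epsilon)_a\times_{\X_{\Gamma_0(p^n)}(\epsilon)_a}\X_{\Gamma_0(p^\infty)}(\epsilon)_a$ and combining Lemma~\ref{Lemma: adic moduli interpretation of X_Gamma^an} with Lemma~\ref{l: moduli interpretation of modular curve at infinite level}, which is precisely what you do. Your extra check that the two anticanonicity conditions agree (both reducing to $G_n\cap C_1=G_1\cap C_1=0$) is sound and a worthwhile addition, and the gluing along $\phi$ is unproblematic since the rigidifying tame level $\Gamma^p$ with $N\geq 3$ makes the isomorphism of images unique.
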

We have the following description of the cusps of $\mathcal X^{\ast}_{\Gamma(p^m)\cap\Gamma_0(p^\infty)}(\epsilon)_a$:
\begin{Lemma}\label{l: Tate parameter spaces of X^ast_Gamma(p^m)cap Gamma_0(p^infty)}
	Let $x$ be a cusp of $\mathcal X^{\ast}$.
	\begin{enumerate}
		\item The map $\mathcal X^{\ast}_{\Gamma(p^m)\cap\Gamma_0(p^\infty)}(\epsilon)_a\rightarrow \mathcal X^{\ast}_{\Gamma(p^m)}(\epsilon)_a$ induces an isomorphism on cusps. The cusps of $\mathcal X^{\ast}_{\Gamma(p^m)\cap\Gamma_0(p^\infty)}(\epsilon)_a$ over $x$ are thus parametrised by $\Gamma_0(p^n,\Z/p^n\Z)$ where  $\smallmat{a}{b}{0}{d}$ corresponds to the ordered basis $(q^{d/p^n},\zeta_{p^n}^{a}q^{-b/p^n})$ of $\Tate(q)[p^n]$.
		\item There is the following Cartesian diagram, 
		where the top map is given by $\smallmat{a}{b}{0}{d}\mapsto d$:
		\begin{equation*}
			\begin{tikzcd}
				\underline{\Gamma_0(p^n,\Z/p^n\Z)}\times \D_{\infty,x} \arrow[d,hook] \arrow[r] &\underline{(\Z/p^n\Z)^\times}\times \D_{\infty,x} \arrow[d,hook]\\
				\mathcal X^{\ast}_{\Gamma(p^n)\cap\Gamma_0(p^\infty)}(\epsilon)_a\arrow[r] & \mathcal X^{\ast}_{\Gamma_1(p^n)\cap\Gamma_0(p^\infty)}(\epsilon)_a.
			\end{tikzcd}
		\end{equation*}
		
	\end{enumerate}
\end{Lemma}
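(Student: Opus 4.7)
The proof should mirror that of Lemma~\ref{l: Tate parameter spaces of X^ast_Gamma_1(p^m)cap Gamma_0(p^infty)}, with the $\Gamma_1$-square replaced by the leftmost square of Prop.~\ref{p: structure of Tate parameter spaces from tame level to Gamma(p^n)}. The key observation is that, by definition,
\[
\mathcal X^{\ast}_{\Gamma(p^n)\cap \Gamma_0(p^\infty)}(\epsilon)_a = \mathcal X^{\ast}_{\Gamma(p^n)}(\epsilon)_a\times_{\mathcal X^{\ast}_{\Gamma_0(p^n)}(\epsilon)_a}\mathcal X^{\ast}_{\Gamma_0(p^\infty)}(\epsilon)_a,
\]
and the same is true with $\Gamma(p^n)$ replaced by $\Gamma_1(p^n)$. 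Hence the square in part (2) should be obtained by base-changing the leftmost square of Prop.~\ref{p: structure of Tate parameter spaces from tame level to Gamma(p^n)} along the map $\mathcal X^{\ast}_{\Gamma_0(p^\infty)}(\epsilon)_a \to \mathcal X^{\ast}_{\Gamma_0(p^n)}(\epsilon)_a$.

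Concretely, for part (2) I would first note that combining Prop.~\ref{p: the Tate parameter space at infinite level Gamma_0(p^infty)} with Prop.~\ref{Proposition: Tate parameter spaces in the Gamma_0-tower} yields a Cartesian square
\[
\begin{tikzcd}
\D_{\infty,x} \arrow[d,hook] \arrow[r] & \D_{n,x} \arrow[d,hook] \\
\mathcal X^{\ast}_{\Gamma_0(p^\infty)}(\epsilon)_a \arrow[r] & \mathcal X^{\ast}_{\Gamma_0(p^n)}(\epsilon)_a,
\end{tikzcd}
\]
since $\D_{\infty,x}\to \D_x$ factors through $\D_{n,x}$ by the map $q\mapsto q$. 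Pulling back the leftmost square of Prop.~\ref{p: structure of Tate parameter spaces from tame level to Gamma(p^n)} along this, and using that base change commutes with fibre products (so that $\underline{\Gamma_0(p^n,\Z/p^n\Z)}\times \D_{n,x}\times_{\D_{n,x}}\D_{\infty,x}=\underline{\Gamma_0(p^n,\Z/p^n\Z)}\times \D_{\infty,x}$, and analogously for the upper right), gives the desired Cartesian diagram relating $\mathcal X^{\ast}_{\Gamma(p^n)\cap\Gamma_0(p^\infty)}(\epsilon)_a$ and $\mathcal X^{\ast}_{\Gamma_1(p^n)\cap\Gamma_0(p^\infty)}(\epsilon)_a$.

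Part (1) then follows from (2) together with Lemma~\ref{l:Tate parameter spaces of X_Gamma(p^n)(epsilon)_a}: the map in question is the base change of $\mathcal X^{\ast}_{\Gamma_0(p^\infty)}(\epsilon)_a \to \mathcal X^{\ast}_{\Gamma_0(p^n)}(\epsilon)_a$, which induces an isomorphism on cusps because each side has exactly one cusp over $x$ (the origin of $\D_{\infty,x}$, resp.\ of $\D_{n,x}$). Hence the cusps over $x$ are in bijection with those of $\mathcal X^{\ast}_{\Gamma(p^n)}(\epsilon)_a$ over $x$, which are parametrised by $\Gamma_0(p^n,\Z/p^n\Z)$ as in Prop.~\ref{p: structure of Tate parameter spaces from tame level to Gamma(p^n)}, with the explicit moduli description of the ordered basis $(q^{d/p^n},\zeta_{p^n}^a q^{-b/p^n})$ imported directly from Lemma~\ref{l:Tate parameter spaces of X_Gamma(p^n)(epsilon)_a}.(2).

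There is no real obstacle here; the only bookkeeping issue is to ensure that the three-fold fibre product is handled correctly, i.e.\ that the map $\underline{\Gamma_0(p^n,\Z/p^n\Z)}\times \D_{n,x}\to \mathcal X^{\ast}_{\Gamma(p^n)}(\epsilon)_a$ composes on both sides with the forgetful maps to $\mathcal X^{\ast}_{\Gamma_0(p^n)}(\epsilon)_a$ to give the Tate parameter space from Prop.~\ref{Proposition: Tate parameter spaces in the Gamma_0-tower} (ignoring the $\Gamma_0(p^n,\Z/p^n\Z)$-factor), which is guaranteed by the commutativity of the tower in Prop.~\ref{p: structure of Tate parameter spaces from tame level to Gamma(p^n)}.
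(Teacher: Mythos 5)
Your proposal is correct and follows essentially the same route as the paper: the paper's (very terse) proof also deduces part (2) from a Cartesian cube built from the left square of Prop.~\ref{p: structure of Tate parameter spaces from tame level to Gamma(p^n)} and Lemma~\ref{l: Tate parameter spaces of X^ast_Gamma_1(p^m)cap Gamma_0(p^infty)}, which is exactly your base-change argument spelled out, and derives part (1) from Lemma~\ref{l:Tate parameter spaces of X_Gamma(p^n)(epsilon)_a} and Prop.~\ref{p: the Tate parameter space at infinite level Gamma_0(p^infty)} as you do.
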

\begin{proof}
	Part (1) follows from Lemma~\ref{l:Tate parameter spaces of X_Gamma(p^n)(epsilon)_a} and Prop.~\ref{p: the Tate parameter space at infinite level Gamma_0(p^infty)} exactly like in Lemma~\ref{l: Tate parameter spaces of X^ast_Gamma_1(p^m)cap Gamma_0(p^infty)}.
	Part (2) follows from a similar Cartesian cube using the left square in Prop.~\ref{p: structure of Tate parameter spaces from tame level to Gamma(p^n)} and Lemma~\ref{l: Tate parameter spaces of X^ast_Gamma_1(p^m)cap Gamma_0(p^infty)}.
\end{proof}

\begin{Definition}
	Let $\Gamma_0(p^{\infty})=\smallmat{\Z_p^\times}{\Z_p}{0}{\Z_p^\times}$ be the subgroup of $\GL_2(\Z_p)$ of upper triangular matrices. This is a profinite group with $\Gamma_0(p^{\infty})=\varprojlim_n \Gamma_0(p^{n},\Z/p^n\Z)$.
\end{Definition}

\begin{Lemma}\label{l:moduli interpretation of Gamma_1(p^infty)(epsilon)_a}
	Let $(R,R^+)$ be a perfectoid $K$-algebra. Then $\X_{\Gamma(p^\infty)}(\epsilon)_a(R,R^+)$ is in functorial bijection with isomorphism classes of triples $(E,\alpha_N,\beta)$ of an $\epsilon$-nearly ordinary elliptic curve $E$ over $R$, together with a $\Gamma^p$-structure $\alpha_N$, and an isomorphism of $p$-divisible groups $\beta\colon  (\Q_p/\Z_p)^2\rightarrow E[p^\infty]$ over $R$ (or equivalently an isomorphism $\Z_p^2\rightarrow T_pE(R)$) such that the restriction of $\beta$ to the first factor is an anti-canonical $\Gamma_1(p^\infty)$-structure.
\end{Lemma}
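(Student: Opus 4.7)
The plan is to bootstrap from the finite-level moduli interpretation of Lemma \ref{l:moduli interpretation of modular curve of level Gamma(p^n) cap Gamma_0(p^infty)} by passing to the inverse limit, exploiting that $(R,R^+)$ is perfectoid so that tilde-limits of perfectoid spaces commute with taking $(R,R^+)$-points.

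First, I would note that $\X_{\Gamma(p^\infty)}(\epsilon)_a \sim \varprojlim_n \X_{\Gamma(p^n)\cap \Gamma_0(p^\infty)}(\epsilon)_a$: indeed, by construction $\X_{\Gamma(p^\infty)}(\epsilon)_a \sim \varprojlim_n \X_{\Gamma(p^n)}(\epsilon)_a$, and fibering everything over $\X_{\Gamma_0(p^\infty)}(\epsilon)_a \sim \varprojlim_n \X_{\Gamma_0(p^n)}(\epsilon)_a$ is compatible with tilde-limits. Applying \cite[Prop.~2.4.5]{ScholzeWeinstein} then yields
\[
\X_{\Gamma(p^\infty)}(\epsilon)_a(R,R^+) \;=\; \varprojlim_n \X_{\Gamma(p^n)\cap \Gamma_0(p^\infty)}(\epsilon)_a(R,R^+).
\]

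Second, I would invoke Lemma \ref{l:moduli interpretation of modular curve of level Gamma(p^n) cap Gamma_0(p^infty)} at each finite level $n$ to identify the right-hand side with the inverse limit of isomorphism classes of tuples $(E_n,\alpha_{N,n},G^{(n)},\beta_n)$, where $\beta_n\colon(\Z/p^n\Z)^2\to E_n[p^n]$ satisfies that $\beta_n(1,0)$ generates the $n$-th piece $G^{(n)}_n$ of the anticanonical $\Gamma_0(p^\infty)$-structure $G^{(n)}$. Under the transition maps in the tower, $E_n$ and $\alpha_{N,n}$ are pulled back from level $1$, so they assemble into a single $\epsilon$-nearly ordinary $E/R$ with $\Gamma^p$-structure $\alpha_N$; similarly the $G^{(n)}$ (which by Lemma~\ref{l: moduli interpretation of modular curve at infinite level} stabilise to a single anticanonical $\Gamma_0(p^\infty)$-structure) agree. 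The remaining data is a compatible system $(\beta_n)_n$ with $\beta_{n+1}\bmod p^n = \beta_n$, which by definition of $T_pE$ is the same as an isomorphism $\beta\colon \Z_p^2 \isomarrow T_pE(R)$, equivalently $\beta\colon(\Q_p/\Z_p)^2\isomarrow E[p^\infty]$.

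Third, I would observe that the ``anticanonical'' condition — that $\beta_n(1,0)$ generate $G^{(n)}_n$ for every $n$ — translates under this identification to $\beta|_{\text{first factor}}$ defining the anticanonical $\Gamma_1(p^\infty)$-structure of $E$; conversely, given such a $\beta$, the subgroups $G_n:=\langle \beta(p^{-n},0)\rangle$ recover the anticanonical $\Gamma_0(p^\infty)$-structure, so $G$ is fully determined by $\beta$ and drops out of the data. This matches the claimed parametrisation.

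The only potentially nontrivial point is verifying that the transition maps in the finite-level moduli functor send $(E_{n+1},\alpha_{N,n+1},G^{(n+1)},\beta_{n+1})$ to $(E_{n+1},\alpha_{N,n+1},G^{(n+1)}[p^n],\beta_{n+1}\bmod p^n)$, i.e.\ that the forgetful map really is reduction mod $p^n$ on the $\beta$-component; this is built into the definition of the tower $\X_{\Gamma(p^{n+1})}\to\X_{\Gamma(p^n)}$ and is the main thing one needs to check carefully, but it is essentially tautological.
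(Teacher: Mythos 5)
Your proposal is correct and follows exactly the paper's route: the paper's proof is the one-line observation that the statement is an immediate consequence of Lemma~\ref{l:moduli interpretation of modular curve of level Gamma(p^n) cap Gamma_0(p^infty)} together with \cite[Prop.~2.4.5]{ScholzeWeinstein}, which is precisely the limit argument you spell out. Your additional verifications (compatibility of the transition maps and the translation of the anticanonical condition) are the details the paper leaves implicit.
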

\begin{proof}
	This is an immediate consequence of Lemma~\ref{l:moduli interpretation of modular curve of level Gamma(p^n) cap Gamma_0(p^infty)} and \cite[Prop.~2.4.5]{ScholzeWeinstein}.
\end{proof}

We are now ready to give the main result of this section, which summarises the discussion so far and moreover describes the cusps of $\mathcal X^{\ast}_{\Gamma(p^\infty)}(\epsilon)_a$. For the statement, let us recall that for any $n$, the Tate curve $\Tate(q)$ over $\D_\infty$ has an anticanonical ordered basis for $\Tate(q)[p^n]$ given by $(q^{1/p^n},\zeta_{p^n})$. In particular, an anticanonical ordered basis of the Tate module $T_p\Tate(q)$ is given by the compatible system $(q^{1/p^n})_{n\in\N}$, that we denote by $q^{1/p^\infty}$, and the compatible system $(\zeta_{p^n})_{n\in\N}$, that we denote by $\zeta_{p^\infty}$.
\begin{Theorem}\label{Theorem: Tate parameter spaces at level Gamma(p^infty)}
	Let $x$ be a cusp of $\mathcal X^{\ast}$ with corresponding morphism $\D_x\hookrightarrow  \mathcal X^{\ast}$.
	\begin{enumerate}
		\item We have a tower of Cartesian squares, where the top left map sends $\smallmat{a}{b}{0}{d}\mapsto d$:
		\begin{equation*}
		\begin{tikzcd}
		\underline{\Gamma_0(p^\infty)}\times \D_{\infty,x} \arrow[d,hook] \arrow[r] & \underline{\Z_p^\times}\times \D_{\infty,x} \arrow[d,hook] \arrow[r] & \D_{\infty,x} \arrow[d,hook] \arrow[r] & \D_{x}\arrow[d,hook]\\
		\mathcal X^{\ast}_{\Gamma(p^\infty)}(\epsilon)_a \arrow[r] &  X^{\ast}_{\Gamma_1(p^\infty)}(\epsilon)_a \arrow[r] & \mathcal \X^{\ast}_{\Gamma_0(p^\infty)}(\epsilon)_a \arrow[r]&\mathcal X^{\ast}(\epsilon) .
		\end{tikzcd}
		\end{equation*} 
		\item For any $\gamma=\smallmat{a}{b}{0}{d}\in \Gamma_0(p^\infty)$, the cusp of $\mathcal X^{\ast}_{\Gamma(p^\infty)}(\epsilon)_a$ obtained by specialising $\underline{\Gamma_0(p^\infty)}\times \D_{\infty,x}\hookrightarrow \mathcal X^{\ast}_{\Gamma(p^\infty)}(\epsilon)_a$ at  $\gamma$ is the one corresponding to the
		isomorphism $\Z_p^2\rightarrow T_p\Tate(q)$ defined by the ordered basis $(q^{d/p^\infty}, \zeta^{a}_{p^\infty}q^{-b/p^\infty})$ of $T_p\Tate(q)$.

	\end{enumerate}
\end{Theorem}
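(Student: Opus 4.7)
The plan is to obtain both parts from the finite-level results of Section 2 by passing to the tilde-limit $n\to\infty$, combined with the moduli interpretation of $\mathcal X^{\ast}_{\Gamma(p^\infty)}(\epsilon)_a$ from Lemma~\ref{l:moduli interpretation of Gamma_1(p^infty)(epsilon)_a}.

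For part (1), the rightmost square is Proposition~\ref{p: the Tate parameter space at infinite level Gamma_0(p^infty)} and the middle square is Proposition~\ref{Proposition: Tate parameter spaces at level Gamma_1(p^infty)}, so only the leftmost square is new. The approach is to take the inverse limit of the left squares in Lemma~\ref{l: Tate parameter spaces of X^ast_Gamma(p^m)cap Gamma_0(p^infty)}(2) along $n\to\infty$. The intermediate tower
\[\cdots \to \mathcal X^{\ast}_{\Gamma(p^{n+1})\cap\Gamma_0(p^\infty)}(\epsilon)_a \to \mathcal X^{\ast}_{\Gamma(p^{n})\cap\Gamma_0(p^\infty)}(\epsilon)_a \to \mathcal X^{\ast}_{\Gamma_0(p^\infty)}(\epsilon)_a\]
is a tower of finite \'etale $\Gamma_0(p^n,\Z/p^n\Z)$-torsors over the already-perfectoid base, and by construction of $\mathcal X^{\ast}_{\Gamma(p^\infty)}(\epsilon)_a$ as the pro-\'etale $\Gamma_0(p^\infty)$-torsor recalled in \S2 there is an identification $\mathcal X^{\ast}_{\Gamma(p^\infty)}(\epsilon)_a\sim\varprojlim_n \mathcal X^{\ast}_{\Gamma(p^n)\cap\Gamma_0(p^\infty)}(\epsilon)_a$ of affinoid perfectoid spaces. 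Using that tilde-limits distribute over products with profinite sets (exactly as in equation \eqref{eq:tilde-limit-distributes-over-product} in the proof of Proposition~\ref{Proposition: Tate parameter spaces at level Gamma_1(p^infty)}) one has $\underline{\Gamma_0(p^\infty)}\times \D_{\infty,x}\sim \varprojlim_n \underline{\Gamma_0(p^n,\Z/p^n\Z)}\times \D_{\infty,x}$. Cartesianity is preserved because the finite-level squares are Cartesian and all four corners admit compatible tilde-limits.

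For part (2), the plan is to read off the ordered basis from the finite-level data and pass to the limit. By Lemma~\ref{l:Tate parameter spaces of X_Gamma(p^n)(epsilon)_a}(2), the cusp $x_{\gamma_n}$ at finite level labeled by the reduction $\gamma_n = \smallmat{\overline{a}}{\overline{b}}{0}{\overline{d}} \in \Gamma_0(p^n,\Z/p^n\Z)$ of $\gamma$ corresponds to the ordered basis $(q^{\overline{d}/p^n},\zeta_{p^n}^{\overline a}q^{-\overline b/p^n})$ of $\Tate(q)[p^n]$. These bases are compatible under the transition maps $\Tate(q)[p^{n+1}]\xrightarrow{p}\Tate(q)[p^n]$ since reduction mod $p^n$ of $\gamma$ is compatible with the tower of finite-level cusps in Lemma~\ref{l:Tate parameter spaces of X_Gamma(p^n)(epsilon)_a}(3). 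Composing with the moduli description at infinite level (Lemma~\ref{l:moduli interpretation of Gamma_1(p^infty)(epsilon)_a}), where a point of $\mathcal X^{\ast}_{\Gamma(p^\infty)}(\epsilon)_a$ away from the cusps encodes a trivialisation of $T_p$, the inverse system assembles to the trivialisation $(q^{d/p^\infty},\zeta_{p^\infty}^a q^{-b/p^\infty})$ of $T_p\Tate(q)$ claimed in the statement.

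The main obstacle is purely bookkeeping: making sure the tilde-limit identification $\mathcal X^{\ast}_{\Gamma(p^\infty)}(\epsilon)_a\sim\varprojlim_n \mathcal X^{\ast}_{\Gamma(p^n)\cap\Gamma_0(p^\infty)}(\epsilon)_a$ is clean (which reduces to the corresponding tilde-limit property for the $\Gamma_0(p^\infty)$-tower, already established by Scholze), and that Cartesianity of the leftmost square genuinely descends to the limit. Everything else is a straightforward packaging of the finite-level calculations already carried out in \S2.
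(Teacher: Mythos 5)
Your proposal is correct and follows essentially the same route as the paper: part (1) is obtained by passing to the limit over the left Cartesian squares of Lemma~\ref{l: Tate parameter spaces of X^ast_Gamma(p^m)cap Gamma_0(p^infty)}.(2), using that tilde-limits distribute over products with profinite sets exactly as in \eqref{eq:tilde-limit-distributes-over-product}, and part (2) is the limit of the finite-level cusp labelling by ordered bases $(q^{d/p^n},\zeta_{p^n}^{a}q^{-b/p^n})$. The paper's proof is just a terser version of the same argument.
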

\begin{proof}
	\begin{enumerate}
		\item
		The only statement we have not yet proved is that the left square is Cartesian. But this follows from Lemma~\ref{l: Tate parameter spaces of X^ast_Gamma(p^m)cap Gamma_0(p^infty)}.(2) in the limit $n\to \infty$. Here we use that 
		\[ \underline{\Gamma_0(p^\infty)}\times \D_{\infty,x}\sim \varprojlim_{n\in\N}  \underline{\Gamma_0(p^n,\Z/p^n)}\times \D_{n,x}\]
		as well as the analogous statement from \eqref{eq:tilde-limit-distributes-over-product}, which hold by the same argument.
		\item This follows from Lemma~\ref{l: Tate parameter spaces of X^ast_Gamma(p^m)cap Gamma_0(p^infty)}.(1) in the limit.\qedhere
	\end{enumerate}
\end{proof}

We note the following easy consequence. The analogue of this for Siegel moduli spaces for dimension $g>1$ is shown in the proof of \cite[Lemma III.2.35]{torsion}.
\begin{Corollary}\label{c: over ordinary locus, Gamma to Gamma_1 is split}
	For any $n\in \N\cup \{\infty\}$, our choice of $\zeta_{p^n}$ induces a canonical isomorphism
	\[\mathcal X^{\ast}_{\Gamma(p^n)}(0)_a = \bigsqcup_{\Gamma(p^n)/\Gamma_1(p^n)}\mathcal X^{\ast}_{\Gamma_1(p^n)}(0)_a.\]
\end{Corollary}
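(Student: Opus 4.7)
The plan is to construct, over the ordinary locus, a canonical section of the forgetful map $\pi_n\colon \mathcal X^{\ast}_{\Gamma(p^n)}(0)_a \to \mathcal X^{\ast}_{\Gamma_1(p^n)}(0)_a$; since $\pi_n$ is a finite \'etale torsor for the Galois group $H_n$ indexing the disjoint union, any section automatically trivialises $\pi_n$ and produces the asserted decomposition. The key input from the hypothesis $\epsilon = 0$ is that the canonical subgroup $C_n \subseteq E[p^n]$ of the universal elliptic curve is then genuinely multiplicative, and $E[p^n] = C_n \oplus \langle P\rangle$ for any anticanonical $P$.

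First I would check that $\pi_n$ is a finite \'etale $H_n$-torsor including across the cusps: away from the cusps this is standard, and at the cusps it follows from Prop~\ref{p: structure of Tate parameter spaces from tame level to Gamma(p^n)} (resp.\ Thm~\ref{Theorem: Tate parameter spaces at level Gamma(p^infty)}.(1) at infinite level), where $\pi_n$ reduces to the map $\underline{\Gamma_0(p^n,\Z/p^n\Z)}\times \D_{n,x} \to \underline{(\Z/p^n\Z)^{\times}}\times \D_{n,x}$, $\smallmat{a}{b}{0}{d} \mapsto d$, which is manifestly a trivial \'etale $H_n$-cover. The section is then constructed via the Weil pairing: over $\mathcal X^{\ast}_{\Gamma_1(p^n)}(0)_a$ the universal anticanonical $\Gamma_1(p^n)$-structure is a section $P$ of $E[p^n]$ with $\langle P\rangle \cap C_n = 0$, and since $C_n$ is isotropic for $e_{p^n}$ in the ordinary setting, the map $Q \mapsto e_{p^n}(Q,P)$ is an isomorphism $C_n \isomarrow \mu_{p^n}$ of group schemes. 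The chosen $\zeta_{p^n} \in K$ therefore determines a canonical $Q \in C_n$, and the basis $(P,Q)$ of $E[p^n]$ defines a $\Gamma(p^n)$-structure, yielding the desired section $s_n$. For $n=\infty$ the $s_n$ are compatible in $n$ (as the $\zeta_{p^n}$ are), and the construction passes to the perfectoid tilde-limit.

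The one point requiring care is that $s_n$ is continuous across the cusps, and that the resulting splitting coincides with the combinatorial one visible at the boundary. This reduces to a direct check on the Tate parameter space $\D_{n,x}$, where $C_n = \mu_{p^n}$ sits canonically inside $\Tate(q) = \G_m/q^{\Z}$, so that the canonical $Q$ is exactly $\zeta_{p^n}$ and $s_n$ sends the anticanonical generator $q^{1/p^n}$ to the basis $(q^{1/p^n}, \zeta_{p^n})$. By Thm~\ref{Theorem: Tate parameter spaces at level Gamma(p^infty)}.(2) this corresponds precisely to the cusp labelled by the identity $\gamma = 1 \in \Gamma_0(p^n,\Z/p^n\Z)$, and the $H_n$-translates of $s_n$ correspond exactly to the other labels $\gamma = \smallmat{a}{b}{0}{1}$. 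Thus the asserted splitting is already forced at the boundary by the Tate parameter space picture of the preceding theorem, with the $\zeta_{p^n}$-determined section giving the canonical global identification.
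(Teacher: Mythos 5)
Your proposal is correct and takes essentially the same route as the paper: the paper's proof also produces the splitting from the section given by $T_pE=T_pC\times T_pG$ together with the Weil-pairing duality $T_pC\cong T_pG^{\vee}$ and the chosen roots of unity, and likewise verifies the extension over the cusps on the Tate parameter spaces, where the section is exactly the lift already constructed in Lemma~\ref{l:Tate parameter spaces of X_Gamma(p^n)(epsilon)_a}. The only cosmetic difference is that you build the section at finite level and pass to the limit, whereas the paper states it for $n=\infty$ first.
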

\begin{proof}
	For $n=\infty$, there is away from the cusps a canonical section induced by $T_pE=T_pC\times T_pG$ and the canonical isomorphism $T_pC=T_pG^\vee$ induced by the Weil pairing. On Tate parameter spaces, one checks that this splitting is given by the map \[\underline{\Z_p^\times}\times \mathring{\D}_\infty \to \underline{\Gamma_0(p^\infty)}\times \mathring{\D}_\infty, \quad(a,q)\mapsto  \left(\smallmat{a}{0}{0}{a^{-1}},q\right).\] 
	This clearly extends over the puncture. Similarly for $n<\infty$.
\end{proof}

\subsection{The action of $\Gamma_0(p)$ on the cusps of $\mathcal X^{\ast}_{\Gamma(p^\infty)}(\epsilon)_a$}\label{s:action-of-Gamma_0}
Next, we discuss the full action of $\Gamma_0(p)$ on the Tate parameter spaces at infinite level.

Since the full $\GL_2(\Z/p^n\Z)$-action on each $\Xa_{\Gamma(p^n)}$ restricts to a $\Gamma_0(p,\Z/p^n\Z)$-action on $\XaGea{}{n}$ as discussed in Prop.~\ref{Proposition: the action of Gamma_0(p,Z/p^nZ) on cusps of Gamma(p^n)}, we see that the $\GL_2(\Z_p)$-action on  $\Xa_{\Gamma(p^\infty)}$ restricts to an action of $\Gamma_0(p)=\varprojlim_n \Gamma_0(p,\Z/p^n\Z)$ on $\XaGea{}{\infty}$.

\begin{Proposition}\label{p: description of the action of Gamma_0(p) on the Tate parameter spaces}
	Over any cusp  $x$ of $\mathcal X^{\ast}$, the $\Gamma_0(p)$-action on  $\mathcal X^{\ast}_{\Gamma(p^\infty)}(\epsilon)_a$ restricts to an action on $\underline{\Gamma_0(p^\infty)}\times \D_\infty\hookrightarrow \mathcal X^{\ast}_{\Gamma(p^\infty)}(\epsilon)_a$  where it can be described as follows: Equip $\underline{\Gamma_0(p)}\times \D_\infty$ with a right action by $p\Z_p$ via $(\gamma,q^{1/p^m})\mapsto (\gamma\smallmat{1}{0}{h}{1},\zeta^{h/e_x}_{p^m}q^{1/p^m})$ for  $h\in p\Z_p$, then
	\[(\underline{\Gamma_0(p)}\times \D_\infty)/p\Z_p=\underline{\Gamma_0(p^\infty)}\times \D_\infty\]
	as sheaves on $\mathbf{Perf}_K$ and the left action of $\Gamma_0(p)$ is the one induced by letting $\Gamma_0(p)$ act on the first factor of $\underline{\Gamma_0(p)}\times \D_\infty$.
	Explicitly, in terms of any $\gamma_1\in \Gamma_0(p)$, the action is given by
	\begin{alignat*}{3}
	\gamma_1\colon &&\underline{\Gamma_0(p^\infty)}\times \D_\infty&\;\xrightarrow{\sim}\;&& \underline{\Gamma_0(p^\infty)}\times \D_\infty\\
	&& \gamma_2,q^{1/p^m} &\; \;\mapsto\; && \smallmat{\det( \gamma_3)/d_3}{b_3}{0}{d_3},\zeta_{p^m}^{-\frac{c_3}{d_3e_x}}q^{1/p^m}.
	\end{alignat*}
	where $\gamma_3=\smallmat{a_3}{b_3}{c_3}{d_3}:=\gamma_1\cdot \gamma_2$.
\end{Proposition}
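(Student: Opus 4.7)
The plan is to deduce this statement directly from its finite-level counterpart, Prop.~\ref{Proposition: the action of Gamma_0(p,Z/p^nZ) on cusps of Gamma(p^n)}, by passing to the tilde-limit $n\to\infty$. By construction, the $\Gamma_0(p)$-action on $\XaGea{}{\infty}$ is the inverse limit of the $\Gamma_0(p,\Z/p^n\Z)$-actions on $\XaGea{}{n}$ under the reductions $\Gamma_0(p)=\varprojlim_n \Gamma_0(p,\Z/p^n\Z)$. Since at each finite level the action preserves $\underline{\Gamma_0(p^n,\Z/p^n\Z)}\times \D_{n,x}$ by the finite-level proposition, the action at infinite level preserves the tilde-limit $\underline{\Gamma_0(p^\infty)}\times \D_{\infty,x}$.

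For the quotient statement, I would combine Lemma~\ref{the perfectoid parameter space of the Tate curve at infinite level} with the compatibility of tilde-limits with products already used in \eqref{eq:tilde-limit-distributes-over-product} to obtain
\[
\underline{\Gamma_0(p)}\times \D_{\infty,x}\;\sim\;\varprojlim_n \underline{\Gamma_0(p,\Z/p^n\Z)}\times \D_{n,x},\quad \underline{\Gamma_0(p^\infty)}\times \D_{\infty,x}\;\sim\;\varprojlim_n \underline{\Gamma_0(p^n,\Z/p^n\Z)}\times \D_{n,x}.
\]
The right action of $p\Z_p$ on the left-hand side is the inverse limit of the right $p\Z/p^n\Z$-actions: for $h\in p\Z_p$, the scalar $\zeta_{p^n}^{h/e_x}$ depends only on $h\bmod p^n$ since $\gcd(e_x,p)=1$, and the matrix factor $\smallmat{1}{0}{h}{1}$ reduces correctly. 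At each level $n$, the finite-level proposition computes the quotient by picking the unique upper-triangular coset representative via the identity $\smallmat{a}{b}{c}{d}=\smallmat{\det/d}{b}{0}{d}\smallmat{1}{0}{c/d}{1}$, and these quotient maps are evidently compatible under the transition $n+1\to n$. Passing to the tilde-limit then yields $(\underline{\Gamma_0(p)}\times \D_{\infty,x})/p\Z_p = \underline{\Gamma_0(p^\infty)}\times \D_{\infty,x}$ as sheaves on $\mathbf{Perf}_K$, and the left $\Gamma_0(p)$-action is through the first factor by naturality of the construction.

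Finally, the explicit formula should drop out immediately: for any $\gamma_1\in\Gamma_0(p)$ and $(\gamma_2,q^{1/p^m})\in\underline{\Gamma_0(p^\infty)}\times \D_{\infty,x}$, I would apply the finite-level formula at any $n\geq m$ to the image of our point in $\underline{\Gamma_0(p^n,\Z/p^n\Z)}\times \D_{n,x}$, obtaining $(\smallmat{\det(\gamma_3)/d_3}{b_3}{0}{d_3},\zeta_{p^n}^{-c_3/(d_3e_x)}q^{1/p^n})$ where $\gamma_3=\gamma_1\gamma_2$; this is independent of $n$ thanks to the compatibility $(\zeta_{p^n}^{-c_3/(d_3e_x)})^{p^{n-m}}=\zeta_{p^m}^{-c_3/(d_3e_x)}$, and hence descends to the stated formula on the tilde-limit. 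The main step that warrants care is the commutation of the $p\Z_p$-quotient with the tilde-limit; this works because the action is free on the first factor $\underline{\Gamma_0(p)}$ (as $\gamma\smallmat{1}{0}{h}{1}=\gamma$ forces $h=0$) and factors through the finite quotient $p\Z/p^n\Z$ at each level, so the quotient exists level-wise as an isomorphism of perfectoid spaces and assembles compatibly in the inverse system.
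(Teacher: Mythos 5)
Your proposal is correct and follows essentially the same route as the paper: both deduce the statement from Prop.~\ref{Proposition: the action of Gamma_0(p,Z/p^nZ) on cusps of Gamma(p^n)} by passing to the tilde-limit, using the compatibility of the finite-level quotient maps with the transition maps. The only cosmetic difference is in how the sheaf identity $(\underline{\Gamma_0(p)}\times \D_\infty)/p\Z_p=\underline{\Gamma_0(p^\infty)}\times \D_\infty$ is packaged: the paper constructs the limit map and exhibits its inverse directly via the inclusion $\Gamma_0(p^\infty)\subseteq\Gamma_0(p)$, while you argue that the quotient commutes with the tilde-limit using freeness and the level-wise sections --- the same underlying mechanism.
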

\begin{proof}
	That the action restricts to an action on $\underline{\Gamma_0(p^\infty)}\times \D_\infty$ is a consequence of Prop.~\ref{Proposition: the action of Gamma_0(p,Z/p^nZ) on cusps of Gamma(p^n)} in the limit over $n$. The same argument gives the explicit formula.
	
	It remains to verify the isomorphism of sheaves. For this we check that the following diagram commutes, where to ease notation, let $\Gamma_m:=\Gamma_0(p^m,\Z/p^m\Z)$ and $\Gamma'_m:=\Gamma_0(p,\Z/p^m\Z)$:
	\[
		\begin{tikzcd}[column sep = 0.4cm,row sep = {1.5cm,between origins},column sep = 1.1cm]
			\underline{\Gamma'_{n+1}}\times \D_{n+1} \arrow[d] \arrow[r] & \underline{\Gamma_{n+1}}\times \D_{n+1}, \arrow[d] \\
			\underline{\Gamma'_n}\times \D_n \arrow[r] & \underline{\Gamma_n}\times \D_n,
		\end{tikzcd}
		\quad
		\begin{tikzcd}[column sep = 0.4cm,row sep = {1.5cm,between origins}]
			\smallmat{a}{b}{c}{d},q^{1/p^{n+1}} \arrow[d, maps to] \arrow[r, maps to] & \smallmat{\det( \gamma)/d}{b}{0}{d},\zeta_{p^{n+1}}^{-\tfrac{c}{de_x}}q^{1/p^{n+1}} \arrow[d, maps to] \\
			\smallmat{a}{b}{c}{d},q^{1/p^n} \arrow[r, maps to] & \smallmat{\det( \gamma)/d}{b}{0}{d},\zeta_{p^{n}}^{-\tfrac{c}{de_x}}q^{1/p^n}
		\end{tikzcd}
	\]
	where $\gamma=\smallmat{a}{b}{c}{d}$ (we emphasize that on the right we describe the maps in terms of \textit{points} rather than \textit{functions}). This diagram is $p\Z_p$-equivariant when we endow the spaces on the left with the $p\Z_p$-actions from Prop.~\ref{Proposition: the action of Gamma_0(p,Z/p^nZ) on cusps of Gamma(p^n)}, and the spaces on the right with the trivial $p\Z_p$-action. The diagram is moreover equivariant for the $\Gamma_0(p)$-action on the left via the natural reduction maps.
	In the limit we therefore obtain a $p\Z_p$-invariant morphism
	\[\underline{\Gamma_0(p)}\times \D_\infty\rightarrow \underline{\Gamma_0(p^\infty)}\times \D_\infty,\]
	equivariant for the $\Gamma_0(p)$-action via the first factor on the left, and the action described in the proposition on the right. This induces a morphism of sheaves  
	\[(\underline{\Gamma_0(p)}\times \D_\infty)/p\Z_p\rightarrow \underline{\Gamma_0(p^\infty)}\times \D_\infty.\]
	On the other hand, the inclusion $\Gamma_0(p^\infty)\subseteq\Gamma_0(p)$ defines an inverse of this map. 
\end{proof}

\subsection{The Hodge--Tate period map on Tate parameter spaces}\label{s:action-of-HT}
Next, we give an explicit description of the Hodge--Tate map on Tate parameter spaces.

Recall that over the ordinary locus, the kernel of the Hodge--Tate map $T_pE\rightarrow \omega_E$ is the Tate module $T_pC$ of the canonical $p$-divisible subgroup, and thus the Hodge--Tate filtration is given by $T_pC\subseteq T_pE$. In particular, this means that $\pi_{\HT}(\mathcal X^{\ast}_{\Gamma(p^\infty)}(0))\subseteq \P^1(\Z_p)$.

When we further restrict to the anticanonical locus, the image lies in the points of the form $(a:1)\in \P^1(\Z_p)$ with $a\in \Z_p$. Denote by $B_1(0)\subseteq \P^1(\Z_p)$ the ball of radius $1$ inside the canonical chart $\A^1\subseteq \P^1$ around $(0:1)$, then the Hodge--Tate period map thus restricts to 
\[\pi_{\HT}(\mathcal X^{\ast}_{\Gamma(p^\infty)}(0)_a)\subseteq B_1(0)\subseteq \P^1(\Z_p).\]

\begin{Proposition}\label{proposition: Hodge--Tate period map on Tate parameter spaces}
	Let $x$ be a cusp of $\mathcal X^{\ast}$. Then the Hodge--Tate period map $\pi_{\HT}\colon \mathcal X^{\ast}_{\Gamma(p^\infty)} \rightarrow \P^1$ restricts on $	(\underline{\Gamma_0(p)}\times \D_{\infty,x})/p\Z_p\hookrightarrow  \mathcal X^{\ast}_{\Gamma(p^\infty)}(\epsilon)_a$ to the locally constant map
	\[
	(\underline{\Gamma_0(p)}\times \D_{\infty,x})/p\Z_p  \to\underline{\P^1(\Z_p)}\subseteq \P^1,\quad \big(\!\smallmat{a}{b}{c}{d},q\big)\mapsto (b:d).
	\]
\end{Proposition}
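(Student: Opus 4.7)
The plan is to use the explicit moduli description of Thm.~\ref{Theorem: Tate parameter spaces at level Gamma(p^infty)} and Prop.~\ref{p: description of the action of Gamma_0(p) on the Tate parameter spaces} to trivialise $T_p\Tate(q)$ and then compute the Hodge--Tate filtration by direct linear algebra. The first step is to observe that the target formula $(b:d)\in \P^1(\Z_p)$ depends only on the right column of $\gamma$, so it is invariant under the right action $\gamma \mapsto \gamma \smallmat{1}{0}{h}{1}$ of $p\Z_p$. Since $\pi_{\HT}$ is defined on the quotient $(\underline{\Gamma_0(p)}\times \D_{\infty,x})/p\Z_p$, it too is invariant, so it suffices to check the formula on a set of $p\Z_p$-orbit representatives. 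Using Prop.~\ref{p: description of the action of Gamma_0(p) on the Tate parameter spaces}, we may take such representatives with $c=0$, thereby reducing to $\gamma \in \Gamma_0(p^\infty)$.

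The second step is to unwind the moduli data. For $\gamma = \smallmat{a}{b}{0}{d} \in \Gamma_0(p^\infty)$ and an $(R,R^+)$-valued point $q$ of $\D_{\infty,x}$, Thm.~\ref{Theorem: Tate parameter spaces at level Gamma(p^infty)}(2) identifies the corresponding object of $\X^\ast_{\Gamma(p^\infty)}(\epsilon)_a$ with the Tate curve $E = \Tate(q)$ together with the trivialisation
\[
\alpha \colon \Z_p^2 \xrightarrow{\sim} T_pE, \qquad e_1 \mapsto q^{d/p^\infty}, \quad e_2 \mapsto \zeta_{p^\infty}^a q^{-b/p^\infty}.
\]
Since $E$ is ordinary with canonical subgroup $C = \mu_{p^\infty}$, its Hodge--Tate filtration is $T_pC = \Z_p \cdot \zeta_{p^\infty} \subseteq T_pE$, and $\pi_{\HT}$ sends our point to the line $\alpha^{-1}(T_pC) \in \P^1(\Z_p)$.

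The third and final step is a short linear algebra computation: in the canonical decomposition $T_pE = \Z_p \cdot \zeta_{p^\infty} \oplus \Z_p \cdot q^{1/p^\infty}$, one has
\[
\alpha(x,y) = (ay)\,\zeta_{p^\infty} + (dx - by)\,q^{1/p^\infty},
\]
so $\alpha(x,y) \in T_pC$ iff $dx = by$. Since $ad = \det \gamma \in \Z_p^\times$ forces $d \in \Z_p^\times$, we conclude $\alpha^{-1}(T_pC) = \Z_p \cdot (b,d)$, giving the point $(b:d) \in \P^1(\Z_p)$. Local constancy is automatic since the formula does not involve $q$, and the map $\underline{\Gamma_0(p)} \to \underline{\P^1(\Z_p)}$, $\smallmat{a}{b}{c}{d}\mapsto (b:d)$, is continuous. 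There is no serious obstacle here beyond bookkeeping: the only subtlety is remembering that the Hodge--Tate filtration at a Tate point is the multiplicative part $T_p\mu_{p^\infty}$ rather than the étale part, and tracking signs in the basis expansion.
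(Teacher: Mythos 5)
Your core computation is correct and follows essentially the same route as the paper: reduce to $p\Z_p$-orbit representatives in $\Gamma_0(p^\infty)$ (the formula $(b:d)$ only depends on the second column, which the right action $\gamma\mapsto\gamma\smallmat{1}{0}{h}{1}$ preserves), use the moduli description of the specialisation at $\gamma$ as $\Tate(q)$ with ordered basis $(q^{d/p^\infty},\zeta_{p^\infty}^aq^{-b/p^\infty})$ of $T_p\Tate(q)$, identify the Hodge--Tate filtration on the ordinary locus with $T_pC=\Z_p\cdot\zeta_{p^\infty}$, and do the linear algebra; your expansion $\alpha(x,y)=(ay)\zeta_{p^\infty}+(dx-by)q^{1/p^\infty}$ matches the paper's computation $be_1+de_2=\zeta_{p^\infty}^{ad}$.

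There is, however, one step you gloss over, and it is the one the paper spends most of its effort on. Your computation of $\pi_{\HT}$ via the moduli interpretation is a computation on \emph{points} (and, strictly, only on points of the punctured disc $\mathring\D_\infty$, since at $q=0$ the Tate curve degenerates and $T_pE$ no longer makes sense). To conclude that $\pi_{\HT}$, as a morphism of adic spaces restricted to $\underline{\Gamma_0(p^\infty)}\times\D_{\infty,x}$, \emph{equals} the locally constant map, you must know that a function on the perfectoid disc $\D_\infty$ which takes the constant value $b/d$ on all $(C,\O_C)$-points is the constant function $b/d\in\O(\D_\infty)$. This is not automatic for a perfectoid space and is exactly the content of Lemma~\ref{l:constant-functions}, whose proof requires an actual argument (reduce to the closed perfectoid disc, reduce modulo $\m_L$, and use that a nonzero polynomial over the infinite field $k_C$ cannot vanish identically). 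Your remark that ``local constancy is automatic since the formula does not involve $q$'' addresses continuity of the \emph{target} map $\smallmat{a}{b}{c}{d}\mapsto(b:d)$, not the identification of $\pi_{\HT}$ with it; and if you intended your $(R,R^+)$-point argument to be fully functorial so that Yoneda applies, you would still need to justify the splitting of $T_p\Tate(q)$ and the identification of the Hodge--Tate filtration over a general perfectoid $R$, and to extend over the puncture. Supplying Lemma~\ref{l:constant-functions} (or an equivalent density argument) closes the gap.
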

We deduce this from the following lemma:
\begin{Lemma}\label{l:constant-functions}
	Let $f\colon \D_\infty\rightarrow \A^1_K$ be a function such that $f$ is constant on $(C,\mathcal O_C)$-points with value $a\in L$. Then the corresponding $f\in\mathcal O(\D_\infty)$ is the constant $a\in L\subseteq \mathcal O(\D_\infty)$.
\end{Lemma}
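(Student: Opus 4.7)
The strategy is to reduce to showing $g := f - a \in \O(\D_\infty)$ vanishes identically by exploiting the explicit affinoid perfectoid cover of $\D_\infty$ together with the uniform-Banach structure of perfectoid algebras.

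By hypothesis, $g$ vanishes on every $(C,\O_C)$-point of $\D_\infty$, and the goal is to conclude $g=0$ in $\O(\D_\infty)$. First, I would use the explicit cover $\D_\infty = \bigcup_{m\in \N} U_m$ with $U_m = \D_\infty(|q|^{p^m}\leq |\varpi|)$ from Lemma~\ref{the perfectoid parameter space of the Tate curve at infinite level}. Since $\O$ is a sheaf, $\O(\D_\infty)$ injects into $\prod_m \O(U_m)$, so it suffices to prove $g|_{U_m}=0$ for each $m$.

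Each $U_m$ is affinoid perfectoid over $L$, so $\O(U_m)$ is a uniform Banach $L$-algebra and its spectral seminorm $\|h\|_{\sup} = \sup_{x\in U_m} |h(x)|$ is in fact a norm. Therefore the vanishing of $g|_{U_m}$ is equivalent to $|g(x)|=0$ for every rank-$1$ point $x$ of $U_m$ (using that any higher-rank point is a specialization of a rank-$1$ point whose value dominates it). Each rank-$1$ point factors through a morphism $\Spa(C',\O_{C'}) \to U_m$ for some complete algebraically closed extension $C'$ of $K$, namely the completed algebraic closure of the completed residue field at $x$. Choosing $C$ sufficiently universal (or, equivalently, base-changing $g$ from $L$ to $C'$, which is harmless because the vanishing hypothesis transfers along the natural map of point-sets) the hypothesis gives $g(x)=0$, and hence $g|_{U_m}=0$, so $g=0$.

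The main obstacle is the last step: realizing an arbitrary rank-$1$ geometric point of $U_m$ as a genuine $(C,\O_C)$-point. This is the familiar issue that $C$ must be large enough to receive every complete algebraically closed extension of $K$ that can occur as a residue field of a point of $\D_\infty$, and is typically absorbed into the convention that $C$ is a universal complete algebraically closed extension of $K$; alternatively one base-changes everything along $L \to C'$ and applies the hypothesis there. With this set-theoretic point handled, the conclusion is a direct application of the uniform-Banach-algebra principle for affinoid perfectoid spaces.
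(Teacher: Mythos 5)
Your first reduction is sound: $\D_\infty$ is covered by the affinoid perfectoid discs $U_m=\D_\infty(|q|^{p^m}\leq|\varpi|)$, these are uniform, so the spectral seminorm is a norm and it suffices to show $|g(x)|=0$ at every rank-$1$ point. The problem is the last step, which is where all the content of the lemma sits. The hypothesis only concerns $(C,\mathcal O_C)$-points for the \emph{fixed} $C$ from the paper's setup, while an arbitrary rank-$1$ point $x$ of $U_m$ factors through $\Spa(C',\mathcal O_{C'})$ for $C'=\widehat{\overline{k(x)}}$, which in general does not embed into $C$ (already the Gauss point of the perfectoid disc has residue field with value group and cardinality that need not fit inside $C$; ``choosing $C$ sufficiently universal'' is not an option since $C$ is fixed, and no single complete algebraically closed field receives all others). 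Your alternative fix, that ``the vanishing hypothesis transfers along the natural map of point-sets,'' is asserted but not proved: there is no map from $(C',\mathcal O_{C'})$-points to $(C,\mathcal O_C)$-points, and the statement that vanishing on the latter forces vanishing on the former is exactly equivalent to the claim that the spectral norm of $L\langle q^{1/p^\infty}\rangle$ is computed on $(C,\mathcal O_C)$-points --- which is the lemma itself in disguise.

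The paper closes this gap by an explicit computation rather than by enlarging $C$: after rescaling to the perfectoid closed disc, subtracting $a$ and normalising so that the Gauss norm of $f$ is $1$, one reduces modulo $\mathfrak m_L$ to get a nonzero element of $k_L[q^{1/p^n}\mid n]$, hence (after $q\mapsto q^{p^m}$) a nonzero polynomial in $k_C[q]$; since $\varprojlim_{x\mapsto x^p}\mathcal O_C\to k_C$ is surjective and $k_C$ is infinite, this polynomial would have to vanish at every element of $k_C$, a contradiction. If you want to keep your softer framework, the correct way to finish is not via the residue fields $C'$ but via density: classical points of the finite-level discs $\D_n(|q|^{p^m}\leq|\varpi|)$ compute sup norms there by the maximum modulus principle, every such point lifts to a $(C,\mathcal O_C)$-point of $U_m$ by choosing compatible $p$-power roots in the algebraically closed $C$, and an ultrametric approximation argument then shows the spectral norm of any $g\in\O(U_m)$ is attained (up to $\varepsilon$) at a $(C,\mathcal O_C)$-point. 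Either way, an actual argument is needed where you currently have an appeal to convention.
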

\begin{proof}
	It suffices to prove this for the spaces $\D_\infty(|q|\leq \varpi^n)$. After rescaling, we are reduced to showing the lemma for $\D_\infty$ replaced by $\Spa(L\langle q^{1/p^\infty}\rangle,\mathcal O_L\langle q^{1/p^\infty}\rangle)$. 
	One can now argue like in the classical proof of the maximum principle: We can regard $f$ as a function
	\[f\in L\langle q^{1/p^\infty}\rangle,\quad f=\sum_{m\in \Z[\frac{1}{p}]_{\geq 0}}a_mq^{m}.\] 
	We need to prove that if $f((x^{1/p^i})_{i\in\N})=a$ for all $(x^{1/p^i})_{i\in\N}\in \mathcal \varprojlim_{x\mapsto x^p} \mathcal O_C$ then $f=a$. 
	
	After subtracting by $a=a_0$, we may assume that $f(x)=0$ for all $x\in \mathcal O_C$. 
	Suppose $f\neq 0$. The convergence condition on coefficients ensures that $\sup_{m\in \Z[\frac{1}{p}]}|a_m|>0$ is attained and after renormalising we may assume that $|f|=\max_{m\in \Z[\frac{1}{p}]}|a_m|=1$.
	Consider the reduction 
	\[r\colon \mathcal O_L\langle q^{1/p^\infty}\rangle \rightarrow k_L[q^{1/p^n}|n\in\N]\]
	modulo $\mathfrak m_L\subseteq \mathcal O_L$. After replacing $q\mapsto q^{p^m}$ we may assume that $r(f)\in k_L[q]$. As $\mathcal O_C$ is perfectoid, the projection $\varprojlim \O_C\rightarrow \O_C\rightarrow k_C$ to the residue field is surjective, and the assumption on $f$ now implies that $r(f)$ is a non-zero polynomial in $k_C[q]$ which evaluates to $0$ on all $q\in k_C$, a contradiction as $k_C$ is infinite.
\end{proof}
\begin{proof}[Proof of Prop.~\ref{proposition: Hodge--Tate period map on Tate parameter spaces}]
	By Lemma~\ref{l:constant-functions} it suffices to prove that for any $\gamma\in \Gamma_0(p^\infty)$, the map 
	\[\D_\infty\xrightarrow{q\mapsto (\gamma,q)} \underline{\Gamma_0(p^\infty)}\times \D_{\infty,x}\hookrightarrow \mathcal X^{\ast}_{\Gamma(p^\infty)}(\epsilon)_a\xrightarrow{\pi_{\HT}}\P^1\]
	is constant with image $b/d$. To see this, we use the moduli description  on $(C,\mathcal O_C)$-points: 
	
	On the ordinary locus, $\pi_{\HT}$ sends any isomorphism $\Z_p^2\rightarrow T_pE$ to the point of $\P^1(\Z_p)$ defined by the line $T_pC\subseteq T_pE$ where $C$ is the canonical $p$-divisible subgroup. By Thm.~\ref{Theorem: Tate parameter spaces at level Gamma(p^infty)}.(2), any $(C,\O_C)$-point of $\D_\infty\xrightarrow{(q\mapsto \gamma,q)} \underline{\Gamma_0(p^\infty)}\times \D_\infty$ corresponds to a Tate curve $E=\Tate(q_E)$ with an ordered basis of $T_pE$ given by $(e_1,e_2)=(q_E^{d/p^\infty}, \zeta^{a}_{p^\infty}q_E^{-b/p^\infty})$. Then (using additive notation on $T_pE$) \[be_1+de_2=q_E^{bd/p^\infty}\zeta^{ad}_{p^\infty}q_E^{-db/p^\infty}=\zeta^{ad}_{p^\infty}\]
	which spans the line $\langle \zeta_{p^\infty}\rangle = T_pC\subseteq T_pE$. Consequently, the image of $(\gamma,q)$ under $\pi_{\HT}$ is
	\[\pi_{\HT}(\gamma,q)=(b:d)=(b/d:1)\in \underline{(\Z_p^\times:1)} \subseteq \P^1(\Z_p).\]
	
	We conclude from this that the function $f\in\Map_{\cts}(\Gamma_0(p^\infty),\mathcal O(\D_\infty))$ defined by the restriction $\pi_{\HT}\colon \underline{\Gamma_0(p^\infty)}\times \D_\infty \to B(0)$ evaluates at $\gamma$ to $f(\gamma)=b/d$. Since this is true for all $\gamma\in \Gamma_0(p^\infty)$, we see that $f$ is given by a function in 
	\[\Map_{\cts}(\Gamma_0(p^\infty),\Z_p^\times)\subseteq \Map_{\cts}(\Gamma_0(p^\infty),\mathcal O(\D_\infty)).\]
	We conclude that $\pi_{\HT}$ has the desired description
	\[\underline{\Gamma_0(p^\infty)}\times \D_\infty \xrightarrow{(\gamma,q)\mapsto b/d} \underline{\Z_p^\times}=\underline{(\Z_p^\times:1)}\subseteq \P^{1}(\Z_p) \hookrightarrow \P^1, \quad  (\gamma,q)\mapsto (b/d:1).\qedhere \]
\end{proof}

\subsection{Tate parameter spaces of the modular curve at infinite level}
As an immediate consequence of the above, we can now consider the entire modular curve $\mathcal X^{\ast}_{\Gamma(p^\infty)}$. Recall that by the very construction in \cite{torsion}, this is the space $\GL_2(\Q_p)\XaGea{}{\infty}$ defined by glueing translates of $\XaGea{}{\infty}$. This allows us to deduce the parts (2)-(3) of Thm.\ \ref{t:cusps of X_Gamma(p^infty)}:
\begin{Theorem}\label{t:Main-Theorem-parts-2-3}
	Let $x\in \mathcal X^{\ast}$ be any cusp. Define a right action of $\Z_p$ on $\underline{\GL_2(\Z_p)}\times \D_{\infty,x}$ by $(\gamma,q^{1/p^n})\cdot h\mapsto (\gamma\smallmat{1}{0}{h}{1},q^{1/p^n}\zeta^{h/e_x}_{p^n})$. Then the quotient $(\underline{\GL_2(\Z_p)}\times \D_{\infty,x})/\Z_p$ exists as a perfectoid space, and there is a Cartesian diagram
\begin{equation*}
\begin{tikzcd}
(\underline{\GL_2(\Z_p)}\times \D_{\infty,x})/\Z_p \arrow[d,hook] \arrow[r] & \D_x\arrow[d,hook]\\
\mathcal X^{\ast}_{\Gamma(p^\infty)} \arrow[r] &\mathcal X^{\ast}.
\end{tikzcd}
\end{equation*}
where the top map is induced by the projection from the second factor.
The left map is $\GL_2(\Z_p)$-equivariant for the left action on $(\underline{\GL_2(\Z_p)}\times \D_{\infty,x})/\Z_p$ via the first factor.

Under this description, the fibres of the canonical and anticanonical locus are precisely
\begin{equation}\label{eq:in-thm-descr-of-Tate-in-antican-and-can}
\begin{tikzcd}[row sep = 0cm]
\Bigg(\underline{\smallmat{\Z_p}{\Z_p}{\Z_p}{\Z_p^{\times}}}\times\D_{\infty,x}\Bigg)/\Z_p\arrow[r,hook]& \mathcal X^{\ast}_{\Gamma(p^\infty)}(\epsilon)_a\\
\Bigg(\underline{\smallmat{\Z_p}{\Z_p^\times}{\Z_p^\times}{p\Z_p}}\times\D_{\infty,x}\Bigg)/\Z_p\arrow[r,hook]& \mathcal X^{\ast}_{\Gamma(p^\infty)}(\epsilon)_c.
\end{tikzcd}
\end{equation}

\end{Theorem}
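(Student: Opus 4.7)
The plan is to build the map from a single copy of the anticanonical open immersion by $\GL_2(\Z_p)$-translation, then verify the descent, the open immersion property, and the Cartesian square.

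First, I would define
\[
\Phi\colon \underline{\GL_2(\Z_p)}\times \D_{\infty,x}\to \mathcal X^{\ast}_{\Gamma(p^\infty)}, \qquad (\gamma,q)\mapsto \gamma\cdot \varphi_0(q),
\]
where $\varphi_0\colon \D_{\infty,x}\hookrightarrow \mathcal X^{\ast}_{\Gamma(p^\infty)}(\epsilon)_a$ is the open immersion at the cusp indexed by $1\in \Gamma_0(p^\infty)$ supplied by Thm~\ref{Theorem: Tate parameter spaces at level Gamma(p^infty)}. By construction $\Phi$ is $\GL_2(\Z_p)$-equivariant for the action on the first factor. To descend $\Phi$ to the quotient by $\Z_p$, equivariance reduces the check to the identity $\smallmat{1}{0}{h}{1}\cdot \varphi_0(q\zeta_{p^\infty}^{h/e_x})=\varphi_0(q)$ for every $h\in \Z_p$. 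This is a moduli-theoretic calculation in the spirit of Prop~\ref{p: description of the action of Gamma_0(p) on the Tate parameter spaces}, now valid for the full group $\Z_p$ and not only $p\Z_p$: the rescaling $q^{e_x/p^\infty}\mapsto \zeta_{p^\infty}^h q^{e_x/p^\infty}$ of the first Tate-module basis vector is exactly undone by the change-of-basis $\smallmat{1}{0}{-h}{1}$ encoding the left action of $\smallmat{1}{0}{h}{1}$. The quotient itself is a perfectoid space because the $\Z_p$-action on $\underline{\GL_2(\Z_p)}$ is free; choosing continuous sections over a clopen cover of the profinite set $\GL_2(\Z_p)/\Z_p$ presents the quotient locally as $\underline{V}\times \D_{\infty,x}$.

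The main step is to verify that the descended map $\overline\Phi$ is an open immersion, for which I would exploit the $\Z_p$-stable clopen decomposition $\GL_2(\Z_p)=U_1\sqcup U_2$ with $U_1=\smallmat{\Z_p}{\Z_p}{\Z_p}{\Z_p^\times}\cap \GL_2(\Z_p)$ and $U_2=\smallmat{\Z_p}{\Z_p^\times}{\Z_p^\times}{p\Z_p}$. Picking the unique representative with $c=0$ identifies $(\underline{U_1}\times \D_{\infty,x})/\Z_p$ with $\underline{\Gamma_0(p^\infty)}\times \D_{\infty,x}$, and under this identification $\overline\Phi|_{U_1}$ becomes precisely the open immersion of Thm~\ref{Theorem: Tate parameter spaces at level Gamma(p^infty)} into the anticanonical locus. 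For $U_2$, a direct matrix computation shows that $wU_2$ is a $\Z_p$-stable clopen subspace of $U_1$ for $w=\smallmat{0}{1}{1}{0}$, hence by $\GL_2(\Z_p)$-equivariance $\overline\Phi|_{U_2}$ is the $w$-translate of the restriction of $\overline\Phi|_{U_1}$ to that clopen subset, and therefore also an open immersion whose image sits inside $w\cdot \mathcal X^{\ast}_{\Gamma(p^\infty)}(\epsilon)_a\subseteq \mathcal X^{\ast}_{\Gamma(p^\infty)}(\epsilon)_c$. This argument also delivers the split~\eqref{eq:in-thm-descr-of-Tate-in-antican-and-can}: the first basis vector $\gamma^\vee(1,0)=(d,-c)$ reduced mod $p$ generates the canonical subgroup $\mu_p=\langle(0,1)\rangle$ precisely when $d\in p\Z_p$, which forces $b,c\in \Z_p^\times$ via $\det\in\Z_p^\times$.

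The Cartesian diagram I would verify by comparing $(R,R^+)$-points for perfectoid $K$-algebras, combining Lemma~\ref{l:moduli interpretation of adic Tate parameter space} and Lemma~\ref{l:moduli interpretation of Gamma_1(p^infty)(epsilon)_a}: an $R$-point of $\mathcal X^{\ast}_{\Gamma(p^\infty)}$ lying over $\D_x$ corresponds to a Tate curve $\Tate(q^{e_x})$ with $q\in R$ topologically nilpotent, a $\Gamma^p$-structure at $x$, and a trivialisation $\alpha\colon \Z_p^2\xrightarrow{\sim} T_pE$. After fixing a compatible system $q^{1/p^\infty}$, the map $\alpha$ is encoded by some $\gamma^\vee\in \GL_2(\Z_p)$ in the anticanonical basis $(q^{e_x/p^\infty},\zeta_{p^\infty})$; the identity $(\gamma\smallmat{1}{0}{h}{1})^\vee = \smallmat{1}{0}{-h}{1}\gamma^\vee$ then shows that the ambiguity in choosing $q^{1/p^\infty}$ up to $\mu_{p^\infty}$ is exactly the $\Z_p$-action in the statement, giving the required bijection on $R$-points. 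The principal obstacle will be the book-keeping for the canonical piece: one must verify that $wU_2$ is genuinely clopen in $U_1$, that the restriction-then-$w$-translation process really recovers $\overline\Phi|_{U_2}$, and that its image fills out the canonical fibre over $\D_x$ in $\mathcal X^{\ast}_{\Gamma(p^\infty)}$.
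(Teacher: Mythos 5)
Your proposal follows essentially the same route as the paper: start from the anticanonical Tate parameter space of Thm.~\ref{Theorem: Tate parameter spaces at level Gamma(p^infty)} and Prop.~\ref{p: description of the action of Gamma_0(p) on the Tate parameter spaces}, translate it around by $\GL_2(\Z_p)$, split $\GL_2(\Z_p)$ according to whether $d\in\Z_p^\times$ or $d\in p\Z_p$, and obtain the canonical piece as a $\smallmat{0}{1}{1}{0}$-translate. The paper organises this by introducing the auxiliary clopen locus $\mathcal X^{\ast}_{\Gamma(p^\infty)}(0)^c_a$ indexed by $\smallmat{\Z_p^\times}{p\Z_p}{\Z_p}{\Z_p^\times}$ and expressing both the anticanonical and canonical loci as its translates by $\smallmat{1}{\Z_p}{0}{1}$ and $w=\smallmat{0}{1}{1}{0}$; your restriction to the clopen set $wU_2=\smallmat{\Z_p^\times}{p\Z_p}{\Z_p}{\Z_p^\times}\subseteq U_1$ followed by $w$-translation is the mirror image of the same manoeuvre, and your descent identity for $h\in\Z_p$ (not just $p\Z_p$) is the same moduli computation the paper carries out.

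One local slip: the containment $w\cdot\mathcal X^{\ast}_{\Gamma(p^\infty)}(\epsilon)_a\subseteq\mathcal X^{\ast}_{\Gamma(p^\infty)}(\epsilon)_c$ is false. The $w$-translate of the full anticanonical locus is the locus where the \emph{second} basis vector is anticanonical mod $p$, which meets the anticanonical locus nontrivially (take both basis vectors anticanonical mod $p$). What you need, and what your own criterion ``$d\in p\Z_p$'' at the end of the paragraph actually delivers, is the finer identity $w\cdot\big(\text{image of }\overline\Phi|_{wU_2}\big)\subseteq\mathcal X^{\ast}_{\Gamma(p^\infty)}(\epsilon)_c$: for indices in $wU_2$ the second basis vector is $\zeta^{a}_{p^\infty}q^{-b/p^\infty}$ with $b\in p\Z_p$, hence canonical mod $p$, and $w$ swaps it into first position. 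This is exactly the paper's identity \eqref{eq:equivar-can-case}. With that correction the argument closes; the remaining point to make explicit is that the Cartesian square over the origin of $\D_x$ (where the moduli interpretation degenerates) follows from the explicit boundary description in Thm.~\ref{Theorem: Tate parameter spaces at level Gamma(p^infty)} rather than from $(R,R^+)$-points of elliptic curves.
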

\begin{proof}
	We need to translate the $\Gamma_0(p)$-equivariant open immersion from Prop~\ref{p: description of the action of Gamma_0(p) on the Tate parameter spaces}
	\[(\Gamma_0(p)\times \D_{\infty,x})/p\Z_p\hookrightarrow \mathcal X^{\ast}_{\Gamma(p^\infty)}(\epsilon)_a\]
	according to the $\GL_2(\Z_p)$-action on the right hand side.
	
	We first rewrite the left hand side: We have
	\[\Gamma_0(p)\smallmat{1}{0}{\Z_p}{1}=\smallmat{\Z_p}{\Z_p}{\Z_p}{\Z_p^\times},\]
	and by extending the $p\Z_p$-action to a $\Z_p$-action in the natural way, we get the equivalent description of anticanonical Tate parameter spaces stated in \eqref{eq:in-thm-descr-of-Tate-in-antican-and-can} in the theorem.

	Next, we note that we may without loss of generality replace $\mathcal X^{\ast}_{\Gamma(p^\infty)}$ by 
	\[\mathcal X^{\ast}_{\Gamma(p^\infty)}(0)=\mathcal X^{\ast}_{\Gamma(p^\infty)}(0)_a\sqcup \mathcal X^{\ast}_{\Gamma(p^\infty)}(0)_c.\]
	To simplify the discussion of translates,	
	we  introduce an auxiliary open subspace 
	\[\mathcal X^{\ast}_{\Gamma(p^\infty)}(0)^c_a\subseteq \mathcal X^{\ast}_{\Gamma(p^\infty)}(0)_a.\]
	parametrising isomorphisms $\alpha\colon \Z_p^2\to T_pE$ such that $\alpha(0,1)\bmod p$ generates the canonical subgroup (``first basis vector anticanonical, second canonical''). More precisely, this subspace can be constructed as follows: 
	 According to Cor.~\ref{c: over ordinary locus, Gamma to Gamma_1 is split}, there is a canonical splitting
	 \[\mathcal X_{\Gamma_1(p)}(0)_a\to \mathcal X_{\Gamma(p)}(0)_a\]
	 that identifies the image with a component of $\mathcal X_{\Gamma(p)}(0)_a$. Let $\mathcal X^{\ast}_{\Gamma(p)}(0)^c_a$ be the finite union of the $\smallmat{(\Z/p\Z)^{\times}}{0}{0}{1}$-translates of the image. Then $\mathcal X^{\ast}_{\Gamma(p^\infty)}(0)^c_a$ is defined as the pullback
	 \begin{center}
	 \begin{tikzcd}[row sep = 0.15cm]
	 	\mathcal X^{\ast}_{\Gamma(p^\infty)}(0)^c_a \arrow[d,"\cap" description] \arrow[r] &  \arrow[d,"\cap" description] \mathcal X^{\ast}_{\Gamma(p)}(0)^c_a\\
	 	\mathcal X^{\ast}_{\Gamma(p^\infty)}(0)_a \arrow[r]                  & \mathcal X^{\ast}_{\Gamma(p)}(\epsilon)_a.          
	 \end{tikzcd}
	 \end{center}
	It is clear from this definition that $	\mathcal X^{\ast}_{\Gamma(p^\infty)}(0)^c_a$ defines an open and closed subspace of 	$\mathcal X^{\ast}_{\Gamma(p^\infty)}(0)_a$. By tracing the Tate parameter spaces through the construction, we moreover see that their fibre over $\mathcal X^{\ast}_{\Gamma(p^\infty)}(0)^c_a$ is
	\begin{equation}\label{eq:Tate paramter space for antican-can}
	\Bigg(\underline{\smallmat{\Z_p^\times}{p\Z_p}{\Z_p}{\Z_p^\times}}\times \mathcal D_{\infty,x}\Bigg)/\Z_p \hookrightarrow \mathcal X^{\ast}_{\Gamma(p^\infty)}(0)^c_a
	\end{equation}
	We can now identify $\mathcal X^{\ast}_{\Gamma(p^\infty)}(0)_a$ with the finite union of translates
	\begin{equation}\label{eq:equivar-antican-case}
	\smallmat{1}{\Z_p}{0}{1}\mathcal X^{\ast}_{\Gamma(p^\infty)}(0)^c_a=\mathcal X^{\ast}_{\Gamma(p^\infty)}(0)_a.
	\end{equation}
	Indeed, away from the cusps this follows on moduli functors using Lemma~\ref{l:moduli interpretation of Gamma_1(p^infty)(epsilon)_a}, whereas over the cusps it follows from the above explicit description using
	\[\smallmat{1}{\Z_p}{0}{1} \smallmat{\Z_p^\times}{p\Z_p}{\Z_p}{\Z_p^\times}=\smallmat{\Z_p}{\Z_p}{\Z_p}{\Z_p^\times}. \]
	
	On the other hand, inside $\mathcal X^{\ast}_{\Gamma(p^\infty)}$  we have an identification
	\begin{equation}\label{eq:equivar-can-case}
	\smallmat{0}{1}{1}{0}\mathcal X^{\ast}_{\Gamma(p^\infty)}(0)^c_a=\mathcal X^{\ast}_{\Gamma(p^\infty)}(0)_c.
	\end{equation}
	Indeed, one can first check this for $\mathcal X^{\ast}_{\Gamma(p)}(0)_c$ on moduli functors, extend to compactifications, and then pull back to infinite level.
	On Tate parameter spaces, the identity  \[\smallmat{0}{1}{1}{0}\smallmat{\Z_p^\times}{p\Z_p}{\Z_p}{\Z_p^\times}=\smallmat{\Z_p}{\Z_p^\times}{\Z_p^\times}{p\Z_p}\]
	therefore gives the desired open immersion onto a neighbourhood of the cusps over $x$
	\[
	\Bigg(\underline{\smallmat{\Z_p}{\Z_p^\times}{\Z_p^\times}{p\Z_p}}\times\D_{\infty,x}\Bigg)\slash\Z_p\hookrightarrow \mathcal X^{\ast}_{\Gamma(p^\infty)}(0)_c.\]
	Taking the disjoint union of the morphisms in~\eqref{eq:in-thm-descr-of-Tate-in-antican-and-can}, we get the desired description. 
	
	To check $\GL_2(\Z_p)$-equivariance, we note that every $\gamma\in \GL_2(\Z_p)$ can be decomposed into $\gamma_1\cdot \gamma_2$ where $\gamma_2\in \smallmat{\Z_p^\times}{p\Z_p}{\Z_p}{\Z_p^\times}$ and either $\gamma_1\in \smallmat{1}{\Z_p}{0}{1}$ or $\gamma_1=\smallmat{0}{1}{1}{0}$. Since the open immersion in \eqref{eq:Tate paramter space for antican-can} is $\smallmat{\Z_p^\times}{p\Z_p}{\Z_p}{\Z_p^\times}$-equivariant, it thus suffices to check this for $\gamma_1$, for which this follows from equivariance in the anticanonical case \eqref{eq:equivar-antican-case}, and glueing in the canonical case \eqref{eq:equivar-can-case}.
\end{proof}
\begin{proof}[Proof of Thm.~\ref{t:cusps of X_Gamma(p^infty)}.(3)]
	This follows from Prop.~\ref{proposition: Hodge--Tate period map on Tate parameter spaces} by $\GL_2(\Z_p)$-equivariance of $\pi_{\HT}$.
\end{proof}	

\section{Modular curves in characteristic $p$}\label{s:cusps-in-char-p} 
We now switch to moduli spaces in characteristic $p$. We start by recalling the general setup:
Let $R$ be any $\F_p$-algebra. Recall from \S\ref{s:adic-cusps-finite} that $X_R$ denotes the modular curve over $R$ of tame level $\Gamma^p$, and $X_R^{\ast}$ denotes its compactification. We write $X_{R,\ord}\subseteq X_R$ for the affine open subscheme where the Hasse invariant $\Ha$ is invertible. Similarly, one defines $X^{\ast}_{R,\ord}\subseteq X^{\ast}_R$ which is also an affine open subspace.

In this section, we consider analytic modular curves over the perfectoid field $K^{\flat}$, the tilt of $K$. We fix a pseudo-uniformiser $\varpi^{\flat}$ such that $\varpi^{\flat\sharp}=\varpi$.
Following the notational conventions in \cite{torsion}, we shall denote modular curves over $K^{\flat}$ with a prime, e.g.\ $X':=X_{K^{\flat}}$ and $X'^{\ast}:=X^{\ast}_{K^{\flat}}$, to distinguish them from the modular curves over $K$.

Let $\mathfrak X'$ be the $\varpi^{\flat}$-adic completion of $X_{\O_{K^{\flat}}}$ and let $\X'$ be the analytification of $X'$ over $\Spa(K^{\flat},\O_{K^{\flat}})$. We analogously define $\mX'^{\ast}$ and $\X'^{\ast}$.
Like in characteristic $0$, for $0\leq \epsilon<1/2$ such that $|\varpi^{\flat}|^\epsilon\in |K^\flat|$ we denote by $\X'^{\ast}(\epsilon)$ the open subspace of $\Xpa$ where $|\Ha|\geq |\varpi^{\flat}|^{\epsilon}$.
Like before, this has a canonical formal model $\mathfrak X'^{\ast}(\epsilon)\to \mathfrak X'^{\ast}$. For any adic space $\mathcal Y\to \mathcal X'^{\ast}$ we write $\mathcal Y(\epsilon):=\mathcal Y\times_{\mathcal X'^{\ast}}\mathcal X'^{\ast}(\epsilon)$. Finally, let $\X'^{\ast}_{\ord}$ be the analytification of $X_{\ord}'^{\ast}=X_{K^{\flat},\ord}^{\ast}$.
\begin{Remark}
We recall that while the elliptic curves parametrised by $\X'(\epsilon)$ might have good supersingular \textit{reduction}, the condition on the Hasse invariant ensures that \textit{generically}, these elliptic curves are always ordinary. In other words, $\X'(\epsilon)\subseteq \X'_{\ord}$ even for $\epsilon>0$.
\end{Remark}
\subsection{Igusa curves}\label{subsection: adding Igusa structure in adic setting}\label{s:Igusa-curves}
In characteristic $p$, one has the Igusa moduli problem:

\begin{Definition}[\cite{KatzMazur}, Def.~12.3.1]
	Let $S$ be a scheme of characteristic $p$ and let $E$ be an elliptic curve over $S$. Consider the Verschiebung morphism $\ker V^n\colon E^{(p^n)}\rightarrow E$.
	An Igusa structure on $E$ is a group homomorphism $\phi\colon \Z/p^n\Z\rightarrow E^{(p^n)}(S)$ that is a Drinfeld generator of $\ker V^n$. This means that the Cartier divisor 
	\[\sum_{a\in \Z/p^n\Z}[\phi(a)]\subseteq E^{(p^n)}\]
	coincides with $\ker V^n$.
	
	The Igusa problem $[\Ig(p^n)]$ is the moduli problem defined by the functor sending $E|S$ to the set of Igusa structures on $E$. If $E|S$ is ordinary, the group scheme $\ker V^n$ is \'etale and naturally isomorphic to the Cartier dual $C_n^\vee$ of the canonical subgroup $C_n$. In particular, in this situation, an $\Ig(p^n)$-structure is the same as an isomorphism of group schemes 
	\[\underline{\Z/p^n\Z}\isomarrow C_n^\vee,\]
	or equivalently, an isomorphism of the Cartier duals $\mu_{p^n}\isomarrow C_n$.
\end{Definition}

	For any $n\geq 0$, the Igusa problem $[\Ig(p^n)]$ is relatively representable, finite and flat of degree $\varphi(p^n)$ over $\mathbf{Ell}|R$ by \cite{KatzMazur}, Thm.~12.6.1. In particular, the simultaneous moduli problem $[\Ig(p^n),\Gamma^p]$ is representable by a moduli scheme $X_{R,\Ig(p^n)}$ over $R$. The forgetful map 
	$X_{R,\Ig(p^n)}\rightarrow X_{R}$
	is finite and flat, and is an \'etale  $(\Z/p^n\Z)^{\times}$-torsor over the ordinary locus $X_{R,\ord}\subseteq X_{R}$.
	One defines by normalisation a compactification $X^{\ast}_{R,\Ig(p^n)}$. The morphism $X_{R,\Ig(p^n)}\rightarrow X_{R}$ then extends to
	\[X^{\ast}_{R,\Ig(p^n)}\rightarrow X^{\ast}_{R}\]
	which is still finite Galois with group $(\Z/p^n\Z)^{\times}$ over the ordinary locus.

	For any map $\Spec(R'\cc{q})\rightarrow X_{R}$ corresponding to a choice of $\Gamma^p$-structure on $\Tate(q^{e})$ over some cyclotomic extension $R'$ of $R$ and for some $1\leq e\leq N$, the canonical isomorphism \[\mu_{p^n}\isomarrow C_n(\Tate(q^e))\subseteq \Tate(q^e)[p^n]\]  induces a canonical lifting to a map $\Spec(R'\cc{q})\rightarrow X
	_{R,\Ig(p^n)}$.
	In particular, over any cusp $x$ of $X^{\ast}_{R}$, the subscheme of cusps of $X^{\ast}_{R,\Ig(p^n)}$ consists of $\varphi(p^n)$ disjoint copies of $x$.

\subsection{Tate parameter spaces in the Igusa tower}

Returning to our analytic setting over $K^{\flat}$, we let $X'^{\ast}_{\Ig(p^n)}:=X^{\ast}_{K^{\flat},\Ig(p^n)}$. We write $\mathfrak X^{\ast}_{\Ig(p^n)}$ for the $\varpi^\flat$-adic completion of $X^{\ast}_{\O_{K^{\flat}},\Ig(p^n)}$ and we write $\mathcal X'^{\ast}_{\Ig(p^n)}$ for the analytification of $X'^{\ast}_{\Ig(p^n)}$. We then get an open subspace $\mathcal X'^{\ast}_{\Ig(p^n)}(\epsilon)$. Since $\X'^{\ast}(\epsilon)\subseteq\mathcal X'^{\ast}_{\ord}$, the morphism $\mathcal X'^{\ast}_{\Ig(p^n)}(\epsilon)\to \mathcal X'^{\ast}(\epsilon)$ is a  finite \'etale $(\Z/p^n\Z)^\times$-torsor. 
Like in Lemma~\ref{Lemma: adic moduli interpretation of X_Gamma^an}, one can use that $X'_{\Ig(p^n)}$ is affine to show that these spaces represent the obvious adic moduli functors.

\begin{Definition}
	The inverse system of natural forgetful morphisms
	\[\dots \to\Xpa_{\Ig(p^{n+1})}(\epsilon)\to \Xpa_{\Ig(p^{n})}(\epsilon)\to \dots \to \Xpae\]
	is called the Igusa tower.
	Note that all transition maps in this inverse system are finite \'etale.
\end{Definition}
\begin{question}
	For $\epsilon=0$, one can show that this system has a sous-perfectoid (but not perfectoid) tilde limit $\Xpa_{\Ig(p^{\infty})}(0)\sim \varprojlim_{n\in\N}\Xpa_{\Ig(p^{n})}(0)$. Is this still true for $\epsilon>0$?
\end{question}
\begin{Definition}\label{d:field-of-definition-of-Tate-curve}
	As in characteristic $0$, by a cusp we shall mean a  (not necessarily geometrically) connected component of the closed subscheme $X'^{\ast}_{\Ig(p^n)}\backslash X'_{\Ig(p^n)}\subseteq X'^{\ast}_{\Ig(p^n)}$ with its induced reduced structure.
\end{Definition} 
Given a fixed cusp $x$ of $X'_{\Ig(p^n)}$, we denote by $L_{x}\subseteq K^{\flat}[\zeta_N]$ the field of definition of the associated Tate curve.
Then the completion of $X'^{\ast}_{\O_{K^{\flat}},\Ig(p^n)}$ along the integral  extension of $x$ is canonically of the form  $\Spf(\mathcal O_{L_{x}}\llbracket q\rrbracket)\rightarrow X'^{\ast}_{\Ig(p^n)}$. Upon $\varpi$-adic completion this becomes
\[\Spf(\mathcal O_{L_x}\llbracket q\rrbracket)\rightarrow \mathfrak X'^{\ast}_{\Ig(p^n)}\]
where $\mathcal O_{L_x}\llbracket q\rrbracket$ carries the $(\varpi^\flat,q)$-adic topology.
Denote by \[\D' \rightarrow \mathcal X'^{\ast}_{\Ig(p^n)}\]
 the adic generic fibre, a morphism of adic spaces over $\Spa(K^{\flat},\O_{K^{\flat}})$. Then like before, $\D'$ is the open unit disc over $L_{x}$ in the variable $q$.
 Exactly like in Lemma~\ref{l: Conrad's theorem on generic fibres of completions around the cusp} one sees:
\begin{Lemma}\label{l: Conrad's theorem on generic fibres of completions around the cusp for Ig(p^n)}
	The morphism $\D' \hookrightarrow \mathcal X'^{\ast}_{\Ig(p^n)}$ is an open immersion. 
\end{Lemma}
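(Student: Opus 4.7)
The plan is to repeat the argument of Lemma~\ref{l: Conrad's theorem on generic fibres of completions around the cusp} verbatim, with $p$ replaced by $\varpi^{\flat}$ and with $\mathfrak X^{\ast}$ replaced by $\mathfrak X'^{\ast}_{\Ig(p^n)}$. The essential input we need is that locally on the integral model, the cusp is cut out by a principal ideal generated by a non-zero-divisor; once we have this, the identification of the open unit disc with an open subspace defined by the condition $|f|<1$ is purely formal.

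First I would choose a formal affine open $\Spf(A)\subseteq \mathfrak X'^{\ast}_{\Ig(p^n)}$ containing (the integral extension of) the cusp $x$. Since the cusps of $\mathfrak X'^{\ast}_{\Ig(p^n)}$ are contained in the ordinary locus where the forgetful map to $\mathfrak X'^{\ast}$ is finite \'etale, and since by Prop.~\ref{p:completion-at-cusp} applied to the base $\O_{K^\flat}$ (using the $\varpi^{\flat}$-adic rather than $p$-adic topology on $\O_{L_x}\bb{q}$) the completion of $\mathfrak X'^{\ast}_{\Ig(p^n)}$ along $x$ is isomorphic to $\Spf(\O_{L_x}\bb q)$, the cusp is a Cartier divisor on $\Spf(A)$ in a neighbourhood of $x$. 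Shrinking $\Spf(A)$, we may thus find a non-zero-divisor $f\in A$ such that $x$ is cut out by $(f)$, and the isomorphism above is induced by a map $A\to \O_{L_x}\bb{q}$ sending $f\mapsto q$ (up to a unit).

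The $(\varpi^{\flat},q)$-adic completion of $A$ along the cusp then coincides with the $(\varpi^\flat,f)$-adic completion $A\bb{T}/(T-f)$ of $A$ at $(f)$. Passing to the adic generic fibre, the open unit disc $\D'$ is the union of the rational subspaces
\[
\Spa(A\langle f^m/\varpi^{\flat}\rangle[\tfrac{1}{\varpi^{\flat}}],A\langle f^m/\varpi^{\flat}\rangle) \subseteq \Spa(A[\tfrac{1}{\varpi^{\flat}}],A),
\]
which is exactly the open subspace of $\Spa(A[\tfrac{1}{\varpi^{\flat}}],A)\subseteq \mathcal X'^{\ast}_{\Ig(p^n)}$ cut out by the condition $|f|<1$. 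This exhibits $\D'\hookrightarrow \mathcal X'^{\ast}_{\Ig(p^n)}$ as an open immersion.

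The only nontrivial step is the local principality of the cusp on the integral model. This should present no real obstacle: $X^{\ast}_{\O_{K^\flat},\Ig(p^n)}$ is the normalization of $X^{\ast}_{\O_{K^\flat}}$ in a finite flat cover that is \'etale over the ordinary locus; since the cusps of $X^{\ast}_{\O_{K^\flat}}$ are Cartier divisors (being sections of a smooth proper morphism to $\Spec \O_{K^\flat}$ in a neighbourhood of the ordinary locus), the same property is inherited by their preimages in $X^{\ast}_{\O_{K^\flat},\Ig(p^n)}$ via the \'etale cover. Alternatively, one can invoke Prop.~\ref{p:completion-at-cusp} directly, which provides the local parameter $q$ at the cusp explicitly as the Tate parameter.
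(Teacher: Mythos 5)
Your proposal is correct and follows exactly the route the paper takes: the paper's proof is literally ``exactly like in Lemma~\ref{l: Conrad's theorem on generic fibres of completions around the cusp}'', i.e.\ the same local computation with a principal ideal $(f)$ cutting out the cusp, the completion $A\llbracket T\rrbracket/(T-f)$, and the identification of the adic generic fibre with the locus $|f|<1$. Your extra paragraph justifying local principality of the cusp on the integral model (via \'etaleness of the Igusa cover over the ordinary locus and the explicit description of the completion) is a reasonable elaboration of a step the paper leaves implicit.
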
 
If we want to indicate the dependence on the cusp $x$, we shall also call this $\mathcal D'_{x} \hookrightarrow \mathcal X'^{\ast}_{\Ig(p^n)}$. 

The following lemma explains how the above individual descriptions fit together for different cusps of $\mathcal X'^{\ast}_{\Ig(p^n)}$ lying over the same cusp of $\X'^{\ast}$.
\begin{Lemma}\label{l:Cartesian diagrams for Tate parameter spaces in the Igusa tower}
	Let $x$ be a cusp of $\X'^{\ast}$. Then there are Cartesian diagrams
	\begin{equation*}
		\begin{tikzcd}[column sep=2cm]
				\underline{\Z/p^{n+1}\Z}\times \D'_{x} \arrow[d,hook] \arrow[r]&\underline{\Z/p^n\Z}\times \D'_{x} \arrow[d,hook] \arrow[r] &
			\D'_x   \arrow[d,hook] \\
			\X'^{\ast}_{\Ig(p^{n+1})}(\epsilon) \arrow[r]&\mathcal X'^{\ast}_{\Ig(p^n)}(\epsilon) \arrow[r]& \mathcal X'^{\ast}(\epsilon).
		\end{tikzcd}
	\end{equation*}
\end{Lemma}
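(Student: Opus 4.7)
The plan is to mirror the proof of Lemma~\ref{l:Tate parameter spaces of X^{ast}_{Gamma_1(p^n)}}: since the forgetful maps $\X'^{\ast}_{\Ig(p^n)}(\epsilon) \to \X'^{\ast}(\epsilon)$ are finite \'etale $(\Z/p^n\Z)^\times$-torsors \emph{everywhere}, including over the cusps (as the cusps of $\X'^{\ast}(\epsilon)$ lie in the ordinary locus, over which the Igusa tower is finite \'etale as recalled in \S\ref{s:Igusa-curves}), it suffices to produce a canonical section $s_n\colon \D'_x \to \X'^{\ast}_{\Ig(p^n)}(\epsilon)$ of the forgetful map, compatibly in $n$. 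The pullback of this torsor along the open immersion $\D'_x \hookrightarrow \X'^{\ast}(\epsilon)$ of Lemma~\ref{l: Conrad's theorem on generic fibres of completions around the cusp for Ig(p^n)} is then trivialised by $s_n$, giving the right Cartesian square with $\underline{(\Z/p^n\Z)^{\times}} \times \D'_x$ on top. The left square will follow by compatibility of $s_{n+1}$ with $s_n$ under the natural reduction $(\Z/p^{n+1}\Z)^{\times}\to(\Z/p^n\Z)^{\times}$.

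The key step is constructing $s_n$. Over the complete local ring $\O_{L_x}\cc{q}$ at the cusp $x$, consider the Tate curve $\Tate(q^{e_x})$, which is ordinary (having multiplicative reduction and generic fibre an elliptic curve with invertible Hasse invariant). By the general theory recalled in \S\ref{subsection: adding Igusa structure in adic setting}, its canonical subgroup at level $p^n$ is canonically identified with $\mu_{p^n}\subseteq \Tate(q^{e_x})[p^n]$, and therefore admits a canonical Igusa structure given by the identity Cartier-dually, i.e.\ by the canonical isomorphism $\underline{\Z/p^n\Z} \isomarrow \mu_{p^n}^{\vee} = C_n^{\vee}$. By the moduli interpretation of $X'_{\Ig(p^n)}$, this determines a map
\[\Spec(\O_{L_x}\cc{q}) \to X'_{\Ig(p^n)}\]
lifting the corresponding map to $X'$. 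Since $\O_{L_x}\bb{q}$ is normal and $X'^{\ast}_{\Ig(p^n)}$ is the normalisation of $X'^{\ast}$ in $X'_{\Ig(p^n)}$, this map extends uniquely to $\Spec(\O_{L_x}\bb{q}) \to X'^{\ast}_{\Ig(p^n)}$. Taking $\varpi^{\flat}$-adic completion and adic generic fibre, and arguing via the universal property of analytification as in Lemma~\ref{lemma for comparing schematic cusp to adic cusp} and Lemma~\ref{l:Tate parameter spaces of X^{ast}_{Gamma_1(p^n)}}, we obtain the section $s_n\colon \D'_x \to \X'^{\ast}_{\Ig(p^n)}$. This factors through $\X'^{\ast}_{\Ig(p^n)}(\epsilon)$ because $\Tate(q^{e_x})$ has Hasse invariant $1$, and in particular $|\Ha|\geq |\varpi^{\flat}|^{\epsilon}$ everywhere on $\D'_x$.

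Compatibility of $s_n$ and $s_{n+1}$ is immediate from the functoriality of the canonical subgroup: the identification $C_{n+1}\cong \mu_{p^{n+1}}$ reduces modulo $p^n$ to the identification $C_n \cong \mu_{p^n}$, hence $s_{n+1}$ composed with the forgetful map $\X'^{\ast}_{\Ig(p^{n+1})}(\epsilon)\to \X'^{\ast}_{\Ig(p^{n})}(\epsilon)$ agrees with $s_n$. Using the $(\Z/p^n\Z)^{\times}$-torsor structure, the right square is Cartesian. The left square is then Cartesian because both the outer rectangle (coming from the level $n+1$ section $s_{n+1}$) and the right square are Cartesian, and because the right vertical map is an open immersion (so the standard diagram chase applies).

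The only nontrivial point is the extension of the section across the cusps via normalisation; this is the one place where the argument is not purely formal, but it is handled exactly as in \cite[Thm.~8.11.10]{KatzMazur} and in the proof of Lemma~\ref{l:Tate parameter spaces of X^{ast}_{Gamma_1(p^n)}}, so no new difficulty arises. Everything else is a direct analogue of the characteristic-$0$ proofs, simplified by the fact that $\X'^{\ast}_{\Ig(p^n)}(\epsilon)\to \X'^{\ast}(\epsilon)$ is finite \'etale so that no ramification at the cusps needs to be separated out.
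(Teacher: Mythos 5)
Your proposal is correct and follows essentially the same route as the paper: the paper's proof is precisely to use the canonical lift of the cusp furnished by the canonical isomorphism $\mu_{p^n}\isomarrow C_n(\Tate(q^e))$ recalled in \S\ref{s:Igusa-curves}, and then to argue exactly as in Lemma~\ref{l:Tate parameter spaces of X^{ast}_{Gamma_1(p^n)}} via the section of the finite \'etale $(\Z/p^n\Z)^{\times}$-torsor and the analogue of Lemma~\ref{lemma for comparing schematic cusp to adic cusp}. Your write-up just spells out the details that the paper leaves implicit.
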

\begin{proof}
	Using the canonical lift described in \S\ref{s:Igusa-curves}, this can be seen exactly like in Lemma~\ref{l:Tate parameter spaces of X^{ast}_{Gamma_1(p^n)}},  based on the analogue of  Lemma~\ref{lemma for comparing schematic cusp to adic cusp} in this setting.
\end{proof}

Like in the $p$-adic case, there is also a larger, quasi-compact Tate curve parameter space:

\begin{Definition}\label{d:overline Dp}
	Let $\overline{\D}'=\overline{\D}'_x=\Spa(\O_{L_x}\llbracket q\rrbracket[\tfrac{1}{\varpi^{\flat}}],\O_{L_x}\llbracket q\rrbracket)$ where $\O_{L_x}\llbracket q\rrbracket$ is endowed with the $\varpi^{\flat}$-adic topology. Like in Lemma~\ref{l:overline{D} is sousperfectoid}, one sees that this is a sousperfectoid adic space with an open immersion $\D'=\cup_n\overline{\D}'(|q|^n\leq |\varpi^{\flat}|)\hookrightarrow \overline{\D}'$.
\end{Definition}
\begin{Lemma}
	For any cusp of $\Xpa_{\Ig(p^n)}(\epsilon)$, the map $\D'\hookrightarrow \Xpa_{\Ig(p^n)}(\epsilon)$ extends uniquely to a natural map $\overline{\D}'\to \Xpa_{\Ig(p^n)}(\epsilon)$. The fibre of the good reduction locus is $\overline{\D}'(|q|\geq 1)$.
\end{Lemma}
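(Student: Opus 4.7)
The plan is to mirror the proof of Lemma~\ref{l:full-Tate-curve-parameter-space}: first construct the extension by starting from a scheme-theoretic morphism, $\varpi^{\flat}$-adically completing, and passing to the adic generic fibre; then identify the fibre of $\mathcal X'_{\gd}$ via the moduli interpretation. The essential new input is the Igusa analogue of Lemma~\ref{lemma for comparing schematic cusp to adic cusp}, available because the canonical isomorphism $\mu_{p^n}\isomarrow C_n$ on the Tate curve (see \S\ref{s:Igusa-curves}) provides a distinguished lift of any cusp of $X'^{\ast}$ to a cusp of $X'^{\ast}_{\Ig(p^n)}$.

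For the existence of the extension, fix a cusp $x$ of $\Xpa_{\Ig(p^n)}(\epsilon)$. The Tate curve $\Tate(q)$ over $\mathcal O_{L_x}\cc{q}$, equipped with its $\Gamma^p$-structure corresponding to $x$ and its canonical Igusa structure, determines a scheme-theoretic morphism
\[
\Spec(\mathcal O_{L_x}\llbracket q\rrbracket) \longrightarrow X'^{\ast}_{\mathcal O_{K^{\flat}},\Ig(p^n)}.
\]
Applying $\varpi^{\flat}$-adic completion with $\mathcal O_{L_x}\llbracket q\rrbracket$ carrying the $\varpi^{\flat}$-adic (rather than $(\varpi^{\flat},q)$-adic) topology, and then the adic generic fibre functor, yields a morphism $\overline{\D}'\to \mathcal X'^{\ast}_{\Ig(p^n)}$. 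This factors through $\Xpa_{\Ig(p^n)}(\epsilon)$ since cusps are disjoint from the supersingular locus. Compatibility with the open immersion $\mathcal D'\hookrightarrow \Xpa_{\Ig(p^n)}(\epsilon)$ on $\mathcal D'\subseteq \overline{\D}'$ is automatic, since both maps arise from the same scheme-theoretic morphism, merely through different topologies on $\mathcal O_{L_x}\llbracket q\rrbracket$. Uniqueness of the natural extension is ensured by the canonicity of this construction: the resulting map is determined on the global sections $\mathcal O_{L_x}\llbracket q\rrbracket[\tfrac{1}{\varpi^{\flat}}]$ of $\overline{\D}'$, which are already pinned down by the datum on $\mathcal D'$ together with the restriction of scalars to $\mathcal O_{L_x}\llbracket q\rrbracket$.

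For the identification of the fibre of the good reduction locus, I would reuse the moduli argument from Lemma~\ref{l:full-Tate-curve-parameter-space}: morphisms $\Spa(R,R^{+})\to \mathcal X'_{\gd}$ correspond to elliptic curves over $R^{+}$, so the preimage of $\mathcal X'_{\gd}$ in $\overline{\D}'$ is the locus where the Tate curve $\Tate(q)$ is already defined over $\mathcal O^{+}$, equivalently where $q\in \mathcal O^{+,\times}$. Given the constraint $|q|\leq 1$ on $\overline{\D}'$, this reduces to the condition $|q|\geq 1$; the Igusa structure extends to the integral model through the canonical isomorphism $\mu_{p^n}\isomarrow C_n$, so no additional obstruction appears. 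I do not anticipate any substantive difficulty: the argument is a direct translation of the characteristic $0$ proof into characteristic $p$, with $p$ replaced by $\varpi^{\flat}$ and the canonical Igusa lift invoked wherever the scheme-theoretic cusp datum is needed.
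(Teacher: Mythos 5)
Your proposal is correct and follows exactly the route the paper intends: the paper's proof of this lemma is literally the single line ``Like in Lemma~\ref{l:full-Tate-curve-parameter-space}'', and your write-up is a faithful unpacking of that reference, including the one genuinely new ingredient (the canonical Igusa lift of the cusp coming from $\mu_{p^n}\isomarrow C_n$ on the Tate curve). The uniqueness claim you sketch via injectivity of $\O^+(\overline{\D}')\to\O^+(\D')$ is also consistent with how the paper handles the analogous point elsewhere.
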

\begin{proof}
	Like in Lemma~\ref{l:full-Tate-curve-parameter-space}.
\end{proof}
\begin{Lemma}\label{Lemma: Frobenius gives Cartesian diagram on Tate parameter spaces of Igusa curves}
Let $n\in\Z_{\geq 0}$. For any cusp $x$ of $\mathcal X'^{\ast}_{\Ig(p^{n})}$, the following squares are Cartesian:
\begin{equation*}
	(1)
	\begin{tikzcd}
		\D' \arrow[d,"q\mapsto q^p"] \arrow[r, hook] &\overline{\D}' \arrow[d,"q\mapsto q^p"] \arrow[r] & \mathcal X'^{\ast}_{\Ig(p^{n})}(p^{-1}\epsilon) \arrow[d,"F_{\rel}"] \\
		\D' \arrow[r, hook] & 	\overline{\D}' \arrow[r] & \XpaIne.
	\end{tikzcd}
\quad (2) \begin{tikzcd}
			\mathcal X_{\Ig(p^{n+1})}'^{\ast}(p^{-1}\epsilon)\arrow[d,"F_\rel"] \arrow[r] & \mathcal X_{\Ig(p^n)}'^{\ast}(p^{-1}\epsilon) \arrow[d,"F_\rel"] \\
			\mathcal X_{\Ig(p^{n+1})}'^{\ast}(\epsilon) \arrow[r] & \mathcal X_{\Ig(p^n)}'^{\ast}(\epsilon).
		\end{tikzcd}
	\end{equation*}
\end{Lemma}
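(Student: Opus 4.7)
The plan is to use the moduli interpretation of relative Frobenius on Tate curves to identify how $F_{\rel}$ acts on Tate parameter spaces, and then to deduce the Cartesian property either by reducing to the formal level (for (1)) or via an iterated-pullback argument (for (2)).

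First, I would establish that $F_{\rel}$ corresponds to $q \mapsto q^p$ on Tate curve parameters. In characteristic $p$, on a Tate curve $\Tate(q) = \G_m/q^{\Z}$ the kernel of Frobenius is the canonical subgroup $\mu_p$ and the quotient $\Tate(q)/\mu_p$ is $\Tate(q^p)$, via the $p$-power map on $\G_m$. The canonical Igusa structure from \S\ref{s:Igusa-curves}, built from the natural identifications $\mu_{p^n} \cong C_n(\Tate(q))$ and Cartier duality, is functorial in $\Tate(q)$, hence maps compatibly under $F_{\rel}$. Thus on formal completions at each cusp $x$ of $\mathcal X'^{\ast}_{\Ig(p^n)}$, the map $F_{\rel}$ restricts to $\Spf(\O_{L_x}\bb{q}) \to \Spf(\O_{L_x}\bb{q})$, $q \mapsto q^p$. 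Passing to the adic generic fibre, this shows commutativity of all squares in (1) and (2).

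For Cartesian-ness of the two squares in (1): the leftmost square is trivially Cartesian, since both the horizontal open immersion $\D' \hookrightarrow \overline{\D}'$ and the vertical map $q \mapsto q^p$ are controlled by the condition $|q| < 1$, and $|q^p| < 1 \Leftrightarrow |q| < 1$. For the rightmost square, I would argue via the formal scheme picture: since $F_{\rel}$ is finite and preserves the cusp divisor, with the induced map on the formal completion being $\Spf \O_{L_x}\bb{q} \to \Spf \O_{L_x}\bb{q}$, $q \mapsto q^p$, formal completion along the cusp divisor commutes with the pullback along $F_{\rel}$. This gives a Cartesian square of formal schemes, whose adic generic fibres form the Cartesian diagram of adic spaces in question.

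For Cartesian-ness of (2): the locus $(\delta)$ inside $\mathcal X'^{\ast}_{\Ig(p^n)}$ is cut out by $|\Ha| \geq |\varpi^{\flat}|^{\delta}$ on the underlying elliptic curve alone, hence
\[\mathcal X'^{\ast}_{\Ig(p^{n+1})}(\delta) = \mathcal X'^{\ast}_{\Ig(p^{n+1})} \times_{\mathcal X'^{\ast}_{\Ig(p^n)}} \mathcal X'^{\ast}_{\Ig(p^n)}(\delta)\]
for $\delta \in \{\epsilon, p^{-1}\epsilon\}$. Combined with the commutativity of $F_{\rel}$ with the forgetful maps $\mathcal X'^{\ast}_{\Ig(p^{n+1})} \to \mathcal X'^{\ast}_{\Ig(p^n)}$, a routine diagram chase of iterated fibre products gives the Cartesian-ness of (2). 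The main obstacle is in the rightmost square of (1): making precise that formal completion along the cusp divisor commutes with pullback along the finite morphism $F_{\rel}$ in our (non-Noetherian) adic setting, and that the Cartesian diagram of formal schemes really descends to a Cartesian diagram of adic generic fibres. In practice this comes down to a direct verification on the explicit Huber pairs describing $\overline{\D}'$ and the corresponding rational subdomains of $\mathcal X'^{\ast}_{\Ig(p^n)}$ around the cusps, using the $q$-expansion description.
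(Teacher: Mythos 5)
Your treatment of square (2) has a genuine gap. The identity $\mathcal X'^{\ast}_{\Ig(p^{n+1})}(\delta) = \mathcal X'^{\ast}_{\Ig(p^{n+1})} \times_{\mathcal X'^{\ast}_{\Ig(p^n)}} \mathcal X'^{\ast}_{\Ig(p^n)}(\delta)$ is correct, but substituting it into the square only rewrites the claim rather than proving it: the fibre product $\mathcal X_{\Ig(p^{n+1})}'^{\ast}(\epsilon) \times_{\mathcal X_{\Ig(p^n)}'^{\ast}(\epsilon)} \mathcal X_{\Ig(p^n)}'^{\ast}(p^{-1}\epsilon)$ becomes the pullback of the cover $\mathcal X'^{\ast}_{\Ig(p^{n+1})} \to \mathcal X'^{\ast}_{\Ig(p^n)}$ along the composite $\iota_{\epsilon}\circ F_{\rel}$, whereas the top-left corner is its pullback along the open immersion $\iota_{p^{-1}\epsilon}$. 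These are the ordinary pullback and the Frobenius pullback of the same finite cover, and they do not coincide in general (consider a $\mu_p$-torsor). What makes them coincide here is precisely that the transition maps in the Igusa tower are finite \'etale over the ordinary locus (which contains every $(\epsilon)$-locus), together with the standard fact that the relative Frobenius square over an \'etale morphism is Cartesian. That one line is the paper's entire proof of (2); your ``routine diagram chase'' never invokes \'etaleness and therefore cannot close.

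For square (1) your route differs from the paper's. You want to deduce Cartesianness from ``completion along the cusps commutes with pullback along the finite morphism $F_{\rel}$'' at the formal level and then pass to adic generic fibres; the paper instead uses that the horizontal composites through $\D'$ are open immersions to reduce the outer square to a check on $(C^{\flat},C^{\flat+})$-points, carried out on moduli of Tate curves ($F_{\rel}$ sends $\Tate(q)$ to $\Tate(q^p)$ and the canonical Igusa structure accordingly), and then extends to $\overline{\D}'$ via the relative moduli description of lifts as $p$-th roots of $q$. Your approach is plausible, but, as you yourself flag, the completion-commutes-with-finite-base-change step is only standard in the Noetherian setting and the compatibility with adic generic fibres needs an argument; the paper's pointwise/moduli check avoids both issues. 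I would either supply the explicit Huber-pair verification you allude to, or switch to the paper's moduli-theoretic argument.
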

\begin{proof}
	It is clear that the diagrams commute by functoriality of the relative Frobenius morphism. The second diagram is Cartesian because the bottom map is \'etale. 
	
	To see that the first diagram is Cartesian, we first consider the outer square. For this it suffices to check this on $(C^{\flat},C^{\flat+})$-points because the horizontal compositions are open immersions. It is clear that the cusps correspond under $F_{\rel}$, and $q\mapsto q^p$ sends the origin to the origin. Away from the cusps, we can check on moduli interpretations that the square is Cartesian: The desired statement follows as $F_{\rel}$ sends $\Tate(q)$ to $\Tate(q^{p})$, and the $\Ig(p^n)$-structure $\langle q\rangle\subseteq \Tate(q)^{(p^n)}=\Tate(q^{p^n})$ to $\langle q^p\rangle\subseteq \Tate(q^p)^{(p^n)}=\Tate(q^{p^{n+1}})$. 
		
		This argument extends to $\overline{\D}'$, which (away from $x$) we may regard as the moduli space of Tate curves $\Tate(q)$ with level structure associated to $x$ over adic spaces $S$ over $\O_{L_x}\llbracket q\rrbracket[\tfrac{1}{p}]$. Lifts of maps $S\to \overline{\D}'$ along the right vertical morphism correspond to Tate curves $\Tate(q)^{(p^{-1})}=\Tate(q^{1/p})$  over $S$ whose base change along $F_{\rel}$ is $\Tate(q)$, and thus to $p$-th roots of $q\in \O^+(S)$. 
\end{proof}

\subsection{Perfections of Igusa curves}
In this section, we discuss perfectoid Igusa curves and their Tate curve parameter spaces. We first recall the perfection functor in characteristic $p$:

\begin{Definition}[\cite{torsion}, Def.~III.2.18]
	Let $\mathcal Y$ be an analytic adic space over $(K^\flat,\mathcal O_K^\flat)$. Then there is a perfectoid space $\mathcal Y^{\perf}$ over $(K^\flat,\mathcal O_K^\flat)$ such that $\mathcal Y^{\perf} \sim \varprojlim_{F_{\rel}} \mathcal Y$
	where we identify $\Y^{(p)}$ with $\Y$ using that $K^\flat$ is perfect. We call $\mathcal Y^{\perf}$ the perfection of $\mathcal Y$. The formation $\mathcal Y\mapsto \mathcal Y^{\perf}$ is functorial and defines a left-adjoint to the forgetful functor from perfectoid spaces over $K^{\flat}$ to analytic adic spaces over $K^{\flat}$.
\end{Definition}

In the case of $\mathcal Y=\X'^{\ast}(\epsilon)$, we can first take the inverse limit 
${\mathfrak X'^{\ast}}(\epsilon)^{\perf}:= \varprojlim_{F_\rel} \mathfrak X'^{\ast}(p^{-n}\epsilon)$
in the category of formal schemes. Its generic fibre is then the tilde limit
\[\mathcal {X'^{\ast}(\epsilon)}^{\perf} = \mathfrak {X'^{\ast}(\epsilon)}^{\perf}_{\eta} \sim \textstyle\varprojlim_{F_{\rel}} (\mathfrak X'^{\ast}(p^{-n}\epsilon))^{\ad}_{\eta}\]
by \cite[Prop. 2.4.2]{ScholzeWeinstein} and it is clear on any affine open formal subscheme of $\mathfrak X'^{\ast}(\epsilon)$ that this space is perfectoid. The analogous construction also works for $\mathcal X'^{\ast}_{\Ig(p^n)}(\epsilon)^{\perf}$.

\begin{Lemma}\label{Lemma moduli interpretation of Ig(p^n)^perf}
	Let $(R,R^+)$ be a perfectoid $K^{\flat}$-algebra. Then $\X'_{\Ig(p^n)}(\epsilon)^{\perf}(R,R^+)$ is in functorial bijection with isomorphism classes of triples $(E,\alpha_N,\beta_n)$ of $\epsilon$-nearly ordinary elliptic curves $E$ over $R$ with a $\Gamma^p$-structure $\alpha_N$ and an isomorphism of group schemes \[\beta_n\colon \underline{\Z/p^n\Z}\to \ker (V:E^{(p^n)}\to E)\cong  \ker (V:E\to E^{(p^{-n})})\subseteq E[p^n].\]
\begin{proof}
	By adjunction, we have $\X'_{\Ig(p^n)}(\epsilon)^{\perf}(R,R^+)=\X'_{\Ig(p^n)}(\epsilon)(R,R^+)$.
\end{proof}
\end{Lemma}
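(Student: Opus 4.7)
The plan is to reduce to the moduli interpretation at finite level via the universal property of the perfection functor. Concretely, the perfection $(-)^{\perf}$ is left adjoint to the inclusion of perfectoid spaces into analytic adic spaces over $(K^\flat, \O_{K^\flat})$. Since an $(R,R^+)$-point of any adic space $\Y$ over $(K^\flat,\O_{K^\flat})$ is the same as a morphism $\Spa(R,R^+) \to \Y$, and $\Spa(R,R^+)$ is perfectoid by assumption, adjunction yields
\[
\X'_{\Ig(p^n)}(\epsilon)^{\perf}(R,R^+) \;=\; \Hom(\Spa(R,R^+),\X'_{\Ig(p^n)}(\epsilon)) \;=\; \X'_{\Ig(p^n)}(\epsilon)(R,R^+).
\]

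Next, I would invoke the moduli interpretation of $\X'_{\Ig(p^n)}(\epsilon)$ already recorded earlier in \S\ref{s:Igusa-curves}, obtained by the same argument as in Lemma~\ref{Lemma: adic moduli interpretation of X_Gamma^an}: since $X'_{\Ig(p^n)}$ is an affine scheme over $K^\flat$, the analytification identifies \mbox{$\Hom_{\mathbf{Adic}}(S, \X'_{\Ig(p^n)}) = X'_{\Ig(p^n)}(\O_S(S))$} for any honest adic space $S$ over $(K^\flat, \O_{K^\flat})$, via the adjunction between $\Spec$ and global sections in locally ringed spaces. Specialising to $S = \Spa(R,R^+)$ and intersecting with the open subspace cut out by $|\Ha|\geq |\varpi^\flat|^{\epsilon}$, the $S$-points of $\X'_{\Ig(p^n)}(\epsilon)$ correspond exactly to tuples $(E,\alpha_N,\beta_n)$ consisting of an elliptic curve $E/R$ with $|\Ha(E)|\geq |\varpi^\flat|^\epsilon$, a $\Gamma^p$-structure $\alpha_N$, and an Igusa structure $\beta_n$ of level $n$ on $E$ in the sense of Katz--Mazur.

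Finally, I would translate the Katz--Mazur definition of an Igusa structure into the form claimed in the statement. Because $E$ is generically ordinary (indeed, $\X'^{\ast}(\epsilon) \subseteq \X'^{\ast}_{\ord}$) and $R$ is reduced, the Verschiebung kernel $\ker(V\colon E^{(p^n)}\to E)$ is finite \'etale over $R$ and canonically Cartier dual to the canonical subgroup $C_n \subseteq E[p^n]$. As recalled in Def.~\ref{subsection: adding Igusa structure in adic setting}, in this situation a Drinfeld generator of $\ker V$ is precisely an isomorphism of group schemes $\underline{\Z/p^n\Z}\isomarrow \ker V$, and through $\ker V \cong \ker(V\colon E \to E^{(p^{-n})}) \subseteq E[p^n]$ (using that Frobenius is invertible on $R$, or equivalently that $K^\flat$ is perfect) we obtain the subgroup description in the statement. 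This identification is functorial in $(R,R^+)$, completing the proof. The only step that needs mild care is the last one, where one must check that the two presentations of $\ker V$ — via $E^{(p^n)}\to E$ and via $E\to E^{(p^{-n})}$ — agree naturally over a perfectoid base, but this is immediate from the perfectness of $R$ at the level of relative Frobenius.
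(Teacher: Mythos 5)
Your proposal is correct and follows the paper's own route: the key step in both is the universal property of the perfection, namely that $\Hom(\Spa(R,R^+),\Y^{\perf})=\Hom(\Spa(R,R^+),\Y)$ for perfectoid $(R,R^+)$, which reduces everything to the finite-level moduli interpretation of $\X'_{\Ig(p^n)}(\epsilon)$ already recorded in \S\ref{subsection: adding Igusa structure in adic setting}. The extra details you supply (affineness of $X'_{\Ig(p^n)}$ giving the adic moduli interpretation, and the identification of $\ker(V\colon E^{(p^n)}\to E)$ with $\ker(V\colon E\to E^{(p^{-n})})$ via invertibility of Frobenius over a perfect base) are exactly the points the paper delegates to earlier discussion, so nothing is missing.
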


\begin{Lemma}\label{l: Cartesian diagram Igusa versus perfection}
	For any cusp $x$ of $\X'^{\ast}_{\Ig(p^n)}$, the perfection of the corresponding Tate parameter space $\D'\hookrightarrow \mathcal X'^{\ast}_{\Ig(p^n)}(\epsilon)$ fits into a Cartesian diagram
\begin{equation*}
\begin{tikzcd}
	\D'_\infty \arrow[d]\arrow[r,hook] & \overline{\D}'_\infty \arrow[d] \arrow[r, hook]& \XpaInep \arrow[d] \\
	\D' \arrow[r, hook] & 	\overline{\D}' \arrow[r] & \XpaIne.
\end{tikzcd}
\end{equation*}
Here $\overline{\D}'_\infty:=\overline{\D}'^{\perf}$, and $\D'_\infty:=\D'^{\perf}$ can be canonically identified with the open subspace of the perfectoid unit disc $\Spa(K^{\flat}\langle q^{1/p^\infty}\rangle, \O_{K^{\flat}}\langle q^{1/p^\infty}\rangle)$ defined by $|q|<1$.
\end{Lemma}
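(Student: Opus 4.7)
The plan is to deduce both Cartesian squares by iterating Lemma~\ref{Lemma: Frobenius gives Cartesian diagram on Tate parameter spaces of Igusa curves} and then passing to the tilde-limit that defines the perfection. Recall that by definition $\XpaInep \sim \varprojlim_{m} \mathcal X'^{\ast}_{\Ig(p^n)}(p^{-m}\epsilon)$ with transitions given by the relative Frobenius.

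First, I would identify the perfections of the Tate parameter spaces. By part (1) of Lemma~\ref{Lemma: Frobenius gives Cartesian diagram on Tate parameter spaces of Igusa curves}, the relative Frobenius on $\mathcal X'^{\ast}_{\Ig(p^n)}$ pulls back along the open immersions $\D'\hookrightarrow \overline{\D}'\to \XpaIne$ to the map $q\mapsto q^p$. Hence
\[
\D'^\perf \sim \textstyle\varprojlim_{q\mapsto q^p}\D', \qquad \overline{\D}'^\perf \sim \textstyle\varprojlim_{q\mapsto q^p}\overline{\D}'.
\]
On global sections, this presents $\D'^\perf$ as the open subspace $|q|<1$ of the perfectoid unit disc $\Spa(L_x\langle q^{1/p^\infty}\rangle,\O_{L_x}\langle q^{1/p^\infty}\rangle)$, which, since $L_x/K^\flat$ is a finite cyclotomic extension, lies canonically inside the base-change of $\Spa(K^\flat\langle q^{1/p^\infty}\rangle,\O_{K^\flat}\langle q^{1/p^\infty}\rangle)$. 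An entirely analogous computation (this time with the $\varpi^\flat$-adic topology on $\O_{L_x}\bb{q^{1/p^\infty}}_{\varpi^\flat}$, as in Lemma~\ref{the perfectoid parameter space of the Tate curve at infinite level} and Definition~\ref{d:overline Dp}) shows $\overline{\D}'^\perf = \overline{\D}'_\infty$.

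Next, for the right-hand square, I would iterate Lemma~\ref{Lemma: Frobenius gives Cartesian diagram on Tate parameter spaces of Igusa curves} part (1) to obtain, for every $m\geq 0$, a Cartesian square
\begin{equation*}
\begin{tikzcd}
\overline{\D}' \arrow[d,"q\mapsto q^{p^m}"'] \arrow[r] & \mathcal X'^{\ast}_{\Ig(p^n)}(p^{-m}\epsilon) \arrow[d,"F^m_\rel"]\\
\overline{\D}' \arrow[r] & \XpaIne.
\end{tikzcd}
\end{equation*}
Taking the tilde-limit over $m$ and invoking \cite[Prop.~2.4.3]{ScholzeWeinstein} to identify the limit of the upper-left corners with the perfectoid fiber product $\overline{\D}'\times_{\XpaIne}\XpaInep$, I conclude that the right-hand square of the statement is Cartesian with top-left corner $\overline{\D}'_\infty$. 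The left-hand square is then easy: both $\D'\hookrightarrow \overline{\D}'$ and $\D'_\infty\hookrightarrow \overline{\D}'_\infty$ are the open immersions cut out by the same condition $|q|<1$, and this condition is manifestly preserved by the perfection morphism $\overline{\D}'^\perf\to \overline{\D}'$.

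The main obstacle is the justification that passage to the tilde-limit preserves the Cartesian property in the second step, since tilde-limits do not commute with arbitrary fiber products of adic spaces. The safest way around this is to check the universal property directly on $(R,R^+)$-points for $(R,R^+)$ a perfectoid $K^\flat$-algebra: using the moduli interpretations from Lemma~\ref{Lemma moduli interpretation of Ig(p^n)^perf} and the Tate-curve description of maps into $\overline{\D}'$, a morphism $\Spa(R,R^+)\to \overline{\D}'\times_{\XpaIne}\XpaInep$ amounts to a Tate curve together with a compatible system of $p^m$-th roots of its parameter, which is precisely an $(R,R^+)$-point of $\overline{\D}'_\infty$.
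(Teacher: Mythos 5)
Your proposal is correct and follows essentially the same route as the paper, whose proof is precisely "take the limit over the Cartesian diagrams from Lemma~\ref{Lemma: Frobenius gives Cartesian diagram on Tate parameter spaces of Igusa curves}.(1)"; you simply spell out the iteration, the identification of the perfected parameter spaces, and the point-wise check that the Cartesian property survives the tilde-limit. The only cosmetic slip is calling $\overline{\D}'\to \XpaIne$ an open immersion (it is not, only $\D'\hookrightarrow \XpaIne$ is), but this does not affect the argument.
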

\begin{proof}
	This follows in the limit over the Cartesian diagrams from Lemma~\ref{Lemma: Frobenius gives Cartesian diagram on Tate parameter spaces of Igusa curves}.(1).
\end{proof}
\begin{Lemma}\label{l:perfectoid Igusa tower is proetale}
	The following diagram is Cartesian:
	\begin{equation*}
		\begin{tikzcd}
			\mathcal X_{\Ig(p^{n})}'^{\ast}(\epsilon)^{\perf} \arrow[d] \arrow[r] &\Xpaep \arrow[d] \\
			\mathcal X_{\Ig(p^{n})}'^{\ast}(\epsilon) \arrow[r] & \Xpae.
		\end{tikzcd}\end{equation*}
\end{Lemma}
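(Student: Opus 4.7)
The strategy is to mimic the proof of Lemma~\ref{Lemma: Frobenius gives Cartesian diagram on Tate parameter spaces of Igusa curves}.(2). The key input is that the forgetful map $f_n\colon \mathcal X'^{\ast}_{\Ig(p^n)}(\epsilon) \to \mathcal X'^{\ast}(\epsilon)$ is finite \'etale: on the overconvergent neighbourhood of the ordinary locus it is a $(\Z/p^n\Z)^\times$-torsor by construction, and over the cusps it is also finite \'etale by the description recalled in \S\ref{s:Igusa-curves}, namely that each cusp of $X'^{\ast}$ splits into $\varphi(p^n)$ disjoint copies under $X'^{\ast}_{\Ig(p^n)}\to X'^{\ast}$.

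First, I would establish for each $k\geq 0$ the Cartesian diagram
\[\begin{tikzcd}
\mathcal X'^{\ast}_{\Ig(p^n)}(p^{-k}\epsilon) \arrow[d,"F_{\rel}^k"'] \arrow[r] & \mathcal X'^{\ast}(p^{-k}\epsilon) \arrow[d,"F_{\rel}^k"] \\
\mathcal X'^{\ast}_{\Ig(p^n)}(\epsilon) \arrow[r] & \mathcal X'^{\ast}(\epsilon).
\end{tikzcd}\]
Commutativity is clear by functoriality of the relative Frobenius (compatibly with the shift of $\epsilon$ by a factor of $p^k$, via $F^\ast \Ha = \Ha^p$). That the square is Cartesian follows by the same argument as in Lemma~\ref{Lemma: Frobenius gives Cartesian diagram on Tate parameter spaces of Igusa curves}.(2): the bottom map is finite \'etale, and in characteristic $p$ the relative Frobenius square of any \'etale morphism is Cartesian (equivalently, $F_{\rel}$ commutes with finite \'etale base change). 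Iteration is immediate since finite \'etale morphisms are stable under base change.

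Second, I would take the inverse limit over $k$ along the Frobenius transition maps. By definition of the perfection, the left column is a tilde-limit computing $\mathcal X'^{\ast}_{\Ig(p^n)}(\epsilon)^{\perf}$, and the right column gives $\mathcal X'^{\ast}(\epsilon)^{\perf}$. Since inverse limits commute with fibre products over a fixed base, we get a natural identification
\[\mathcal X'^{\ast}_{\Ig(p^n)}(\epsilon)^{\perf}\sim\varprojlim_{k,F_\rel}\mathcal X'^{\ast}_{\Ig(p^n)}(p^{-k}\epsilon)\sim \mathcal X'^{\ast}_{\Ig(p^n)}(\epsilon) \times_{\mathcal X'^{\ast}(\epsilon)} \mathcal X'^{\ast}(\epsilon)^{\perf}.\]
The right-hand side is a finite \'etale cover of the affinoid perfectoid space $\mathcal X'^{\ast}(\epsilon)^{\perf}$, hence itself affinoid perfectoid by Scholze's finite \'etale theory for perfectoid spaces; this justifies treating the fibre product as the appropriate tilde-limit and gives the claim.

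The main (minor) obstacle is verifying that the Frobenius square in Step~1 is Cartesian. This is really just the standard characterisation of \'etaleness in characteristic $p$ via the relative Frobenius, transported to the analytic adic setting, so the rest of the argument is a formal exercise in universal properties of tilde-limits and finite \'etale base change for perfectoid spaces.
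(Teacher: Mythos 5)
Your proposal is correct and follows essentially the same route as the paper: establish the finite-level Frobenius square as Cartesian using that the forgetful map is finite \'etale (this is the content of Lemma~\ref{Lemma: Frobenius gives Cartesian diagram on Tate parameter spaces of Igusa curves}.(2), composed over consecutive Igusa levels), then pass to the tilde-limit over $F_{\rel}$ using that perfectoid tilde-limits commute with fibre products. The extra remark that the fibre product is affinoid perfectoid via the finite \'etale theory for perfectoid spaces is a harmless and valid supplement.
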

\begin{proof}
	This follows from Lemma~\ref{Lemma: Frobenius gives Cartesian diagram on Tate parameter spaces of Igusa curves}.(2) in the limit over $F_{\rel}$ because perfectoid tilde-limits commute with fibre products.
\end{proof}
\begin{Definition}
	Consider the tower of affinoid perfectoid spaces with finite \'etale maps
	\[\dots \to\Xpa_{\Ig(p^{n+1})}(\epsilon)^\perf\to \XpaInep\to \dots \to \Xpaep. \]
	We denote by $\XpaIiep$ the unique affinoid perfectoid tilde-limit of this system.
\end{Definition}
\begin{Proposition}\label{p:Tate parameter spaces in perfectoid Igusa tower}
	Let $x$ be any cusp of $\Xpa$. Then there are natural Cartesian diagrams
		\begin{equation*}
		(1)	\begin{tikzcd}
				\underline{(\Z/p^n\Z)^\times}\times \D_\infty' \arrow[d,hook] \arrow[r] & \D'_\infty \arrow[d,hook] \\
				\XpaInep \arrow[r] & \Xpaep,
			\end{tikzcd}
	\quad(2)
	\begin{tikzcd}
		\underline{\Z_p^\times}\times \D_\infty' \arrow[d,hook] \arrow[r] & \D'_\infty \arrow[d,hook] \\
		\XpaIiep \arrow[r] & \Xpaep.
	\end{tikzcd}
	\end{equation*}
\end{Proposition}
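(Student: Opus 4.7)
The plan is to obtain Part (1) by base-changing the finite-level Cartesian square of Lemma~\ref{l:Cartesian diagrams for Tate parameter spaces in the Igusa tower}
\begin{equation*}
\begin{tikzcd}
\underline{(\Z/p^n\Z)^\times}\times \D'_x \arrow[d,hook] \arrow[r] & \D'_x \arrow[d,hook] \\
\mathcal X'^{\ast}_{\Ig(p^n)}(\epsilon) \arrow[r] & \mathcal X'^{\ast}(\epsilon)
\end{tikzcd}
\end{equation*}
along the perfection morphism $\Xpaep\to \Xpae$. By Lemma~\ref{l:perfectoid Igusa tower is proetale}, this base-change replaces the bottom-left corner by $\XpaInep$. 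By Lemma~\ref{l: Cartesian diagram Igusa versus perfection}, it replaces the right-hand column by $\D'_\infty\hookrightarrow\Xpaep$. Finally, the new top-left corner is
\[\underline{(\Z/p^n\Z)^\times}\times \D'_x \times_{\D'_x} \D'_\infty = \underline{(\Z/p^n\Z)^\times}\times \D'_\infty,\]
since the profinite set $\underline{(\Z/p^n\Z)^\times}$ is already an affinoid perfectoid space. Transitivity of Cartesian squares then yields Part (1).

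For Part (2), I would pass to the inverse limit $n\to\infty$ of the squares of Part~(1). On the bottom-left, $\XpaIiep$ is defined as the affinoid perfectoid tilde-limit of the $\XpaInep$. On the top-left, it suffices to show
\[\underline{\Z_p^\times}\times \D'_\infty \sim \varprojlim_n \underline{(\Z/p^n\Z)^\times}\times \D'_\infty,\]
which is the exact analogue of~\eqref{eq:tilde-limit-distributes-over-product} and follows by the same argument (cf.\ \cite[Lemma~12.2]{perfectoid-covers-Arizona}). Since perfectoid tilde-limits commute with fibre products, the Cartesian property at finite level passes to the limit, giving Part (2). The $\Z_p^\times$-equivariance is automatic from the finite-level equivariance under the reduction maps.

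The only mildly delicate step is the identification of the perfected fibre product with a product involving $\D'_\infty$, which rests on the fact that $\underline{(\Z/p^n\Z)^\times}$ is already perfectoid and that $\D'^{\perf}=\D'_\infty$ by Lemma~\ref{l: Cartesian diagram Igusa versus perfection}. Beyond this, the proof is a formal assembly of three Cartesian squares already proved: Lemmas~\ref{l:Cartesian diagrams for Tate parameter spaces in the Igusa tower},~\ref{l: Cartesian diagram Igusa versus perfection}, and~\ref{l:perfectoid Igusa tower is proetale}.
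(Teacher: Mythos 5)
Your proposal is correct and follows essentially the same route as the paper: the paper deduces Part (1) from the Cartesian cube spanned by the squares of Lemmas~\ref{l:Cartesian diagrams for Tate parameter spaces in the Igusa tower}, \ref{l: Cartesian diagram Igusa versus perfection} and \ref{l:perfectoid Igusa tower is proetale}, which is exactly your base-change along $\Xpaep\to\Xpae$ phrased differently, and then obtains Part (2) in the inverse limit $n\to\infty$ using the analogue of \eqref{eq:tilde-limit-distributes-over-product}. No gaps.
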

\begin{proof}
	Part (1) follows from Lemma~\ref{l:Cartesian diagrams for Tate parameter spaces in the Igusa tower}, Lemma~\ref{l: Cartesian diagram Igusa versus perfection} and Lemma~\ref{l:perfectoid Igusa tower is proetale}  using the Cartesian cube that these three squares span.
	Part (2) follows in the inverse limit $n\to \infty$.
\end{proof}

\section{Tilting isomorphisms for modular curves}\label{s:tilting-isomorphism}
\subsection{The tilting isomorphism at level $\Gamma_0(p^\infty)$}
While so far we have studied modular curves in characteristic $0$ and $p$ separately, we now compare the two worlds via tilting. This is possible based on the following result:

\begin{thm}[{\cite[Cor.~III.2.19]{torsion}}]\label{theorem torsion paper: tilting the perfectoid modular curve}
	There is a canonical isomorphism
	\[\mathcal X^{\ast}_{\Gamma_0(p^\infty)}(\epsilon)_a^\flat \isomarrow \mathcal X'^{\ast}(\epsilon)^{\perf}.\]
\end{thm}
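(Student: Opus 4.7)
The plan is to identify both sides of the isomorphism as tilde-limits built from a Frobenius-type lift, and then to invoke the tilting equivalence through a common reduction modulo $p$.

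First I would use the Atkin--Lehner isomorphism \eqref{eq:Atkin--Lehner} to rewrite the $\Gamma_0$-tower on the left-hand side as the anticanonical ``Frobenius tower''
\[\XaGziea\;\sim\;\varprojlim\nolimits_{\phi}\,\Xa(p^{-n}\epsilon),\]
whose transition map is $\phi\colon E\mapsto E/C_1$. I would then pass to canonical $p$-adic formal models $\mathfrak X^{\ast}(p^{-n}\epsilon)$ on which $\phi$ extends, and observe on the moduli description that since the canonical subgroup of an $\epsilon$-nearly ordinary elliptic curve reduces modulo $p$ to the kernel of Frobenius, the map $\phi$ reduces modulo $p$ to the relative Frobenius.

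Using the tilting of the base (which furnishes a canonical ring isomorphism $\O_K/p\cong\O_{K^{\flat}}/\varpi^{\flat}$), I would next identify the special fibres $\mathfrak X^{\ast}(p^{-n}\epsilon)\otimes\F_p$ and $\mathfrak X'^{\ast}(p^{-n}\epsilon)\otimes_{\O_{K^{\flat}}}\F_p$, and correspondingly $\phi\bmod p$ with $F_{\rel}$. Taking inverse limits in the category of formal schemes, the reduction modulo $p$ of
\[\mathfrak X^{\ast}_{\Gamma_0(p^\infty)}(\epsilon)_a\;:=\;\varprojlim\nolimits_{\phi}\,\mathfrak X^{\ast}(p^{-n}\epsilon)\]
then coincides with the reduction modulo $\varpi^{\flat}$ of the perfection $\mXpaep=\varprojlim_{F_{\rel}}\mathfrak X'^{\ast}(p^{-n}\epsilon)$. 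The final step is the tilting dictionary: for any perfectoid affinoid $(R,R^{+})$ one has a canonical identification $R^{+\flat}/\varpi^{\flat}\cong R^{+}/p$, and the tilt is characterised functorially by this reduction mod $\varpi^{\flat}$. Applied to the perfectoid generic fibres of the two formal models above, this upgrades the identification of mod-$p$ special fibres to the desired isomorphism $\mathcal X^{\ast}_{\Gamma_0(p^\infty)}(\epsilon)_a^{\flat}\isomarrow \mathcal X'^{\ast}(\epsilon)^{\perf}$.

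The main obstacle, and the technical heart of the argument, will be to verify that the formal tilde-limit $\mathfrak X^{\ast}_{\Gamma_0(p^\infty)}(\epsilon)_a$ genuinely has a perfectoid adic generic fibre: concretely, one must show that after inverting $p$ the completed colimit of integral rings becomes a perfectoid $K$-algebra, which amounts to checking that modulo $p$ the Frobenius is almost surjective on the completed colimit. This is where the precise behaviour of the canonical subgroup under iterated quotients (including the fact that $\phi$ genuinely maps $\X^{\ast}(p^{-n}\epsilon)$ into $\X^{\ast}(p^{-(n-1)}\epsilon)$) and the choice of overconvergent radius $\epsilon<1/2$ enter in an essential way; once this is in place, the rest of the argument is a formal consequence of the tilting equivalence.
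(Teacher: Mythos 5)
Your proposal follows essentially the same route as the argument the paper recalls from \cite[Cor.~III.2.19]{torsion}: rewrite the anticanonical $\Gamma_0$-tower via Atkin--Lehner as the tower of the Frobenius lift $\phi$, pass to formal models, identify the reductions on both sides and match $\phi$ with $F_{\rel}$, then conclude by the characterisation of the tilt via reduction modulo a pseudo-uniformiser \cite[Thm.~5.2]{perfectoid-spaces}. You are also right that the substantive content is the perfectoidness of the limit, i.e.\ the almost surjectivity of Frobenius on the completed colimit, which is what legitimises the final tilting step.

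One step is stated too strongly and, as written, is false for $\epsilon>0$: the canonical subgroup of an $\epsilon$-nearly ordinary elliptic curve does \emph{not} reduce to the kernel of Frobenius modulo $p$ unless the curve is genuinely ordinary; the congruence only holds modulo $p^{1-\delta}$ with $\delta=\tfrac{p+1}{p}\epsilon$ (and this is where the constraint $\epsilon<\tfrac12$, forcing $\delta<1$, enters). Consequently the identification of $\phi$ with $F_{\rel}$, and likewise the identification of the special fibres of $\mathfrak X^{\ast}(p^{-n}\epsilon)$ and $\mathfrak X'^{\ast}(p^{-n}\epsilon)$, must be carried out modulo $p^{1-\delta}$ rather than modulo $p$, and the inverse limits must be compared at that level. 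This does not derail your strategy, since the tilting equivalence only requires agreement modulo \emph{some} pseudo-uniformiser dividing $p$, but the quantitative weakening is essential and needs to be made explicit; as literally written, the mod-$p$ claim would fail at the very step on which the whole comparison rests.
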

Let us recall how this is proved: Via $\O_K/p\cong\O_{K^{\flat}}/\varpi^\flat$
we have an identification of the reductions $\mathfrak X^{\ast}/p=\mathfrak X'^{\ast}/\varpi^{\flat}$ which by explicit inspection extends to a natural isomorphism
\begin{equation}\label{equation: identification of mXae mod p and mXpae mod t} 
	\mathfrak X^{\ast}(\epsilon)/p\cong\mathfrak X'^{\ast}(\epsilon)/\varpi^{\flat}.
\end{equation}
The morphism $\mathcal X^{\ast}_{\Gamma_0(p^{n+1})}(\epsilon)\to \mathcal X^{\ast}_{\Gamma_0(p^n)}(\epsilon)$ gets identified via the Atkin--Lehner isomorphism \eqref{eq:Atkin--Lehner} with a map $\mathcal X^{\ast}(p^{-(n+1)}\epsilon)\to \mathcal X^{\ast}(p^{-n}\epsilon)$ that has a formal model $\phi\colon \mathfrak X^{\ast}(p^{-(n+1)}\epsilon)\to \mathfrak X^{\ast}(p^{-n}\epsilon)$. One can then prove that mod $p^{1-\delta}$ where $\delta:=\frac{p+1}{p}\epsilon$, the map $\phi$ gets identified with $F_{\rel}$ in the sense that the following diagram commutes:
\[\begin{tikzcd}[row sep=0.2cm]
	\mathfrak X^{\ast} (p^{-(n+1)}\epsilon)/p^{1-\delta} \arrow[r,"\phi"] \arrow[d,equal,"\sim"labelrotate] & 	\mathfrak X^{\ast} (p^{-n}\epsilon)/p^{1-\delta}  \arrow[d,equal,"\sim"labelrotate] \\
	\mathfrak X'^{\ast}(p^{-(n+1)}\epsilon)/\varpi^{\flat(1-\delta)} \arrow[r,"F_{\rel}"]                                    &  	\mathfrak X'^{\ast}(p^{-n}\epsilon)/\varpi^{\flat(1-\delta)}.              
\end{tikzcd}
\]
In the inverse limit, this gives the result by \cite[Thm~5.2]{perfectoid-spaces}.

The following lemma says that the isomorphism of Thm.~\ref{theorem torsion paper: tilting the perfectoid modular curve} identifies the cusps:

\begin{Lemma}\label{l:tilting cusps}
	 The cusps of $\mathcal X^{\ast}$ and $\mathcal X'^{\ast}$ correspond via tilting: For each cusp $x$ of $\mathcal X^{\ast}$, considered as a finite \'etale adic space over $K$, its tilt can be canonically identified with a cusp $x^{\flat}$ of  $\mathcal X'^{\ast}$. In particular, $(L_{x})^{\flat} =L_{x^{\flat}}$ for the fields of definition.
\end{Lemma}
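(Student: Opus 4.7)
\medskip

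\noindent\textbf{Proof sketch.} The plan is to transfer the cusps of $\mathcal X^{\ast}$ and $\mathcal X'^{\ast}$ to their respective infinite-level/perfection analogues, and then invoke the tilting isomorphism of Theorem~\ref{theorem torsion paper: tilting the perfectoid modular curve} to match them up.

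First, since the cusps are disjoint from the supersingular locus, they are contained in the ordinary locus and therefore biject with cusps of $\mathcal X^{\ast}(\epsilon)$. Passing up the anticanonical tower, Prop.~\ref{Proposition: Tate parameter spaces in the Gamma_0-tower}.(2) asserts that each forgetful map $\mathcal X^{\ast}_{\Gamma_0(p^n)}(\epsilon)_a\to\mathcal X^{\ast}_{\Gamma_0(p^{n-1})}(\epsilon)_a$ induces a bijection on cusps; in the limit $n\to\infty$, the cusps of $\mathcal X^{\ast}_{\Gamma_0(p^\infty)}(\epsilon)_a$ are therefore in canonical bijection with those of $\mathcal X^{\ast}$, and moreover the cusp $x$ is defined over $L_x$ in each layer of the tower, so the field of definition is unchanged. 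On the characteristic-$p$ side, the cusps of $\mathcal X'^{\ast}(\epsilon)$ agree with those of $\mathcal X'^{\ast}$ for the same ordinarity reason, and by the Cartesian diagram in Lemma~\ref{Lemma: Frobenius gives Cartesian diagram on Tate parameter spaces of Igusa curves}.(1) applied with $n=0$, the relative Frobenius is a bijection on cusps; taking the tilde-limit $\mathcal X'^{\ast}(\epsilon)^{\perf}\sim\varprojlim_{F_{\rel}}\mathcal X'^{\ast}(p^{-n}\epsilon)$ shows that cusps of $\mathcal X'^{\ast}(\epsilon)^{\perf}$ match those of $\mathcal X'^{\ast}$, with fields of definition unchanged.

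Now we use the tilting isomorphism $\mathcal X^{\ast}_{\Gamma_0(p^\infty)}(\epsilon)_a^{\flat}\isomarrow \mathcal X'^{\ast}(\epsilon)^{\perf}$. By construction (cf.\ the outline following Thm.~\ref{theorem torsion paper: tilting the perfectoid modular curve}), this isomorphism is built from the identification $\mathfrak X^{\ast}(\epsilon)/p\cong \mathfrak X'^{\ast}(\epsilon)/\varpi^{\flat}$ from \eqref{equation: identification of mXae mod p and mXpae mod t}, which identifies the divisors of cusps of the two integral models as reduced closed formal subschemes. Passing to the generic fibre and to infinite level produces a bijection on cusps compatible with the two identifications above; this already proves the first assertion.

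For the identification of fields of definition, observe that $L_x$ is a cyclotomic extension $K[\zeta_d]$ for some $d\mid N$, hence $d$ is prime to $p$ and $L_x/K$ is unramified. Consequently the tilting equivalence for perfectoid fields gives $L_x^{\flat}=K^{\flat}[\zeta_d]$, where we use the canonical identification of $d$-th roots of unity in the two characteristics (both are the unique solutions of $X^d=1$). On the other hand, the component $x$ of $\partial X^{\ast}_{\mathcal O_K}$ is defined over $\mathcal O_{L_x}$ by the classification of Prop.~\ref{p:completion-at-cusp}, and this data survives reduction modulo $p$; under the identification of reductions used to construct the tilting isomorphism, the reduced component of the boundary corresponding to $x$ goes to the reduced component corresponding to $x^{\flat}$, whose field of definition is by the same classification $L_{x^{\flat}}=K^{\flat}[\zeta_d]$. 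The main subtlety to verify here is only the bookkeeping that the combinatorial $\Gamma^p$-level structure on $\Tate(q^{e_x})$ that cuts out $L_x$ depends only on the tame data modulo $p$, which is immediate since $N$ is prime to $p$; with this in hand both fields equal $K^{\flat}[\zeta_d]$, proving $L_x^{\flat}=L_{x^{\flat}}$.
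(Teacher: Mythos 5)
Your argument is correct, but it takes a noticeably heavier route than the paper's. The paper proves this lemma entirely at tame level and without invoking the infinite-level tilting theorem: a cusp $x$ is a closed point $\Spa(L_x)\hookrightarrow\mathcal X^{\ast}$ with canonical formal model $\Spf(\O_{L_x})\to\mathfrak X^{\ast}$; one reduces this mod $p$, reinterprets it via $\mathfrak X^{\ast}/p\cong\mathfrak X'^{\ast}/\varpi^{\flat}$ as a cusp of $X^{\ast}_{\O_{K^\flat}/\varpi^\flat}$, lifts it back to $\mathfrak X'^{\ast}$ (the divisor of cusps being finite \'etale over the base), and takes the generic fibre to get $x^\flat$; the identification with the tilt of $x$ and the equality $(L_x)^\flat=L_{x^\flat}$ then follow from the equivalence of \'etale sites. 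You instead pass up both towers — matching cusps of $\mathcal X^{\ast}$ with those of $\mathcal X^{\ast}_{\Gamma_0(p^\infty)}(\epsilon)_a$ via Prop.~\ref{Proposition: Tate parameter spaces in the Gamma_0-tower}.(2), and cusps of $\mathcal X'^{\ast}$ with those of $\mathcal X'^{\ast}(\epsilon)^{\perf}$ via Lemma~\ref{Lemma: Frobenius gives Cartesian diagram on Tate parameter spaces of Igusa curves} — and then apply Thm.~\ref{theorem torsion paper: tilting the perfectoid modular curve}. Since that theorem is itself built from the same identification \eqref{equation: identification of mXae mod p and mXpae mod t} of reductions, the underlying mechanism is identical and there is no circularity; your route just routes the elementary content through the big perfectoid theorem and requires the extra (but available) bookkeeping that cusps are preserved up and down both towers. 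Your treatment of the fields of definition via $L_x=K[\zeta_d]$ with $d\mid N$ prime to $p$ and the tilting correspondence for finite \'etale extensions is a correct and slightly more explicit version of the paper's appeal to the equivalence of \'etale sites.
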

\begin{proof}
	The cusp $x$ can be described as a closed immersion $\Spa(L_x)\hookrightarrow \mathcal X'^{\ast}$. It has a canonical formal model $\Spf(\O_{L_x})\to \mathfrak X'^{\ast}$ that reduces mod $p$ to a morphism $\Spec(\O_{L_x}/p)\to X^{\ast}_{\O_K/p}$
	which in turn can be interpreted as cusp of $X^{\ast}_{\O_{K^{\flat}/\varpi^{\flat}}}$. Lifting to $\mathfrak X'^{\ast}$ and taking generic fibre gives a cusp $x^{\flat}$ defined by a closed point
	\[\Spa((L_x)^{\flat})\hookrightarrow \X'^{\ast}.\]
	It is clear from this construction that this can be identified with the tilt of $x$ via the equivalence of \'etale sites.
	Reversing this argument shows that this defines a bijection on cusps.
\end{proof}

\begin{Proposition}\label{p: description of the tilting isomorphism at level Gamma0 on Tate parameter spaces}
	Let $x$ be any cusp of $\mathcal X^{\ast}$. Then the canonical isomorphism of $K^{\flat}$-algebras
		\[L_x\llbracket q^{1/p^\infty}\rrbracket^{\flat} =L_{x^{\flat}}\llbracket q^{1/p^\infty}\rrbracket\]
		defines isomorphisms $\overline{\D}_{\infty,x}^\flat\cong\overline{\D}'_{\infty,x^{\flat}} $ and  $\D_{\infty,x}^\flat\cong\D'_{\infty,x^{\flat}}$ that fit into a commutative diagram
		\[
		\begin{tikzcd}[row sep = 0.15cm]
		\D_{\infty,x}^{\flat} \arrow[d,equal,"\sim"labelrotate] \arrow[r, hook] & \overline{\D}_{\infty,x}^{\flat} \arrow[d,equal,"\sim"labelrotate] \arrow[r, hook] & \XaGea{0}{\infty}^{\flat} \arrow[d,equal,"\sim"labelrotate] \\
		\D'_{\infty,x^{\flat}} \arrow[r, hook] & \overline{\D}'_{\infty,x^{\flat}}\arrow[r, hook] & \Xpaep.
		\end{tikzcd}\]
\end{Proposition}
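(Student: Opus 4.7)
The plan is to first construct the left-hand vertical isomorphisms of perfectoid spaces, and then to verify commutativity of the diagram by reducing modulo $p$ and matching with the explicit construction of Scholze's tilting isomorphism recalled above.

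For the first step, I would check directly that $\O_{L_x}\bb{q^{1/p^\infty}}_p$ equipped with its $p$-adic topology is a perfectoid $\O_{L_x}$-algebra: its reduction modulo $p$ is the $q$-adic completion of $\varinjlim_n (\O_{L_x}/p)[q^{1/p^n}]$, on which Frobenius is surjective (combine perfectness of $\O_{L_x}/p$ with $q^{1/p^n}=(q^{1/p^{n+1}})^p$), and $p$ admits a compatible system of $p$-power roots in $\O_{L_x}$. The tilt is computed as the inverse limit along Frobenius, and together with the identification $\O_{L_x}^\flat = \O_{L_{x^\flat}}$ from Lemma~\ref{l:tilting cusps} it yields the desired canonical isomorphism $(\O_{L_x}\bb{q^{1/p^\infty}}_p)^\flat = \O_{L_{x^\flat}}\bb{q^{1/p^\infty}}$. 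Passing to adic generic fibres gives $\overline{\D}_{\infty,x}^\flat \cong \overline{\D}'_{\infty,x^\flat}$, and restricting to the rational subspace cut out by $|q|<1$ (which is preserved under tilting, as $q\in \O^+$ maps to $q\in \O^{\flat+}$ under the tilt) gives $\D_{\infty,x}^\flat \cong \D'_{\infty,x^\flat}$.

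For the commutativity, it suffices to show that the composite $\overline{\D}_{\infty,x}^\flat \to \XaGea{0}{\infty}^\flat \isomarrow \Xpaep$ agrees with the map $\overline{\D}_{\infty,x}^\flat = \overline{\D}'_{\infty,x^\flat} \to \Xpaep$ of Lemma~\ref{l: Cartesian diagram Igusa versus perfection}. Recall that Scholze's isomorphism arises from the compatible identifications $\mathfrak X^{\ast}(p^{-n}\epsilon)/p^{1-\delta} \cong \mathfrak X'^{\ast}(p^{-n}\epsilon)/\varpi^{\flat(1-\delta)}$ extending \eqref{equation: identification of mXae mod p and mXpae mod t}, combined with the identification of the Atkin--Lehner transition map $\phi$ (given by $E \mapsto E/C_1$) with relative Frobenius $F_{\rel}$. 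On Tate parameter spaces over $x$, both transition maps are described by $q \mapsto q^p$: this is $\Tate(q)/\mu_p = \Tate(q^p)$ in characteristic $0$, and the analogous computation for $F_{\rel}$ in characteristic $p$, shown on the outer square of Lemma~\ref{Lemma: Frobenius gives Cartesian diagram on Tate parameter spaces of Igusa curves}. Taking the limit in $n$ of both compatible systems thus produces exactly the canonical tilt isomorphism $\overline{\D}_{\infty,x}^\flat = \overline{\D}'_{\infty,x^\flat}$ from the first step, and this gives commutativity.

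The main technical obstacle is to verify that the base identification \eqref{equation: identification of mXae mod p and mXpae mod t} restricts, at the cusp $x$, to the obvious map $\Spf(\O_{L_x}/p\bb{q}) \cong \Spf(\O_{L_{x^\flat}}/\varpi^\flat\bb{q})$ induced by $\O_{L_x}/p = \O_{L_{x^\flat}}/\varpi^\flat$ sending the Tate parameter to itself. This should follow from naturality of the completion along $x$: both sides of \eqref{equation: identification of mXae mod p and mXpae mod t} are defined by normalisation along the same moduli problem interpreted in characteristic $p$, and the formal completions at the corresponding cusps are each tautologically given by $\Spf(\O_{L_x}/p\bb{q})$ with its universal Tate curve. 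Hence the identification of completions must be the evident one. Unpacking this carefully near the boundary, as well as the compatibility with the $\phi = F_\rel$ matching on each finite level, is the only point requiring real attention; the rest is then formal.
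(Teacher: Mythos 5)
Your proposal is correct and follows essentially the same route as the paper: both arguments come down to identifying the formal models $\Spf(\O_{L_x}\llbracket q\rrbracket)$ of the Tate parameter spaces over the common reduction $\mathfrak X^{\ast}(0)/p=\mathfrak X'^{\ast}(0)/\varpi^{\flat}$, matching the Atkin--Lehner transition $\phi$ with $F_{\rel}$ (both $q\mapsto q^p$ on parameter spaces), and passing to the limit via the tilting correspondence for perfectoid formal models. The only difference is organisational — you construct the vertical isomorphisms first by computing the tilt of $\O_{L_x}\llbracket q^{1/p^\infty}\rrbracket_p$ directly and then check commutativity, whereas the paper obtains both simultaneously from the inverse limit of formal models mod $p$ — and the technical point you flag (that \eqref{equation: identification of mXae mod p and mXpae mod t} restricts at the cusp to the evident identification of $\Spf((\O_{L_x}/p)\llbracket q\rrbracket)$) is exactly the point the paper also relies on, justified the same way via compatibility of the Katz--Mazur cuspidal completions with base change.
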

\begin{proof}
	For the proof we use that $\overline{\D}_\infty$ has a very simple $p$-adic formal model. Here and in the following, let us for simplicity drop the additional $x$ and $x^{\flat}$ in the index.
	
	We can without loss of generality assume $\epsilon=0$. Using the identifications
	\[\D_\infty^{\flat}=\cup_n \overline{\D}_\infty(|q|^n\leq |\varpi|)^{\flat}= \cup_n \overline{\D}'_\infty(|q|^n\leq |\varpi^{\flat}|)=\D'_\infty,\] 
	it is clear that the left square commutes. It therefore suffices to consider the right square.
	
	Recall that the morphism $\overline{\D}\to \Xa(0)$ arises as the adic generic fibre of the morphism $\overline{\mathfrak D}\to \mX^{\ast}(0)$ where $\overline{\mathfrak D}:=\Spf(\O_L\llbracket q\rrbracket)$ is endowed with the $p$-adic topology. Similarly, $\overline{\D}'\to \Xpa(0)$ is the adic generic fibre of $\overline{\mathfrak D}'\to \mXpa(0)$ where $\overline{\mathfrak D}':=\Spf(\O_{L^{\flat}}\llbracket q\rrbracket)$. The reductions mod $\varpi$ and $\varpi^{\flat}$ of these formal models can be canonically identified with the map \[\Spec((\O_L/p)\llbracket q\rrbracket )\to \mX^{\ast}(0)/p=\mXpa(0)/\varpi^{\flat}\]
	associated to the Tate curve for the corresponding cusp of $X^{\ast}_{\O_K/p}$.
	
	In the limit over $\varphi$ and $F_{\rel}$, these identifications therefore fit into a commutative diagram
	\[\begin{tikzcd}[row sep =0.15cm]
	\varprojlim_{q\mapsto q^p} \overline{\mathfrak D}/p\arrow[r]\arrow[d,equal,"\sim"labelrotate]&\varprojlim_{\phi}\mXa(0)/p\arrow[d,equal,"\sim"labelrotate]\\
	\varprojlim_{q\mapsto q^p} \overline{\mathfrak D}'/\varpi^{\flat}\arrow[r]& \mXpa(0)^{\perf}/\varpi^{\flat}.
	\end{tikzcd}
	\]
	As these are perfectoid schemes over $\O_K^{a}/p$ (or $\O_{K^{\flat}}^{a}/\varpi^{\flat}$) with corresponding perfectoid spaces $\overline{\D}_\infty\to \mathcal X^{\ast}_{\Gamma_0(p^\infty)}(0)_a$ and $\overline{\D}'_\infty\to \mathcal X'^{\ast}(0)^{\perf}$ via \cite[Thm~5.2]{perfectoid-spaces}, this gives the desired identification of the tilts.
\end{proof}
\begin{Remark}
	The correspondence of moduli of Tate curves implicit in Prop.~\ref{p: description of the tilting isomorphism at level Gamma0 on Tate parameter spaces} can be made explicit as follows: Let $(R,R^+)$ be a perfectoid $K$-algebra, then Thm~\ref{theorem torsion paper: tilting the perfectoid modular curve} gives a correspondence \[\X_{\Gamma_0(p^\infty)}(\epsilon)_a(R,R^{+})=\Xpep(R^{\flat},R^{\flat+})\]
	of elliptic curves with extra data. If now $E_q$ is a Tate curve with parameter $q\in R$, equipped with $\Gamma_0(p^\infty)$-structure $(q^{1/p^n})_{n\in\N}$ and $\Gamma^p$-structure, corresponding to a point in $\D_\infty(R,R^+)$, then via $\D_\infty(R,R^+)=\D'_\infty(R^{\flat},R^{\flat+})$, this corresponds to the Tate curve $E_{q'}$ with parameter 
	\[q':=(q^{1/p^n})_{n\in\N}\in \varprojlim_{q\mapsto q^p}R^{\times}=R^{\flat\times}.\]
	
	One can moreover identify the $\Gamma^p$-structure of $E_{q'}$ using that $E_{q}[N]^{\flat}=E_{q'}[N]$.
\end{Remark}
\subsection{The tilting isomorphism at level $\Gamma_1(p^\infty)$}
We now extend the tilting isomorphism of Thm.~\ref{theorem torsion paper: tilting the perfectoid modular curve} to level $\Gamma_1(p^\infty)$ by proving the following theorem stated in the introduction:
\begin{Theorem}\label{t: tilting the Gamma_1(p^infty)-tower-2}
	\begin{enumerate}
		\item There is a canonical isomorphism 
		\[\mathcal X^{\ast}_{\Gamma_1(p^\infty)}(\epsilon)_a^{\flat} \isomarrow \mathcal X'^{\ast}_{\Ig(p^\infty)}(\epsilon)^{\perf}\]
		which is $\Z_p^\times$-equivariant and makes the following diagram commute:
		\begin{equation*}
		\begin{tikzcd}[row sep = 0.15cm]
		\mathcal X^{\ast}_{\Gamma_1(p^\infty)}(\epsilon)_a^{\flat} \arrow[d,"\sim"labelrotate,equal] \arrow[r] & \mathcal X^{\ast}_{\Gamma_0(p^\infty)}(\epsilon)_a^{\flat}\arrow[d,"\sim"labelrotate,equal] \\ \mathcal X'^{\ast}_{\Ig(p^\infty)}(\epsilon)^{\perf}  \arrow[r] & \mathcal X'^{\ast}(\epsilon)^{\perf}.
		\end{tikzcd}
		\end{equation*}
		\item The cusps of $\XaGea{1}{\infty}$ and $\XpaIie$ correspond via the isomorphism in (1). Moreover, for any cusp $x$ of $\X^{\ast}$, the following diagram commutes:
		\begin{equation*}
		\begin{tikzcd}[row sep = 0.15cm]
		\underline{\Z_p^{\times}}\times \D_{\infty,x}^{\flat} \arrow[d,"\sim"labelrotate,equal] \arrow[r,hook] & 
		\XaGoiea^{\flat} \arrow[d,"\sim"labelrotate,equal]\\
		\underline{\Z_p^{\times}}\times \D'_{\infty,x^{\flat}}\arrow[r,hook]& \mathcal X'^{\ast}_{\Ig(p^\infty)}(\epsilon)^{\perf},
		\end{tikzcd}
		\end{equation*}
		where the left map is given by the canonical identification $\D_{\infty,x}^{\flat}\cong  \D'_{\infty,x^{\flat}}$. 
	\end{enumerate}
\end{Theorem}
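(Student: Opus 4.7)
The plan is to reduce the theorem to Thm.~\ref{theorem torsion paper: tilting the perfectoid modular curve} by identifying the $\Z_p^\times$-torsor that controls the step from $\Gamma_0(p^\infty)$ to $\Gamma_1(p^\infty)$ in characteristic $0$ with the torsor controlling the step from the base to the perfected Igusa cover in characteristic $p$. Indeed, both of the maps
\[
\X^{\ast}_{\Gamma_1(p^\infty)}(\epsilon)_a \to \X^{\ast}_{\Gamma_0(p^\infty)}(\epsilon)_a \quad \text{and} \quad \X'^{\ast}_{\Ig(p^\infty)}(\epsilon)^{\perf} \to \X'^{\ast}(\epsilon)^{\perf}
\]
are affinoid perfectoid pro-\'etale $\Z_p^\times$-torsors arising as tilde-limits of finite \'etale $(\Z/p^n\Z)^\times$-torsors (the first by Prop.~\ref{Proposition: Tate parameter spaces at level Gamma_1(p^infty)}; the second by Lemma~\ref{l:perfectoid Igusa tower is proetale} combined with Prop.~\ref{p:Tate parameter spaces in perfectoid Igusa tower}), and these finite-level torsors are \'etale even over the cusps by Lemma~\ref{l:Tate parameter spaces of X^{ast}_{Gamma_1(p^n)}} and Lemma~\ref{l:Cartesian diagrams for Tate parameter spaces in the Igusa tower}. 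Since tilting is an equivalence of finite \'etale sites for perfectoid spaces, it therefore suffices to produce, for each $n$, a $(\Z/p^n\Z)^\times$-equivariant isomorphism
\[
\X^{\ast}_{\Gamma_1(p^n)\cap\Gamma_0(p^\infty)}(\epsilon)_a^{\flat} \;\isomarrow\; \X'^{\ast}_{\Ig(p^n)}(\epsilon)^{\perf}
\]
over the $\Gamma_0$-level identification of Thm.~\ref{theorem torsion paper: tilting the perfectoid modular curve}, and then take the inverse limit.

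To construct the finite-level isomorphism, I would use moduli. By Lemma~\ref{l:moduli interpretation of modular curve of level Gamma(p^n) cap Gamma_0(p^infty)}, an $(R,R^+)$-point of $\X^{\ast}_{\Gamma_1(p^n)\cap\Gamma_0(p^\infty)}(\epsilon)_a$ away from the cusps consists of a tuple $(E,\alpha_N,G,\phi)$ with $G$ an anticanonical (hence \'etale) $p$-divisible subgroup of $E[p^\infty]$ of height $1$ and $\phi\colon\underline{\Z/p^n\Z}\isomarrow G_n$ a trivialization. The construction underlying Thm.~\ref{theorem torsion paper: tilting the perfectoid modular curve} — which identifies Atkin--Lehner $\phi$ with $F_\rel$ mod $p^{1-\delta}$ via \eqref{equation: identification of mXae mod p and mXpae mod t} — is set up so that the \'etale $p$-divisible group $G$ on the char-$0$ side corresponds precisely to $\ker V^\infty \subseteq E^{\flat}[p^\infty]$ on the char-$p$ side. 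Hence trivializations of $G_n$ transport to isomorphisms $\underline{\Z/p^n\Z}\isomarrow \ker V^n$, i.e.~to $\Ig(p^n)$-structures in the sense of Lemma~\ref{Lemma moduli interpretation of Ig(p^n)^perf}, in a $\Z_p^\times$-equivariant way. Extension over the cusps is then handled by comparing the Tate parameter spaces: both sides restrict to $\underline{(\Z/p^n\Z)^\times}\times \D_{n,x}$ and $\underline{(\Z/p^n\Z)^\times}\times \D'_{n,x^\flat}$ respectively, which coincide after tilting by Prop.~\ref{p: description of the tilting isomorphism at level Gamma0 on Tate parameter spaces}.

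Part (2) is then a direct comparison of the canonical sections. Under the identification $\D_{\infty,x}^{\flat}\cong \D'_{\infty,x^\flat}$ of Prop.~\ref{p: description of the tilting isomorphism at level Gamma0 on Tate parameter spaces}, the section $\D_{n,x}\hookrightarrow \X^{\ast}_{\Gamma_1(p^n)}(\epsilon)_a$ of Lemma~\ref{l:Tate parameter spaces of X^{ast}_{Gamma_1(p^n)}} is defined by the natural generator $q^{e_x/p^n}$ of the anticanonical subgroup $\langle q^{e_x/p^n}\rangle \subseteq \Tate(q^{e_x})[p^n]$, while the section $\D'_{n,x^\flat}\hookrightarrow \X'^{\ast}_{\Ig(p^n)}(\epsilon)^\perf$ of Lemma~\ref{l:Cartesian diagrams for Tate parameter spaces in the Igusa tower} uses the canonical Igusa structure $\mu_{p^n}\isomarrow C_n(\Tate(q^{e_x}))$ induced by the fixed $\zeta_{p^\infty}$. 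Both are the ``obvious'' generators of the respective \'etale/canonical parts of the Tate curve, and match via the tilting of Tate curves together with the fixed $\zeta_{p^\infty}^\flat=\zeta_{p^\infty}$. Taking $\Z_p^\times$-translates and the limit $n\to\infty$ gives the commutative square in part~(2).

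The main difficulty is the assertion in the second paragraph that Scholze's mod-$p^{1-\delta}$ construction does indeed identify the anticanonical \'etale $p$-divisible group $G$ with $\ker V^\infty$ in a $\Z_p^\times$-equivariant fashion (rather than, e.g., via an inversion twist). While very natural — both classify the \'etale part of the ordinary $p$-divisible group, and the Atkin--Lehner/Frobenius matching is tailor-made for this correspondence — making this rigorous requires carefully revisiting the integral-model identification with an additional $\Gamma_1$-datum, tracking the role of the Weil pairing/Cartier duality, and checking the compatibility of the chosen $\zeta_{p^\infty}$ on the two sides. All remaining steps are general nonsense (the tilting equivalence of finite \'etale sites, passage to tilde-limits) or direct inspection at the cusps.
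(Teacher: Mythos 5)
Your overall strategy coincides with the paper's: reduce to the finite-level comparison $\mathcal X^{\ast}_{\Gamma_1(p^n)\cap\Gamma_0(p^\infty)}(\epsilon)_a^{\flat}\cong \mathcal X'^{\ast}_{\Ig(p^n)}(\epsilon)^{\perf}$ over the $\Gamma_0$-level tilting isomorphism, construct it away from the cusps via the moduli interpretations (Lemma~\ref{l:moduli interpretation of modular curve of level Gamma(p^n) cap Gamma_0(p^infty)} and Lemma~\ref{Lemma moduli interpretation of Ig(p^n)^perf}), extend over the cusps by matching the canonical sections of the Tate parameter spaces, and pass to the limit. This is exactly how the paper proceeds.

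However, the step you yourself flag as ``the main difficulty'' --- that the tilting isomorphism identifies the universal anticanonical subgroup $G_n$ with the kernel of Verschiebung $G_n'$, compatibly with trivialisations --- is precisely the mathematical content of the proof, and you do not supply it; as written this is a genuine gap. The paper fills it with Lemma~\ref{Lemma G_n identifies with ker V^n upon tilting}, and the argument there is more direct than the route you sketch (revisiting Scholze's mod-$p^{1-\delta}$ comparison of $\phi$ with $F_{\rel}$ carrying an extra $\Gamma_1$-datum). Namely: the good-reduction case is \cite[Lemma~III.2.26]{torsion}, so one only needs the ordinary locus. There, the Atkin--Lehner isomorphism identifies $G_n$ over $\X_{\Gamma_0(p^n)}(0)_a$ with the generic fibre of the Cartier dual $C_n^{\vee}$ of the canonical subgroup over $\mathfrak X^{\ast}(0)$, and likewise $G_n'=\ker V^n$ is the generic fibre of $C_n'^{\vee}$. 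Since mod $p=\varpi^{\flat}$ both $C_n$ and $C_n'$ are the kernel of Frobenius on the identified special fibres $\mathfrak X^{\ast}(0)/p=\mathfrak X'^{\ast}(0)/\varpi^{\flat}$, and Cartier duality commutes with base change, the formal models of $G_n$ and $G_n'$ agree after reduction, which gives the identification of the tilts. Your worry about a possible ``inversion twist'' is then resolved exactly where you look for it: over the Tate parameter spaces the section of $G_n$ given by $q^{1/p^n}\in\langle q^{1/p^n}\rangle$ and the canonical Igusa structure $\mu_{p^n}\isomarrow C_n'$ are dual trivialisations of the respective canonical subgroups, and the construction above shows these correspond under tilting, which is what makes the diagram in part~(2) commute. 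So your outline is the right one, but a complete proof requires writing out this duality argument rather than asserting that Scholze's construction ``is set up so that'' $G$ corresponds to $\ker V^{\infty}$.
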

For the proof, we use the univeral anticanonical subgroup at infinite level:
\begin{Definition}
	For any $n\in \Z_{\geq 1}$,
	we denote by $G_n\to \X_{\Gamma_0(p^\infty)}(\epsilon)_a$ the universal anticanonical subgroup of rank $n$. This can be defined via pullback from finite level $\X_{\Gamma_0(p^n)}(\epsilon)_a$, and is a finite \'etale morphism of perfectoid spaces.
	
	Let $\E'\to \X'$  be the analytification of the universal elliptic curve over $X'$, and write $\E'(\epsilon)\to \X'(\epsilon)$ for the pullback.  We denote by $G'_n\to \Xpep $ the finite \'etale morphism of perfectoid spaces given by the perfection of $\ker V^n\subseteq \E'(\epsilon)^{(p^n)}$.
\end{Definition}

\begin{Lemma}\label{Lemma G_n identifies with ker V^n upon tilting}
	There is a natural isomorphism making the following diagram commutative:
	\begin{equation*}
	\begin{tikzcd}[column sep=0.15cm]
	G_n^{\flat} \arrow[r,"\sim"] \arrow[d] & G'_n \arrow[d]\\
	\X_{\Gamma_0(p^\infty)}(\epsilon)_a^{\flat}\arrow[r,"\sim",equal]& \X'(\epsilon)^{\perf},
	\end{tikzcd}
	\end{equation*}
\end{Lemma}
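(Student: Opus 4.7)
The plan is to identify both $G_n^{\flat}$ and $G'_n$ with the \'etale part of the $p^n$-torsion of the universal elliptic curve in characteristic $p$ (linked by the $n$-th relative Frobenius), and then to lift uniquely via the tilting equivalence. Both $G_n^{\flat}$ and $G'_n$ are finite \'etale of degree $p^n$ over the common perfectoid base $\XGea{0}{\infty}^{\flat} = \Xpep$, so by the tilting equivalence of finite \'etale sites it suffices to produce a natural mod-$\varpi^{\flat}$ isomorphism; functoriality of the lift will automatically give compatibility with the structure maps to the base.

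By Lemma~\ref{l: moduli interpretation of modular curve at infinite level}, $G_n$ is the $p^n$-torsion of the \'etale anticanonical $p$-divisible subgroup $D \subseteq E[p^\infty]$; hence $G_n$ has an \'etale formal model, and its reduction $\bar G_n \subseteq \bar E[p^n]$ is the \'etale part under the connected-\'etale decomposition of $\bar E[p^n]$ over the perfect mod-$p$ base, i.e.\ the complement of the infinitesimal canonical subgroup $\ker(F^n\colon \bar E \to \bar E^{(p^n)})$. Under the identification~\eqref{equation: identification of mXae mod p and mXpae mod t} from the proof of Thm.~\ref{theorem torsion paper: tilting the perfectoid modular curve}, this reduction $\bar E$ is canonically identified with the mod-$\varpi^{\flat}$ reduction of $E'$. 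On the characteristic-$p$ side, combining $V^n \circ F^n = [p^n]_{E'}$ with a rank count shows that the $n$-th relative Frobenius descends, over the ordinary locus, to a canonical isomorphism
\[ F^n\colon E'[p^n]/\ker F^n \xrightarrow{\ \sim\ } \ker V^n \]
of finite \'etale group schemes. Composing, we obtain the desired mod-$\varpi^{\flat}$ isomorphism $\bar G_n \xrightarrow{\sim} \ker V^n = G'_n \bmod \varpi^{\flat}$, which lifts uniquely to the sought-after isomorphism $G_n^{\flat} \xrightarrow{\sim} G'_n$.

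The main subtlety is the verification that Scholze's formal-model identification~\eqref{equation: identification of mXae mod p and mXpae mod t} indeed respects the universal elliptic curves and the connected-\'etale decomposition of their $p^n$-torsion. This is, however, implicit in the moduli-theoretic nature of the identification: it expresses an equality of two moduli spaces of elliptic curves in characteristic $p$, under which universal elliptic curves and their subgroup schemes match tautologically, and it is compatible with the Atkin-Lehner quotient $E \mapsto E/G_n$ because the latter matches $\phi$ on char-$0$ formal models with $F_{\rel}$ on char-$p$ ones in Scholze's argument.
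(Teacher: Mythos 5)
Your overall strategy is the same as the paper's: produce a natural identification of integral models modulo $p=\varpi^{\flat}$ and lift it uniquely using the tilting/almost-purity equivalence of finite \'etale covers. Where you differ is in how the mod-$p$ identification is produced: you use the connected--\'etale splitting of $\bar E[p^n]$ over the perfect base together with the isomorphism $F^n\colon E'[p^n]/\ker F^n\isomarrow \ker V^n$, whereas the paper identifies $G_n$ with $(C_n^{\vee})^{\ad}_{\eta}$ via the Atkin--Lehner isomorphism and then uses that $C_n/p=C_n'/\varpi^{\flat}=\ker F^n$ together with the fact that Cartier duality commutes with base change. These two identifications agree (both ultimately rest on the Weil pairing), and avoiding explicit Cartier duals is a legitimate variant. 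Note, though, that your assertion that $G_n$ ``has an \'etale formal model'' whose reduction is the \'etale part of $\bar E[p^n]$ is exactly the nontrivial input that the paper supplies through the Atkin--Lehner description; as written you assert it rather than prove it.

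The genuine gap is that your argument only covers the locus of ordinary reduction, while the lemma is stated over $\X_{\Gamma_0(p^\infty)}(\epsilon)_a$ with $\epsilon>0$ allowed. For $\epsilon>0$ the formal model $\mathfrak X(\epsilon)$ contains points whose reduction is supersingular; there the connected--\'etale decomposition of $\bar E[p^n]$ has no \'etale part of order $p^n$, $\ker V^n$ is not \'etale, and the reduction of the canonical subgroup agrees with $\ker F^n$ only modulo $p^{1-\delta}$ for $\delta$ depending on $\epsilon$, not modulo $p$. Every step of your construction (the splitting, the complement of $\ker F^n$, the isomorphism onto $\ker V^n$, which you yourself qualify with ``over the ordinary locus'') breaks down on this part of the base. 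The paper deals with precisely this by covering $\X(\epsilon)_a$ by the good reduction locus --- where it quotes \cite[Lemma~III.2.26]{torsion} --- and the ordinary locus, which suffices because all points outside the good reduction locus are Tate curves and hence ordinary; the duality argument is then run only over $\X_{\Gamma_0(p^\infty)}(0)_a$. To repair your proof you would either need to import that case separately, or argue that an isomorphism of the two finite \'etale covers constructed over the (dense, open) ordinary locus extends over each connected component of $\X_{\Gamma_0(p^\infty)}(\epsilon)_a^{\flat}$ --- which is not automatic and would itself require an argument.
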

This lemma is a slight extension of \cite[Lemma III.2.26]{torsion},  from the good reduction locus to the whole uncompactified modular curve (we reiterate that \cite{torsion} writes $\X$ for the good reduction locus, whereas we use this symbol to denote the whole open modular curve).
\begin{proof}
	It suffices to see this locally on $\X_{\Gamma_0(p^\infty)}(\epsilon)_a$. The case of good reduction is \cite[Lemma~III.2.26]{torsion}. It therefore suffices to prove the lemma over the ordinary locus $\X_{\Gamma_0(p^\infty)}(0)_a$.
	
	Over $\mathfrak X^{\ast}(0)$, the universal semi-abelian scheme has a canonical subgroup, a finite flat group scheme $C_n\to \mathfrak X^{\ast}(0)$. Via the Atkin--Lehner isomorphism $\mathcal X^{\ast}(0)\isomarrow\X^{\ast}_{\Gamma_0(p^n)}(0)_a$, the generic fibre of its dual $(C_n^\vee)^{\ad}_{\eta}$ can be identified over $\X_{\Gamma_0(p^n)}(0)_a$ with the universal anticanonical subgroup over $\X_{\Gamma_0(p^n)}(0)_a$.
	
	Similarly, over  $\mathfrak X'^{\ast}(0)$, we have a canonical subgroup $C'_n\to\mathfrak X^{\ast}(0)$, and the dual $(C_n'^\vee)^{\ad}_{\eta}$ restricted  to $\mathcal X'(0)$ can be identified with the kernel of Verschiebung of $\E'(0)$ over $\Xp(0)$.
		
	It follows from these descriptions that after pullback we have identifications 
	\begin{alignat*}{2}
	G_n&=&&\big(C_n^{\vee}\times _{\mX^{\ast}(0)} \varprojlim_{\phi}\mX^{\ast}(0)\big)^{\ad}_{\eta} \text{ restricted to }\X_{\Gamma_0(p^\infty)}(0)_a,\\
	G_n'&=&&\big(C'^{\vee}_n\times _{\mXp^{\ast}(0)} \mXp^{\ast}(0)^{\perf}\big)^{\ad}_\eta \text{ restricted to }\mathcal X'(0)^{\perf}.
	\end{alignat*}
	To prove the lemma, it therefore suffices to prove that the formal models on the right hand side can be identified after reduction to $\O_K/p=\O_{K^{\flat}}/\varpi^{\flat}$, for which it suffices to prove that $C_n^{\vee}/p=C'^{\vee}_n/\varpi^{\flat}$ on $\mathfrak X^{\ast}(0)/p=\mathfrak X'^{\ast}(0)/\varpi^{\flat}$. But over the ordinary locus, $C_n/p=C'_n/\varpi^{\flat}$ are both the kernel of Frobenius, and Cartier duals commute with base change.
\end{proof}
We can now complete the proof of Thm.~\ref{t: tilting the Gamma_1(p^infty)-tower-2} stated in the introduction:
\begin{proof}[Proof of Thm.~\ref{t: tilting the Gamma_1(p^infty)-tower-2}]
We start by proving that for any $n\in\Z_{\geq 1}$, there is a natural isomorphism 
\begin{equation}\label{dg:proof-of-thm-1.11-case-of-n<infty}
	\begin{tikzcd}
	\mathcal X^{\ast}_{\Gamma_1(p^n)\cap \Gamma_0(p^\infty)}(\epsilon)_a^{\flat} \arrow[r,"\sim"] \arrow[d] & \mathcal X'^{\ast}_{\Ig(p^n)}(\epsilon)^{\perf} \arrow[d] \\ \mathcal X^{\ast}_{\Gamma_0(p^\infty)}(\epsilon)_a^{\flat} \arrow[r,equal] & \mathcal X'^{\ast}(\epsilon)^{\perf}
	\end{tikzcd}
\end{equation}
making the diagram commute. Part (1) of the theorem then follows in the limit $n\to \infty$. 

Away from the cusps, the desired isomorphism is induced by
the natural isomorphism from Lemma~\ref{Lemma G_n identifies with ker V^n upon tilting}, using the moduli interpretations in Lemma~\ref{l:moduli interpretation of modular curve of level Gamma(p^n) cap Gamma_0(p^infty)} and Lemma~\ref{Lemma moduli interpretation of Ig(p^n)^perf}.

We need to extend this over the cusps. One way of doing this is to give the vertical maps in the diagram a relative moduli interpretation that extends to the cusps. More in the spirit of our arguments so far, we shall instead give a more explicit proof using Tate parameter spaces, which also proves part (2).

To this end, fix a cusp $x$ of $\Xa$. By Prop.~\ref{p: description of the tilting isomorphism at level Gamma0 on Tate parameter spaces}, the isomorphism $\mathcal X^{\ast}_{\Gamma_0(p^\infty)}(\epsilon)_a^{\flat} \to \mathcal X'^{\ast}(\epsilon)^{\perf}$ restricts to the canonical isomorphism $\D_{\infty}^{\flat}=\D'_{\infty}$ over $x$.
Using the description of the Tate curve parameter spaces in $\mathcal X^{\ast}_{\Gamma_1(p^m)\cap \Gamma_0(p^\infty)}(\epsilon)_a\to \XaGea{0}{\infty}$  from Lemma~\ref{l: Tate parameter spaces of X^ast_Gamma_1(p^m)cap Gamma_0(p^infty)} and similarly in $\XpaInep\to \Xpaep$ from Prop.~\ref{p:Tate parameter spaces in perfectoid Igusa tower}.(1), it now suffices to prove that the isomorphism $G_n^\flat=G_n'$ over the Tate parameter spaces becomes the natural map
		\begin{equation}\label{dg:proof-of-thm-1.11-situation-over-cusps}
			\begin{tikzcd}
				(\underline{\Z/p^n\Z}\times \mathring{\D}_\infty)^{\flat} \arrow[r,"\sim"] \arrow[d] & 	\underline{\Z/p^n\Z}\times \mathring \D'_\infty \arrow[d] \\  \mathring \D_\infty^{\flat} \arrow[r,"\sim"] & \mathring \D'_\infty.
			\end{tikzcd}
		\end{equation}
It is then clear that the diagram extends uniquely over the cusps.
		
To see this, we note that the restriction of $G_n$ to $\mathring{\D}_{\infty}$ is indeed canonically isomorphic to $\underline{\Z/p^n\Z}\times \mathring{\D}_{\infty}$ due to the canonical section given by the element $q^{1/p^n}$ of the anticanonical subgroup $\langle q^{1/p^n}\rangle\subseteq \Tate(q)[p^n]$. Similarly,  $G'_n$ is isomorphic to $\underline{\Z/p^n\Z}\times \mathring{\D}'_\infty$ on $\mathring{\D}'_\infty\to \Xpep$. By considering the dual trivialisations of the respective canonical subgroups, it follows from the construction in the proof of Lemma~\ref{Lemma G_n identifies with ker V^n upon tilting} that these isomorphisms are compatible with tilting and make diagram~\eqref{dg:proof-of-thm-1.11-situation-over-cusps} commute, as desired.

Part (2) now follows from diagram~\eqref{dg:proof-of-thm-1.11-situation-over-cusps} in the limit $n\to \infty$.
\end{proof}

\section{$q$-expansion principles}\label{s:q-expansion-principles}
In this section, we prove various $q$-expansion principles for functions on the infinite level spaces $\XGea{0}{\infty}$, $\XGea{1}{\infty}$, $\XGea{}{\infty}$, etc, based on our discussion of cusps in \S\ref{s:adic-cusps-finite}-\S\ref{s:cusps-in-char-p}.

\subsection{Detecting vanishing}
We begin with the proof of $q$-expansion principle I, Prop.~\ref{p: q-expansion principle I} in the introduction, recalled below. On the way, we also prove principles III and IV. We focus on the case of characteristic $0$, the case of characteristic $p$ is completely analogous.

\begin{Proposition}\label{p: q-expansion principle I,second version}
	Let $\mathcal C$ be a collection of cusps of $\mathcal X^{\ast}$ such that each connected component of $\mathcal X^{\ast}$ contains at least one $x\in \mathcal C$. Let $n\in \Z_{\geq 0}\cup\{\infty\}$ and let $\Gamma$ be one of $\Gamma_0(p^n),\Gamma_1(p^n), \Gamma(p^n)$. Define $\D_{\mathcal C,\Gamma}$ as the pullback
	\[
	\begin{tikzcd}
	\D_{\mathcal C,\Gamma}\arrow[r,hook]\arrow[d]&\X^{\ast}_{\Gamma}(\epsilon)_a\arrow[d]\\
	\bigsqcup_{x\in \mathcal C}\D_{x}\arrow[r,hook]& \X^{\ast}(\epsilon).
	\end{tikzcd}
	\]
	Then the map $\mathcal O(\mathcal X^{\ast}_{\Gamma}(\epsilon)_a)\rightarrow \mathcal O(\D_{\mathcal C,\Gamma})$
	is injective. 
\end{Proposition}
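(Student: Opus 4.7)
The plan is to reduce the injectivity statement, on each connected component of $\mathcal X^{\ast}_\Gamma(\epsilon)_a$, to the vanishing of an analytic function on a dense open subspace. Since $\O$ of a disjoint union is a product, I would first reduce to the case of a single connected component $Y \subseteq \mathcal X^{\ast}_\Gamma(\epsilon)_a$: it suffices to show that the restriction $\O(Y) \to \O(U)$ is injective for $U := Y \cap \D_{\mathcal C,\Gamma}$.

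Next I would argue that $U$ is a non-empty dense open in $Y$. For \emph{non-emptiness}: at level $\Gamma_0(p^n)$ with $n$ finite, the Atkin--Lehner isomorphism~\eqref{eq:Atkin--Lehner} identifies $\mathcal X^{\ast}_{\Gamma_0(p^n)}(\epsilon)_a$ with an open subspace of $\mathcal X^{\ast}$ containing every cusp, so by the hypothesis on $\mathcal C$ each connected component contains a cusp above some $x\in \mathcal C$; for $\Gamma\in\{\Gamma_1(p^n),\Gamma(p^n)\}$ this propagates via the finite surjective forgetful maps; at infinite level the perfectoid tilde-limit descriptions from \S\ref{s:infinite-level-Gamma_0}--\S\ref{s:infinite-level-Gamma} give the same conclusion in the limit. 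For \emph{density}: at finite level, $Y$ is a smooth connected analytic adic space, hence irreducible, so any non-empty open is automatically dense. At infinite level, $Y$ is a perfectoid tilde-limit $Y\sim\varprojlim_n Y_n$ of finite-level components, so any non-empty open $V\subseteq Y$ descends to a non-empty open in some $Y_n$, which intersects the dense disc $\D_n\cap Y_n$, yielding $V\cap U\ne\emptyset$.

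Given this density, for any $f\in \O(Y)$ with $f|_U=0$ the vanishing locus $V(f)\subseteq Y$ is a closed subspace containing the dense $U$, hence $V(f)=Y$; thus $|f(y)|=0$ for every $y\in Y$. Since $\O(Y)$ is reduced---being a uniform Banach $K$-algebra in the perfectoid case, and a reduced affinoid $K$-algebra at finite level---this forces $f=0$.

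The main obstacle is the density claim at infinite level, particularly for $\Gamma_1(p^\infty)$ and $\Gamma(p^\infty)$: one must pass through the intermediate covers $\mathcal X^{\ast}_{\Gamma_1(p^n)\cap \Gamma_0(p^\infty)}$ and $\mathcal X^{\ast}_{\Gamma(p^n)\cap \Gamma_0(p^\infty)}$ of Lemmas~\ref{l: Tate parameter spaces of X^ast_Gamma_1(p^m)cap Gamma_0(p^infty)} and~\ref{l: Tate parameter spaces of X^ast_Gamma(p^m)cap Gamma_0(p^infty)} to transfer finite-level irreducibility before taking the perfectoid tilde-limit. By contrast, working topologically with the density argument circumvents the subtler analytic estimate that would otherwise be required: namely, bounding the sup-norm of an infinite-level function by its sup-norm on a Tate disc.
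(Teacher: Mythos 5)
Your argument hinges on the claim that $U = Y \cap \D_{\mathcal C,\Gamma}$ is \emph{topologically} dense in the connected component $Y$, so that the closed vanishing locus $V(f)$, containing $\overline{U}$, must be all of $Y$. This is false already at tame level: each $\D_x$ is the open unit disc around the cusp, contained in the locus $|j|>1$, while the good reduction locus $\X_{\gd}$ is a non-empty open subspace of the same component cut out by $|j|\leq 1$ (Prop.~\ref{p: classification of points of X^ast}). These are disjoint opens, so $\overline{U}$ cannot meet $\X_{\gd}\cap Y$ and the inclusion $V(f)\supseteq \overline{U}$ yields nothing. The inference ``connected and smooth, hence irreducible, hence every non-empty open is dense'' is not valid for adic or rigid spaces: the analytic topology is not the Zariski topology of a scheme (compare the closed unit disc, where $\{|T|<1\}$ is a non-empty, non-dense open because $\{|T|\geq 1\}$ is a disjoint open neighbourhood of the Gauss point).

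What your argument actually requires is the identity principle: a function on a reduced, irreducible analytic space vanishing on a non-empty open vanishes everywhere. At finite level this does hold (connectedness plus normality gives irreducibility in the sense of the theory of irreducible components, or one argues via Krull's intersection theorem on the completion at the cusp), but it is precisely the non-formal input, and it does not pass to the infinite-level spaces for free: these are neither Noetherian nor rigid, and a function on $Y\sim\varprojlim Y_n$ need not come from any finite level, so descending a non-empty open to some $Y_n$ does not reduce the vanishing of $f$ to finite level. The paper's proof avoids the identity principle altogether: it first reduces $\epsilon>0$ to $\epsilon=0$ by the explicit power-series computation of Lemma~\ref{l:restricting functions from (epsilon) to (0) on algebras} (this step is itself a genuine estimate, not a density statement), then passes to affine formal models, reduces mod $p$ to the smooth affine Igusa curves over $\F_p$ where Krull's intersection theorem applies, and finally takes a direct limit over finite levels on the mod-$\varpi$ reductions. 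Some integral or power-series argument of this kind is needed where you invoke density.
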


This is an analogue of saying that for any affine irreducible integral variety over $K$, completion at any $K$-point gives rise to an injection on function, which is a consequence of Krull's Intersection Theorem. As this requires Noetherianess, we first reduce to the Noetherian situation using that all of the above spaces have natural models over $\Z_p$.

The proof is in two steps: We first consider $\mathcal X^{\ast}_{\Gamma_0(p^\infty)}(0)_a$ where it is easy to reduce to the Noetherian case. In a second step, we then show that restriction of functions from $\mathcal X^{\ast}_{\Gamma_1(p^\infty)}(\epsilon)_a$ to $\mathcal X^{\ast}_{\Gamma_1(p^\infty)}(0)_a$ is injective, which is a straight-forward computation on power series.
We start with the case $\epsilon=0$. On the way we will also see Prop.~\ref{p: q-expansion principle III}.

\begin{proof}[Proof of Prop.~\ref{p: q-expansion principle I} for $\epsilon=0$]
	The case of $\Gamma=\Gamma(p^n)$ reduces to the one of  $\Gamma_1(p^n)$  by Cor.~\ref{c: over ordinary locus, Gamma to Gamma_1 is split}.
	
	We first consider the case of $n<\infty$. Then the case of $\Gamma=\Gamma_0(p^n)$ further reduces to the case of tame level via the Atkin--Lehner isomorphism $\X^{\ast}(0)\cong\X^{\ast}_{\Gamma_0(p^n)}(0)_a$. We are therefore left with the case of $\Gamma_1(p^n)$ for $n\in \Z_{\geq 0 }$ (the case of tame level being $n=0$).
	
	The space $\mathcal X^{\ast}_{\Gamma_1(p^n)}(0)_a$ has an affine formal model $\mathfrak X^{\ast}_{\Gamma_1(p^n)}(0)_a=\Spf(R)$ for some complete $\Z_p$-algebra $R$. Let $\mathcal C'$ be the pullback of $\mathcal C$ to $\mathfrak X^{\ast}_{\Gamma_1(p^n)}(0)_a$ and let 
	\[\sqcup_{x\in \mathcal C'} \Spf \mathcal O_{L_x}\llbracket q\rrbracket\rightarrow \mathfrak X^{\ast}_{\Gamma_1(p^n)}(0)_a\]
	be the completion along $\mathcal C'$. 
	It suffices to show that the map on global sections 
	\[\varphi:R\to \prod_{x\in \mathcal C'} \mathcal O_{L_x}\llbracket q\rrbracket\] is injective.
	As these are flat $\O_K$-algebras, it suffices to see that the reduction 
	\begin{equation}\label{eq:q-exp-III-for-n<infty}
	R/p\to \prod_{c \in \mathcal C'} \mathcal O_{L_x}/p\llbracket q\rrbracket
	\end{equation}
	 is injective.
	But this reduction can be interpreted as the completion of $X^{\ast}_{\mathcal O_K/p,\Ig(p^n),\ord}$ at the divisor of cusps $\mathcal C'$. 
	By base change from $\F_p$ to $\O_K/p$, we can now reduce to showing that for $Y:=X^{\ast}_{\F_p,\Ig(p^n),\ord}$, completion at $\mathcal C'$ defines an injection 
	\[\O(Y)\to \prod_{x\in \mathcal C'}\F_p(x)\llbracket q\rrbracket \]
	where $\F_p(x)\subseteq \F_p[\zeta_N]$ is the coefficient field of definition of the level structure on the Tate curve corresponding to the cusp $x\in \mathcal C'$.
	
	Since $Y$ is a smooth affine curve over $\F_p$,  and by considering each connected component separately, the desired injectivity follows  as for an integral Noetherian ring $A$, completion at any maximal ideal $\mathfrak m\subseteq A$ gives an injection $A\to \hat{A}_{\mathfrak m}$ by Krull's intersection theorem.
	
	The case of $n=\infty$ can be deduced in the limit:
	As the natural restriction map \[\O^+(\overline{\D}_\infty)\hookrightarrow\O^+({\D}_\infty)\]
	is injective (see Def.~\ref{d: OK bb q^1/p^infty _p}), it suffices to prove the statement for $\overline{\D}_\infty\sim \varprojlim\overline{\D}_n$,
	 while conversely it is clear from $\O^+(\overline{\D}_n)=\O^+({\D}_n)$ for $n<\infty$ and the first part that the corresponding result at finite level holds for $\D_n$ replaced by $\overline{\mathcal D}_n$.
	
	For any $m\in\N$, let $\mathfrak Y_m=\mathfrak X^{\ast}_{\Gamma_0(p^m)}(0)_a$ or $\mathfrak Y_m=\mathfrak X^{\ast}_{\Gamma_1(p^m)}(0)_a$. Then $\mathfrak Y=\varprojlim \mathfrak Y_m$ is a formal model of $\mathcal X^{\ast}_{\Gamma}(0)_a$. To see the result it suffices to prove that the natural maps
		\begin{alignat*}{4}
	\O(\mathfrak X^{\ast}_{\Gamma_0(p^m)}(0)_a)\to& \prod_{x\in \mathcal C}\O(\mathfrak D_{\infty,x})\\
		\O(\mathfrak X^{\ast}_{\Gamma_1(p^m)}(0)_a)\to& \prod_{x\in \mathcal C}\Map_{\cts}(\Z_p^\times,\O(\mathfrak D_{\infty,x})).
		\end{alignat*}
	are injective. By flatness, it suffices to prove this on the reduction mod $\varpi$. But here it follows in the direct limit over $m\to \infty$ from the case of finite level.
\end{proof}

The proof of Prop.~\ref{p: q-expansion principle I} is completed by the following two lemmas:
\begin{Lemma}\label{l:restricting functions from (epsilon) to (0) is injective}
	\leavevmode
	Let $n\in\Z_{\geq 0}\cup \{\infty\}$ and 
	let $\mathcal Y\rightarrow \mathcal X^{\ast}$ be one of $\mathcal X^{\ast}_{\Gamma_0(p^n)}$, $\mathcal X^{\ast}_{\Gamma_1(p^n)}$, $\mathcal X^{\ast}_{\Gamma(p^n)}$. Then the open immersion $\mathcal Y(0)\rightarrow \mathcal Y(\epsilon)$ on sections gives an injection $\mathcal O(\mathcal Y(\epsilon))\rightarrow \mathcal O(\mathcal Y(0))$.
\end{Lemma}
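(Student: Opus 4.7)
The approach is to show that $\mathcal Y(0) \subseteq \mathcal Y(\epsilon)$ is a non-empty open subspace meeting every connected component of the reduced adic space $\mathcal Y(\epsilon)$; the lemma then follows from the standard principle that restriction of sections to such an open on a reduced analytic space is injective on each component. The key input is that the cusps of $\mathcal Y$ all lie in $\mathcal Y(0)$---Tate curves are ordinary, so $|\Ha| = 1$ along each Tate parameter space $\mathcal D_{n,x}$ or $\mathcal D_{\infty,x}$ constructed in \S\ref{s:adic-cusps-finite}--\S\ref{s:adic-cusps-infinite}---and that every connected component of $\mathcal Y(\epsilon)$ contains a cusp.

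First I would treat $n < \infty$, where $\mathcal Y(\epsilon)$ is a quasi-compact smooth adic curve over $K$, hence reduced with connected components equal to irreducible components. Via the finite flat forgetful morphism $\mathcal Y(\epsilon) \to \mathcal X^{\ast}(\epsilon)$, each connected component of $\mathcal Y(\epsilon)$ surjects onto a connected component of $\mathcal X^{\ast}(\epsilon)$, which in turn is obtained from a connected component of the proper smooth curve $\mathcal X^{\ast}$ by removing finitely many supersingular disks. Each connected component of $\mathcal X^{\ast}$ contains cusps (using $N \geq 3$), and these cusps already lie in $\mathcal X^{\ast}(0) \subseteq \mathcal X^{\ast}(\epsilon)$; pulling back, each connected component of $\mathcal Y(\epsilon)$ contains a cusp whose Tate parameter space $\mathcal D_{n,x}$ provides a non-empty open neighborhood inside $\mathcal Y(0)$. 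Injectivity of the restriction on each connected component is then the standard fact that a non-zero function on a smooth connected adic curve cannot vanish on a non-empty open.

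For $n = \infty$ (with the anticanonical locus taken implicitly when $\Gamma \in \{\Gamma_0(p^\infty), \Gamma_1(p^\infty)\}$), $\mathcal Y(\epsilon)$ is perfectoid, hence uniform and reduced. The tilde-limit presentation $\mathcal Y(\epsilon) \sim \varprojlim_m \mathcal Y_m(\epsilon)$ over the finite-level pieces ensures that each connected component of $\mathcal Y(\epsilon)$ surjects onto a connected component of every $\mathcal Y_m(\epsilon)$, hence contains cusps by the finite-level case. By Theorem~\ref{Theorem: Tate parameter spaces at level Gamma(p^infty)} and its $\Gamma_0, \Gamma_1$ analogues from \S\ref{s:adic-cusps-infinite}, the perfectoid Tate parameter spaces $\mathcal D_{\infty,x}$ give open neighborhoods of these cusps inside $\mathcal Y(0)$. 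Thus $\mathcal Y(0)$ meets every connected component of $\mathcal Y(\epsilon)$.

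The main obstacle is justifying the injectivity principle rigorously in the perfectoid, non-Noetherian setting, where classical arguments about irreducible components of affinoid algebras require adaptation. The cleanest route is to reduce modulo $\varpi$ and transfer to characteristic $p$ via the tilt: by Theorem~\ref{t: tilting the Gamma_1(p^infty)-tower} (and its $\Gamma(p^\infty)$ analogue), $\mathcal O^+(\mathcal Y(\epsilon))/\varpi$ is almost isomorphic to $\mathcal O^+(\mathcal Y'(\epsilon)^{\perf})/\varpi^{\flat}$, and the latter is a filtered colimit over Frobenius of rings of sections on smooth affinoid curves in characteristic $p$, each governed by the finite-level argument on 1-dimensional smooth curves. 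A function $f \in \mathcal O^+(\mathcal Y(\epsilon))$ vanishing on $\mathcal Y(0)$ therefore vanishes modulo $\varpi$; iterating gives $f \in \varpi^n \mathcal O^+(\mathcal Y(\epsilon))$ for all $n$, and $\varpi$-adic completeness of $\mathcal O^+(\mathcal Y(\epsilon))$ forces $f = 0$.
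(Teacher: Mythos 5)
Your finite-level argument is a legitimate alternative to the paper's: each connected component of the smooth affinoid curve $\mathcal Y(\epsilon)$ contains a cusp, cusps are ordinary so they lie in $\mathcal Y(0)$, and a nonzero function on a connected normal (hence irreducible) reduced affinoid cannot vanish on a nonempty open. The gap is in the passage to $n=\infty$. Your key step is ``a function $f\in\O^+(\mathcal Y(\epsilon))$ vanishing on $\mathcal Y(0)$ vanishes modulo $\varpi$.'' For this you need the map $\O^+(\mathcal Y(\epsilon))/\varpi\to\O^+(\mathcal Y(0))/\varpi$ to be injective, which is a strictly stronger \emph{integral} statement than the irreducibility principle you invoke (that principle concerns $\O$ of a reduced space, whereas $\O^+/\varpi$, and likewise each term $\O^+(\mathcal Y'(\epsilon))/\varpi^{\flat}$ of your Frobenius colimit, is a highly non-reduced ring to which it says nothing). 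Worse, the mod-$\varpi$ map is genuinely \emph{not} injective here: locally on a formal model one has $\O^+(\mathcal Y(\epsilon))=R\langle X\rangle/(X\Ha-\varpi^{\epsilon})$ and $\O^+(\mathcal Y(0))=R\langle \Ha^{-1}\rangle$ with $X\mapsto \varpi^{\epsilon}\Ha^{-1}$, so $X$ is a nonzero class in the source modulo $\varpi^{\epsilon}$ that maps to zero. (The same phenomenon occurs already for the inclusion of a small disc into a larger one: $T\in\O_K\langle T\rangle$ restricts to $\varpi\cdot(T/\varpi)\in\varpi\,\O_K\langle T/\varpi\rangle$.) So the induction ``$f\in\varpi^n\O^+$ for all $n$'' cannot start. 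Retreating to the rational level does not save the argument either: $\O(\mathcal Y(\epsilon))$ at infinite level is a \emph{completed} colimit, and completion preserves injectivity only for strict (topologically embedded) maps, which again requires exactly the kind of uniform integral control that is missing.

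The paper sidesteps all of this with a local, purely algebraic argument: it covers $\mathcal Y(\epsilon)$ by $\mathcal Y(0)$ and the good-reduction locus $\mathcal Y_{\gd}(\epsilon)$, reducing to injectivity of $\O(\mathfrak Y(\epsilon)|_{\mathfrak U})\to\O(\mathfrak Y(0)|_{\mathfrak U})$ on affine formal opens, and then proves by hand (Lemma~\ref{l:restricting functions from (epsilon) to (0) on algebras}) that $A\langle X\rangle/(XH-\varpi)\to A\langle X\rangle/(XH-1)$ is injective for an \emph{arbitrary} ring $A$ with $H$ a non-zero-divisor mod $\varpi$ --- note: injective on the nose, not modulo $\varpi$. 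Because that lemma has no Noetherian or reducedness hypotheses, it applies uniformly to the perfectoid formal models at level $p^\infty$, with no connectedness, tilting, or limit arguments needed. If you want to keep your global strategy, you would still have to prove something equivalent to that lemma (e.g.\ that the restriction map is a closed embedding with bounded denominators), so the local computation is not really avoidable.
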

\begin{proof}[Proof of Lemma~\ref{l:restricting functions from (epsilon) to (0) is injective}]
	It suffices to prove this locally. Let $\mathcal Y_{\gd}(\epsilon):=\mathcal Y(\epsilon)\times_{\X}\X_{\gd}$. Then since $\mathcal Y(0)\to\mathcal Y(\epsilon)$ is an open immersion, and $\mathcal Y(0)$ and $\mathcal Y_{\gd}(\epsilon)$ cover all of $\mathcal Y(\epsilon)$, it suffices to prove the statement for $\mathcal Y_{\gd}(\epsilon)$.
	Let thus $\mathfrak Y$ be one of $\mathfrak X_{\Gamma_0(p^n)}$ , $\mathfrak X_{\Gamma_1(p^n)}$, $\mathfrak X_{\Gamma(p^n)}$, each for any $n\in\Z_{\geq 0}\cup \{\infty\}$. It suffices to prove that for any affine open $\mathfrak U=\Spf(R)\subseteq \mathfrak Y$ where $\omega$ is trivial, the natural map $\mathfrak Y(0)\rightarrow \mathfrak Y(\epsilon)$ induces an injection $
	\mathcal O(\mathfrak Y(\epsilon)|_{\mathfrak U})\rightarrow \mathcal O(\mathfrak Y(0)|_{\mathfrak U})$. We have
	$\mathfrak Y(\epsilon)|_{\mathfrak U}=\Spf(S)$ where $S=R\langle X\rangle /(X\Ha -p^{\epsilon} )$, and $\mathfrak Y(0)|_{\mathfrak U}=\Spf(R\langle \Ha^{-1}\rangle)$. Since $\Ha$ is a non-zero-divisor on $R/p^n$, Lemma~\ref{l:restricting functions from (epsilon) to (0) on algebras} below now gives the desired statement.
\end{proof}	
\begin{Lemma}\label{l:restricting functions from (epsilon) to (0) on algebras}
	Let $A$ be any ring, let $0\neq \varpi \in A$ be a non-zero-divisor and let $H\in A$ be such that its image in $A/\varpi$ is a non-zero-divisor. Endow $A$ with the $\varpi$-adic topology. Then
	\[\varphi\colon A\langle X\rangle /(XH-\varpi)\xrightarrow{X\mapsto \varpi X } A\langle X \rangle/(XH-1) \]
	is injective.
\end{Lemma}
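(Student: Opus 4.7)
The plan is to prove injectivity by an explicit power-series argument. Suppose $\varphi(f) = 0$, so that $f(\varpi X) = (XH - 1) g(X)$ in $A\langle X\rangle$ for some $g(X) = \sum b_n X^n$ with $b_n \to 0$ in the $\varpi$-adic topology; writing $f(X) = \sum a_n X^n$, comparing coefficients will give the recursion $b_0 = -a_0$ and $b_n = b_{n-1} H - a_n \varpi^n$ for $n \geq 1$, which iterates to $b_n = -\sum_{k=0}^n a_k \varpi^k H^{n-k}$. From this I would derive the key identity
\[
H^{N-n} b_n \;=\; b_N + \sum_{k=n+1}^N a_k \varpi^k H^{N-k}, \qquad N \geq n,
\]
which is going to be the main tool for extracting divisibility information on $b_n$.

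A preliminary observation I would establish first is that $H$, and hence every power $H^m$, is a non-zero-divisor in $A/\varpi^j$ for all $j \geq 1$. This follows by a routine induction on $j$: in the step, $Hx \in \varpi^{j+1} A$ forces $x \in \varpi A$ by the base case, and then one divides by $\varpi$ (which is a non-zero-divisor) and invokes the inductive hypothesis.

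The main computational step then uses the key identity as follows. Let $\alpha_m := \inf_{k \geq m} v_\varpi(a_k)$, which tends to $\infty$ as $a_k \to 0$. Each summand on the right of the identity satisfies $a_k \varpi^k H^{N-k} \in \varpi^{\alpha_{n+1} + n + 1} A$ since $k \geq n+1$, and for $N$ large enough one also has $b_N \in \varpi^{\alpha_{n+1} + n + 1} A$ by the convergence of $g$. Hence $H^{N-n} b_n \in \varpi^{\alpha_{n+1} + n + 1} A$, and the non-zero-divisor property of $H^{N-n}$ modulo that power of $\varpi$ forces $b_n \in \varpi^{\alpha_{n+1} + n + 1} A$. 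Setting $c_n := b_n / \varpi^{n+1}$, this shows both that $c_n$ is well-defined in $A$ (using $\varpi$ non-zero-divisor) and that $c_n \in \varpi^{\alpha_{n+1}} A$, so $c_n \to 0$.

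Finally, I would set $h(X) := \sum c_n X^n \in A\langle X\rangle$ and verify that $(XH - \varpi) h(X) = f(X)$, which is a direct matching of coefficients: the recursion $b_n = b_{n-1}H - a_n \varpi^n$ translates, after dividing through by $\varpi^{n+1}$, into $\varpi c_n = c_{n-1} H - a_n$ for $n \geq 1$ together with $\varpi c_0 = -a_0$, which is exactly the coefficient-wise identity coming from $(XH - \varpi) h(X)$. The main obstacle I anticipate is not constructing the candidate coefficients $c_n \in A$ — for which the weaker divisibility $b_n \in \varpi^{n+1} A$ would already suffice — but controlling the $\varpi$-adic decay of $c_n$; this is where one must exploit the vanishing of $b_N$ to \emph{arbitrarily} high $\varpi$-adic order and trade it against the non-zero-divisor property of powers of $H$, which is precisely the content of the key identity above.
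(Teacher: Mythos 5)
Your proof is correct and follows essentially the same strategy as the paper's: show that each $b_n$ is divisible by $\varpi^{n+1+k}$ with $k\to\infty$, using that $H$ (and its powers) is a non-zero-divisor modulo every $\varpi^m$, then divide by $\varpi^{n+1}$ to exhibit an honest preimage in $A\langle X\rangle/(XH-\varpi)$. The only cosmetic difference is the mechanism for extracting that divisibility — the paper compares degrees of polynomials over $A/\varpi^m$, while you solve the recursion for $b_n$ in closed form and telescope — and you additionally supply the induction proving $H$ stays a non-zero-divisor mod $\varpi^j$, which the paper merely asserts.
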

\begin{proof}
	We first note that the assumption on $H\in A$ implies that $H$ is a non-zero-divisor in any $A/\varpi^n$.
	Suppose $f=\sum a_nX^n$ is in the kernel of $A\langle X\rangle\rightarrow A\langle X\rangle/(XH-\varpi)\xrightarrow{\varphi} A\langle X\rangle /(XH-1)$. Then there is $g=\sum b_nX^n\in A\langle X\rangle$ such that 
	\[f(\varpi X)=\sum a_n\varpi^nX^n=(XH-1)g=(XH-1)\sum b_nX^n.\]
	Reducing mod $\varpi^m$, we see that
	\[a_0+\dots +a_{m-1}\varpi^{m-1} X^{m-1} \equiv (XH-1)\sum b_nX^n \bmod \varpi^m\]
	By comparing degrees as polynomials in $A/\varpi^m[X]$, we conclude from  $H$ being a non-zero-divisor mod $\varpi^m$ that $\deg(\sum b_nX^n \bmod \varpi^m)<m-1$, thus $b_k\equiv 0 \bmod \varpi^m$ for $k\geq m-1$.
	
	Consequently, there are elements $c_m={b_m}/{\varpi^{m+1}}\in A$ for all $m$ and in $A\llbracket X\rrbracket$  we have
	\[f':= (XH-\varpi)\sum \frac{b_m}{\varpi^{m+1}}X^m \stackrel{X\mapsto \varpi X}{\longmapsto} (XH-1)\sum b_mX^m.\]
	Thus $f'(\varpi X)=f(\varpi X)$ in $A\llbracket X\rrbracket$ which implies $f'=f$ since $\varpi$ is a non-zero-divisor.
	
	It remains to prove that $\sum c_mX^m$ converges in $A\langle X\rangle$: Since $f\in A\langle X\rangle$, for every $k\in \N$ there is an $N_k$ such that $v(a_{m})\geq k$ for all $m\geq N_k$, where $v$ is the $\varpi$-adic valuation. In particular, we then have $v(\varpi^ma_{m})\geq k+m$ for all $m\geq N_k$. Consequently, for all $m\geq N_k$
	\[a_0+\dots +a_{m-1}\varpi^{m-1} X^{m-1} \equiv (XH-1)\sum b_mX^m \bmod \varpi^{m+k}.\]
	This shows that $v(b_{m-1})\geq m+k$, and thus $v(c_m)\geq k$ for all $m\geq N_k$. Thus $\sum c_mX^m \in A\langle X\rangle$ as desired.
	We conclude that $f$ is already in $(XH-\varpi)A\langle X\rangle$. Thus $\varphi$ is injective.
\end{proof}

We can extract from this argument a proof of $q$-expansion principle III in the introduction:

\begin{Proposition}[$q$-expansion principle III]
	For $f\in \O(\mathcal X^{\ast}_{\Gamma_0(p^\infty)}(0)_a)$ are equivalent:
	\begin{enumerate}
		\item	$f$ is integral, i.e.\ it is contained in $\O^+(\mathcal X^{\ast}_{\Gamma_0(p^\infty)}(0)_a)$. 	
		\item The $q$-expansion of $f$ at every cusp $x$ is already in $\O_{L_x}\llbracket q^{1/p^\infty}\rrbracket$.
		\item On each connected component of $\XaGea{0}{n}$, there is at least one cusp $x$ at which the $q$-expansion of $f$ is in $\O_{L_x}\llbracket q^{1/p^\infty}\rrbracket$.
	\end{enumerate}
	Equivalently, the natural map
	$\varphi\colon \O^+(\mathcal X^{\ast}_{\Gamma_0(p^\infty)}(0)_a)/p\to \prod_{x}(\O_{L_x}/p)\llbracket q^{1/p^\infty}\rrbracket$
	is injective. 
	The analogous statements for $\mathcal X^{\ast}_{\Gamma_1(p^\infty)}(0)_a$, $\mathcal X^{\ast}_{\Gamma(p^\infty)}(0)_a$, $\mathcal X'^{\ast}(0)^{\perf}$ and $\mathcal X'^{\ast}_{\Ig(p^\infty)}(0)^\perf$ are also true when we replace $\O_{L_x}\llbracket q^{1/p^\infty}\rrbracket$ by the respective algebra from Prop.~\ref{p: q-expansion principle I}.
\end{Proposition}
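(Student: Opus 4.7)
The implications (1)$\Rightarrow$(2)$\Rightarrow$(3) are immediate: on $\mathcal D_{\infty,x}$ we have $\O^+(\mathcal D_{\infty,x})=\O_{L_x}\llbracket q^{1/p^\infty}\rrbracket$ by Lemma~\ref{the perfectoid parameter space of the Tate curve at infinite level}, and (3) is weaker than (2). For the converse, the key point is that $\mathcal X^{\ast}_{\Gamma_0(p^\infty)}(0)_a$ is quasi-compact with an affine formal model, so $\O=\O^+[\tfrac{1}{p}]$. If $f\in\O$ satisfies (3) but $f\notin\O^+$, pick $n\geq 1$ minimal with $p^nf\in\O^+$. Then $p^nf$ reduces to a nonzero element of $\O^+/p$, yet its $q$-expansion at any $x$ where $f$ has integral $q$-expansion is divisible by $p^n$, hence vanishes in $(\O_{L_x}/p)\llbracket q^{1/p^\infty}\rrbracket$. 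Thus (3)$\Rightarrow$(1) follows once we know that $\varphi$ is injective after restricting the target to any collection $\mathcal C$ of cusps meeting every connected component; i.e., it suffices to prove the ``equivalently'' formulation.

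To prove injectivity of $\varphi$, I will follow the template of the proof of Prop.~\ref{p: q-expansion principle I} for $\epsilon=0$, but read it at the integral level. Write $\mathfrak Y:=\varprojlim_m\mathfrak X^{\ast}_{\Gamma_0(p^m)}(0)_a=\Spf R$, an affine formal model such that $\O^+(\mathcal X^{\ast}_{\Gamma_0(p^\infty)}(0)_a)=R$. The completion of $\mathfrak Y$ at any cusp $x\in\mathcal C$ is the formal Tate parameter space $\Spf \O_{L_x}\llbracket q^{1/p^\infty}\rrbracket$ by Lemma~\ref{the perfectoid parameter space of the Tate curve at infinite level}, so $\varphi$ is the natural completion map on global sections. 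By flatness of $R$ and of each $\O_{L_x}\llbracket q^{1/p^\infty}\rrbracket$ over $\O_K$, injectivity of $\varphi$ is equivalent to injectivity of the reduction
\[R/p\to\textstyle\prod_{x\in\mathcal C}(\O_{L_x}/p)\llbracket q^{1/p^\infty}\rrbracket.\]
Via the mod-$p$ identification $\mathfrak X^{\ast}(0)/p=\mathfrak X'^{\ast}(0)/\varpi^\flat$ underlying Thm.~\ref{theorem torsion paper: tilting the perfectoid modular curve} (and the identification of cusps in Lemma~\ref{l:tilting cusps}), this map is the completion of the perfection of the smooth affine curve $Y:=X^{\ast}_{\ord,\O_{K^\flat}/\varpi^\flat}$ at the $x^\flat$, for $x\in\mathcal C$. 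On each connected component, the finite-level version is Krull's intersection theorem applied to the smooth affine curve $Y/\F_p$; perfection, being a reduced filtered colimit of Frobenius twists of $\O(Y)$, preserves this injectivity since a nonzero element of $\O(Y)^{\perf}$ has some $p^N$-th power nonzero in $\O(Y)$, whose image in $\F_p(x^\flat)\llbracket q\rrbracket$ is nonzero, whence its $p^N$-th root is nonzero in $\F_p(x^\flat)\llbracket q^{1/p^\infty}\rrbracket$.

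The remaining statements reduce to this one. For $\mathcal X^{\ast}_{\Gamma_1(p^\infty)}(0)_a$ and $\mathcal X^{\ast}_{\Gamma(p^\infty)}(0)_a$, Cor.~\ref{c: over ordinary locus, Gamma to Gamma_1 is split} gives a canonical splitting into components over the ordinary locus, and the pro-\'etale $\Z_p^\times$-torsor structure of $\mathcal X^{\ast}_{\Gamma_1(p^\infty)}(0)_a\to\mathcal X^{\ast}_{\Gamma_0(p^\infty)}(0)_a$ identifies $\O^+$ and the target $\Mapc(\Z_p^\times,\O_{L_x}\llbracket q^{1/p^\infty}\rrbracket)$-module with a $\Mapc(\Z_p^\times,-)$-variant of the above, for which injectivity is again tested componentwise. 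The characteristic-$p$ statements for $\mathcal X'^{\ast}(0)^{\perf}$ and $\mathcal X'^{\ast}_{\Ig(p^\infty)}(0)^{\perf}$ either follow by repeating the argument with $F_{\rel}$ in place of $\phi$, or deductively by tilting through Thm.~\ref{t: tilting the Gamma_1(p^infty)-tower-2} together with the identification of Tate parameter spaces of Prop.~\ref{p: description of the tilting isomorphism at level Gamma0 on Tate parameter spaces}. The main obstacle is checking that passing to perfections (and the tilting identification of formal models, which is a priori only an almost isomorphism) does not damage the honest, non-almost injectivity at the integral level; this is handled by working directly with the explicit $\varpi^\flat$-adic formal models of the Tate parameter spaces from Def.~\ref{d:overline Dp}, where everything is honest, and only invoking the tilting equivalence to match the two sides mod a pseudo-uniformiser.
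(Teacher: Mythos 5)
Your proposal follows the same route as the paper: (1)$\Rightarrow$(2)$\Rightarrow$(3) from $\O^+(\D_{\infty,x})=\O_{L_x}\llbracket q^{1/p^\infty}\rrbracket$, reduction of (3)$\Rightarrow$(1) to injectivity of the mod-$p$ completion map, and then the mod-$p$ injectivity via the identification of the reduction with (the perfection of) the ordinary-locus Igusa curve in characteristic $p$ and Krull's intersection theorem on a smooth affine curve over $\F_p$; your explicit ``minimal $n$ with $p^nf\in\O^+$'' reduction and the colimit-over-Frobenius phrasing of the infinite-level step are just more verbose versions of what the paper does.

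There is, however, one genuine gap: you simply posit ``an affine formal model $\Spf R$ such that $\O^+(\mathcal X^{\ast}_{\Gamma_0(p^\infty)}(0)_a)=R$.'' This equality is the crux of why the proposition is about $\O^+$ of the adic space rather than about functions on some chosen formal model: in general $\O(\mathfrak X^{\ast}_{\Gamma_0(p^\infty)}(0)_a)\subseteq \O^+(\mathcal X^{\ast}_{\Gamma_0(p^\infty)}(0)_a)$ with equality only if the formal model is integrally closed in its generic fibre, and without it the injectivity of $R/p\to\prod_x(\O_{L_x}/p)\llbracket q^{1/p^\infty}\rrbracket$ says nothing about an $f\in\O^+\setminus R$. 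The paper has to import exactly this identification from external results (\cite[Prop.~4.1.3]{heuer-thesis} at finite level and \cite[Lemma~A.2.2.3]{heuer-thesis} at infinite level), so your sketch should either prove or cite it. A second, smaller point: Krull's theorem is applied to the curve over $\F_p$, and the statement over $\O_{K^\flat}/\varpi^{\flat}$ (which has nilpotents) is then obtained by tensoring the resulting injection of $\F_p$-modules with the flat $\F_p$-algebra $\O_{K^\flat}/\varpi^{\flat}$; your phrase ``the smooth affine curve $Y/\F_p$'' conflates the two bases, so make that base-change step explicit. The remaining reductions (splitting off $\Gamma(p^\infty)$ via Cor.~\ref{c: over ordinary locus, Gamma to Gamma_1 is split}, the Igusa/char-$p$ cases) match the paper's ``by the same argument.''
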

\begin{proof}
	It is clear from $\O^+(\D_{\infty,x})= \O_{L_x}\llbracket q^{1/p^\infty}\rrbracket$ that (1) implies (2) implies (3). To prove that (3) implies (1), it suffices to see that $\O^+(\mathcal X^{\ast}_{\Gamma_0(p^\infty)}(0)_a)/p\to \prod_{x}(\O_{L_x}/p)\llbracket q^{1/p^\infty}\rrbracket$ is injective.
	We have already seen in \eqref{eq:q-exp-III-for-n<infty} in the proof of Prop.~\ref{p: q-expansion principle I,second version} that
	\[ \O(\mathfrak X^{\ast}_{\Gamma_0(p^n)}(0)_a)/p\hookrightarrow \prod_{x\in \mathcal C}(\O_{L_x}/p)\llbracket q^{1/p^n}\rrbracket \]
	is injective for any $n\in \Z_{\geq 0}\cup\{\infty\}$. Since by 
	by \cite[Prop.~4.1.3]{heuer-thesis}, we have \[\O^+(\mathcal X^{\ast}_{\Gamma_0(p^n)}(0)_a)=\O(\mathfrak X^{\ast}_{\Gamma_0(p^n)}(0)_a),\]
	this gives the desired statement in the case of $\Gamma_0(p^\infty)$.
	
	By the same argument, the cases of $\mathcal X^{\ast}_{\Gamma_1(p^\infty)}(0)_a$, $\mathcal X^{\ast}_{\Gamma(p^\infty)}(0)_a$, $\mathcal X'^{\ast}(0)^{\perf}$ and $\mathcal X'^{\ast}_{\Ig(p^\infty)}(0)^\perf$ also follow from \eqref{eq:q-exp-III-for-n<infty} in the limit $n\to \infty$ using instead \cite[Lemma A.2.2.3]{heuer-thesis}.
\end{proof}

We can also use the lemmas for the proof of $q$-expansion principle IV:
\begin{Proposition}[$q$-expansion principle IV]
	Let $\mathcal C$ be a collection of cusps of $\Xa$ such that each connected component contains at least one $x\in \mathcal C$. Then a function on the good reduction locus $\X_{\gd}(\epsilon)$ extends to all of $\X^{\ast}(\epsilon)$ if and only if its $q$-expansion with respect to $\overline{\D}(|q|\geq 1)\to \X_{\gd}(\epsilon)$ at each $x\in \mathcal C$ is already in $\O_{L_x}\llbracket q\rrbracket[\tfrac{1}{p}]\subseteq \O_{L_x}\lauc{q}[\tfrac{1}{p}]$. In this case, the extension is unique. The analogous statements for $\mathcal X^{\ast}_{\Gamma_0(p^\infty)}(0)_a$ $\mathcal X^{\ast}_{\Gamma_1(p^\infty)}(0)_a$, $\mathcal X^{\ast}_{\Gamma(p^\infty)}(0)_a$, $\mathcal X'^{\ast}(0)$, $\mathcal X'^{\ast}(0)^{\perf}$ and $\mathcal X'^{\ast}_{\Ig(p^\infty)}(0)^\perf$ are also true.
\end{Proposition}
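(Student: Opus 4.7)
The plan is to proceed in three main steps: necessity together with uniqueness of the extension, construction of the extension under a stronger hypothesis (the condition at every cusp), and propagation of the hypothesis from $\mathcal{C}$ to all cusps.

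For necessity, I would note that if $\tilde f \in \O(\X^{\ast}(\epsilon))$ extends $f$, then the restriction $\tilde f|_{\overline{\D}_x} \in \O(\overline{\D}_x) = \O_{L_x}\bb q[\tfrac{1}{p}]$ (by Lemma~\ref{l:overline{D} is sousperfectoid}) further restricts along $\overline{\D}_x(|q|\geq 1) \subseteq \X_{\gd}(\epsilon)$ to the $q$-expansion of $f$ at $x$, which therefore lies in $\O_{L_x}\bb q[\tfrac{1}{p}]$. For uniqueness, I would establish injectivity of $\O(\X^{\ast}(\epsilon)) \to \O(\X_{\gd}(\epsilon))$ using Prop.~\ref{p: classification of points of X^ast}: each point of $\X^{\ast}(\epsilon) \setminus \X_{\gd}(\epsilon)$ either lies in some $\D_x \hookrightarrow \X^{\ast}(\epsilon)$, where a section is determined by its Laurent expansion and thus by its restriction to $\overline{\D}_x(|q|\geq 1) \subseteq \X_{\gd}(\epsilon)$ via the injection $\O_{L_x}\bb q[\tfrac{1}{p}] \hookrightarrow \O_{L_x}\lauc q[\tfrac{1}{p}]$, or is a rank~2 point of type~(c) whose unique rank~1 vertical generisation lies in $\X_{\gd}(\epsilon)$.

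Next, I would establish sufficiency under the stronger hypothesis that the $q$-expansion of $f$ lies in $\O_{L_y}\bb q[\tfrac{1}{p}]$ at every cusp $y$. Setting $\tilde f_y := f|_{\overline{\D}_y(|q|\geq 1)} \in \O_{L_y}\bb q[\tfrac{1}{p}] = \O(\overline{\D}_y)$ for each cusp $y$, the family $(f, (\tilde f_y)_y)$ is compatible on the common restriction to $\overline{\D}_y(|q|\geq 1) \subseteq \X_{\gd}(\epsilon) \cap \overline{\D}_y$. Using that $\X_{\gd}(\epsilon) \sqcup \bigsqcup_y \overline{\D}_y \to \X^{\ast}(\epsilon)$ is a $v$-cover (remark following Lemma~\ref{l:full-Tate-curve-parameter-space}) and that the structure sheaf is a $v$-sheaf on sousperfectoid analytic adic spaces, these sections glue to the required extension $\tilde f \in \O(\X^{\ast}(\epsilon))$.

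The heart of the proof is propagating the hypothesis from $\mathcal{C}$ to every cusp. I would fix a connected component $Y \subseteq \X^{\ast}(\epsilon)$ with $x \in \mathcal{C} \cap Y$; after multiplying by a suitable power of $p$, we may assume $p^N f|_{\overline{\D}_x(|q|\geq 1)} \in \O_{L_x}\bb q$. Using Lemma~\ref{l:restricting functions from (epsilon) to (0) is injective} to reduce to $\epsilon = 0$, I would combine the Atkin--Lehner isomorphism~\eqref{eq:Atkin--Lehner} $\X^{\ast}_{\Gamma_0(p^n)}(0)_a \cong \X^{\ast}(0)$ with Prop.~\ref{p: q-expansion principle III}: the latter says integrality at one cusp per component of $\X^{\ast}_{\Gamma_0(p^\infty)}(0)_a$ propagates to every cusp. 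Transferring back via Atkin--Lehner, integrality at $x$ forces the $q$-expansion condition at every cusp of $Y$. For the analogous statements at higher level, the same strategy works, using the perfectoid Tate parameter spaces of Sections~\ref{s:adic-cusps-infinite}--\ref{s:cusps-in-char-p} in place of $\overline{\D}_y$ and the coefficient algebras from Prop.~\ref{p: q-expansion principle I}. The main obstacle is this propagation step, where integrality at a single cusp of $Y$ must be upgraded to integrality at every cusp of $Y$, requiring the connectedness of $Y$ and the formal-model structure underpinning Prop.~\ref{p: q-expansion principle III}.
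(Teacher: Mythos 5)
Your Step 3 --- propagating the hypothesis from $\mathcal C$ to the remaining cusps via Proposition~\ref{p: q-expansion principle III} and Atkin--Lehner --- does not work, and this is the decisive gap. Proposition~\ref{p: q-expansion principle III} concerns $p$-integrality: it takes as input a function that is \emph{already defined on the compactification} $\mathcal X^{\ast}_{\Gamma_0(p^\infty)}(0)_a$ and detects whether it lies in $\O^+$, i.e.\ whether its $q$-expansion coefficients are $p$-adically bounded. What you need here is a different kind of statement: $f$ is only given on the uncompactified locus, its $q$-expansion at a cusp $y$ lives in $\O_{L_y}\lauc{q}[\tfrac1p]$, and the question is whether the \emph{negative powers of $q$} vanish. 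Multiplying by $p^N$ so that the expansion at $x$ lands in $\O_{L_x}\bb q$ addresses $p$-integrality, not holomorphy at the other cusps, and Atkin--Lehner does not convert one condition into the other; moreover Proposition~\ref{p: q-expansion principle III} cannot even be invoked before you know that $f$ extends. So the sentence ``integrality at $x$ forces the $q$-expansion condition at every cusp of $Y$'' is a non sequitur, and nothing in your argument rules out a function on $\X_{\gd}(\epsilon)$ that is holomorphic at $x$ but has a genuine pole at another cusp of the same component.

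The paper's proof takes an entirely different route that avoids both your descent argument and Proposition III: after reducing (for the infinite-level variants) to finite level and, by Lemma~\ref{l:restricting functions from (epsilon) to (0) is injective}, to $\epsilon=0$, it passes to the affine formal models, reduces mod $\varpi^n$, and base-changes to $\F_p$, so that everything comes down to the left-exactness of
\[ 0\to \O(X_{\F_p,\ord}^{\ast}) \to \O(X_{\F_p,\ord})\times \textstyle\prod_{x\in \mathcal C} \F_p(x)\bb{q}\longrightarrow \textstyle\prod_{x\in \mathcal C}\F_p(x)\cc{q},\]
a purely algebraic statement about $X^{\ast}_{\F_p}$ being the normalisation of $X_{\F_p}$ in $\P^1$. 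Your Steps 1--2 are reasonable in outline (necessity and uniqueness amount to the injectivity statements the paper also uses), but the gluing in Step 2 needs more care than you give it: $v$-descent of $\O$ on non-perfectoid sousperfectoid spaces is itself a nontrivial input, and you only verify the descent datum on the overlaps $\overline{\D}_y\times_{\X^{\ast}(\epsilon)}\X_{\gd}(\epsilon)$, not on the self-products $\overline{\D}_y\times_{\X^{\ast}(\epsilon)}\overline{\D}_y$, which are strictly larger than the diagonal because $\overline{\D}_y\to\X^{\ast}(\epsilon)$ is not an immersion.
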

\begin{proof}
	As before, one can reduce to the case of finite level. For simplicity, let us treat $\mathcal X_{\gd}$, the other cases are similar. By Lemma~\ref{l:restricting functions from (epsilon) to (0) is injective} we can reduce to $\epsilon=0$. We then need to prove that the following sequence is left exact:
	\[ 0\to \O(\mathfrak X^{\ast}(0)) \to \O(\mathfrak X(0))\times \textstyle\prod_{x\in \mathcal C} \O_{L_x} \bb{q}\xrightarrow{(f,g)\mapsto f-g} \textstyle\prod_{x\in \mathcal C}\O_{L_x}\lauc{q}.\]
	It suffices to prove that this is true mod $\varpi^n$ for all $n$. By tensoring with the flat $\F_p$-algebra $\O_L/\varpi^n$, the statement then follows from the following sequence being left-exact:
	\[ 0\to \O(X_{\F_p,\ord}^{\ast}) \to \O(X_{\F_p,\ord})\times \textstyle\prod_{x\in \mathcal C} \F_p(x)\bb{q}\xrightarrow{(f,g)\mapsto f-g} \textstyle\prod_{x\in \mathcal C}\F_p(x)(\!(q)\!).\]
	This holds as $X_{\F_q}^{\ast}$ is the normalisation of $j\colon X_{\F_q}\to \A_{\F_q}^1$ in $\P_{\F_q}^1$, and thus a function $f$ extends to the cusp $x$ if and only if it is finite over the completion $\F_p\llbracket q\rrbracket $ of $\P_{\F_p}^1$ at $\infty$.
\end{proof}
\subsection{Tate traces and detecting the level}
While the transition from $\Gamma_0(p^\infty)$ to $\Gamma(p^\infty)$ is controlled by the Galois action, the transition from $\Gamma_0(p)$ to $\Gamma_0(p^\infty)$ is controlled by normalised Tate traces, as discussed in \cite[\S III.2.4]{torsion}:

\begin{Proposition}[{\cite[Cor.~III.2.23]{torsion}}]
	Let $0\leq n\leq k\in\N$. Then the normalised traces
	\[\tr_{k,n}\colon  \O_{\mathfrak X^{\ast}_{\Gamma_0(p^k)}(\epsilon)_a}\to \O_{\mathfrak X^{\ast}_{\Gamma_0(p^n)}(\epsilon)_a}[\tfrac{1}{p}]\]
	of the finite flat forgetful map $\mathfrak X^{\ast}_{\Gamma_0(p^k)}(\epsilon)_a\to \mathfrak X^{\ast}_{\Gamma_0(p^n)}(\epsilon)_a$ in the limit $k\to \infty$ give rise  to compatible continuous $\O_{\mathfrak X^{\ast}_{\Gamma_0(p^n)}(\epsilon)_a}$-linear morphisms with bounded image
	\[\tr_{n}\colon \O_{\mathfrak X^{\ast}_{\Gamma_0(p^\infty)}(\epsilon)_a}\to \O_{\mathfrak X^{\ast}_{\Gamma_0(p^n)}(\epsilon)_a}[\tfrac{1}{p}].\]
\end{Proposition}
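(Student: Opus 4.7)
The plan is to construct $\tr_n$ by normalising the finite flat trace and then passing to the limit in $k$. For any $n\leq k$, the forgetful map $f_{k,n}\colon \mathfrak X^{\ast}_{\Gamma_0(p^k)}(\epsilon)_a\to \mathfrak X^{\ast}_{\Gamma_0(p^n)}(\epsilon)_a$ is finite flat of degree $p^{k-n}$, and we define $\tr_{k,n}:=\tfrac{1}{p^{k-n}}\Tr_{f_{k,n}}$, where $\Tr_{f_{k,n}}$ is the ordinary trace. After inverting $p$ this is the standard normalised trace, so it is $\O_{\mathfrak X^{\ast}_{\Gamma_0(p^n)}(\epsilon)_a}[\tfrac{1}{p}]$-linear and satisfies the transitivity property $\tr_{k',n}=\tr_{k,n}\circ \tr_{k',k}$ for $n\leq k\leq k'$. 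This transitivity immediately implies that if for each $n$ we can show $\tr_{k,n}$ takes values in a fixed bounded $\O_{\mathfrak X^{\ast}_{\Gamma_0(p^n)}(\epsilon)_a}$-submodule of $\O_{\mathfrak X^{\ast}_{\Gamma_0(p^n)}(\epsilon)_a}[\tfrac{1}{p}]$ uniformly in $k$, then the $\tr_{k,n}$ are Cauchy in the $p$-adic topology and descend to a continuous map $\tr_n$ on $\O_{\mathfrak X^{\ast}_{\Gamma_0(p^\infty)}(\epsilon)_a}$.

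The main content is therefore the uniform boundedness of the image. For this I would use the Atkin--Lehner isomorphism \eqref{eq:Atkin--Lehner} to identify $f_{k,n}$ with the iterated Frobenius lift $\phi^{k-n}\colon \mathfrak X^{\ast}(p^{-k}\epsilon)\to \mathfrak X^{\ast}(p^{-n}\epsilon)$. Thus it suffices to bound $\tfrac{1}{p^{k-n}}\Tr_{\phi^{k-n}}$ on each of these spaces. By transitivity it is enough to handle one step, i.e.\ to prove that there exists a constant $C=C(\epsilon)\geq 0$, independent of $n$, such that
\[\tfrac{1}{p}\Tr_{\phi}\big(\O_{\mathfrak X^{\ast}(p^{-(n+1)}\epsilon)}\big)\subseteq p^{-C}\O_{\mathfrak X^{\ast}(p^{-n}\epsilon)}.\]
This is the hard step. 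I would prove it locally on affine opens $\Spf(R)\subseteq \mathfrak X^{\ast}(p^{-n}\epsilon)$ where $\omega$ trivialises, so that after choosing a local coordinate $\phi$ is given by a degree-$p$ map and $\Tr_{\phi}$ can be computed concretely. Using the key congruence recalled after Thm.~\ref{theorem torsion paper: tilting the perfectoid modular curve}, namely that modulo $p^{1-\delta}$ with $\delta=\tfrac{p+1}{p}\epsilon$ the map $\phi$ agrees with the relative Frobenius $F_{\rel}$, one reduces to a statement about the trace of Frobenius in characteristic $p$: the trace of $F_{\rel}$ on a smooth ordinary curve vanishes on $\O^+/p^{1-\delta}$ up to the Hasse invariant, so $\tfrac{1}{p}\Tr_{\phi}$ maps $\O^+$ into $p^{-\delta-c}\O^+$ for an explicit $c$ coming from the denominators introduced by $\Ha^{-1}$ on $\mathfrak X^{\ast}(p^{-n}\epsilon)$. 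The point is that $\delta$ is fixed once $\epsilon$ is and the $\Ha$-denominators contribute a uniform amount, giving a single $C$ that works throughout the tower.

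Once the uniform bound is established, continuity and the bounded image statement follow formally: the maps $\tr_{k,n}$ converge as $k\to\infty$ in the topology of uniform convergence on affine opens, the limit $\tr_n$ is $\O_{\mathfrak X^{\ast}_{\Gamma_0(p^n)}(\epsilon)_a}$-linear because each $\tr_{k,n}$ is, and the image is contained in $p^{-C}\O_{\mathfrak X^{\ast}_{\Gamma_0(p^n)}(\epsilon)_a}$, which is bounded. Compatibility for varying $n$ is inherited from the finite-level transitivity $\tr_{k,n}=\tr_{m,n}\circ \tr_{k,m}$ in the limit $k\to\infty$. The main obstacle, as indicated, is the uniform estimate on $\tfrac{1}{p}\Tr_\phi$; everything else is a formal consequence of finite flatness and the Atkin--Lehner identification, together with completeness of the formal models.
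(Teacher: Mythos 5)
Your overall architecture (normalise the finite flat trace, use Atkin--Lehner to turn the anticanonical tower into the tower of Frobenius lifts $\phi$, exploit the congruence $\phi\equiv F_{\rel}\bmod p^{1-\delta}$ together with the vanishing of the trace of relative Frobenius on a smooth curve in characteristic $p$, then extend to the completion by uniform boundedness) is exactly the strategy of the result you are reconstructing. The paper itself does not reprove any of this: its proof is the Atkin--Lehner identification plus a citation of \cite[Cor.~III.2.23]{torsion}, with the single new observation that the argument applies over the cusps because the minimal compactification $\mathfrak X^{\ast}$ of the modular curve is a \emph{smooth} formal scheme (unlike the Siegel case), so \cite[Cor.~III.2.22]{torsion} holds on all of $\mathfrak X^{\ast}$ and not just on the good reduction locus. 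Your local argument on affine opens where $\omega$ trivialises does silently cover the cusps for the same reason, so that part is fine.

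There is, however, a genuine gap in your quantitative step. You reduce uniform boundedness of $\tr_{k,n}$ in $k$ to a one-step estimate $\tfrac{1}{p}\Tr_{\phi}\big(\O_{\mathfrak X^{\ast}(p^{-(m+1)}\epsilon)}\big)\subseteq p^{-C}\O_{\mathfrak X^{\ast}(p^{-m}\epsilon)}$ with a single constant $C=C(\epsilon)$ ``that works throughout the tower'', and you assert that transitivity then finishes the proof. It does not: since the normalised trace is linear, composing $k-n$ such steps only yields $\tr_{k,n}(\O)\subseteq p^{-(k-n)C}\O$, which is unbounded as $k\to\infty$ whenever $C>0$ --- and $C>0$ is unavoidable for $\epsilon>0$ (the image genuinely fails to be integral off the ordinary locus, which is why the statement only claims a bounded image in $\O[\tfrac1p]$). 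The correct argument has to use that the per-step loss \emph{improves} as one climbs the tower: at level $m$ the congruence between $\phi$ and $F_{\rel}$ holds modulo $p^{1-\delta_m}$ with $\delta_m=\tfrac{p+1}{p}p^{-m}\epsilon$, so the $m$-th step loses only $p^{-C_m}$ with $C_m$ proportional to $p^{-m}\epsilon$. These constants are summable, $\sum_m C_m<\infty$, and it is this geometric decay --- not a fixed per-step constant --- that produces a bound on $\tr_{k,n}$ independent of $k$. As written, your plan would prove convergence of the $\tr_{k,n}$ only on the ordinary locus $\epsilon=0$, where the per-step loss is zero.
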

\begin{proof}
	Via the Atkin--Lehner isomorphism $\mathfrak X^{\ast}_{\Gamma_0(p^n)}(\epsilon)_a\cong \mathfrak X^{\ast}(p^{-n}\epsilon)$,
	this is the statement of \cite[Cor.~III.2.23]{torsion}, except that we use the compactified $\mathfrak X^{\ast}$ instead of $\mathfrak X$: This is possible since in contrast to the higher dimensional Siegel moduli spaces, the minimal compactification of the modular curve $\mathfrak X^{\ast}$ is a smooth formal scheme, and thus Cor.~III.2.22 applies over all of $\mathfrak X^{\ast}$, not just over $\mathfrak X$, which means that the proof of III.2.23 goes through for $\mathfrak X^{\ast}$.
\end{proof}

\begin{Definition}
	Taking global sections and inverting $p$, the trace $\tr_n$ gives a $K$-linear map
	\[\tr\colon \O(\mathcal X^{\ast}_{\Gamma_0(p^\infty)}(\epsilon)_a)\to \O(\mathcal X^{\ast}_{\Gamma_0(p^n)}(\epsilon)_a).\]
\end{Definition}

\begin{Proposition}\label{Proposition: effect of trace on q-expansions}
	Let $x$ be any cusp of $\mathcal X^{\ast}$, with corresponding Tate curve parameter space $\D_{n,x}\hookrightarrow \mathcal X^{\ast}_{\Gamma_0(p^n)}(\epsilon)_a$. Then the normalised Tate trace fits into a commutative diagram
	\begin{equation*}
		\begin{tikzcd}[row sep={0.7cm,between origins}]
			\O(\XaGea{0}{\infty}) \arrow[r,  "\tr_n"] \arrow[dd, '] & \O(\mathcal X^{\ast}_{\Gamma_0(p^n)}(\epsilon)_a) \arrow[dd, ']&&\\
			&&	{\displaystyle\sum_{m\in\Z[\frac{1}{p}]_{\geq 0}} a_mq^m} \arrow[r, maps to]& {\displaystyle\sum_{m\in \tfrac{1}{p^n}\Z_{\geq 0}} a_mq^m}
			\\
			\O(\D_{\infty,x})  \arrow[r, "\tr_n"] & \O(\D_{n,x}),&&
		\end{tikzcd}
	\end{equation*}
	where the bottom map is given by forgetting all coefficients $a_m$ for $m\not \in \tfrac{1}{p^n}\Z_{\geq 0}$.
\end{Proposition}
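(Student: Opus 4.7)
The plan is to prove the statement at each finite level $k \geq n$ by an explicit linear-algebra computation, using that the normalised trace is compatible with restriction to open subsets, and then pass to the limit $k \to \infty$.

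Fix $k \geq n$. By Prop.~\ref{Proposition: Tate parameter spaces in the Gamma_0-tower}.(2), the open immersion $\D_{n,x}\hookrightarrow \mathcal X^{\ast}_{\Gamma_0(p^n)}(\epsilon)_a$ base-changes along the finite flat forgetful map of degree $p^{k-n}$ to the map $\D_{k,x}\to \D_{n,x}$, which is itself finite flat of the same degree. Since the trace of a finite locally free morphism is compatible with arbitrary base change, and in particular with open immersions, the normalised trace $\tr_{k,n}$ restricts on $\D_{n,x}$ to the normalised trace of $\D_{k,x}\to \D_{n,x}$. The latter is induced by the inclusion $L_x\langle q^{1/p^n}\rangle \hookrightarrow L_x\langle q^{1/p^k}\rangle$, which is free on the basis $\{q^{i/p^k}\}_{0 \leq i < p^{k-n}}$. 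Multiplication by $q^{i/p^k}$ sends $q^{j/p^k}$ to $q^{(i+j)/p^k}$ or, if $i+j \geq p^{k-n}$, to $q^{1/p^n}\cdot q^{(i+j-p^{k-n})/p^k}$; in either case the diagonal entries in the chosen basis vanish whenever $0<i<p^{k-n}$, while for $i=0$ the matrix is the identity of size $p^{k-n}$. Hence, expanding any $f = \sum_{m\in p^{-k}\Z_{\geq 0}} a_m q^m\in \O(\D_{k,x})$ uniquely as $\sum_{i=0}^{p^{k-n}-1} q^{i/p^k}\cdot g_i$ with $g_i \in \O(\D_{n,x})$, the normalised trace $\tr_{k,n}(f)$ equals $g_0 = \sum_{j\geq 0}a_{j/p^n}q^{j/p^n}$, which is precisely the operation of discarding all coefficients with exponent outside $p^{-n}\Z_{\geq 0}$. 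This establishes the finite-level analogue of the proposition.

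To pass to the limit, we use the tilde-limits $\mathcal X^{\ast}_{\Gamma_0(p^\infty)}(\epsilon)_a\sim \varprojlim_k \mathcal X^{\ast}_{\Gamma_0(p^k)}(\epsilon)_a$ and $\D_{\infty,x}\sim \varprojlim_k \D_{k,x}$ (the latter from Lemma~\ref{the perfectoid parameter space of the Tate curve at infinite level}), under which the $\O^+$-rings are $p$-adic completions of the colimits of their finite-level counterparts. Since $\tr_n$ is defined as the continuous extension of the compatible system $\{\tr_{k,n}\}_{k\geq n}$ with bounded image, and since the finite-level formula of the preceding paragraph is manifestly compatible with the transition maps $\O(\D_{k+1,x})\to \O(\D_{k,x})$ (truncation to exponents in $p^{-n}\Z_{\geq 0}$ is transitive), continuity propagates the identity from finite to infinite level, yielding precisely the formula of the proposition. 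The only real subtlety is the verification that the normalised trace commutes with localisation to open subsets, which I expect to be the main technical obstacle; however, this reduces to the standard fact that the trace of a finite locally free algebra extension commutes with arbitrary localisation, applied on an affine open cover of $\mathcal X^{\ast}_{\Gamma_0(p^n)}(\epsilon)_a$.
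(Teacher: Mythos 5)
Your proposal is correct and follows essentially the same route as the paper: reduce to finite level by continuity of $\tr_n$, use the compatibility of the finite flat trace with the open immersions of Prop.~\ref{Proposition: Tate parameter spaces in the Gamma_0-tower} to identify the restricted trace with the trace of $\O_{L_x}\llbracket q^{1/p^n}\rrbracket\hookrightarrow\O_{L_x}\llbracket q^{1/p^k}\rrbracket$, and compute that trace explicitly. The only difference is cosmetic: you read off the trace from the diagonal of the multiplication matrix in the basis $\{q^{i/p^k}\}_{0\leq i<p^{k-n}}$, whereas the paper sums over the Galois conjugates $q^{1/p^k}\mapsto\zeta_{p^k}^{d}q^{1/p^k}$ after inverting $q$ and uses the vanishing of the resulting root-of-unity sums --- two presentations of the same calculation.
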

\begin{proof}
	Let us treat the case of $n=0$, the other cases are completely analogous.
	By continuity, $\tr_n$ is uniquely determined by the normalised traces $\tr_{k,0}$. By Lemma~\ref{Proposition: Tate parameter spaces in the Gamma_0-tower}, this is on $q$-expansions the trace of the inclusion $\O_L\llbracket q\rrbracket\to \O_L\llbracket q^{1/p^k}\rrbracket$. Since after inverting $q$, this map becomes Galois with automorphisms $q^{1/p^k}\mapsto q^{1/p^k}\zeta^d_{p^k}$ for $d\in \Z/p^k\Z$, we compute
	\[\tr_{k,0}\left(\sum_{i=0}^\infty a_{\frac{i}{p^k}}q^{\frac{i}{p^k}}\right)= \frac{1}{p^k}\sum_{i=0}^\infty a_{\frac{i}{p^k}}(1+\zeta_{p^k}^i+\dots+\zeta_{p^k}^{(p^{k}-1)i})q^{{\frac{i}{p^k}}}=\sum_{i=0}^\infty a_{i}q^i\]
	as $1+\zeta_{p^k}^i+\dots+\zeta_{p^k}^{(p^{k}-1)i}=0$ unless $p^k|i$, when it is $=p^k$, giving the desired description.
\end{proof}
\begin{Proposition}[$q$-expansion principle II]\label{p: q-expansion principle II}
	Let  $f\in \O(\XaGea{0}{\infty})$. Then for any $n\in \Z_{\geq 0}$, the following are equivalent:
	\begin{enumerate}
		\item $f$ comes via pullback from $\XaGea{0}{n}$, i.e.\  $f\in\O(\XaGea{0}{n})\subseteq \O(\XaGea{0}{\infty})$.
		\item The $q$-expansion of $f$ at every cusp $x$ is contained in $\O_{L_x}\llbracket q^{1/p^n}\rrbracket[\frac{1}{p}]\subseteq \O_{L_x}\llbracket q^{1/p^\infty}\rrbracket[\frac{1}{p}]$.
		\item On each connected component of $\XaGea{0}{n}$, there is at least one cusp $x$ at which the $q$-expansion of $f$ is already in ${\O_{L_x}}\llbracket q^{1/p^n}\rrbracket[\frac{1}{p}]\subseteq \O_{L_x}\llbracket q^{1/p^\infty}\rrbracket[\frac{1}{p}]$.
	\end{enumerate}
	The analogous statements for $\Xpaep\to \Xpae$ are also true.
\end{Proposition}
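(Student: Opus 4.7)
The plan is to use the normalised Tate trace $\tr_n\colon \O(\XaGea{0}{\infty})\to \O(\XaGea{0}{n})$ as a tool to detect whether a function is pulled back from finite level, and then to reduce the vanishing of the ``defect'' $f-\tr_n(f)$ to the injectivity of restriction to $q$-expansions at sufficiently many cusps, i.e.\ to Prop.~\ref{p: q-expansion principle I}.

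The implications (1)$\Rightarrow$(2)$\Rightarrow$(3) are immediate: by the Cartesian diagram in Prop.~\ref{Proposition: Tate parameter spaces in the Gamma_0-tower}, the restriction to $\D_{\infty,x}$ of any function pulled back from $\XaGea{0}{n}$ factors through $\D_{n,x}$, hence has $q$-expansion in $\O_{L_x}\llbracket q^{1/p^n}\rrbracket[\tfrac{1}{p}]$ at every cusp $x$; and (2)$\Rightarrow$(3) is trivial.

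For (3)$\Rightarrow$(1), set $g:=f-\tr_n(f)\in \O(\XaGea{0}{\infty})$, where $\tr_n(f)$ is pulled back to $\XaGea{0}{\infty}$ along the forgetful map. Since the normalisation of the trace is precisely such that $\tr_n$ restricts to the identity on pullbacks from $\XaGea{0}{n}$, it suffices to show $g=0$: this forces $f=\tr_n(f)\in\O(\XaGea{0}{n})$. By Prop.~\ref{Proposition: effect of trace on q-expansions}, the $q$-expansion of $\tr_n(f)$ at any cusp $x$ is obtained from that of $f$ by forgetting all monomials $a_mq^m$ with $m\notin \tfrac{1}{p^n}\Z_{\geq 0}$; consequently, at any cusp where the $q$-expansion of $f$ already lies in $\O_{L_x}\llbracket q^{1/p^n}\rrbracket[\tfrac{1}{p}]$, the $q$-expansion of $g$ vanishes identically. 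By hypothesis (3) this produces, on each connected component of $\XaGea{0}{n}$, at least one cusp at which the $q$-expansion of $g$ vanishes. Using that the forgetful map $\XaGea{0}{n}\to \mathcal X^{\ast}(\epsilon)$ is finite flat and surjective on connected components, and that by Prop.~\ref{Proposition: Tate parameter spaces in the Gamma_0-tower}.(2) the cusps of $\XaGea{0}{n}$ are in bijection with those of $\mathcal X^{\ast}$, the resulting collection $\mathcal C$ of cusps of $\mathcal X^{\ast}$ meets every connected component of $\mathcal X^{\ast}$. Prop.~\ref{p: q-expansion principle I} then forces $g=0$, as desired.

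The characteristic $p$ analogue for $\Xpaep\to \Xpae$ is strictly parallel: the tower $\Xpa_{\Ig(p^\bullet)}(\epsilon)^{\perf}\to \Xpaep$ is a pro-finite \'etale $\Z_p^\times$-cover by Prop.~\ref{p:Tate parameter spaces in perfectoid Igusa tower}, so one again has compatible normalised Tate traces $\tr_n$ whose effect on $q$-expansions is given by the evident truncation, and the same argument applies. The only real subtlety in the whole argument is the bookkeeping translating ``one cusp per component of $\XaGea{0}{n}$'' into the hypothesis of Prop.~\ref{p: q-expansion principle I} (``one cusp per component of $\mathcal X^{\ast}$''), which is handled as above; everything else is formal given Prop.~\ref{Proposition: effect of trace on q-expansions} and Prop.~\ref{p: q-expansion principle I}.
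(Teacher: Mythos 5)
Your argument for the main case is correct and is essentially the paper's own proof: both rest on the normalised Tate trace $\tr_n$, the observation that $f\in\O(\XaGea{0}{n})$ iff $\tr_n(f)=f$, the description of $\tr_n$ on $q$-expansions as truncation to exponents in $\tfrac{1}{p^n}\Z_{\geq 0}$ (Prop.~\ref{Proposition: effect of trace on q-expansions}), and the injectivity of restriction to $q$-expansions (Prop.~\ref{p: q-expansion principle I}); your explicit bookkeeping of cusps versus connected components is fine. The one slip is in the characteristic $p$ case: the relevant tower for $\Xpaep\to\Xpae$ is the perfection tower over the relative Frobenius, not the Igusa tower $\Xpa_{\Ig(p^\bullet)}(\epsilon)^{\perf}\to\Xpaep$ that you invoke, and since $F_{\rel}$ is not \'etale (the trace of a purely inseparable extension like $\O_{L^\flat}\llbracket q^{1/p}\rrbracket/\O_{L^\flat}\llbracket q\rrbracket$ vanishes) the existence of suitable normalised traces is not automatic; the paper instead imports the characteristic $p$ Tate traces from \cite[\S6.3]{AIP}.
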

\begin{proof}
	It suffices to prove that (3) implies (1). Clearly $f$ is in $\O(\XaGea{0}{n} )$ if and only if $\tr_n(f)=f$. By Prop.\ \ref{p: q-expansion principle I,second version}, this can be checked on $q$-expansions on each component. By Prop.\ \ref{Proposition: effect of trace on q-expansions}, we have $\tr_n(f)=f$ if and only if the $q$-expansion at each $x$ is in $\O_L\llbracket q^{1/p^n}\rrbracket[\frac{1}{p}]$.
	
	The case of $\Xpaep$ is completely analogous, by replacing the normalised Tate traces of \cite{torsion} with those of \cite[\S 6.3]{AIP}. 
\end{proof}

\bibliography{/amd/nfs-12/export/home/heuer/texmf/tex/bibtex/bib/universal}
\bibliographystyle{alpha}
\end{document}